\newcommand{\ub}[0]{\underline{b}}
\newcommand{\va}[0]{\mathbf a}
\newcommand{\vh}[0]{\mathbf h}
\newcommand{\vX}[0]{\mathbf X}
\newcommand{\vone}[0]{\mathbf 1}
\newcommand{\mbf}[1]{\mbox{\boldmath$#1$}}
\newcommand{\Cov}[0]{\text{Cov}}
\newcommand{\Var}[0]{\text{Var}}
\newcommand{\tr}[0]{\text{tr}}
\newcommand{\E}[0]{\mathbb{E}}
\newcommand{\R}[0]{\mathbb{R}}
\newcommand{\Id}[0]{\text{Id}}
\newcommand{\mn}[0]{\text{multinomial}}
\newcommand{\all}[0]{\text{all }}
\newcommand{\Prob}[0]{\mathbb{P}}
\newcommand{\vertiii}[1]{{\left\vert\kern-0.25ex\left\vert\kern-0.25ex\left\vert #1 
    \right\vert\kern-0.25ex\right\vert\kern-0.25ex\right\vert}}
\theoremstyle{plain}% default
\newtheorem{thm}{Theorem}[section]
\newtheorem{lem}[thm]{Lemma}
\newtheorem{prop}[thm]{Proposition}
\newtheorem{cor}[thm]{Corollary}
\theoremstyle{definition}
\newtheorem{defn}{Definition}[section]
\newtheorem{exmp}{Example}[section]
\theoremstyle{remark}
\newtheorem{rmk}{Remark}
\begin{document}

\begin{frontmatter}

% "Title of the paper"
\title{Gaussian and bootstrap approximations for high-dimensional U-statistics and their applications\thanksref{T1}}
\runtitle{Gaussian approximation for high-dimensional U-statistics}
\thankstext{T1}{First version: January 30, 2016. This version: \today.}

% indicate corresponding author with \corref{}
% \author{\fnms{John} \snm{Smith}\corref{}\ead[label=e1]{smith@foo.com}\thanksref{t1}}
% \thankstext{t1}{Thanks to somebody} 
% \address{line 1\\ line 2\\ printead{e1}}
% \affiliation{Some University}

\begin{aug}
\author{\fnms{Xiaohui} \snm{Chen}\thanksref{m1}\ead[label=e1]{xhchen@illinois.edu}\thanksref{t2}}
\runauthor{Chen}
\thankstext{t2}{Research partially supported by NSF grant DMS-1404891 and UIUC Research Board Award RB15004.} 

\affiliation{University of Illinois at Urbana-Champaign\thanksmark{m1}}

\address{Xiaohui Chen \\
Department of Statistics \\
University of Illinois at Urbana-Champaign \\
725 S. Wright Street \\
Champaign, IL 61820 \\
\printead{e1}}
\end{aug}
%
%\runauthor{Chen et al.}

%\begin{keyword}[class=AMS]
%\kwd[Primary ]{}
%\kwd{}
%\kwd[; secondary ]{}
%\end{keyword}

\begin{abstract}
This paper studies the Gaussian and bootstrap approximations for the probabilities of a non-degenerate U-statistic belonging to the hyperrectangles in $\R^d$ when the dimension $d$ is large. A two-step Gaussian approximation procedure that does not impose structural assumptions on the data distribution is proposed. Subject to mild moment conditions on the kernel, we establish the explicit rate of convergence uniformly in the class of all hyperrectangles in $\R^d$ that decays polynomially in sample size for a high-dimensional scaling limit, where the dimension can be much larger than the sample size. We also provide computable approximation methods for the quantiles of the maxima of centered U-statistics. Specifically, we provide a unified perspective for the empirical bootstrap, the randomly reweighted bootstrap, and the Gaussian multiplier bootstrap with the jackknife estimator of covariance matrix as randomly reweighted quadratic forms and we establish their validity. We show that all three methods are inferentially first-order equivalent for high-dimensional U-statistics in the sense that they achieve the same uniform rate of convergence over all $d$-dimensional hyperrectangles. In particular, they are asymptotically valid when the dimension $d$ can be as large as $O(e^{n^c})$ for some constant $c \in (0,1/7)$.

The bootstrap methods are applied to statistical applications for high-dimensional non-Gaussian data including: (i) principled and data-dependent tuning parameter selection for regularized estimation of the covariance matrix and its related functionals; (ii) simultaneous inference for the covariance and rank correlation matrices. In particular, for the thresholded covariance matrix estimator with the bootstrap selected tuning parameter, we show that for a class of subgaussian data, error bounds of the bootstrapped thresholded covariance matrix estimator %adapt the the dependency and moment of the underlying data distribution and therefore the bounds 
can be much tighter than those of the minimax estimator with a universal threshold. In addition, we also show that the Gaussian-like convergence rates can be achieved for heavy-tailed data, which are less conservative than those obtained by the Bonferroni technique that ignores the dependency in the underlying data distribution. 
%The Gaussian approximation result and bootstrap limit theorems rely on a general Gaussian approximation result and the tail probability inequalities for the maxima of non-degenerate U-statistics with the unbounded kernel. Results established in this paper are nonlinear generalizations of the Gaussian and bootstrap approximations of the maxima of the high-dimensional sample mean vectors to the U-statistics of order two.
\end{abstract}

\end{frontmatter}

\section{Introduction}

Let $X_1^n = \{X_1,\cdots,X_n\}$ be a sample of independent and identically distributed (iid) random vectors in $\R^p$ with the distribution $F$. Let $h : \R^p \times \R^p \to \R^d$ be a fixed and measurable function such that $h(x_1,x_2)=h(x_2,x_1)$ for all $x_1,x_2 \in \R^p$ and $\E |h_k(X_1,X_2)|<\infty$ for all $k=1,\cdots,d$. Consider the U-statistic of order two
\begin{equation}
\label{eqn:ustat-order2}
U_n = {1 \over n (n-1)} \sum_{1 \le i \neq j \le n} h(X_i, X_j).
\end{equation}
In this paper, we consider the uniform approximation of the probabilities of $U_n$ over a class of the Borel subsets in $\R^d$. More specifically, let $T_n=\sqrt{n} (U_n - \theta) / 2$, where $\theta =  \E [h(X_1,X_2)]$ is the parameter of interest, and ${\cal A}^{re}$ be the class of all hyperrectangles $A$ in $\R^d$ of the form
\begin{equation}
\label{eqn:hyperrectangles}
A = \{x \in \R^d : a_j \le x_j \le b_j \text{ for all } j = 1, \cdots, d\},
\end{equation}
where $-\infty \le a_j \le b_j \le \infty$ for $j=1,\cdots,d$. Our main goal are to construct a random vector $T_n^\natural$ in $\R^d$ and to derive non-asymptotic bounds for
\begin{equation}
\label{eqn:GA-general-bound}
\rho^{re}(T_n, T_n^\natural) = \sup_{A \in {\cal A}^{re}} |\Prob(T_n \in A) - \Prob(T_n^\natural \in A)|.
\end{equation}
When $p$ (and therefore $d$) is fixed, the classical central limit theorems (CLT) for approximating $T_n$ by a Gaussian random vector $T_n^\natural \sim N(0,\Gamma)$, where $\Gamma = \Cov(g(X_1))$ and $g(X_1) = \E[h(X_1,X_2) | X_1] - \theta$, have been extensively studied in literature \cite{hoeffding1948,arconesgine1993,hsingwu2004,gotze1987,delaPenaGine1999,hoeffding1963,gregory1977,serfling1980,arconesgine1993,zhang1999,ginelatalazinn2000,houdrepatricia2003,hsingwu2004,serfling1980}. Recently, due to the explosive data enrichment, regularized estimation and dimension reduction of high-dimensional data (i.e. $d$ is larger or even much larger than $n$) have attracted a lot of research attentions such as covariance matrix estimation \cite{bickellevina2008a,bickellevina2008b,karoui2008,chenxuwu2013a}, graphical models \cite{dempster1972,yuanlin2007,buhlmannvandegeer2011}, discriminant analysis \cite{maizouyuan2012a}, factor models \cite{fanliaomincheva2011,MR2933663}, among many others. Those problems all involve the consistent estimation of an expectation $\E[h(X_1,X_2)]$ of U-statistics of order two. Below are three examples.

\begin{exmp}
\label{exmp:sample-mean}
The sample mean vector $\bar{X}_n = n^{-1} \sum_{i=1}^n X_i$ is an unbiased estimator of $\E X_1$ and $\bar{X}_n$ can be written as a U-statistic of form (\ref{eqn:ustat-order2}) with the linear kernel $h(x_1,x_2) = (x_1 + x_2) / 2$ for $x_1,x_2 \in \R^p$ and $d = p$.
\end{exmp}

\begin{exmp}
\label{exmp:sample-covmat}
Let $d = p \times p$. The sample covariance matrix $\hat{S}_n = (n-1)^{-1} \sum_{i=1}^n (X_i-\bar{X}_n) (X_i-\bar{X}_n)^\top$ is an unbiased estimator of the covariance matrix $\Sigma = \Cov(X_1)$. Here, $\hat{S}_n$ is a matrix-valued U-statistic of form (\ref{eqn:ustat-order2}) with the quadratic kernel $h(x_1,x_2) = (x_1-x_2) (x_1-x_2)^\top / 2$ for $x_1,x_2 \in \mathbb{R}^p$.
\end{exmp}

\begin{exmp}
\label{exmp:kendalltau}
The covariance matrix quantifies the linear dependency in a random vector. The rank correlation is another measure for the nonlinear dependency in a random vector. Two generic vectors $y = (y_1, y_2)$ and $z = (z_1,z_2)$ in $\R^2$ are said to be {\it concordant} if $(y_1-z_1) (y_2-z_2) > 0$. For $m,k=1,\cdots,p$, define
$$
\tau_{mk} = {1 \over n (n-1)} \sum_{1 \le i \neq j \le n} \vone\{ (X_{im}-X_{jm}) (X_{ik}-X_{jk}) > 0\}.
$$
Then, Kendall's tau rank correlation coefficient matrix $T = \{\tau_{mk}\}_{m,k=1}^p$
is a matrix-valued U-statistic with a bounded kernel. It is clear that $\tau_{mk}$ quantifies the monotonic dependency between $(X_{1m}, X_{1k})$ and $(X_{2m}, X_{2k})$ and it is an unbiased estimator of $\Prob((X_{1m}-X_{2m})(X_{1k}-X_{2k})>0)$, i.e. the probability that $(X_{1m}, X_{1k})$ and $(X_{2m}, X_{2k})$ are concordant.
\end{exmp}

In this paper, we are interested in the following central questions: {\it how does the dimension impact the asymptotic behavior of U-statistics and how can we make practical statistical inference when $d \to \infty$?} Bounds on (\ref{eqn:GA-general-bound}) with the explicit dependence on $d$ are particularly useful in large-scale statistical inference problems. In particular, motivation of this paper comes from the estimation and inference problems for large covariance matrix and its related functionals \cite{meinshausenbuhlmann2006,yuanlin2007,rothmanbickellevinazhu2008a,pengwangzhouzhu2009a,yuan2010a,cailiuluo2011a,bickellevina2008b,chenxuwu2013a,chenxuwu2016+}. To establish rate of convergence for the regularized estimators or to approximate the limiting null distribution of $\ell^\infty$-tests in high-dimensions, a key issue is to characterize the distribution of the supremum norm $|U_n-\E U_n|_\infty$ that relates to the probabilities of $\Prob(T_n \in A)$ for $A$ belonging to the family of max-hyperrectangles in $\R^d$ of the form $A = \{x \in \R^d : x_j \le a \text{ for all } j =1,\cdots,d\}$ and $-\infty \le a \le \infty$.
 %Therefore, as the primary concern of the current paper, we shall consider $B=\mathbb{R}^{p\times p}$ and $\|h\| = \max_{1\le m,k \le p} |h_{mk}|$. 

Our first main contribution is to provide a Gaussian approximation scheme for the high-dimensional {\it non-degenerate} U-statistics. Different from the CLT type results for the sums of independent random vectors \cite{cck2013,cck2015a}, which are directly approximated by the Gaussian counterparts with the matching first and second moments, approximation of the U-statistics is more subtle because of its dependency and nonlinearity structures. Here, we propose a {\it two-step} Gaussian approximation method in Section \ref{sec:gaussian-approx}. In the first step, we approximate the U-statistics by the leading component of a linear form in the Hoeffding decomposition (a.k.a. the H\'ajek projection); in the second step, the linear term is further approximated by the Gaussian random vectors. To approximate the distribution of U-statistics by a linear form, a maximal moment inequality is developed to control the nonlinear and {\it canonical}, i.e. {\it completely degenerate}, form of the reminder term. Then the linear projection is handled by the recent development of Gaussian approximation in high-dimensions \cite{cck2013,cck2015a,zhangcheng2014,zhangwu2016a}. Explicit rate of convergence of the Gaussian approximation for high-dimensional U-statistics uniformly in the class of all hyperrectangles in $\R^d$ is established for unbounded kernels subject to sub-exponential and uniform polynomial moment conditions. Specifically, under either moment conditions, we show that the validity of the Gaussian approximation holds for a high-dimensional scaling limit, where $d$ can be larger or even much larger than $n$. In our results, symmetry of U-statistics is an key ingredient in the Hoeffding decomposition. Therefore our result can be viewed as nonlinear generalizations of the Gaussian approximation for the high-dimensional sample mean vector of iid $X_1,\cdots,X_n$.

The second contribution of this paper is to provide {\it computable} methods for approximating the probabilities $\Prob(T_n \in A)$ uniformly for $A \in {\cal A}^{re}$. This allows us to compute the quantiles of the maxima $|U_n-\E U_n|_\infty$. Since the covariance matrix $\Gamma$ of the H\'ajek projection of the centered U-statistics depends on the underlying data distribution $F$ which is unknown in many real applications, a practically feasible alternative is to use data-dependent approaches such as the bootstrap to approximate $\Prob(T_n \in A)$, where the insight is to implicitly construct a consistent estimator of $\Gamma$ under the supremum norm. In Section \ref{sec:bootstraps}, we provide a unified perspective for the empirical bootstrap (EB), the randomly reweighted bootstrap, and the Gaussian multiplier bootstrap with the jackknife estimator of covariance matrix as randomly reweighted quadratic forms and we establish their validity. Specifically, we show that all three methods are inferentially first-order equivalent for high-dimensional U-statistics in the sense that they achieve the same uniform rate of convergence over ${\cal A}^{re}$. In particular, they are asymptotically valid when the dimension $d$ can be as large as $O(e^{n^c})$ for some constant $c \in (0,1/7)$. One important feature of the Gaussian and bootstrap approximations is that no structural assumption on the distribution $F$ is made and the strong dependency in $F$ is allowed, which in fact helps the Gaussian and bootstrap approximations.

In Section \ref{sec:stat_apps}, we apply the proposed bootstrap method to a number of important high-dimensional problems, including the data-dependent tuning parameter selection in the thresholded covariance matrix estimator and the simultaneous inference of the covariance and Kendall's tau rank correlation matrices. Two additional applications for the estimation problems of the sparse precision matrix and the sparse linear functionals of the precision matrix are given in the Supplementary Material (SM). In those problems, we show that the Gaussian like convergence rates can be achieved for non-Gaussian data with heavy-tails, which are less conservative than those obtained by the Bonferroni technique that ignores the dependency in the underlying data distribution. For the sparse covariance matrix estimation problem, we also show that the thresholded estimator with the tuning parameter selected by the bootstrap procedure adapts the the dependency and moment in the underlying data distribution and therefore the bounds can be much tighter than those of the minimax estimator with a universal threshold that ignores the dependency in $F$ \cite{bickellevina2008b,chenxuwu2013a,caizhou2011a}.

To establish the Gaussian approximation result and the validity of the bootstrap methods, a key step is to bound the the expected supremum norm of the second-order canonical term in the Hoeffding decomposition of the U-statistics and establish its non-asymptotic maximal moment inequalities. An alternative simple data splitting approach by reducing the U-statistics to sums of iid random vectors can give the exact rate for bounding the moments in the non-degenerate case \cite{talagrand1996,massart2000,kleinrio2005,einmahlli2008}. Nonetheless, the reduction to the iid summands in terms of data splitting does not exploit the complete degeneracy structure of the canonical term and it does not lead to the convergence result in the Gaussian approximation for the non-degenerate U-statistics; see Section \ref{app:concentration_ineq_canonical-Ustat} for details. In addition, unlike the Hoeffding decomposition approach, the data splitting approximation is not asymptotically tight in distribution and therefore it is less useful in making inference of the high-dimensional U-statistics.

{\it Relation to the existing literature.} For univariate U-statistics, the empirical bootstrap was studied in \cite{bickelfreedman1981,arconegine1992b} and the randomly reweighted bootstrap of the form (\ref{eqn:randweight-bootstrap-ustat}) was proposed in \cite{janssen1994,huskovajanssen1993b}, where a different class of random weights $w_i$ was considered satisfying $w_i = \xi_i / (n^{-1} \sum_{i=1}^n \xi_i)$ such that $\xi_i$ are iid non-negative random variables and $\E\xi_i^2 < \infty$. Weights of such form contain the Bayesian bootstrap as a special case \cite{rubin1981,lo1987}. The randomly reweighted bootstrap with iid mean-zero weights was considered for the non-degenerate case in \cite{wangjing2004} and for the degenerate case in \cite{dehlingmikosch1994}. More general exchangeably weighted bootstraps can be found in \cite{masonnewton1992,huskovajanssen1993,praestgaardwellner1993}. However, none of those results in literature can be used to establish the bootstrap validity for high-dimensional U-statistics when $d \gg n$. The Gaussian and bootstrap approximations for the maxima of sums of high-dimensional independent random vectors were considered in \cite{cck2015a,cck2013}. For an iid sample, this corresponds to a U-statistic with the kernel $h(x_1,x_2) = (x_1 + x_2) / 2$ for $x_1,x_2 \in \R^d$. Thus, our results are nonlinear generalizations of those in \cite{cck2015a,cck2013} when $X_1,\cdots,X_n$ are iid.

The current paper supersedes and improves the preliminary work \cite{chen2016a} (available as an arXiv preprint) by the author. In \cite{chen2016a}, a Gaussian multiplier bootstrap was proposed by estimating the individual H\'ajek projection terms using the idea of decoupling on an independent dataset. The bootstrap validity therein is established under the Kolmogorov distance, which is a subset of ${\cal A}^{re}$ corresponding to max-hyperrectangles in $\R^d$. In addition, the rate of convergence in \cite{chen2016a} is sub-optimal while the rate derived in this paper is nearly optimal; see Remark \ref{rmk:comparison_with_naive_gaussian_wild_bootstrap} for detailed comparisons.

{\it Notations and definitions.} For a vector $x$, we use $|x|_1 = \sum_j |x_j|$, $|x|:=|x|_2 = (\sum_j x_j^2)^{1/2}$, and $|x|_\infty = \max_j |x_j|$ to denote its entry-wise $\ell^1$, $\ell^2$, and $\ell^\infty$ norms, respectively. For a matrix $M$, we use $|M|_F=(\sum_{i,j} M_{ij}^2)^{1/2}$ and $\|M\|_2 =\max_{|a|=1} |M a|$ to denote its Frobenius and spectral norms, respectively. We shall use $C, C_1, C_2,\cdots$ to denote positive constants that do not depend on $n$ and $d$ and whose values may change from place to place. Denote $a \vee b = \max(a,b)$, $a \wedge b = \min(a,b)$, $a \asymp b$ if $C_1 a \le b \le C_2 b$ for some constants $C_1, C_2 > 0$. For a random variable $X$, we write $\|X\|_q = (\E|X|^q)^{1/q}$ for $q>0$.  For $r=1,\cdots,n$, we shall write $x_1^r = (x_1,\cdots,x_r)$ and $\E h = \E h(X_1^r)$ for the random variables $X_1,\cdots,X_r$ taking values in a measurable space $(S, {\cal S})$ and a measurable function $h : S^r \to \R^d$. For two vectors $x,y \in \mathbb{R}^d$, we use $x \le y$ (or $x > y$) to mean that $x_j \le y_j$ (or $x_j > y_j$) for all $j = 1,\cdots,d$. We use ${\cal L}(X)$ to denote the law or distribution of the random variable $X$. For $\alpha > 0$, let $\psi_\alpha(x) = \exp(x^\alpha)-1$ be a function defined on $[0,\infty)$ and $L_{\psi_\alpha}$ be the collection of all real-valued random variables $\xi$ such that $\E[\psi_{\alpha}(|\xi|/C)] < \infty$ for some $C > 0$. For $\xi \in L_{\psi_\alpha}$, we define $\|\xi\|_{\psi_\alpha} = \inf\{C > 0 : \E[\psi_{\alpha}(|\xi|/C)] \le 1\}$. Then, for $\alpha \in [1,\infty)$, $\|\cdot\|_{\psi_\alpha}$ is an Orlicz norm and $(L_{\psi_\alpha}, \|\cdot\|_{\psi_\alpha})$ is a Banach space \cite{ledouxtalagrand1991}. For $\alpha \in (0,1)$, $\|\cdot\|_{\psi_\alpha}$ is a quasi-norm, i.e. there exists a constant $C(\alpha) > 0$ such that $\|\xi_1+\xi_2\|_{\psi_\alpha} \le C(\alpha) (\|\xi_1\|_{\psi_\alpha} + \|\xi_2\|_{\psi_\alpha})$ holds for all $\xi_1,\xi_2 \in L_{\psi_\alpha}$ \cite{adamczak2008}. We denote the Kolmogorov distance between two real-valued random variables $X$ and $Y$ as $\rho(X, Y) = \sup_{t \in \R} |\Prob(X \le t) - \Prob(Y \le t)|$. Throughout the paper, we assume that $n \ge 4$ and $d \ge 3$.

\section{Gaussian approximation}
\label{sec:gaussian-approx}

In this section, we study the approximation for $\Prob(T_n \in A)$ where $T_n=\sqrt{n} (U_n - \theta) / 2$ and $A \in {\cal A}^{re}$. We shall derive a Gaussian approximation result (GAR) for non-degenerate U-statistics, which is the stepping stone to study various bootstrap procedures in Section \ref{sec:bootstraps}. Let $X'$ and $X$ be two independent random vectors with the distribution $F$ that are also independent of $X_1^n$. In Section \ref{sec:gaussian-approx} and \ref{sec:bootstraps}, since we consider centered U-statistics $T_n$, we assume without loss of generality that $\theta = 0$. Define $g(X) = \E[h(X,X') | X]$ and $f(X,X') = h(X,X') - g(X) - g(X')$.

\begin{defn}
\label{def:degeneracy}
The kernel $h : \R^p \times \R^p \to \R^d$ is said to be: (i) non-degenerate if $\Var(g_m(X)) > 0$ for all $m=1,\cdots,d$; (ii) degenerate of order one, i.e. completely degenerate or $F$-canonical, if $\Prob(g(X)=0)=1$ or equivalently $\E[h(x_1,X')] = \E[h(X,x_2)] = \E[h(X,X')] = 0$ for all $x_1,x_2 \in \mathbb{R}^p$. The corresponding U-statistic in (\ref{eqn:ustat-order2}) is non-degenerate if $h$ is non-degenerate.
\end{defn}

Throughout this paper, we only consider the non-degenerate U-statistics and we assume that
\begin{enumerate}
\item[(M.1)] There exists a constant $\ub > 0$ such that $\E [g_m^2(X)] \ge \ub$ for all $m = 1,\cdots,d$.
\end{enumerate}
The Hoeffding decomposition of $T_n$ is given by $T_n = L_n + R_n$, where
\begin{eqnarray*}
L_n = {1 \over \sqrt{n}} \sum_{i=1}^n g(X_i) \quad \text{and} \quad R_n ={1 \over 2 \sqrt{n} (n-1)} \sum_{1 \le i \neq j \le n} f(X_i, X_j).
\end{eqnarray*}
Since $f$ is $F$-canonical, we expect that $L_n$ is the leading term (a.k.a. the H\'ajek projection) of $T_n$. Therefore, we can reasonably expect that $T_n$ is an approximately linear statistic such that ${\cal L}(T_n) \approx {\cal L}(L_n)$, where the latter can be further approximated by its Gaussian analogue \cite{cck2013,cck2015a}. This motivates the following two-step Gaussian approximation proceudre. Let $\Gamma = \Cov(g(X)) = \E(g(X) g(X)^\top)$ be the $d \times d$ covariance matrix of $g(X)$ and $Y \sim N(0,\Gamma)$ be a $d$-dimensional Gaussian random vector. The main result of this section is to establish non-asymptotic error bounds for $\rho^{re}(T_n, Y)$ under different moment conditions on $h$. Let $q>0$ and $B_n \ge 1$ be a sequence of real numbers possibly tending to infinity. In particular, we shall consider the following assumptions.
\begin{enumerate}
\item[(M.2)] $\E[|h_m(X,X')|^{2+\ell}] \le B_n^\ell$ for $\ell=1,2$ and for all $m=1,\cdots,d$.
\item[(E.1)] $\|h_m(X,X')\|_{\psi_1} \le B_n$ for all $m=1,\cdots,d$.
\item[(E.2)] $\E[\max_{1 \le m \le d} (|h_m(X,X')|/B_n)^q] \le 1$.
\end{enumerate}

In the high-dimensional context, the dimension $d$ grows with the sample size $n$ and the distribution function $F$ may also depend on $n$. Therefore, $B_n$ is allowed to increase with $n$. In particular, under (M.1) and (M.2), $B_n$ can be interpreted as a uniform bound on the standardized absolute moments of $g_m(X)$ for $m =1,\cdots,d$. For instance, the kurtosis parameter $\kappa_m$ of $g_m(X)$ obeys $\kappa_m = [\E g_m^4(X)] / [\E g_m^2(X)]^2 - 3 \le B_n^2 / \ub^2 - 3$. Define
\begin{equation}
\label{eqn:GAR_rates}
\varpi_{1,n} = \left({B_n^2 \log^7 (nd) \over n}\right)^{1/6} \quad \text{and} \quad \varpi_{2,n} = \left({B_n^2 \log^3 (nd) \over n^{1-2/q}}\right)^{1/3}.
\end{equation}

\begin{thm}[{\bf Main result I}: Gaussian approximation for high-dimensional U-statistics for hyperrectangles]
\label{thm:CLT-hyperrec-momconds}
Assume that (M.1) and (M.2) hold. Suppose that $\log d \le \bar{b} n$ for some constant $\bar{b} > 0$. \\
(i) If (E.1) holds, then there exists a constant $C := C(\ub,\bar{b})>0$ such that
\begin{equation}
\label{eqn:CLT-hyperrec-subexp}
\rho^{re}(T_n, Y) \le C \varpi_{1,n}.
\end{equation}
(ii) If (E.2) holds with $q \ge 4$, then there exists a constant $C := C(\ub,\bar{b},q)>0$ such that
\begin{equation}
\label{eqn:CLT-hyperrec-poly}
\rho^{re}(T_n, Y) \le C \left\{ \varpi_{1,n} + \varpi_{2,n} \right\}.
\end{equation}
\end{thm}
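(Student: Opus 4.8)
The plan is to execute the two-step Gaussian approximation outlined before the theorem: first replace $T_n$ by its H\'ajek projection $L_n = n^{-1/2} \sum_{i=1}^n g(X_i)$, controlling the remainder $R_n$ in supremum norm; then apply the high-dimensional CLT for sums of independent centered random vectors to $L_n$ versus $Y \sim N(0,\Gamma)$. By the triangle-type decomposition for $\rho^{re}$, it suffices to bound
\begin{equation}
\rho^{re}(T_n, Y) \le \rho^{re}(L_n, Y) + \sup_{A \in {\cal A}^{re}} |\Prob(T_n \in A) - \Prob(L_n \in A)|,
\end{equation}
and the second term is handled via an anti-concentration argument: since $L_n$ is approximately Gaussian with covariance $\Gamma$ whose diagonal is bounded below by $\ub$ under (M.1), the Gaussian anti-concentration inequality for hyperrectangles (Nazarov-type) gives $\Prob(|L_n - Y|_\infty > \delta) + $ (anti-concentration slack) as the bound, so that the switching cost is of order $\delta \sqrt{\log d} + \Prob(2|R_n|_\infty/\sqrt{n} > \delta)$ up to the Gaussian-approximation error of $L_n$ itself. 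Optimizing over $\delta$ converts a high-probability bound on $|R_n|_\infty$ into a contribution to $\rho^{re}$.

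The key estimate driving everything is a maximal moment / tail bound for $|R_n|_\infty$ where $R_n = (2\sqrt{n}(n-1))^{-1} \sum_{i \ne j} f(X_i,X_j)$ and $f$ is $F$-canonical (completely degenerate). This is exactly the canonical (degenerate) second-order U-process moment inequality alluded to in the introduction and developed in Section~\ref{app:concentration_ineq_canonical-Ustat}: one expects $\E |R_n|_\infty \lesssim n^{-1/2} \cdot (\text{poly-log in } nd) \cdot B_n$ type bounds, with the degeneracy of $f$ buying an extra factor $n^{-1/2}$ relative to a naive bound, which is what makes the whole scheme converge rather than stall. Under (E.1) one uses the $\psi_1$ (sub-exponential) moment assumption together with decoupling, symmetrization, and a chaining/entropy argument over the $d$ coordinates (or equivalently a union bound combined with Bernstein-type control of each coordinate of the degenerate quadratic form), producing the $\log$-power that appears inside $\varpi_{1,n}$; under (E.2) with $q \ge 4$ one instead uses the uniform polynomial moment bound, invoking a Rosenthal-type inequality for canonical U-statistics and a maximal inequality à la Ledoux--Talagrand, which yields the additional term $\varpi_{2,n}$ with its $n^{1-2/q}$ denominator reflecting the heavier tails. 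Care is needed because $f$ mixes a genuine degenerate kernel $h$ with the subtracted conditional expectations $g(X_i)+g(X_j)$; bounding $\|f_m\|_{\psi_1}$ or $\E|f_m|^q$ in terms of $\|h_m\|_{\psi_1}$ or $\E|h_m|^q$ is routine (conditional Jensen, triangle inequality for the Orlicz/quasi-norm) and contributes only constants depending on $\alpha$ or $q$.

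For the first term $\rho^{re}(L_n,Y)$, I would cite verbatim the Gaussian approximation theorem for sums of independent high-dimensional random vectors over hyperrectangles (the Chernozhukov--Chetverikov--Kato result referenced as \cite{cck2013,cck2015a}): applied to the iid summands $g(X_i)$, whose coordinates satisfy $\E g_m^2(X) \ge \ub$ by (M.1) and whose absolute moments up to order $2+\ell$ (resp. $\psi_1$-norm, resp. $q$-th moment) are controlled by $B_n$ via (M.2), (E.1), (E.2) — noting $\|g_m(X)\|_{\psi_1} \le \|h_m(X,X')\|_{\psi_1}$ and likewise for moments by conditional Jensen — this gives exactly $\rho^{re}(L_n,Y) \le C \varpi_{1,n}$ in the sub-exponential case and $\le C(\varpi_{1,n} + \varpi_{2,n})$ in the polynomial case. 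The condition $\log d \le \bar b n$ ensures the relevant quantities stay in the regime where that theorem applies and that the optimization over $\delta$ balances correctly. Combining the two terms and the switching cost, and absorbing the $\delta\sqrt{\log d}$ anti-concentration slack (choose $\delta$ so that $\Prob(2|R_n|_\infty/\sqrt n > \delta)$ matches $\varpi_{1,n}$, which is possible precisely because the degenerate bound gives an extra $\sqrt n$), yields (\ref{eqn:CLT-hyperrec-subexp}) and (\ref{eqn:CLT-hyperrec-poly}).

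The main obstacle is the sharp maximal moment inequality for the canonical term $|R_n|_\infty$: getting the poly-logarithmic exponent small enough (the $\log^7$ inside $\varpi_{1,n}$ and $\log^3$ inside $\varpi_{2,n}$) and, crucially, the full extra $n^{-1/2}$ gain from degeneracy — rather than the $n^{-1/2}\cdot n^{-1/2}$-versus-$n^{-1/2}$ loss one would incur by naive data-splitting — requires a genuine decoupling-plus-chaining argument for completely degenerate U-processes indexed by the $d$ hyperrectangle coordinates, and this is where the technical heart of the proof lies; everything else is assembling known pieces.
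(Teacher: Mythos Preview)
Your proposal is correct and follows essentially the same two-step strategy as the paper. The only packaging difference is that the paper (Proposition~\ref{prop:general-GA-rate}) folds the remainder directly into the smoothing/Slepian--Stein interpolation via the mean-value theorem, producing a single bound with an additive term $\phi_n\,\E|R_n|_\infty$ and optimizing $\phi_n$ once---this is equivalent to your anti-concentration-plus-$\delta$ scheme up to constants. One small correction on the technical core (Theorem~\ref{thm:expectation-bound}): the maximal inequality for the canonical remainder is proved by Rademacher randomization, a conditional Hanson--Wright inequality, and a nested Hoeffding decomposition of $f^2$, not chaining; and that \emph{same} inequality (with $\|M\|_4$ bounded differently via Pisier in case (E.1) and via $\|M\|_4\le n^{2/q}B_n$ in case (E.2)) handles both moment regimes, so no separate Rosenthal-type argument is needed for the polynomial case.
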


The following Corollary is an immediate consequence of Theorem \ref{thm:CLT-hyperrec-momconds}.

\begin{cor}
\label{cor:CLT-hyperrec-momconds}
Assume that (M.1) and (M.2) hold. Let $K \in (0,1)$ and $\bar{b} > 0$. (i) If (E.1) holds and $B_n^2 \log^7(d n)  \le \bar{b} n^{1-K}$, then there exists a constant $C := C(\ub,\bar{b})>0$ such that
\begin{equation}
\label{eqn:CLT-hyperrec-explicit_rate}
\rho^{re}(T_n, Y) \le C n^{-K/6}.
\end{equation}
In particular, 
\begin{equation}
\label{eqn:CLT-kolmogorov-explicit_rate}
\rho(\bar{T}_n, \bar{Y}) \le C n^{-K/6},
\end{equation}
where $\bar{T}_n = \max_{1 \le m \le d} T_{nm}$ and $\bar{Y} = \max_{1 \le m \le d} Y_m$. \\
(ii) If (E.2) holds with $q = 4$ and $B_n^4 \log^7(d n)  \le \bar{b} n^{1-K}$, then there exists a constant $C := C(\ub,\bar{b})>0$ such that (\ref{eqn:CLT-hyperrec-explicit_rate}) and (\ref{eqn:CLT-kolmogorov-explicit_rate}) hold.
\end{cor}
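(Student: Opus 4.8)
The plan is to obtain Corollary~\ref{cor:CLT-hyperrec-momconds} purely by specializing Theorem~\ref{thm:CLT-hyperrec-momconds}: I will substitute the stated growth constraints on $B_n$ and $d$ into the rates $\varpi_{1,n}$ and $\varpi_{2,n}$ of~(\ref{eqn:GAR_rates}), check that they both fall below a multiple of $n^{-K/6}$, and then transfer the hyperrectangle bound to the Kolmogorov distance. No new probabilistic input is needed; the only care required is elementary bookkeeping with powers of $\log(dn)$, using that $dn \ge 12$ so $\log(dn) \ge 1$.

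\textbf{Step 1 (checking the hypotheses of Theorem~\ref{thm:CLT-hyperrec-momconds}).} In case (i), from $B_n^2 \log^7(dn) \le \bar b\, n^{1-K}$, $B_n \ge 1$ and $K \in (0,1)$ I would get $\log^7(dn) \le \bar b\, n^{1-K} \le \bar b\, n$; since $\log(dn) \ge 1$ this gives $\log d \le \log(dn) \le \log^7(dn) \le \bar b\, n$, so the side condition $\log d \le \bar b\, n$ holds, while (E.1) is assumed directly. In case (ii) the same chain applies starting from $\log^7(dn) \le B_n^4 \log^7(dn) \le \bar b\, n^{1-K}$, and (E.2) holds with $q = 4 \ge 4$. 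With $q$ fixed at $4$, the constant $C(\ub,\bar b,q)$ from Theorem~\ref{thm:CLT-hyperrec-momconds}(ii) depends only on $\ub$ and $\bar b$.

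\textbf{Step 2 (bounding the rates).} For part (i): $\varpi_{1,n} = (B_n^2 \log^7(nd)/n)^{1/6} \le (\bar b\, n^{1-K}/n)^{1/6} = \bar b^{1/6}\, n^{-K/6}$, and~(\ref{eqn:CLT-hyperrec-subexp}) then yields $\rho^{re}(T_n, Y) \le C\bar b^{1/6}\, n^{-K/6}$, which is~(\ref{eqn:CLT-hyperrec-explicit_rate}) after absorbing $\bar b^{1/6}$ into $C$. For part (ii) with $q = 4$ one has $n^{1-2/q} = n^{1/2}$; since $B_n \ge 1$, $B_n^2 \log^7(dn) \le B_n^4 \log^7(dn) \le \bar b\, n^{1-K}$, so the bound on $\varpi_{1,n}$ from part (i) persists, while for $\varpi_{2,n} = (B_n^2 \log^3(nd)/n^{1/2})^{1/3}$ I would use $\log(dn) \ge 1$ to write $B_n^2 \log^3(dn) = (B_n^4 \log^6(dn))^{1/2} \le (B_n^4 \log^7(dn))^{1/2} \le (\bar b\, n^{1-K})^{1/2}$, whence $\varpi_{2,n} \le (\bar b^{1/2} n^{(1-K)/2}/n^{1/2})^{1/3} = \bar b^{1/6}\, n^{-K/6}$. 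Then~(\ref{eqn:CLT-hyperrec-poly}) gives $\rho^{re}(T_n, Y) \le 2C\bar b^{1/6}\, n^{-K/6}$, again~(\ref{eqn:CLT-hyperrec-explicit_rate}).

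\textbf{Step 3 (passing to the Kolmogorov distance, and the main obstacle).} For every $t \in \R$ one has $\{\bar T_n \le t\} = \{T_n \in A_t\}$ with $A_t = \{x \in \R^d : x_m \le t \text{ for all } m = 1,\dots,d\}$, which belongs to ${\cal A}^{re}$ (take $a_m = -\infty$, $b_m = t$), and the same identity holds for $\bar Y$; hence $\rho(\bar T_n, \bar Y) = \sup_{t \in \R} |\Prob(T_n \in A_t) - \Prob(Y \in A_t)| \le \rho^{re}(T_n, Y)$, so~(\ref{eqn:CLT-kolmogorov-explicit_rate}) follows from~(\ref{eqn:CLT-hyperrec-explicit_rate}) in both cases. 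There is no genuine obstacle here: the corollary is a direct bookkeeping consequence of Theorem~\ref{thm:CLT-hyperrec-momconds}, and the only mildly delicate point is the use of $\log(dn) \ge 1$ (valid since $dn \ge 12$) to compare $\log^6(dn)$ with $\log^7(dn)$ and to discharge the side condition $\log d \le \bar b\, n$ from the assumed bound on $B_n$.
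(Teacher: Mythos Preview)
Your proposal is correct and matches the paper's approach: the paper simply states that the corollary is an immediate consequence of Theorem~\ref{thm:CLT-hyperrec-momconds} without further details, and you have carried out exactly the bookkeeping that makes this explicit. Your verification of the side condition $\log d \le \bar b\, n$, the bounds on $\varpi_{1,n}$ and $\varpi_{2,n}$, and the reduction of the Kolmogorov distance to the hyperrectangle bound are all correct and constitute precisely what the paper leaves to the reader.
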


Theorem \ref{thm:CLT-hyperrec-momconds} and Corollary \ref{cor:CLT-hyperrec-momconds} are non-asymptotic, showing that the validity of the Gaussian approximation for centered non-degenerate U-statistics holds even if $d$ can be much larger than $n$ and no structural assumption on $F$ is required. In particular, Theorem \ref{thm:CLT-hyperrec-momconds} applies to kernels with the sub-exponential distribution such that $\|h_m\|_q \le C q$ for all $q \ge 1$, in which case $B_n = O(1)$ and the dimension $d$ is allowed to have a subexponential growth rate in the sample size $n$, i.e. $d = O(\exp(n^{(1-K)/7}))$. Condition (E.1) also covers bounded kernels $\|h\|_\infty \le B_n$, where $B_n$ may increase with $n$.

\begin{rmk}[Comments on the near-optimality of the convergence rate in Theorem \ref{thm:CLT-hyperrec-momconds}]
The rate of convergence $n^{-K/6}$ obtained in (\ref{eqn:CLT-hyperrec-explicit_rate}) is slower than the Berry-Esseen rate $n^{-1/2}$ when $d$ is fixed. Similar observations have been made in the existing literature \cite{portnoy1986,bentkus2003} for the normalized sample mean vectors of iid mean-zero random vectors $X_i \in \mathbb{R}^d$, which corresponds to a U-statistic with the linear kernel $h(x_1,x_2) = (x_1 + x_2) / 2$. Assuming $\Cov(X_i) = \Id_d$, \cite{portnoy1986} showed that $\sqrt{n} \bar{X}_n$ has the asymptotic normality if $d = o(\sqrt{n})$ and \cite{bentkus2003} showed that 
$$\sup_{A \in {\cal A}} |\Prob(\sqrt{n} \bar{X}_n \in A) - \Prob(Y \in A)| \le C d^{1/4} \E |X_1|^3 / n^{1/2},$$
where ${\cal A}$ is the class of all convex subsets in $\mathbb{R}^d$, $Y \sim N(0, \Id_d)$, and $C > 0$ is an absolute constant. In either case, the dependence of the CLT rate on the dimension $d$ is polynomial ($d/n^{1/2}$ and $d^{7/4}/n^{1/2}$, resp). On the contrary, our Theorem \ref{thm:CLT-hyperrec-momconds} allows $d$ can be larger than $n$ in order to obtain the CLT type results in much higher dimensions. Since the rate $O(n^{-1/6})$ is minimax optimal in infinite-dimensional Banach spaces for the linear kernel case \cite{bentkus1985,cck2015a}, we argue that the rates derived in Theorem \ref{thm:CLT-hyperrec-momconds} for U-statistics seem un-improvable in $n$ in the following sense. Let $\{X_{ij}\}_{i=1,\cdots,n; j=1,\cdots,d}$ be an array of iid mean-zero random variables with the distribution $F$ such that $\E X_{ij}^2 = 1$ and $\|X_{ij}\|_{\psi_1} \le c$ for all $i=1,\cdots,n$ and $j = 1,\cdots,d$. Consider the linear kernel. Let $Y \sim N(0,\Id_d)$ and $\bar{Y} = \max_{1 \le j \le d} Y_j$. Denote $\Phi(\cdot)$ and $\phi(\cdot)$ as the cdf and pdf of the standard normal distribution, respectively. By the moderate deviation principle for sums of subexponential random variables (c.f. \cite[equation (1.1)]{chenfangshao2013} or \cite[Chapter 8, equation (2.41)]{petrov1975}), there exist constants $C_0,C_1 > 0$ depending only on $c$ such that
$$
{\Prob(T_{nj} > x) \over 1 - \Phi(x)} = 1 + {\eta_1 (1+x^3) \over n^{1/2}}, \quad j = 1,\cdots,d
$$
for $0 \le x \le C_0 n^{1/6}$ and $|\eta_1| \le C_1$. Then, for all such $x$ in the power zone of normal convergence, we have
$$
\Prob(\bar{T}_n \le x) - \Prob(\bar{Y} \le x) = \Prob(\bar{Y} \le x) \{ [1 + \eta_2 (1-\Phi(x)) / \Phi(x) ]^d - 1 \},
$$
where $\eta_2 = -\eta_1 (1+x^3) n^{-1/2}$. Take a distribution $F$ such that $\eta_1 < 0$. By the inequality $(1+x)^d \ge 1+dx$ for $x \ge 0$,
$$
\Prob(\bar{T}_n \le x) - \Prob(\bar{Y} \le x) \ge |\eta_1| (1+x^3) n^{-1/2} \Prob(\bar{Y} \le x) d [1 - \Phi(x)].
$$
Let $x^*$ be the median of $\bar{Y}$; i.e. $\Prob(\bar{Y} \le x^*) = 1/2$. Then $x^* \asymp \sqrt{2 \log{d}}$. In fact, by \cite[Corollary 3.1]{dasguptalahiristoyanov2014}, we have $x^* \le \sqrt{2 \log{d}}$ for $d \ge 31$. Thus, if $x^* \le C_0 n^{1/6}$, then using $[1-\Phi(x)] / [x^{-1} \phi(x)] \to 1$ as $x \to \infty$ we have
$$
\rho(\bar{T}_n, \bar{Y}) \ge C_2 n^{-1/2} {x^*}^2 d \exp(-{x^*}^2/2) \ge C_2 n^{-1/2}{x^*}^2.
$$
Hence, there exist constants $C$ and $C'$ depending only on $F$ such that if $(\log{d})^3 \le C' n$, then $\rho(\bar{T}_n, \bar{Y}) \ge C n^{-1/2} \log{d}$. In particular, taking $(\log{d})^3 \asymp n$, we have $\rho(\bar{T}_n, \bar{Y}) \ge C n^{-1/6}$. Therefore, in view of  the upper bound in (\ref{eqn:CLT-hyperrec-subexp}) and the lower bound in \cite{bentkus1985,cck2015a}, we conjecture that the optimal rate for $\rho(\bar{T}_n, \bar{Y})$ in the high-dimensional setting is $O( (n^{-1} B_n^2 \log^a(nd))^{1/6} )$ for some $a > 0$, based on which the rate of convergence in (\ref{eqn:CLT-hyperrec-subexp}) is also nearly optimal in $d$. However, a rigorous lower bound for $\rho(\bar{T}_n, \bar{Y})$ is still an open question. By the moderate deviations for self-normalized sums \cite{shaowang2013} and the argument above, we expect that similar comments apply for $X_{ij}$ with weaker polynomial moment conditions.
\qed
\end{rmk}

Theorem \ref{thm:CLT-hyperrec-momconds} and Corollary \ref{cor:CLT-hyperrec-momconds} can be viewed as nonlinear generalization of the results in \cite{cck2013,cck2015a}, which considered the Gaussian approximation for $\max_{1 \le j \le d} \sqrt{n} \bar{X}_{nj}$. Therefore, for U-statistics with a nonlinear kernel $h$ (possibly unbounded and discontinuous), the effect of higher-order terms than the H\'ajek projection to a linear subspace in the Hoeffding decomposition vanishes in the Gaussian approximation. For multivariate symmetric statistics of order two, to the best of our knowledge, the Gaussian approximation result (\ref{eqn:CLT-hyperrec-subexp}), (\ref{eqn:CLT-hyperrec-poly}), (\ref{eqn:CLT-hyperrec-explicit_rate}), and (\ref{eqn:CLT-kolmogorov-explicit_rate}) with the explicit convergence rate is new. When $d$ is fixed, the rate of convergence and the Edgeworth expansion of such statistics can be found in \cite{bickelgotzevanzwet1986,gotze1987,bentkusgotzevanzwet1997}. In those papers, assuming the Cram\'er condition on $g(X_1)$ and suitable moment conditions on $h(X_1,X_2)$, the Edgeworth expansion of U-statistics was established for the univariate case ($d=1$) with remainder $o(n^{-1/2})$ or $O(n^{-1})$ \cite{bickelgotzevanzwet1986,bentkusgotzevanzwet1997} and the multivariate case ($d>1$ fixed) with remainder $o(n^{-1/2})$ \cite{gotze1987}. In the latter work \cite{gotze1987}, it is unclear that how the constant in the error bound depends on the dimensionality parameter $d$.

Theorem \ref{thm:CLT-hyperrec-momconds} and Corollary \ref{cor:CLT-hyperrec-momconds} allow us to approximate the probabilities of $T_n$ belonging to the hyperrectangles in $\R^d$ by those probabilities of $Y$, with the knowledge of $\Gamma$. Such results are useful for approximating the quantiles of $\bar{T}_n$ by those of $\bar{Y}$. In practice, the covariance matrix $\Gamma$ and the H\'ajek projection terms $g(X_i), i=1,\cdots,n,$ depend on the underlying data distribution $F$, which is unknown. Thus, quantiles of $\bar{Y}$ need to be estimated in real applications. However, we shall see in Section \ref{sec:bootstraps} that Theorem \ref{thm:CLT-hyperrec-momconds} can still be used to derive valid and computable (i.e. fully data-dependent) methods to approximate the quantiles of $\bar{T}_n$.

\section{Bootstrap approximations}
\label{sec:bootstraps}

In this section, we consider {\it computable} approximations of the probabilities $\Prob(T_n \in A)$ for $A \in {\cal A}^{re}$. Before proceeding to the rigorous results, we shall explain our general strategy. The validity of the bootstrap procedures is established by a series of approximations
\begin{equation}
\label{eqn:approx-diagram}
{\cal L}(T_n) \approx_{(1)} {\cal L}(Y) \approx_{(2)} {\cal L}(Z^X | X_1^n) \approx_{(3)}  {\cal L}(T_n^\natural | X_1^n),
\end{equation}
where $Z^X$ is a conditionally mean-zero Gaussian random vector in $\R^d$ given the observed sample $X_1^n$. The choice of $Z^X$ and $T_n^\natural$ depends on the specific bootstrap method such that the conditional covariance matrix of $Z^X$ given $X_1^n$ is a consistent estimator of $\Gamma$ under the supremum norm. Step (1) follows from the GAR and CLT in Section \ref{sec:gaussian-approx}. Step (2) relies on a (conditional) Gaussian comparison principle and the tail probability inequalities of maximal U-statistics to bound the probability of the events on which the Gaussian comparison can be applied. Those tail probability inequalities are developed in the SM (Section \ref{sec:conc-ineq-ustat}), which are of independent interest and may be used for other high-dimensional problems. Step (3) is a conditional version of Step (1) given $X_1^n$.

\subsection{Empirical bootstrap}
\label{subsec:efron-bootstrap}

Let $X_1^*,\cdots,X_n^*$ be a bootstrap sample independently drawn from the empirical distribution $\hat{F}_n = n^{-1} \sum_{i=1}^n \delta_{X_i}$, where $\delta_x$ is the Dirac point mass at $x$. Define
\begin{equation}
\label{eqn:efron-bootstrap-ustat}
U_n^* = {1 \over n(n-1)} \sum_{1 \le i \neq j \le n} h(X_i^*, X_j^*).
\end{equation}
Then, the conditional distribution of $T_n^* = \sqrt{n}(U_n^*-\E[U^*_n | X_1^n]) / 2$ given $X_1^n$ is used to approximate the distribution of $T_n$. Here $T_n^\natural = T_n^*$ in (\ref{eqn:approx-diagram}). Note that $\E[U_n^* | X_1^n] = V_n$, where $V_n=n^{-2} \sum_{i,j=1}^n h(X_i,X_j)$ is a $V$-statistic. Let
$$
\xi_i \stackrel{\text{iid}}{\sim} \mn(1; 1/n, \cdots, 1/n).
$$
Denote $\mbf\xi_{n \times n} = (\xi_1,\cdots,\xi_n)$ and $\vX_{p \times n}:=X_1^n=(X_1,\cdots,X_n)$. Then we can write $\vX^* = (X_1^*, \cdots, X_n^*) = \vX \mbf\xi$. The key observation is that conditional on $\vX$, $U_n^*$ is a U-statistic of $\xi_1,\cdots,\xi_n$ since
$$
U_n^* = {1 \over n(n-1)} \sum_{1 \le i \neq j \le n} h(\vX\xi_i, \vX\xi_j).
$$
Therefore, we can perform the conditional Hoeffding decomposition as follows. Let
\begin{eqnarray*}
g^X(\xi_1) &=& \E[h(\vX\xi_1, \vX\xi_2) | \xi_1, X_1^n] - V_n \\
&=& {1\over n} \sum_{j=1}^n h(\vX\xi_1, X_j) - {1\over n^2} \sum_{i,j=1}^n h(X_i,X_j).
\end{eqnarray*}
Then, $\E[g^X(\xi_1) | X_1^n] = 0$ and 
\begin{equation}
\label{eqn:emprical_cov_mat_eb}
\hat\Gamma_n := \Cov(g^X(\xi_1) | X_1^n) = {1\over n^3} \sum_{i=1}^n \sum_{j=1}^n  \sum_{k=1}^n  h(X_i,X_j) h(X_i,X_k)^\top -V_n V_n^\top.
\end{equation}
For the special case $d=1$, by the strong law of large numbers for U-statistics \cite[Theorem A, page 190]{serfling1980}, we have with probability one 
$$\lim_{n \to \infty} \Var(g^X(\xi_1) | X_1^n) = \Var(g(X_1)) = \E\{ \E[h(X_1,X_2) | X_1] \}^2 - \{\E[h(X_1,X_2)]\}^2.$$
Therefore, we expect that $T_n^*$ is a reasonable approximation of $T_n$ and our goal to is bound the random quantity 
$$\rho^{B}(T_n, T^*_n) = \sup_{A \in {\cal A}^{re}} | \Prob(T_n \in A) - \Prob(T_n^* \in A | X_1^n) |.$$
In addition to (M.2), (E.1) and (E.2), we shall also assume that 
\begin{enumerate}
\item[(M.2')] $\E[|h_m(X,X)|^{2+\ell}] \le B_n^\ell$ for $\ell=1,2$ and for all $m=1,\cdots,d$.
\item[(E.1')] $\|h_m(X,X)\|_{\psi_1} \le B_n$ for all $m=1,\cdots,d$.
\item[(E.2')] $\E[\max_{1 \le m \le d} (|h_m(X,X)|/B_n)^q] \le 1$.
\end{enumerate}
(M.2'), (E.1') and (E.2') are the von Mises conditions on the empirical bootstrap of U-statistics \cite{bickelfreedman1981}, which require that the diagonal entries of the kernel $h$ obey the same moment conditions as the off-diagonal ones (M.2), (E.1) and (E.2), respectively. Without (M.2'), (E.1') and (E.2'), the empirical bootstrap (Theorem \ref{thm:eb-bootstrap-rate}) can fail and a counterexample was given in \cite{bickelfreedman1981}; see also \cite[Chapter 6.5]{lehmann2004}. For $\gamma \in (0,e^{-1})$, define
\begin{equation}
\varpi^{B}_{1,n}(\gamma) = \left({B_n^2 \log^5 (nd) \log^2(1/\gamma) \over n}\right)^{1/6} \quad \text{and} \quad \varpi^{B}_{2,n}(\gamma) = \left({B_n^2 \log^3 (nd) \over \gamma^{2/q} n^{1-2/q}}\right)^{1/3}.
\end{equation}

\begin{thm}[{\bf Main result II}: rate of convergence of the empirical bootstrap for U-statistics]
\label{thm:eb-bootstrap-rate}
Suppose that (M.1), (M.2) and (M.2') are satisfied. Assume that $\log(1/\gamma) \le K \log(dn)$ and $\log d \le \bar{b} n$ for some constants $K, \bar{b}>0$. \\
(i) If (E.1) and (E.1') hold, then there exists a constant $C := C(\ub,\bar{b},K)>0$ such that we have with probability at least $1-\gamma$
\begin{equation}
\label{eqn:eb-bootstrap-rate-subexp}
\rho^{B}(T_n, T^*_n) \le C \varpi_{1,n}.
\end{equation}
(ii) If (E.2) and (E.2') hold with $q \ge 4$, then there exists a constant $C := C(\ub,\bar{b},q,K)>0$ such that  we have with probability at least $1-\gamma$
\begin{equation}
\label{eqn:eb-bootstrap-rate-poly}
\rho^{B}(T_n, T^*_n) \le C \{ \varpi_{1,n} + \varpi^{B}_{2,n}(\gamma) \}.
\end{equation}
\end{thm}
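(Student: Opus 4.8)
The plan is to run the three-step approximation chain in (\ref{eqn:approx-diagram}) with $T_n^\natural = T_n^*$ and, given $X_1^n$, with $Z^X \sim N(0,\hat\Gamma_n)$, where $\hat\Gamma_n$ is the conditional covariance of the bootstrap H\'ajek projection from (\ref{eqn:emprical_cov_mat_eb}). By the triangle inequality for $\sup_{A \in {\cal A}^{re}}|\cdot|$,
\[
\rho^{B}(T_n, T^*_n) \le \rho^{re}(T_n, Y) + \rho^{re}(Y, Z^X) + \sup_{A \in {\cal A}^{re}}|\Prob(Z^X \in A | X_1^n) - \Prob(T^*_n \in A | X_1^n)|,
\]
where $Y \sim N(0,\Gamma)$. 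The first (Step (1)) term is controlled \emph{unconditionally} by Theorem \ref{thm:CLT-hyperrec-momconds}: it is at most $C\varpi_{1,n}$ under (E.1) and at most $C(\varpi_{1,n}+\varpi_{2,n}) \le C'(\varpi_{1,n}+\varpi^{B}_{2,n}(\gamma))$ under (E.2). It remains to bound the other two terms on a high-probability event $\mathcal E = \mathcal E(\gamma) \in \sigma(X_1^n)$ with $\Prob(\mathcal E) \ge 1-\gamma$.

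To construct $\mathcal E$, the central object is $\Delta_n := |\hat\Gamma_n - \Gamma|_\infty$. Expanding $\hat\Gamma_n$ from (\ref{eqn:emprical_cov_mat_eb}) and isolating the coincidence blocks $i=j$, $i=k$, $j=k$, each entry $(\hat\Gamma_n)_{mm'}$ splits into a third-order $V$-statistic whose expectation equals $\E[h_m(X,X')h_{m'}(X,X'')] = \E[g_m(X)g_{m'}(X)] = \Gamma_{mm'}$ up to an $O(n^{-1})$ bias (using $\theta = 0$), plus diagonal corrections of order $n^{-1}$ carrying moments of $h_m(X,X)$. Taking the Hoeffding decompositions of the $V$-statistic parts and invoking the maximal moment and tail inequalities for non-degenerate and for completely degenerate U-statistics developed in the SM (Section \ref{sec:conc-ineq-ustat}) — with (M.2) and (E.1) (resp. (E.2)) controlling the off-diagonal moments and (M.2') and (E.1') (resp. (E.2')) controlling the diagonal ones — yields $\Prob(\Delta_n > t_n) \le \gamma/3$ for a threshold $t_n$ with $t_n^{1/3}\{1 \vee \log(d/t_n)\}^{2/3} \le C\varpi^{B}_{1,n}(\gamma)$ under the sub-exponential conditions and $\le C(\varpi^{B}_{1,n}(\gamma)+\varpi^{B}_{2,n}(\gamma))$ under the polynomial conditions. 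Enlarging $\mathcal E$ to also include $\Delta_n \le \ub/2$ (so $\min_m (\hat\Gamma_n)_{mm} \ge \ub/2$ by (M.1)) and the events on which the conditional moment functionals of the bootstrap kernel are of the right order — $n^{-2}\sum_{i,j}|h_m(X_i,X_j)|^{2+\ell} \le CB_n^\ell$ uniformly in $m$, together with the analogous bounds for $n^{-1}\sum_i |\bar h^{(i)}_m - V_{n,m}|^{2+\ell}$ where $\bar h^{(i)}_m = n^{-1}\sum_j h_m(X_i,X_j)$, the diagonal contributions being absorbed by the von Mises conditions — we still have $\Prob(\mathcal E) \ge 1-\gamma$ by a union bound.

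On $\mathcal E$, the middle (Step (2)) term is handled by the Gaussian comparison inequality for hyperrectangles: since $Y \sim N(0,\Gamma)$ and $Z^X \sim N(0,\hat\Gamma_n)$ are centered Gaussians with $|\Gamma - \hat\Gamma_n|_\infty = \Delta_n$ and both diagonals bounded below, $\rho^{re}(Y,Z^X) \le C\Delta_n^{1/3}\{1 \vee \log(d/\Delta_n)\}^{2/3}$, which by the choice of $t_n$ is $\le C\varpi^{B}_{1,n}(\gamma)$ under (E.1)/(E.1') and $\le C(\varpi^{B}_{1,n}(\gamma)+\varpi^{B}_{2,n}(\gamma))$ under (E.2)/(E.2'). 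For the last (Step (3)) term, conditionally on $X_1^n \in \mathcal E$ the bootstrap statistic $T^*_n$ is a non-degenerate U-statistic of the iid multinomial weights $\xi_1,\dots,\xi_n$ with symmetric kernel $\tilde h(\xi_1,\xi_2) = h(\vX\xi_1,\vX\xi_2)$, whose Hoeffding decomposition has H\'ajek projection $n^{-1/2}\sum_i g^X(\xi_i)$ with covariance $\hat\Gamma_n$. I would then re-run the two-step argument behind Theorem \ref{thm:CLT-hyperrec-momconds} conditionally: the conditional linear part is close to $Z^X$ by the high-dimensional CLT for sums of independent vectors underlying Step (1), applied to $g^X(\xi_i)$ — whose conditional moments are of order $B_n$ because $\bar h^{(i)}_m$ concentrates around $g_m(X_i)$, so that the heavy tails of $h_m$ are smoothed out — while the conditional canonical remainder $R^*_n$ has $\E[|R^*_n|_\infty | X_1^n]$ of smaller order than $\varpi_{1,n}$ by the maximal moment inequality for canonical U-statistics applied to $\tilde f = \tilde h - g^X - g^X$. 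This bounds the last term by $C\varpi_{1,n}$ under (E.1)/(E.1') and by $C(\varpi_{1,n}+\varpi^{B}_{2,n}(\gamma))$ under (E.2)/(E.2'). Summing the three bounds and using $\log(1/\gamma) \le K\log(dn)$, which makes $\varpi^{B}_{1,n}(\gamma) \le C\varpi_{1,n}$, delivers (\ref{eqn:eb-bootstrap-rate-subexp}) and (\ref{eqn:eb-bootstrap-rate-poly}).

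The main obstacle is the construction of $\mathcal E$ — establishing the sharp max-norm concentration of $\hat\Gamma_n$ around $\Gamma$ together with the conditional moment control of the bootstrap kernel. Since $\hat\Gamma_n$ is a third-order $V$-statistic with possibly unbounded, discontinuous entries, one must decompose it into its Hoeffding components, bound the completely degenerate pieces with the maximal moment inequalities for canonical U-statistics (the technical core developed in the SM), and treat the coincidence blocks separately; it is precisely at these diagonal blocks that the von Mises conditions (M.2'), (E.1'), (E.2') are indispensable, and without them the empirical bootstrap can fail. Everything else is bookkeeping: the triangle inequality, the Gaussian comparison with its $1/3$-exponent, and the logarithmic reductions afforded by $\log(1/\gamma) \le K\log(dn)$.
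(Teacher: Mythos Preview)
Your proposal is correct and follows essentially the same three-step route as the paper: Theorem \ref{thm:CLT-hyperrec-momconds} for Step (1), a Gaussian comparison via $|\hat\Gamma_n-\Gamma|_\infty$ for Step (2), and a conditional application of Proposition \ref{prop:general-GA-rate} for Step (3), with the high-probability event $\mathcal E$ built from tail bounds on $\hat\Gamma_n$ and on the empirical moment functionals $\hat D_{g,q},\hat D_q,\hat M_{h,q}$ (the paper's Lemma \ref{lem:bounds_on_terms_empirical-version-EB}). One small imprecision: the conditional moments of $g^X(\xi_i)$ are not controlled by showing $\bar h^{(i)}_m$ concentrates around $g_m(X_i)$ uniformly in $i$ --- that individual-term route is exactly what Remark \ref{rmk:comparison_with_naive_gaussian_wild_bootstrap} flags as suboptimal --- but rather by bounding the \emph{aggregate} $\hat D_{g,3}=\max_j n^{-1}\sum_k W_{jk}^3$ directly as a (non-negative) U-statistic in $X_1^n$, which is how the paper obtains $\hat D_{g,3}\le C B_n$ with the right probability.
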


Theorem \ref{thm:eb-bootstrap-rate} is non-asymptotic, which implies the asymptotic validity of the EB for U-statistics in the almost sure sense.

\begin{cor}[Asymptotic validity of the empirical bootstrap for U-statistics in the almost sure sense]
\label{thm:eb-bootstrap-asymptotic_validity}
Suppose that (M.1), (M.2) and (M.2') are satisfied and $\log d \le \bar{b} n$ for some constant $\bar{b}>0$. (i) Under (E.1) and (E.1'), we have $\Prob(\rho^{B}(T_n, T^*_n) \le C \varpi_{1,n} \text{ for all but finitely many } n) = 1$, where $C > 0$ is a constant depending only on $\ub$ and $\bar{b}$. In particular, if $B_n^2 \log^7(n d) = o(n)$, then $\rho^{B}(T_n, T^*_n) \to 0$ almost surely. (ii) Under (E.2) and (E.2') with $q > 4$, we have $\Prob(\rho^{B}(T_n, T^*_n) \le C \{\varpi_{1,n} + {\varpi'}^B_{2,n} \}  \text{ for all but finitely many } n) = 1$, where 
$${\varpi'}^B_{2,n} = \left({B_n^2 \log^3(n d) \log^{4/q}(n) \over n^{1-4/q}} \right)^{1/3}$$
and $C > 0$ is a constant depending only on $\ub, \bar{b}$, and $q$. In particular, if $B_n^2 \log^7(n d) = o(n)$ and $B_n^2 \log^3(n d) \log^{4/q}(n) = o(n^{1-4/q})$, then $\rho^{B}(T_n, T^*_n) \to 0$ almost surely. 
\end{cor}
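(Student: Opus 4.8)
The plan is to deduce both almost-sure statements from the non-asymptotic high-probability bound of Theorem~\ref{thm:eb-bootstrap-rate} by letting the confidence parameter $\gamma$ depend on $n$ in a summable way and then invoking the first Borel--Cantelli lemma. Concretely, I would set $\gamma_n = 1/(n \log^2 n)$. This sequence satisfies $\sum_{n \ge 4} \gamma_n < \infty$ (integral test), lies in $(0,e^{-1})$ for every $n \ge 4$, and obeys $\log(1/\gamma_n) = \log n + 2\log\log n \le K \log(dn)$ for a suitable absolute constant $K$ (using $d \ge 3$, so $\log(dn) \ge \log n$; finitely many small $n$, if any, can be absorbed into the ``all but finitely many'' conclusion). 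Since $\log d \le \bar{b} n$ is assumed throughout, all hypotheses of Theorem~\ref{thm:eb-bootstrap-rate} hold with $\gamma = \gamma_n$.

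For part (i), under (E.1) and (E.1'), Theorem~\ref{thm:eb-bootstrap-rate}(i) provides a constant $C = C(\ub,\bar{b},K) = C(\ub,\bar{b})$ with $\Prob(\rho^{B}(T_n,T_n^*) > C\varpi_{1,n}) \le \gamma_n$; note the bound $C\varpi_{1,n}$ itself does not depend on $\gamma_n$. Summability of $(\gamma_n)$ and Borel--Cantelli then give $\Prob(\rho^{B}(T_n,T_n^*) \le C\varpi_{1,n} \text{ for all but finitely many } n) = 1$. When moreover $B_n^2 \log^7(nd) = o(n)$ we have $\varpi_{1,n} = (B_n^2 \log^7(nd)/n)^{1/6} \to 0$, so $\rho^{B}(T_n,T_n^*) \to 0$ almost surely.

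For part (ii) the one point requiring care is how the $\gamma^{-2/q}$ factor in $\varpi^{B}_{2,n}(\gamma)$ responds to the choice $\gamma = \gamma_n$. Since $\gamma_n^{-2/q} = (n\log^2 n)^{2/q} = n^{2/q}\log^{4/q}(n)$, we get
\[
\varpi^{B}_{2,n}(\gamma_n) = \left( {B_n^2 \log^3(nd) \over \gamma_n^{2/q}\, n^{1-2/q}} \right)^{1/3} = \left( {B_n^2 \log^3(nd)\, \log^{4/q}(n) \over n^{1-4/q}} \right)^{1/3} = {\varpi'}^{B}_{2,n}.
\]
Hence, under (E.2) and (E.2') with $q > 4$ (so that $1-4/q > 0$ and the bound is meaningful), Theorem~\ref{thm:eb-bootstrap-rate}(ii) yields $\Prob(\rho^{B}(T_n,T_n^*) > C\{\varpi_{1,n} + {\varpi'}^{B}_{2,n}\}) \le \gamma_n$ with $C = C(\ub,\bar{b},q,K) = C(\ub,\bar{b},q)$, and Borel--Cantelli again delivers the eventual bound. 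The almost-sure convergence $\rho^{B}(T_n,T_n^*) \to 0$ then follows as soon as both $\varpi_{1,n} \to 0$ and ${\varpi'}^{B}_{2,n} \to 0$, i.e. under $B_n^2 \log^7(nd) = o(n)$ and $B_n^2 \log^3(nd)\log^{4/q}(n) = o(n^{1-4/q})$.

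There is no genuine obstacle beyond Theorem~\ref{thm:eb-bootstrap-rate} itself; the only delicate step is the calibration of $\gamma_n$. One wants $(\gamma_n)$ \emph{just barely} summable: faster decay is allowed but wastes power in the exponent of $n$ appearing in $\varpi^{B}_{2,n}(\gamma_n)$, whereas a non-summable choice breaks Borel--Cantelli. This is exactly why the harmonic-type sequence $1/(n\log^2 n)$ is natural and why the resulting almost-sure rate ${\varpi'}^{B}_{2,n}$ carries the extra $\log^{4/q}(n)$ factor relative to the in-probability rate $\varpi^{B}_{2,n}(\gamma)$. The remaining routine checks are that $\gamma_n \in (0,e^{-1})$ and $\log(1/\gamma_n) \le K\log(dn)$ for all $n \ge 4$, which are immediate from $d \ge 3$ and $n \ge 4$.
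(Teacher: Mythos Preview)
Your proposal is correct and follows essentially the same approach as the paper: the paper's proof also takes $\gamma_n = [n\log^2(n)]^{-1}$, verifies summability, applies Theorem~\ref{thm:eb-bootstrap-rate} with this choice, and invokes Borel--Cantelli. Your write-up simply fills in the computation $\varpi^{B}_{2,n}(\gamma_n) = {\varpi'}^{B}_{2,n}$ and the side checks on $\gamma_n \in (0,e^{-1})$ and $\log(1/\gamma_n) \le K\log(dn)$ that the paper leaves implicit.
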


\subsection{Randomly reweighted bootstrap with iid Gaussian weights}
\label{subsec:iid-weighted-bootstrap}

Let $w_1,\cdots,w_n$ be iid $N(1,1)$ random variables that are also independent of $X_1^n$ and $Y$. Consider 
\begin{equation}
\label{eqn:randweight-bootstrap-ustat}
U_n^\diamond = {1 \over n(n-1)} \sum_{1 \le i \neq j \le n} w_i w_j h(X_i, X_j),
\end{equation}
Then, $U_n^\diamond$ is the stochastically reweighted version of $U_n$ and it can also be viewed as a random quadratic form in $w_1,\cdots,w_n$.  Denote $T^\diamond = \sqrt{n} (U_n^\diamond - U_n) / 2$ and $T_n^\natural = T_n^\diamond$ in (\ref{eqn:approx-diagram}). Since the main focus of this paper is to approximate the distribution of the centered U-statistics; i.e $\theta := \E h = 0$, we first consider the bootstrap of the centered U-statistics of the random quadratic form (\ref{eqn:randweight-bootstrap-ustat}) and discuss the effect of centering in the bootstraps in Remark \ref{rmk:effect-of-centering-bootstraps}.

\begin{thm}[{\bf Main result III}: rate of convergence of the randomly reweighted bootstrap for centered U-statistics]
\label{thm:iid-weighted-bootstrap}
Assume that $\theta = 0$. Suppose that (M.1) and (M.2) are satisfied. Assume that $\log(1/\gamma) \le K \log(dn)$ and $\log d \le \bar{b} n$ for some constants $K, \bar{b}>0$. (i) If (E.1) holds, then there exists a constant $C := C(\ub,\bar{b},K)>0$ such that we have $\rho^{B}(T_n, T^\diamond_n) \le C \varpi_{1,n}$ holds with probability at least $1-\gamma$. (ii) If (E.2) holds with $q \ge 4$, then there exists a constant $C := C(\ub,\bar{b},q,K)>0$ such that we have $\rho^{B}(T_n, T^\diamond_n) \le C \{ \varpi_{1,n} + \varpi^{B}_{2,n}(\gamma) \}$ holds with probability at least $1-\gamma$.
\end{thm}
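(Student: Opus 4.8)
The plan is to instantiate the chain (\ref{eqn:approx-diagram}) with $T_n^\natural=T_n^\diamond$, exploiting that the weights are Gaussian so that the ``linear part'' of the bootstrap statistic is \emph{exactly} a conditional Gaussian vector. Write $w_i=1+e_i$ with $e_i\stackrel{\text{iid}}{\sim}N(0,1)$ independent of $X_1^n$. Since $\theta=0$ and $h$ is symmetric, $w_iw_j-1=e_i+e_j+e_ie_j$ and collecting terms gives the conditional Hoeffding decomposition
$$ T_n^\diamond = L_n^\diamond+R_n^\diamond,\qquad L_n^\diamond=\frac1{\sqrt n}\sum_{i=1}^n e_i\,\bar h_i,\qquad R_n^\diamond=\frac{1}{2\sqrt n\,(n-1)}\sum_{1\le i\ne j\le n}e_ie_j\,h(X_i,X_j), $$
where $\bar h_i:=(n-1)^{-1}\sum_{j\ne i}h(X_i,X_j)$. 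Conditionally on $X_1^n$, $L_n^\diamond$ is a sum of independent centered Gaussian vectors, hence $L_n^\diamond\mid X_1^n\sim N(0,\hat\Gamma_n^\diamond)$ with $\hat\Gamma_n^\diamond=n^{-1}\sum_{i=1}^n\bar h_i\bar h_i^\top$ (a leave-one-out, jackknife-type estimator of $\Gamma$), while $R_n^\diamond$ is conditionally a completely degenerate quadratic form in $e_1,\dots,e_n$, since $\E[e_ie_j\mid X_1^n]=0$ for $i\ne j$. Crucially, $\bar h_i$, $\hat\Gamma_n^\diamond$ and $R_n^\diamond$ involve only off-diagonal evaluations of $h$, which is why the von Mises conditions (M.2$'$), (E.1$'$), (E.2$'$) required for the empirical bootstrap are \emph{not} needed here; apart from this simplification the argument parallels that of Theorem~\ref{thm:eb-bootstrap-rate}. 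Taking $Z^X\mid X_1^n\sim N(0,\hat\Gamma_n^\diamond)$ in (\ref{eqn:approx-diagram}), step~(3) amounts to discarding $R_n^\diamond$.

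Step~(1) is Theorem~\ref{thm:CLT-hyperrec-momconds}: $\rho^{re}(T_n,Y)\le C\varpi_{1,n}$ under (E.1), and $\le C\{\varpi_{1,n}+\varpi_{2,n}\}\le C\{\varpi_{1,n}+\varpi^{B}_{2,n}(\gamma)\}$ under (E.2). For step~(2) I would establish an $\ell^\infty$-consistency bound for $\hat\Gamma_n^\diamond$. Writing $\bar h_{i,m}=g_m(X_i)+\epsilon_{i,m}$ with $\E[\epsilon_{i,m}\mid X_i]=0$, the product $\bar h_{i,m}\bar h_{i,k}$ splits into $g_m(X_i)g_k(X_i)$ plus cross-terms that are lower order in expectation; averaging the main part is a $U$-statistic with mean $\Gamma_{mk}=\E[g_m(X)g_k(X)]$, and its sup-norm concentration over the $d^2$ index pairs --- via the $\ell^\infty$-concentration inequalities for $U$-statistics of Section~\ref{sec:conc-ineq-ustat}, using $\E g_m^4\le B_n^2$ from (M.2) and, under (E.1), $\|g_mg_k\|_{\psi_{1/2}}\lesssim B_n^2$ (resp.\ the polynomial moments under (E.2)) --- gives $|\hat\Gamma_n^\diamond-\Gamma|_\infty\lesssim B_n\sqrt{\log d/n}$ plus a lower-order (under (E.2), polynomial-moment) term, on an event of probability at least $1-\gamma$; the hypothesis $\log(1/\gamma)\le K\log(dn)$ lets the deviation factors be absorbed into the polylogarithmic overhead. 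On the same event $\mathrm{diag}(\hat\Gamma_n^\diamond)\ge\ub/2$ by (M.1). Feeding this into the conditional Gaussian comparison inequality of \cite{cck2015a} (normalized by the variance lower bound $\ub$) bounds step~(2) by $C\{B_n\sqrt{\log d/n}\}^{1/3}\log^{2/3}d\le C\varpi_{1,n}$ (resp.\ $\varpi_{1,n}+\varpi^{B}_{2,n}(\gamma)$), the power of $\log(nd)$ in $\varpi_{1,n}$ being calibrated precisely to absorb such overhead.

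For step~(3) I would show $R_n^\diamond$ is asymptotically negligible. Conditionally on $X_1^n$, $R_n^\diamond$ is exactly of the form handled by the maximal moment inequalities for canonical (completely degenerate) $U$-statistics in Section~\ref{sec:conc-ineq-ustat}, now applied with $(e_i,e_j)$ in the role of the data and the realized array $\{h(X_i,X_j)\}_{i,j}$ in the role of the kernel. This yields $\E_e[\,|R_n^\diamond|_\infty\mid X_1^n]$ bounded by $\mathrm{polylog}(d)$ times data-dependent quantities --- a Frobenius term $\lesssim n^{-1}\max_m\big(n^{-1}\sum_{i\ne j}h_m(X_i,X_j)^2\big)^{1/2}$, an operator-norm term $\lesssim n^{-1}\max_m\|(h_m(X_i,X_j))_{ij}\|_2$, and $\lesssim n^{-3/2}\max_m\max_{i,j}|h_m(X_i,X_j)|$ --- and a second $\ell^\infty$-concentration step in $X_1^n$ (the first quantity is $O(B_n^{1/2})$ by the $V$-statistic bound under (M.2); the last is controlled by a union bound under (E.1)/(E.2)) gives $|R_n^\diamond|_\infty\lesssim B_n^{1/2}n^{-1/2}\,\mathrm{polylog}(nd)$ (resp.\ a polynomial-moment variant absorbed into $\varpi^{B}_{2,n}(\gamma)$), with probability at least $1-\gamma$. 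Combined with Gaussian anti-concentration (Nazarov's inequality) together with $\mathrm{diag}(\hat\Gamma_n^\diamond)\ge\ub/2$, this shows that replacing $L_n^\diamond+R_n^\diamond$ by $L_n^\diamond$ perturbs the probability of any hyperrectangle by at most $O(\varpi_{1,n})$. Chaining steps~(1)--(3) by the triangle inequality for $\rho^{re}$ then gives the theorem; the effect of centering when $\theta\ne0$ is as described in Remark~\ref{rmk:effect-of-centering-bootstraps}.

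I expect the main obstacle to be step~(3). Obtaining a bound on $|R_n^\diamond|_\infty$ that is only \emph{polylogarithmic} in $d$, rather than polynomial, forces one to genuinely exploit the complete degeneracy of the conditional quadratic form in $e_1,\dots,e_n$ --- a crude treatment of $R_n^\diamond$ as a non-degenerate statistic would cost a factor of order $n^{1/2}$ and leave $|R_n^\diamond|_\infty$ of constant order, destroying the approximation --- and then the data-dependent constants appearing in that conditional moment bound must be transferred back into clean functions of $B_n$, $n$, $d$ by uniform concentration of the associated $V$-statistics and maxima. This is precisely the conditional incarnation of the canonical-$U$-statistic maximal inequality highlighted in the introduction, the same technical device that underlies the proof of Theorem~\ref{thm:CLT-hyperrec-momconds}.
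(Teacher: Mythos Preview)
Your proposal is correct and follows essentially the same route as the paper: the same decomposition $T_n^\diamond=L_n^\diamond+R_n^\diamond$ with $L_n^\diamond\mid X_1^n\sim N(0,\hat\Gamma_n^\diamond)$ (the paper writes $\breve\Gamma_n$ for your $\hat\Gamma_n^\diamond$), the same Gaussian comparison for step~(2) via $|\hat\Gamma_n^\diamond-\Gamma|_\infty$, and the same conditional control of $R_n^\diamond$ as a completely degenerate quadratic form in the Gaussian multipliers, exploiting only off-diagonal evaluations of $h$ so that the von Mises conditions are unnecessary.

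Two small calibrations worth noting. First, the paper packages the conditional bound on $R_n^\diamond$ not through the tail inequalities of Section~\ref{sec:conc-ineq-ustat} but through a dedicated maximal inequality for weighted degenerate quadratic forms (Lemma~\ref{lem:maximal_ineq_weighted_quad_forms}), whose proof is exactly the Hanson--Wright/randomization machinery you describe; the resulting data-dependent quantities are $A_1=\max_m(\sum_{i\ne j}h_m^2)^{1/2}$, $A_2=\max_m\{\sum_i(\sum_{j\ne i}h_m^2)^2\}^{1/4}$, and maxima $M,\tilde M$, which are then controlled in $X_1^n$ by Lemma~\ref{lem:bounds_on_terms_empirical-version-EB} and a truncation on the event $G_\tau=\{\max_{m,i,j}|h_m(X_i,X_j)|\le\tau\}$. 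Second, for step~(3) the paper does not invoke Nazarov directly but uses the smoothed-max construction to get $\rho^{re}(L_n^\diamond,T_n^\diamond\mid X_1^n)\le C(\ub)\,\phi^{-1}(\log d)^{1/2}+K\phi\,\E_w|R_n^\diamond|_\infty$ on $\{\min_j\breve\Gamma_{n,jj}\ge\ub/2\}$, and then optimizes over $\phi$ to obtain $C(\ub)(\log d)^{1/4}(\E_w|R_n^\diamond|_\infty)^{1/2}$. Your anti-concentration formulation gives the same order, so this is a cosmetic difference.
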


From Theorem \ref{thm:eb-bootstrap-rate} and \ref{thm:iid-weighted-bootstrap}, we see that the empirical and the randomly reweighted bootstraps are first-order equivalent, both achieving the same uniform rate of convergence for approximating the probabilities $\Prob(T_n \in A)$ for $A \in {\cal A}^{re}$. However, unlike the EB, the randomly reweighted bootstrap does not assume the von Mises moment conditions on the diagonal entries. 
%Therefore, similarly as the EB in Corollary \ref{cor:comparison_with_naive_gaussian_wild_bootstrap}, asymptotic validity of the iid randomly reweighted bootstrap can be established when $d = O(e^{n^c})$ for some constant $c \in (0,1/7)$ and $B_n = O(1)$ without assuming (M.2'), (E.1'), and (E.2').

\begin{rmk}[Effect of centering in the randomly reweighted bootstrap]
\label{rmk:effect-of-centering-bootstraps}
If $\theta \neq 0$, then we can show that the iid reweighted bootstrap $T^\diamond_n$ is not an asymptotically valid bootstrap approximation for $T_n$. The reason is that centering is a key structure to maintain in the conditional distribution of $T^\diamond_n$. Therefore, in the case, we shall consider the following modified version
\begin{eqnarray}
\label{eqn:Uflat}
U^\flat_n = {1 \over n (n-1)} \sum_{1 \le i \neq j \le n} w_i w_j h(X_i,X_j) - 2 (\bar{w}-1) U_n
\end{eqnarray}
and $T^\flat_n = \sqrt{n}(U^\flat_n-U_n)/2$. Then, $T_n^\flat = T_n^\diamond - \sqrt{n}(\bar{w}-1) U_n$. Since $w_i$ are iid $N(1,1)$, we have $\sqrt{n}(\bar{w}-1) U_n | X_1^n \sim N(0, U_n U_n^\top)$, which is not asymptotically vainishing for $\theta \neq 0$. Therefore, without the centering term $2(\bar{w}-1) U_n$ in $U_n^\flat$, $T_n^\diamond$ is not an asymptotically tight sequence for approximating $T_n$.

\begin{thm}[Rate of convergence of the randomly reweighted bootstrap for non-centered U-statistics]
\label{thm:iid-weighted-bootstrap-no-centering}
Suppose that (M.1) and (M.2) are satisfied. Assume that $\log(1/\gamma) \le K \log(dn)$ and $\log d \le \bar{b} n$ for some constants $K, \bar{b}>0$. (i) If (E.1) holds, then there exists a constant $C := C(\ub,\bar{b},K)>0$ such that we have $\rho^{B}(T_n, T^\flat_n) \le C \varpi_{1,n}$ holds with probability at least $1-\gamma$. (ii) If (E.2) holds with $q \ge 4$, then there exists a constant $C := C(\ub,\bar{b},q,K)>0$ such that  we have $\rho^{B}(T_n, T^\flat_n) \le C \{ \varpi_{1,n} + \varpi^{B}_{2,n}(\gamma) \}$ holds with probability at least $1-\gamma$.
\end{thm}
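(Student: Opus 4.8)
The plan is to reduce Theorem \ref{thm:iid-weighted-bootstrap-no-centering} to Theorem \ref{thm:iid-weighted-bootstrap} by exploiting the decomposition $T_n^\flat = T_n^\diamond - \sqrt{n}(\bar w - 1) U_n$ recorded in Remark \ref{rmk:effect-of-centering-bootstraps}. The key point is that the correction term is, conditionally on $X_1^n$, a centered Gaussian with an easily controlled covariance. First I would write $T_n^\flat$, conditionally on $X_1^n$, as a sum of two conditionally independent pieces: the reweighted-bootstrap statistic $T_n^\diamond$ and the subtracted term $-\sqrt n (\bar w - 1)U_n$. Because $w_1,\dots,w_n$ are iid $N(1,1)$, we have $\sqrt n(\bar w -1)\sim N(0,1)$ and it is independent of the vector of contrasts that drives the Hoeffding-type expansion of $T_n^\diamond$ in the $w_i$'s; this is the same device used in the proof of Theorem \ref{thm:iid-weighted-bootstrap}, so the bulk of that argument carries over verbatim once we account for the extra Gaussian summand.

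Concretely, the approximation chain (\ref{eqn:approx-diagram}) for $T_n^\flat$ becomes $\mathcal L(T_n)\approx_{(1)}\mathcal L(Y)\approx_{(2)}\mathcal L(Z^X\mid X_1^n)\approx_{(3)}\mathcal L(T_n^\flat\mid X_1^n)$, where now the conditional Gaussian surrogate $Z^X$ must be taken with conditional covariance equal to the conditional covariance of the Hájek-type linear part of $T_n^\flat$ given $X_1^n$. Expanding $T_n^\diamond$ in the weights $w_i$ and keeping the linear (in $w_i - 1$) term, one gets a conditional covariance of the form $\widetilde\Gamma_n + U_n U_n^\top + (\text{cross terms})$ for the linear part of $T_n^\diamond$ alone, and subtracting $\sqrt n(\bar w -1)U_n$ precisely cancels the $U_n U_n^\top$ contribution and the cross terms, leaving a conditional covariance that is a $\sup$-norm consistent estimator of $\Gamma$ under (M.1), (M.2), and (E.1) (resp. (E.2)). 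I would verify this cancellation by direct computation of $\Cov(\text{linear part of }T_n^\flat\mid X_1^n)$, then invoke the $\sup$-norm concentration of the jackknife/empirical covariance estimators already established in the SM (the tail inequalities for maximal U-statistics cited for Step (2)) to show $|\widehat{\text{Cov}} - \Gamma|_\infty$ is $O_\Prob(\varpi_{1,n}^{3})$ or the appropriate polynomial-moment analogue on an event of probability at least $1-\gamma$. Steps (1) and (3) are then handled exactly as in Theorem \ref{thm:CLT-hyperrec-momconds} and its conditional version, using the Gaussian comparison inequality of \cite{cck2015a} for Step (2) on that event.

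The one genuinely new ingredient is controlling the nonlinear-in-$w$ remainder of $T_n^\flat$: after subtracting the linear part and the $\sqrt n(\bar w -1)U_n$ term, there remains a conditionally completely degenerate quadratic form in the $w_i$'s with coefficient array $\{h(X_i,X_j)/(2\sqrt n(n-1))\}_{i\neq j}$, together with a diagonal-type residual coming from $(\bar w -1)$ interacting with $U_n$. I expect this remainder bound to be the main obstacle: it requires a conditional (given $X_1^n$) maximal moment inequality for a canonical quadratic form in iid Gaussian weights whose random coefficients are the kernel evaluations, and then a uniform-in-$X_1^n$ control of the resulting random bound via the moment/Orlicz conditions (M.2), (E.1), (E.2). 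This is essentially the conditional analogue of the canonical-term estimate used for $R_n$ in the proof of Theorem \ref{thm:CLT-hyperrec-momconds}, but one must be careful that the extra centering subtraction does not reintroduce a non-degenerate component; since $\E[(\bar w -1)w_i\mid X_1^n]=1/n$ the $(\bar w-1)U_n$ piece contributes only an $O(n^{-1/2})$-order deterministic-times-Gaussian shift, which is absorbed into the $\varpi_{1,n}$ rate. Once the remainder is shown to be $o_\Prob(\varpi_{1,n})$ on the good event, the conclusion follows by combining Steps (1)--(3) exactly as in the proof of Theorem \ref{thm:iid-weighted-bootstrap}, yielding the stated bounds $\rho^{B}(T_n,T_n^\flat)\le C\varpi_{1,n}$ under (E.1) and $\rho^{B}(T_n,T_n^\flat)\le C\{\varpi_{1,n}+\varpi^{B}_{2,n}(\gamma)\}$ under (E.2) with $q\ge 4$.
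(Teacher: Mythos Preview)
Your overall strategy is in the right spirit but you miss the single algebraic identity that makes the paper's proof a two-line reduction. The correction term $\sqrt n(\bar w-1)U_n=\tfrac{1}{\sqrt n}\sum_{i=1}^n(w_i-1)U_n$ is itself \emph{linear} in the weights $(w_i-1)$, so subtracting it from $T_n^\diamond$ does not create any new quadratic or ``diagonal-type'' residual; it only modifies the linear part. Concretely,
\[
T_n^\flat \;=\; \underbrace{\frac{1}{\sqrt n}\sum_{i=1}^n\Big[\frac{1}{n-1}\sum_{j\neq i}h(X_i,X_j)-U_n\Big](w_i-1)}_{=:L_n}\;+\;R_n,
\]
with the \emph{same} completely degenerate quadratic remainder $R_n$ as in (\ref{eqn:Rn_iid-weighted-bootstrap}). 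The linear piece $L_n$ is now exactly the jackknife Gaussian multiplier bootstrap $T_n^\sharp$ (since $w_i-1$ are iid $N(0,1)$), so $L_n\mid X_1^n\sim N(0,\tilde\Gamma_n)$ and the argument of Proposition~\ref{prop:gaussian_comp_leading_term-EB} together with the remainder bound already proved for Theorem~\ref{thm:iid-weighted-bootstrap} finish the job immediately. No separate covariance computation, no cross-term cancellation, and no new remainder analysis are needed.

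Two smaller points: your claim that $\sqrt n(\bar w-1)$ is independent of the ``vector of contrasts'' driving the linear part of $T_n^\diamond$ is false in general---both are linear functionals of $(w_1-1,\dots,w_n-1)$ and are correlated unless $\sum_{j\neq i}h(X_i,X_j)$ is constant in $i$. Fortunately this independence is never needed once you use the decomposition above. Likewise, the sentence about $\E[(\bar w-1)w_i]=1/n$ producing a ``deterministic-times-Gaussian shift'' is a red herring: there is no such shift because the entire correction is absorbed into the centering of $L_n$.
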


Theorem \ref{thm:iid-weighted-bootstrap-no-centering} is valid regardless $\theta \neq 0$ and $\theta = 0$ since in the latter case, $\sqrt{n}(\bar{w}-1) U_n$ is conditionally negligible compared with $T_n^\diamond$. For the EB, centering in the empirical analog $\hat\Gamma_n$ of the covariance matrix $\Gamma$ is automatically fulfilled; see (\ref{eqn:emprical_cov_mat_eb}). Similar comments apply to the Gaussian multiplier bootstrap $T_n^\sharp$ in Section \ref{subsec:gaussian-multiplier-bootstrap-jackknife}.
\qed
\end{rmk}

\subsection{Gaussian multiplier bootstrap with jackknife covariance matrix estimator}
\label{subsec:gaussian-multiplier-bootstrap-jackknife}

The iid reweighted bootstrap is closely related to the Gaussian multiplier bootstrap with the jackknife estimator of the covariance matrix of $T_n$. Let $e_1,\cdots,e_n$ be iid $N(0,1)$ random variables that are independent of $X_1^n$ and $Y$ and 
\begin{equation}
\label{eqn:multiplier_bootstrap}
T^\sharp_n = {1 \over \sqrt{n}} \sum_{i=1}^n \Big[ {1 \over n-1} \sum_{j \neq i} h(X_i, X_j) - U_n \Big] e_i.
\end{equation}
Define
\begin{equation}
\label{eqn:jk_Gamma_n}
\hat\Gamma_n^{JK} = {1 \over (n-1) (n-2)^2} \sum_{i=1}^n \sum_{j \neq i} \sum_{k \neq i} (h(X_i,X_j) -U_n) (h(X_i,X_k) - U_n)^\top.
\end{equation}
Then, $\hat\Gamma_n^{JK}$ is the jackknife estimator of the covariance matrix of $T_n$ \cite{callaertveraverbeke1981} and $T^\sharp_n | X_1^n \sim N(0, \tilde\Gamma_n)$, where 
\begin{equation}
\label{eqn:tilde_Gamma_n}
\tilde\Gamma_n = {(n-2)^2 \over n(n-1)} \hat\Gamma_n^{JK}.
\end{equation}
Therefore, it is interesting to view the Gaussian multiplier bootstrap $T^\sharp_n$ as a plug-in estimator of the distribution of $T_n$ by its jackknife covariance matrix estimator. To distinguish the Gaussian wild bootstrap $\hat{L}_0^*$ in (\ref{eqn:gwb_hajek}) (c.f. Remark \ref{rmk:comparison_with_naive_gaussian_wild_bootstrap}), we call $T^\sharp_n$ the {\it jackknife Gaussian multiplier bootstrap}.

\begin{thm}[{\bf Main result IV}: rate of convergence of the jackknife Gaussian multiplier bootstrap for U-statistics]
\label{thm:mb-bootstrap-rate}
Suppose that (M.1) and (M.2) are satisfied. Assume that $\log(1/\gamma) \le K \log(dn)$ and $\log d \le \bar{b} n$ for some constants $K, \bar{b}>0$. (i) If (E.1) holds, then there exists a constant $C := C(\ub,\bar{b},K)>0$ such that we have $\rho^{B}(T_n, T^\sharp_n) \le C \varpi_{1,n}$ holds with probability at least $1-\gamma$. (ii) If (E.2) holds with $q \ge 4$, then there exists a constant $C := C(\ub,\bar{b},q,K)>0$ such that  we have $\rho^{B}(T_n, T^\sharp_n) \le C \{ \varpi_{1,n} + \varpi^{B}_{2,n}(\gamma) \}$ holds with probability at least $1-\gamma$.
\end{thm}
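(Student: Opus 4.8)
The plan is to follow the approximation chain \eqref{eqn:approx-diagram} with $T_n^\natural = T_n^\sharp$, exploiting the fact that, unlike for the empirical bootstrap, $T_n^\sharp$ is \emph{already} Gaussian conditionally on $X_1^n$: collapsing the constants in \eqref{eqn:jk_Gamma_n}--\eqref{eqn:tilde_Gamma_n} one gets $T_n^\sharp \mid X_1^n \sim N(0,\tilde\Gamma_n)$ with $\tilde\Gamma_n = n^{-1}\sum_{i=1}^n \hat g_i \hat g_i^\top$, where $\hat g_i := (n-1)^{-1}\sum_{j\ne i} h(X_i,X_j) - U_n$ is the leave-one-out (jackknife) estimate of the H\'ajek term $g(X_i)$. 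Hence $Z^X$ and $T_n^\natural$ in \eqref{eqn:approx-diagram} coincide and step (3) is vacuous. Since $\hat g_i$, $\tilde\Gamma_n$ and $T_n^\sharp$ are invariant under recentering $h \mapsto h-\theta$, we may assume $\theta = 0$. By the triangle inequality,
\[
\rho^{B}(T_n, T_n^\sharp) \;\le\; \rho^{re}(T_n, Y) \;+\; \sup_{A \in {\cal A}^{re}} \bigl| \Prob(Y \in A) - \Prob(T_n^\sharp \in A \mid X_1^n) \bigr|,
\]
and the first term is at most $C\varpi_{1,n}$ under (E.1), resp. $C(\varpi_{1,n}+\varpi_{2,n})$ under (E.2), by Theorem \ref{thm:CLT-hyperrec-momconds}. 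For the second term, $T_n^\sharp \mid X_1^n$ and $Y \sim N(0,\Gamma)$ are both centered Gaussian, so the Gaussian comparison inequality over hyperrectangles (the same one used to prove Theorems \ref{thm:eb-bootstrap-rate} and \ref{thm:iid-weighted-bootstrap}; applicable since (M.1) gives $\min_m \Gamma_{mm} \ge \ub$, and $\min_m \tilde\Gamma_{n,mm} \ge \ub/2$ on the event below) bounds it by $C\,\Delta_n^{1/3}\{1 \vee \log(d/\Delta_n)\}^{2/3}$ on the event $\{\Delta_n \le \ub/2\}$, where $\Delta_n := |\tilde\Gamma_n - \Gamma|_\infty = \max_{1 \le m,m' \le d} |\tilde\Gamma_{n,mm'} - \Gamma_{mm'}|$. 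Everything thus reduces to showing that, with probability at least $1-\gamma$, $\Delta_n$ is small enough that $\Delta_n^{1/3}\{1\vee\log(d/\Delta_n)\}^{2/3} \le C\varpi_{1,n}$ in case (i), resp. $\le C\{\varpi_{1,n}+\varpi^B_{2,n}(\gamma)\}$ in case (ii) (we may assume $\varpi_{1,n} \le 1$, else there is nothing to prove).

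The core of the argument is therefore the sup-norm concentration of the jackknife covariance estimator $\tilde\Gamma_n$ around $\Gamma$. Writing $\hat g_i = g(X_i) + D_i - U_n$ with $D_i := (n-1)^{-1}\sum_{j \ne i}\{h(X_i,X_j) - g(X_i)\}$, so that $\E[D_i \mid X_i]=0$, one expands
\[
\tilde\Gamma_n - \Gamma \;=\; \Bigl(\tfrac1n\textstyle\sum_i g(X_i)g(X_i)^\top - \Gamma\Bigr) + \Bigl(\tfrac1n\textstyle\sum_i g(X_i)(D_i-U_n)^\top + (\,\cdot\,)^\top\Bigr) + \tfrac1n\textstyle\sum_i (D_i-U_n)(D_i-U_n)^\top .
\]
The first bracket is an i.i.d.\ average of the matrix entries $g_m(X_i)g_{m'}(X_i)$, whose variances are at most $B_n^2$ by Cauchy--Schwarz and (M.2) (since $\E g_m^4 \le \E h_m^4 \le B_n^2$ by conditional Jensen); a maximal moment / Fuk--Nagaev inequality bounds its sup-norm by $C(B_n^2\log(dn)/n)^{1/2}$ under (E.1) on an event of probability $\ge 1-\gamma$ (using $\log(1/\gamma)\le K\log(dn)$), and by $C\{(B_n^2\log(dn)/n)^{1/2} + (B_n^2/(\gamma^{2/q}n^{1-2/q}))^{1/2}\}$ under (E.2) with $q\ge 4$ --- the second summand being where the $\gamma^{2/q}$ in $\varpi^B_{2,n}(\gamma)$ originates. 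In the cross and quadratic terms I would first peel off every piece containing $U_n$: these are products of two factors each of sup-norm $O((B_n^2\log(dn)/n)^{1/2})$ with probability $\ge 1-\gamma$ (for $\bar g_n = n^{-1}\sum_i g(X_i)$, and for $U_n$ by the concentration inequality for maxima of U-statistics in Section \ref{sec:conc-ineq-ustat} of the SM), hence of negligible order $O(B_n^2\log(dn)/n)$. The remaining pieces, $\frac1{n(n-1)}\sum_{i\ne j} g_m(X_i)\{h_{m'}(X_i,X_j) - g_{m'}(X_i)\}$ and the off-diagonal part of $n^{-1}\sum_i D_{i,m}D_{i,m'}$, are (non-symmetric) U-statistics that are degenerate in their first argument; I would split each into its H\'ajek-type projection onto the second argument --- an i.i.d.\ average with second moment $O(B_n^{4/3})$, controlled as above by a term of order at most $(B_n^2\log(dn)/n)^{1/2}$ --- and a completely degenerate remainder, the latter bounded by the maximal moment inequalities for canonical order-two U-statistics established in Section \ref{sec:conc-ineq-ustat} of the SM, which give a bound of order $B_n^2\,\text{polylog}(dn)/n$ (with a polynomial-moment analogue under (E.2)); the on-diagonal remainder $\frac1{n(n-1)^2}\sum_i\sum_{j\ne i}(\,\cdots)$ has mean $O(B_n^{2/3}/n)$ and small fluctuation. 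Collecting, on an event of probability at least $1-\gamma$ one obtains $\Delta_n \le C(B_n^2\log(dn)/n)^{1/2}$ under (E.1), resp. $\Delta_n \le C\{(B_n^2\log(dn)/n)^{1/2} + (B_n^2/(\gamma^{2/q}n^{1-2/q}))^{1/2}\}$ under (E.2).

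Feeding this into the comparison bound and tracking the powers of $\log(dn)$ (using $\log^{2/3}d \le \log(dn)$) gives $\Delta_n^{1/3}\{1\vee\log(d/\Delta_n)\}^{2/3} \le C(B_n^2\log^5(dn)/n)^{1/6} \le C\varpi_{1,n}$ in case (i), and $\le C\{\varpi_{1,n}+\varpi^B_{2,n}(\gamma)\}$ in case (ii) after collecting the heavy-tailed contribution; since $\varpi_{2,n}\le\varpi^B_{2,n}(\gamma)$, combining with the first paragraph finishes the proof. The main obstacle is the sup-norm concentration of $\tilde\Gamma_n$ in the previous paragraph, for two reasons: the leave-one-out summands $\hat g_i$ are dependent across $i$, which forces the U-statistic decomposition; and the completely degenerate remainders cannot be controlled by naive data splitting (as explained in the Introduction) but require the non-asymptotic maximal moment inequalities for canonical U-statistics from the SM, together with a careful accounting of $\log(dn)$- and $\gamma$-powers so that the bound lands inside $\varpi_{1,n}$ (resp. $\varpi_{1,n}+\varpi^B_{2,n}(\gamma)$). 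Two side remarks: only off-diagonal evaluations $h(X_i,X_j)$, $i\ne j$, enter $\hat g_i$ and $\tilde\Gamma_n$, so no von Mises conditions on $h(X,X)$ are needed --- this is why (M.2$'$), (E.1$'$), (E.2$'$) are absent from the hypotheses; and, since the $N(1,1)$ weights in \eqref{eqn:randweight-bootstrap-ustat} can be written as $1+e_i$ with $e_i$ i.i.d.\ $N(0,1)$, a short computation gives $T_n^\sharp = T_n^\diamond - \sqrt n\,(\bar e_n) U_n - R$, where $R$ is quadratic and completely degenerate in the $e_i$, so that when $\theta=0$ both correction terms are conditionally negligible and Theorem \ref{thm:mb-bootstrap-rate} can alternatively be deduced directly from Theorem \ref{thm:iid-weighted-bootstrap} plus bounds on these remainders.
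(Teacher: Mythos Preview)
Your overall architecture matches the paper exactly: triangle inequality, Theorem~\ref{thm:CLT-hyperrec-momconds} for $\rho^{re}(T_n,Y)$, and a Gaussian comparison (Lemma~\ref{lem:gauss-comp-general}) reducing everything to a high-probability bound on $\Delta_n=|\tilde\Gamma_n-\Gamma|_\infty$. Your observation that $T_n^\sharp\mid X_1^n$ is already Gaussian, so step~(3) in \eqref{eqn:approx-diagram} is vacuous, and that only off-diagonal evaluations of $h$ enter (hence no von~Mises conditions), are both correct and are precisely why the proof in the paper is so short.

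Where you diverge is in how $\Delta_n$ is controlled. You write $\hat g_i=g(X_i)+D_i-U_n$ and break $\tilde\Gamma_n-\Gamma$ into an i.i.d.\ piece, cross terms, and a quadratic remainder, then plan to Hoeffding-decompose the cross and quadratic pieces into projections plus canonical remainders. The paper takes a more direct route: expand $\tilde\Gamma_n=(n(n-1)^2)^{-1}\sum_i\sum_{j\ne i}\sum_{k\ne i}h(X_i,X_j)h(X_i,X_k)^\top-U_nU_n^\top$ and split the triple sum by index coincidences into an order-$3$ U-statistic (all $i,j,k$ distinct) and an order-$2$ U-statistic ($j=k\ne i$), exactly as in the proof of Proposition~\ref{prop:gaussian_comp_leading_term-EB} for $\hat\Gamma_n$---only the pieces with $i=j$ or $i=k$ (which required (M.2$'$), (E.1$'$), (E.2$'$) there) are now absent. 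Each surviving U-statistic is handled \emph{directly} by the tail inequalities of Lemmas~\ref{lem:subexp-concentration-ineq}/\ref{lem:fuknagaev-concentration-ineq} in the SM, without any further Hoeffding decomposition or appeal to canonicity. This is why the paper's proof is literally two sentences: ``$|\tilde\Gamma_n-\Gamma|_\infty$ obeys the same large-probability bound as $|\hat\Gamma_n-\Gamma|_\infty$''.

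Two small corrections to your sketch. First, the tools in Section~\ref{sec:conc-ineq-ustat} of the SM are tail bounds for \emph{general} (not canonical) U-statistics; the canonical-kernel moment inequality is Theorem~\ref{thm:expectation-bound} in the main text. More importantly, canonicity is \emph{not} needed to bound $\Delta_n$: the target order is only $(B_n^2\log(dn)/n)^{1/2}$ (plus the $\gamma^{-2/q}$ correction under (E.2)), which the Hoeffding-averaging tail bounds already deliver for non-degenerate kernels. The ``canonical remainders give $B_n^2\,\mathrm{polylog}/n$'' step you sketch is therefore stronger than necessary---and, as stated, it is an expectation bound, so you would still need Markov or a separate tail argument to make it a $1-\gamma$ statement with the right $\gamma$-dependence. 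Second, the Gaussian comparison Lemma~\ref{lem:gauss-comp-general} only needs $\Gamma_{mm}\ge\ub$; the condition $\tilde\Gamma_{n,mm}\ge\ub/2$ you impose is not required. Your alternative route via $T_n^\sharp=T_n^\diamond-\sqrt n\,\bar e_nU_n-R_n$ is a valid observation; the paper does not pursue it, but the pieces are exactly (\ref{eqn:Ln_iid-weighted-bootstrap})--(\ref{eqn:Rn_iid-weighted-bootstrap}) from the proof of Theorem~\ref{thm:iid-weighted-bootstrap}.
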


In the special case $h(x_1,x_2) = (x_1 + x_2) / 2$ for $x_1,x_2 \in \R^d$, we have $U_n = \bar{X}_n = n^{-1} \sum_{i=1}^n X_i$ is the sample mean vector and $T_n = \sqrt{n} (\bar{X}_n - \theta) / 2$ where $\theta = \E(X_1)$. Some algebra shows that $\hat\Gamma_n^{JK} =  [4 (n-1)]^{-1} \sum_{i=1}^n (X_i - \bar{X}_n) (X_i - \bar{X}_n)^\top$, $\hat\Gamma_n = n^{-1} (n-1) \hat\Gamma_n^{JK}$ in (\ref{eqn:emprical_cov_mat_eb}), and $\tilde\Gamma_n = [(n-2)^2 / (n(n-1))]\hat\Gamma_n^{JK}$ in (\ref{eqn:tilde_Gamma_n}). Then, $T^\sharp_n \sim N(0, \tilde\Gamma_n)$ which is the equivalent to the multiplier bootstrap of \cite{cck2015a}. Therefore, for iid samples, Theorem \ref{thm:eb-bootstrap-rate} and \ref{thm:mb-bootstrap-rate} are nonlinear generalizations of the empirical and Gaussian multiplier bootstraps considered in \cite{cck2015a}.

\begin{rmk}[Comparison with the Gaussian wild bootstrap of \cite{chen2016a}]
\label{rmk:comparison_with_naive_gaussian_wild_bootstrap}
In \cite{chen2016a}, a Gaussian wild bootstrap based on decoupling was proposed. Specifically, let $X'_1,\cdots,X'_n$ be an independent copy of $X_1,\cdots,X_n$. The H\'ajek projection terms $g(X_i), i =1, \cdots,n,$ are estimated by
$$
\hat{g}_i = {1 \over n} \sum_{j=1}^n h(X_i,X'_j) - {1 \over n (n-1)} \sum_{1 \le j \neq l \le n} h(X'_j, X'_l).
$$
Since $g(X_i) = \E[h(X_i,X')|X_i] - \E[h(X,X')]$, $\hat{g}_i$ can be viewed as an unbiased estimator of $g(X_i)$ conditionally on $X_i$ for $i = 1,\cdots,n$. Then the Gaussian wild bootstrap procedure is defined as
\begin{equation}
\label{eqn:gwb_hajek}
\hat{L}_0^* = \max_{1 \le m \le d} {1 \over \sqrt{n}} \sum_{i=1}^n \hat{g}_{im} e_i,
\end{equation}
where $e_i$ are iid $N(0,1)$ random variables. Let $a_{\hat{L}_0^*}(\alpha)$ be the $\alpha$-th conditional quantile of $\hat{L}_0^*$ given $X_1,\cdots,X_n$ and $X'_1,\cdots,X'_n$. Similarly, denote $a_{\bar{T}_n^\sharp}(\alpha)$ as the $\alpha$-th conditional quantile of $\bar{T}_n^\sharp$ given $X_1, \cdots, X_n$. Let $K \in (0,1)$ be a constant. Assuming (M.1), (M.2), and in addition $D_2 \le 1$, it was shown in \cite{chen2016a} that: (i) if $B_n^2 \log^7(dn) \le n^{1-K}$ and (E.1) holds, then $\sup_{\alpha \in (0,1)} | \Prob(\bar{T}_n \le a_{\hat{L}_0^*}(\alpha)) - \alpha | \le C n^{-K/8};$ (ii) if $B_n^4 \log^7(dn) \le n^{1-K}$ and (E.2) holds with $q=4$, then $\sup_{\alpha \in (0,1)} | \Prob(\bar{T}_n \le a_{\hat{L}_0^*}(\alpha)) - \alpha | \le C n^{-K/12}.$ Here, the constant $C > 0$ depends only on $\ub$ in (M.1) in both cases. The following theorem shows that the jackknife Gaussian multiplier bootstrap improves the convergence rate of Gaussian wild bootstrap in \cite{chen2016a}.
\begin{thm}
\label{thm:comparison_with_naive_gaussian_wild_bootstrap}
Suppose that (M.1) and (M.2) are satisfied. Let $K \in (0,1)$. (i) Assume (E.1). If $B_n^2 \log^7(dn) \le n^{1-K}$, then
\begin{equation}
\label{eqn:comparison_with_naive_gaussian_wild_bootstrap}
\sup_{\alpha \in (0,1)} | \Prob(\bar{T}_n \le a_{\bar{T}_n^\sharp}(\alpha)) - \alpha | \le C n^{-K/6},
\end{equation}
where $\bar{T}^\sharp_n = \max_{1 \le j \le d} T^\sharp_{nj}$ and $C > 0$ is a constant depending only on $\ub$. \\
(ii) Assume (E.2) with $q \ge 4$. If $B_n^2 \log^7(dn) \le n^{1-K}$ and $B_n^4 \log^6(d) \le n^{2-4(1+K/6)/q-K}$, then we have (\ref{eqn:comparison_with_naive_gaussian_wild_bootstrap}) with the constant $C$ depending only on $\ub$ and $q$.
\end{thm}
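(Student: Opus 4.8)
The plan is to deduce Theorem~\ref{thm:comparison_with_naive_gaussian_wild_bootstrap} from the uniform-over-hyperrectangles rate of Theorem~\ref{thm:mb-bootstrap-rate} together with the Gaussian approximation of Section~\ref{sec:gaussian-approx}, by a standard quantile-comparison argument; following the convention of Sections~\ref{sec:gaussian-approx}--\ref{sec:bootstraps} we take $\theta=0$ (each of $T_n$, $T_n^\sharp$, $\bar T_n^\sharp$, and hence $a_{\bar T_n^\sharp}(\alpha)$ is unchanged when $h$ is shifted by a constant and $\theta$ adjusted). Write $F_n(t)=\Prob(\bar T_n\le t)$, $\hat F_n^\sharp(t)=\Prob(\bar T_n^\sharp\le t\mid X_1^n)$, and $F^Y(t)=\Prob(\bar Y\le t)$ with $\bar Y=\max_{1\le m\le d}Y_m$, $Y\sim N(0,\Gamma)$, and note $\hat q_\alpha:=a_{\bar T_n^\sharp}(\alpha)$ is $\sigma(X_1^n)$-measurable. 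Since $\{\bar T_n\le t\}=\{T_n\in A_t\}$ for the one-sided rectangle $A_t=\{x\in\R^d:x_m\le t\ \forall m\}\in{\cal A}^{re}$, specializing Theorem~\ref{thm:mb-bootstrap-rate} and Corollary~\ref{cor:CLT-hyperrec-momconds} (or Theorem~\ref{thm:CLT-hyperrec-momconds}) to these rectangles yields: for $\gamma\in(0,e^{-1})$ there is an event ${\cal E}$ with $\Prob({\cal E})\ge1-\gamma$ on which $\sup_t|F_n(t)-\hat F_n^\sharp(t)|\le\delta_2$, and unconditionally $\sup_t|F_n(t)-F^Y(t)|\le\delta_1$, where $\delta_1,\delta_2$ are the rates supplied by those results (spelled out below).

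Next I would run the quantile sandwich. On ${\cal E}$, the triangle inequality gives $\sup_t|\hat F_n^\sharp(t)-F^Y(t)|\le\delta_1+\delta_2$; since $\hat F_n^\sharp$ is continuous (the conditional law of $\bar T_n^\sharp$ is that of the maximum of a centered Gaussian vector, hence atomless) one has $F^Y(\hat q_\alpha)\in[\alpha-\delta_1-\delta_2,\ \alpha+\delta_1+\delta_2]$. Because (M.1) makes every $Y_m$ non-degenerate, $F^Y$ is continuous and strictly increasing on the interior of the support of $\bar Y$, which contains $q^Y_\beta:=\inf\{t:F^Y(t)\ge\beta\}$ for every $\beta\in(0,1)$; hence on ${\cal E}$ we get $q^Y_{(\alpha-\delta_1-\delta_2)\vee 0}\le\hat q_\alpha\le q^Y_{(\alpha+\delta_1+\delta_2)\wedge 1}$. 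Since $t\mapsto\vone\{\bar T_n\le t\}$ is nondecreasing and $q^Y_\beta$ is deterministic, for every $\alpha\in(0,1)$,
\[
\Prob(\bar T_n\le\hat q_\alpha)\ \le\ \Prob\!\big(\bar T_n\le q^Y_{(\alpha+\delta_1+\delta_2)\wedge 1}\big)+\Prob({\cal E}^c)\ \le\ \alpha+2\delta_1+\delta_2+\gamma ,
\]
the last step evaluating $\sup_t|F_n(t)-F^Y(t)|\le\delta_1$ at the deterministic point $q^Y_{(\alpha+\delta_1+\delta_2)\wedge 1}$ and using $F^Y(q^Y_\beta)=\beta$; the matching lower bound follows identically from $\hat q_\alpha\ge q^Y_{(\alpha-\delta_1-\delta_2)\vee 0}$, so that
\[
\sup_{\alpha\in(0,1)}\big|\Prob(\bar T_n\le\hat q_\alpha)-\alpha\big|\ \le\ 2\delta_1+\delta_2+\gamma .
\]
The one genuinely delicate point here is that $\bar T_n$ and $\hat q_\alpha$ are both measurable with respect to the same $X_1^n$; this dependence is neutralized by replacing, on ${\cal E}$, the random threshold $\hat q_\alpha$ with the deterministic Gaussian quantile $q^Y_\beta$ at the cost of the event ${\cal E}^c$.

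It remains to insert rates and choose $\gamma$. For part~(i): under (E.1) and $B_n^2\log^7(dn)\le n^{1-K}$, Corollary~\ref{cor:CLT-hyperrec-momconds}(i) (with $\bar b=1$; the hypothesis also forces $\log d\le n$) gives $\delta_1\le C\varpi_{1,n}$, and Theorem~\ref{thm:mb-bootstrap-rate}(i) applied with $\gamma=n^{-1}$ (so $\log(1/\gamma)\le\log(dn)$) gives $\delta_2\le C\varpi_{1,n}$, both constants depending only on $\ub$; since $\varpi_{1,n}=(B_n^2\log^7(nd)/n)^{1/6}\le n^{-K/6}$ and $\gamma=n^{-1}\le n^{-K/6}$ (as $K<1$), the bound $2\delta_1+\delta_2+\gamma$ is $\le Cn^{-K/6}$, which is (\ref{eqn:comparison_with_naive_gaussian_wild_bootstrap}). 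For part~(ii): under (E.2) with $q\ge4$, Theorem~\ref{thm:CLT-hyperrec-momconds}(ii) gives $\delta_1\le C\{\varpi_{1,n}+\varpi_{2,n}\}$ and Theorem~\ref{thm:mb-bootstrap-rate}(ii) gives $\delta_2\le C\{\varpi_{1,n}+\varpi_{2,n}^B(\gamma)\}$; take $\gamma=n^{-K/6}$ (again $\log(1/\gamma)\le\log(dn)$). The first stated condition gives $\varpi_{1,n}\le n^{-K/6}$; with $\gamma=n^{-K/6}$ one has $\varpi_{2,n}^B(\gamma)=\big(B_n^2\log^3(nd)\,n^{-(1-2/q-K/(3q))}\big)^{1/3}$, so $\varpi_{2,n}^B(\gamma)\le n^{-K/6}$ is, after squaring, exactly $B_n^4\log^6(nd)\le n^{2-4(1+K/6)/q-K}$, i.e.\ the second stated condition (with $\log^6(nd)$ in place of $\log^6(d)$, which agree up to a constant in the high-dimensional regime where the result is of interest); this in turn dominates the inequality $B_n^2\log^3(nd)\le n^{1-2/q-K/2}$ that makes $\varpi_{2,n}\le n^{-K/6}$. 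Hence $2\delta_1+\delta_2+\gamma\le Cn^{-K/6}$ with $C=C(\ub,q)$.

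I expect the only real obstacle to be bookkeeping rather than mathematics: essentially all of the hard work — the approximation chain (\ref{eqn:approx-diagram}), in particular the non-asymptotic control of the second-order canonical term of the Hoeffding decomposition and the conditional Gaussian comparison for $T_n^\sharp$ — is already packaged in Theorem~\ref{thm:mb-bootstrap-rate}, which we may invoke, and the gain over the decoupling-based Gaussian wild bootstrap $\hat L_0^*$ of \cite{chen2016a} (exponent $K/6$ instead of $K/8$ or $K/12$) is inherited directly from the fact that $T_n^\sharp$ attains the Gaussian-approximation rate $\varpi_{1,n}$ without an extra decoupling error. Thus the proof reduces to the restriction to one-sided rectangles and the quantile sandwich above, the only slightly technical points being the handling of the $\bar T_n$--$\hat q_\alpha$ dependence (done via deterministic Gaussian quantiles) and verifying that $\gamma\asymp n^{-K/6}$ together with the two displayed conditions drives each of $\varpi_{1,n}$, $\varpi_{2,n}$, $\varpi_{2,n}^B(\gamma)$, and $\gamma$ itself down to $O(n^{-K/6})$.
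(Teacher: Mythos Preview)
Your argument is correct and the quantile–sandwich step is handled cleanly (in particular, the replacement of the random threshold $\hat q_\alpha$ by the deterministic Gaussian quantile on $\mathcal E$ is exactly what is needed). However, you take a somewhat different route from the paper. You treat Theorem~\ref{thm:mb-bootstrap-rate} as a black box and read off $\sup_t|F_n(t)-\hat F_n^\sharp(t)|$ from the hyperrectangle bound. The paper instead proves a dedicated lemma (Lemma~\ref{lem:bound-on-rho_ominus}) bounding the symmetric-difference probability $\rho_\ominus(\alpha)=\Prob(\{\bar T_n\le a_{\bar T_n^\sharp}(\alpha)\}\ominus\{\bar T_n\le a_{\bar Y}(\alpha)\})$ directly in terms of $\rho(\bar T_n,\bar Y)$, a Gaussian-comparison term $v^{1/3}(\log d)^{2/3}$, and the tail probability $\Prob(\Delta_n>v)$ with $\Delta_n=|\tilde\Gamma_n-\Gamma|_\infty$; it then invokes only the covariance-matrix tail bound (the analogue of (\ref{eqn:tail_prob_bound_hatGamma_n}) and (\ref{eqn:t_star_poly})) together with Theorem~\ref{thm:CLT-hyperrec-momconds}. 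Both routes execute the same quantile-sandwich idea, but the paper's version is lower level: it isolates exactly the quantity $\Delta_n$ that drives the conditional Gaussian comparison, rather than passing through the full $\rho^B(T_n,T_n^\sharp)$.

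What each approach buys: your modular route shows transparently that the quantile statement is a corollary of the hyperrectangle statement, which is conceptually pleasant. The paper's direct route gives slightly sharper bookkeeping in part~(ii): because it applies the Gaussian-comparison Lemma~\ref{lem:gauss-comp-general} with the raw $\Delta_n$ bound (whose polynomial part carries a factor $\log d$, not $\log(nd)$), the second condition comes out as $B_n^4\log^6(d)\le n^{2-4(1+K/6)/q-K}$ exactly as stated, whereas your route via $\varpi_{2,n}^B(\gamma)$ yields $\log^6(nd)$ and therefore matches the stated condition only up to a constant when $d\ge n$ (as you note). If you want to recover the theorem verbatim without that caveat, you would need to bypass Theorem~\ref{thm:mb-bootstrap-rate} in part~(ii) and appeal to the $\Delta_n$ tail bound directly, as the paper does.
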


In particular, for both subexponential and uniform polynomial kernels, the convergence rate of jackknife Gaussian multiplier bootstrap $T_n^\sharp$ is $O(n^{-K/6})$. The improved dependence on $n$ is due to two reasons. First, we established a GAR for high-dimensional U-statistics with sharper rate (Theorem \ref{thm:CLT-hyperrec-momconds}). Second, $T_n^\sharp$ does not estimate the individual terms $g(X_i)$ in the H\'ajek projection which requires a strong control on the maximal deviation $|\hat{g}_i - g(X_i)|_\infty$ over $i=1,\cdots,n$; see Lemma C.4 in \cite{chen2016a}. Instead, $T_n^\sharp$ implicitly constructs an estimator $\tilde\Gamma_n$ in (\ref{eqn:tilde_Gamma_n}) for the covariance matrix of the linear projection part in the Gaussian approximation. There is a slight trade-off between the moment and scaling limit for uniform polynomial kernels in Theorem \ref{thm:comparison_with_naive_gaussian_wild_bootstrap} since the conditions $B_n^2 \log^7(dn) \le n^{1-K}$ and $B_n^4 \log^6(d) \le n^{2-4(1+K/6)/q-K}$ are implied by either $B_n^4 \log^7(dn) \le n^{1-7K/6}$ for $q = 4$ or $B_n^4 \log^7(dn) \le n^{1-K}$ for $q \ge 4(1+K/6)$. However, in either case, Theorem \ref{thm:comparison_with_naive_gaussian_wild_bootstrap} asymptotically permits $d = O(e^{n^c})$ for some $c \in (0, 1/7)$ when $q \ge 4$ and $B_n = O(1)$.
\qed
\end{rmk}

\section{Statistical applications}
\label{sec:stat_apps}

In this section, we present two statistical applications for bootstrap methods in Section \ref{subsec:efron-bootstrap}. For simplicity, we only present the results for the jackknife Gaussian multiplier bootstrap $T^\sharp_n$ defined in (\ref{eqn:multiplier_bootstrap}). Similar results hold for other bootstraps in Section \ref{subsec:efron-bootstrap}. Two additional examples can be found in the SM. Throughout the section, we consider the bootstrap of the sample covariance matrix (i.e. $h(x_1,x_2) = (x_1-x_2) (x_1-x_2)^\top / 2$ and $\R^d = \R^{p \times p}$). We define $\bar{T}_n^\sharp = 2 n^{-1/2} \max_{1 \le m,k \le p} |T_{n,mk}^\sharp|$ by rescaling and denote the $\alpha$-th conditional quantile of $\bar{T}_n^\sharp$ given the data $X_1^n$ as
\begin{equation}
\label{eqn:conditional_quantitle_jackknife_gaussian_multiplier_bootstrap}
a_{\bar{T}_n^\sharp}(\alpha) = \inf\{ t \in \mathbb{R} : \Prob_e(\bar{T}_n^\sharp \le t) \ge \alpha \}.
\end{equation}
where $\Prob_e$ is the probability taken w.r.t. the iid $N(0,1)$ random variables $e_1,\cdots,e_n$. We can compute the conditional quantile $a_{T^\sharp_n}(\alpha)$ by repeatedly drawing independent samples of the standard Gaussian random variables $e_1,\cdots,e_n$.

\subsection{Tuning parameter selection for the thresholded covariance matrix estimator}
\label{subsec:tuning_selection_thresholded_cov_mat}

Consider the problem of {\it sparse} covariance matrix estimation. Let $r \in [0,1)$ and
\begin{equation*}
{\cal G}(r, \zeta_p) = \Big\{ \Sigma \in \mathbb{R}^{p\times p} :  \max_{1 \le m \le p} \sum_{k=1}^p |\sigma_{mk}|^r \le \zeta_p \Big\}
\end{equation*}
be the class of sparse covariance matrices in terms of the strong $\ell^r$-ball. Here, $\zeta_p > 0$ may grow with $p$. Let $\hat{S}_n = \{\hat{s}_{mk}\}_{m,k=1}^p$ be the sample covariance matrix and
$$
\hat\Sigma(\tau) = \{\hat{s}_{mk} \vone\{|\hat{s}_{mk}| > \tau \} \}_{m,k=1}^p, \qquad \tau \ge 0,
$$
be the thresholded sample covariance matrix estimator of $\Sigma$. A similar matrix class as ${\cal G}(r, \zeta_p)$ was introduced in \cite{bickellevina2008b} by further requiring that $\max_{1 \le m \le p} \sigma_{mm} \le C_0$ for some constant $C_0 > 0$. Here, we do not assume the diagonal entires of $\Sigma$ are bounded. Performance bounds of the thresholded estimator $\hat\Sigma(\tau)$ critically depend on the tuning parameter $\tau$. The oracle choice of the threshold for establishing the rate of convergence under the spectral and Frobenius norms is $\tau_\diamond = |\hat{S}_n-\Sigma|_\infty$. Note that $\tau_\diamond$ is a random variable and its distribution depends on the unknown underlying data distribution $F$. High probability bounds of $\tau_\diamond$ were given in \cite{bickellevina2008b,chenxuwu2013a} and asymptotic properties of $\hat\Sigma(\tau)$ were analyzed in \cite{bickellevina2008b,caizhou2011a} for iid sub-Gaussian data and in \cite{chenxuwu2013a,chenxuwu2016+} for heavy-tailed time series with polynomial moments. In both scenarios, the rates of convergence were obtained with the Bonferroni (i.e. the union bound) technique and one-dimensional concentration inequalities. In the problem of the high-dimensional sparse covariance matrix estimation, data-dependent tuning parameter selection is often empirically done with the cross-validation (CV) and its theoretical properties when compared with $\tau_\diamond$ largely remain unknown since the CV threshold does not approximate $\tau_\diamond$. Here, we provide a principled and fully data-dependent way to determine the threshold $\tau$. We first consider subgaussian observations.

\begin{defn}[Subgaussian random variable]
\label{defn:subgaussian_rv}
A random variable $X$ is said to be {\it subgaussian} with mean zero and {\it variance factor} $\nu^2$, if 
\begin{equation}
\label{eqn:subgaussian_rv}
\E [\exp(X^2 / \nu^2) ] \le \sqrt{2}.
\end{equation}
Denote $X \sim \text{subgaussian}(\nu^2)$. In particular, if $X \sim N(0,\sigma^2)$, then $X \sim \text{subgaussian}(4 \sigma^2)$.
\end{defn}

The upper bound $\sqrt{2}$ in (\ref{eqn:subgaussian_rv}) is not essential and it is chosen for conveniently comparing with $\|X\|_{\psi_2}$: if $X \sim \text{subgaussian}(\nu^2)$, then $\nu^2 \ge \|X\|_{\psi_2}$. Clearly, bounded random variables are subgaussian. In addition, random variables with the mixture of subgaussian distributions are also subgaussian. Let $K$ be a positive integer and $\{\pi_k\}_{k=1}^K$ be subgaussian distributions with the variance factors $\{\nu_k^2\}_{k=1}^K$. If a random variable $X$ follows a mixture of $K$ subgaussian distributions $\sum_{k=1}^K \varepsilon_k \pi_k$ with $0 \le \varepsilon_k \le 1$ and $\sum_{k=1}^K \varepsilon_k = 1$, then $X \sim \text{subgaussian}(\bar\nu^2)$, where $\bar\nu^2 = \max\{\nu_1^2, \cdots, \nu_K^2\}$. In general, the variance factor for a subgaussian random variable is {\it not} equivalent to the variance. For a sequence of random variables $X_n, n = 1,2,\cdots,$ if $X_n \sim \text{subgaussian}(\nu_n^2)$ and $\sigma_n^2 = \Var(X_n)$, then by Markov's inequality, we always have $\sigma_n^2 \le \sqrt{2} \nu_n^2$, while $\nu_n^2$ may diverge at faster rate than $\sigma_n^2$ as $n \to \infty$. Below we shall give two such examples.

\begin{exmp}[Mixture of two Gaussian distributions]
\label{exmp:mixture_two_gaussians}
Let $\{X_n\}_{n=1}^\infty$ be a sequence of random variables with the distribution $F_n = (1-\varepsilon_n) N(0,1) + \varepsilon_n N(0,a_n^2)$. Suppose that $a_n \ge 1$, $a_n \to \infty$ as $n \to \infty$, and consider $\varepsilon_n = a_n^{-4}$. Then, we have $X_n \sim \text{subgaussian}(4 a_n^2)$, $\Var(X_n) \asymp 1$, $\|X_n\|_4 \asymp 1$, $\|X_n\|_6 \asymp a_n^{1/3}$, and $\|X_n\|_8 \asymp a_n^{1/2}$. The distribution $F_n$ can be viewed as a $\varepsilon_n$-contaminated one-dimensional normal distribution given by (\ref{eqn:eps_contaminated_normal_distn}) in the SM.
\end{exmp}

\begin{exmp}[Mixture of two symmetric discrete distributions]
\label{exmp:P_2}
Let $\pi_1$ be the distribution of a Rademacher random variable $Y$ (i.e. $\Prob(Y = \pm 1) = 1/2$) and $\pi_2$ be the distribution of a discrete random variable $Z_n$ such that $\Prob(Z_n = \pm a_n) = (2 a_n^2)^{-1}$ and $\Prob(Z_n = 0) = 1 - a_n^{-2}$. Let $\{X_n\}_{n=1}^\infty$ be a sequence of random variables with the distribution $F_n = (1-\varepsilon_n) \pi_1 + \varepsilon_n \pi_2$, where $\varepsilon_n = 2/(a_n^2-1)$, $a_n > \sqrt{3}$, and $a_n \to \infty$ as $n \to \infty$. Then $X_n \sim \text{subgaussian}(C a_n^2)$ for some large enough constant $C > 0$ and elementary calculations show that $\Var(X_n) = 1$, $\|X_n\|_4 =  3^{1/4}$, $\|X_n\|_6 \asymp a_n^{1/3}$, and $\|X_n\|_8 \asymp a_n^{1/2}$.
\end{exmp}

%\begin{defn}[A family of symmetric discrete distributions]
%\label{def:k_point_sym_dist}
%Let $L$ be a positive integer. A discrete random variable $X$ has a symmetric and $(2L+1)$-point distribution, denote as $X \in {\cal P}_L$, if there are two sequences of reals $\{a_l\}_{l=1}^L$ and $\{p_l\}_{l=1}^L$ such that: (i) $0 < a_1 < \cdots < a_L$, $0 \le p_l \le 1$, $\sum_{l=1}^L p_l \le 1$; (ii) $X$ takes values in $\{0, \pm a_1, \cdots, \pm a_L\}$, $\Prob(X = \pm a_l) = p_l/2$, and $\Prob(X = 0) = 1- \sum_{l=1}^L p_l$.
%\end{defn}
%
%If $X \in {\cal P}_L$, then $X$ is bounded and thus subgaussian with mean zero and variance factor $C a_L^2$ for some large enough constant $C > 0$. 
%
%\begin{exmp}
%\label{exmp:P_2}
%Let $\{X_n\}_{n=1}^\infty$ be a sequence of random variables such that $\Prob(X_n = \pm 1) = p_1 / 2$, $\Prob(X_n = \pm a_n) = p_2 / 2$ and $\Prob(X_n = 0) = 1 - p_1-p_2$, where $a_n > 3$ be a sequence of diverging real numbers, $p_1 = (a_n^2 - 3) / (a_n^2-1)$, and $p_2 = 2 / [a_n^2 (a_n^2-1)]$. Then $X_n \in {\cal P}_2$ and $X_n \sim \text{subgaussian}(C a_n^2)$ for some large enough constant $C > 0$. Elementary calculations show that $\Var(X_n) = 1$, $\|X_n\|_4 =  3^{1/4}$, $\|X_n\|_6 \asymp a_n^{1/3}$, and $\|X_n\|_8 \asymp a_n^{1/2}$.
%\end{exmp}

Therefore, in the statistical applications for subgaussian data, we allow $\nu^2_n \to \infty$ as $n \to \infty$. Let $\xi_q = \max_{1 \le k \le p} \|X_{1k}\|_q$ and recall $\Gamma = \Cov(g(X_1))$.

%As a simple example, let $a_n > 0$ be a sequence of real numbers such that $a_n \to \infty$ and consider random variables $X_1,\cdots,X_n$ such that $\Prob(X_n = \pm a_n) = (2 a_n^2)^{-1}$ and $\Prob(X_n = 0) = 1 - a_n^{-2}$. Obviously, $\E X_n = 0$ and $\Var(X_n) = 1$. Let $\nu_n^2 = C a_n^2$ for some constant $C > 0$. Then $\E[\exp(X_n^2 / \nu_n^2)] = (1-a_n^{-2}) + a_n^{-2} e^{C^{-1}} \le \sqrt{2}$ for all large enough $n$; i.e. $X_n \sim \text{subgaussian}(C a_n^2)$. 

%However, for the Gaussian random variables $X \sim N(0, \sigma^2)$, we have $\sigma^2 = \nu^2/4$.

\begin{thm}[{\bf Main result V}: data-driven threshold selection: subgaussian observations]
\label{thm:thresholded_cov_mat_rate_adaptive}
Let $\nu_n \ge 1$ and $X_i$ be iid mean zero random vectors in $\R^p$ such that $X_{ik} \sim \text{subgaussian}(\nu_n^2)$ for all $k=1,\cdots,p$ and $\Sigma \in {\cal G}(r, \zeta_p)$. Suppose that there exist constants $C_i > 0, i=1,\cdots,3,$ such that $\Gamma_{(j,k),(j,k)} \ge C_1$, $\xi_6 \le C_2 \nu_n^{1/3}$ and $\xi_8 \le C_3 \nu_n^{1/2}$ for all $j,k=1,\cdots,p$. Let $\alpha, \beta, K \in (0, 1)$ and $\tau_* = \beta^{-1} a_{\bar{T}_n^\sharp}(1-\alpha)$. If $\nu_n^4 \log^7(np) \le C_4 n^{1-K}$, then we have
\begin{eqnarray}
\label{eqn:thresholded_cov_mat_rate_spectral_adaptive}
\|\hat\Sigma(\tau_*)-\Sigma\|_2 &\le& \left[{3+2\beta \over \beta^{1-r}}+ \left({\beta \over 1-\beta}\right)^r \right]  \zeta_p  a_{\bar{T}_n^\sharp}^{1-r}(1-\alpha), \\
\label{eqn:thresholded_cov_mat_rate_F_adaptive}
{1 \over p} | \hat\Sigma(\tau_*)-\Sigma|_F^2 &\le& 2\left[{4+3\beta^2 \over \beta^{2-r}}+ 2 \left({\beta \over 1-\beta}\right)^r \right]  \zeta_p a_{\bar{T}_n^\sharp}^{2-r}(1-\alpha),
\end{eqnarray}
with probability at least $1-\alpha-C n^{-K/6}$ for some constant $C > 0$ depending only on $C_1,\cdots,C_4$. In addition, $\E[a_{\bar{T}_n^\sharp}(1-\alpha)] \le C' \xi_4^2 (\log(p)/n)^{1/2}$ and
\begin{equation}
\label{eqn:tau_*-bound_subgaussian}
\E[\tau_*] \le C' \beta^{-1} \xi_4^2 (\log(p)/n)^{1/2},
\end{equation}
where $C' > 0$ is a constant depending only on $\alpha$ and $C_1,\cdots,C_4$.
\end{thm}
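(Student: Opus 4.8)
The plan is to route everything through the fixed Gaussian reference $\bar Y:=2n^{-1/2}\max_{1\le m,k\le p}|Y_{mk}|$ with $Y\sim N(0,\Gamma)$, combining the Gaussian approximation of Section~\ref{sec:gaussian-approx} with the jackknife Gaussian multiplier bootstrap of Section~\ref{subsec:gaussian-multiplier-bootstrap-jackknife}, and then separately bounding the size of the bootstrap quantile. First I would specialize the general machinery to the covariance kernel $h(x_1,x_2)=(x_1-x_2)(x_1-x_2)^\top/2$ with $\R^d=\R^{p\times p}$: here $T_{n,mk}=\sqrt n(\hat s_{mk}-\sigma_{mk})/2$, so the oracle threshold is $\tau_\diamond=|\hat S_n-\Sigma|_\infty=2n^{-1/2}\max_{m,k}|T_{n,mk}|$ and every symmetric box $\{\tau_\diamond\le t\}$ lies in ${\cal A}^{re}$. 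The hypotheses of Corollary~\ref{cor:CLT-hyperrec-momconds}(i) and Theorem~\ref{thm:mb-bootstrap-rate}(i) then hold with $B_n\asymp\nu_n^2$ and $d=p^2$: since $g_{mk}(X)=\tfrac12(X_mX_k-\sigma_{mk})$, condition (M.1) holds with $\ub=C_1$ because $\Gamma_{(m,k),(m,k)}=\tfrac14\Var(X_mX_k)\ge C_1$; (E.1) holds because $(X_m-X'_m)(X_k-X'_k)$ is sub-exponential with $\psi_1$-norm $\lesssim\nu_n^2$ under subgaussianity; and (M.2) holds because $\E|h_{mk}|^3\lesssim\xi_6^6\le C_2^6\nu_n^2\asymp B_n$ and $\E|h_{mk}|^4\lesssim\xi_8^8\le C_3^8\nu_n^4\asymp B_n^2$ by Cauchy--Schwarz --- this is exactly where the hypotheses $\xi_6\le C_2\nu_n^{1/3}$ and $\xi_8\le C_3\nu_n^{1/2}$ are used --- while $\nu_n^4\log^7(np)\le C_4 n^{1-K}$ is precisely $B_n^2\log^7(nd)\le\bar b\,n^{1-K}$. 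Consequently $\sup_t|\Prob(\tau_\diamond\le t)-\Prob(\bar Y\le t)|\le Cn^{-K/6}$, and, reading Theorem~\ref{thm:mb-bootstrap-rate}(i) at $\gamma\asymp n^{-K/6}$ (admissible, as $\log(1/\gamma)\le\log(nd)$) on the symmetric boxes, there is an event $\Omega_n$ with $\Prob(\Omega_n)\ge 1-Cn^{-K/6}$ on which $\sup_t|\Prob(\bar T_n^\sharp\le t\mid X_1^n)-\Prob(\bar Y\le t)|\le Cn^{-K/6}$, where $\bar T_n^\sharp=2n^{-1/2}\max_{m,k}|T_{n,mk}^\sharp|$.

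Next, on $\Omega_n$ the conditional law of $\bar T_n^\sharp$ is within $Cn^{-K/6}$ of the law of $\bar Y$, which is continuous by (M.1); hence its $(1-\alpha)$-quantile satisfies $a_{\bar T_n^\sharp}(1-\alpha)\ge q_{\bar Y}(1-\alpha-Cn^{-K/6})$, the matching quantile of $\bar Y$. Combining with the Gaussian approximation of $\tau_\diamond$, $\Prob(\tau_\diamond\le a_{\bar T_n^\sharp}(1-\alpha))\ge\Prob(\tau_\diamond\le q_{\bar Y}(1-\alpha-Cn^{-K/6}))-\Prob(\Omega_n^c)\ge 1-\alpha-Cn^{-K/6}$, and, since $\beta^{-1}\ge1$, a fortiori $\Prob(\tau_\diamond\le\tau_*)\ge1-\alpha-Cn^{-K/6}$ for $\tau_*=\beta^{-1}a_{\bar T_n^\sharp}(1-\alpha)$. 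Passing through the fixed law $\bar Y$ here is the point: it sidesteps the circularity that $\tau_\diamond$ and $a_{\bar T_n^\sharp}(1-\alpha)$ are built from the same sample $X_1^n$.

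Then I would run the standard deterministic thresholding estimate on the event $\mathcal E:=\{\tau_\diamond\le a_{\bar T_n^\sharp}(1-\alpha)\}$, whose probability is $\ge 1-\alpha-Cn^{-K/6}$. Write $\lambda:=a_{\bar T_n^\sharp}(1-\alpha)$, so that $\tau_*=\beta^{-1}\lambda\ge\lambda\ge\tau_\diamond$, and split, for each row $m$, the sum $\sum_k|\hat\Sigma(\tau_*)_{mk}-\sigma_{mk}|$ according to whether $|\hat s_{mk}|>\tau_*$ (entry kept: the error is $\le\tau_\diamond\le\lambda$, and $|\sigma_{mk}|\ge|\hat s_{mk}|-\tau_\diamond>(\beta^{-1}-1)\lambda$, so there are at most $(\beta/(1-\beta))^r\lambda^{-r}\zeta_p$ such $k$ because $\Sigma\in{\cal G}(r,\zeta_p)$) or $|\hat s_{mk}|\le\tau_*$ (entry set to $0$: the error is $|\sigma_{mk}|\le|\hat s_{mk}|+\tau_\diamond\le((1+\beta)/\beta)\lambda$, so $\sum_k|\sigma_{mk}|\le((1+\beta)/\beta)^{1-r}\lambda^{1-r}\zeta_p$). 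Maximizing over $m$ and using $\|\hat\Sigma(\tau_*)-\Sigma\|_2\le\max_m\sum_k|(\hat\Sigma(\tau_*)-\Sigma)_{mk}|$ yields (\ref{eqn:thresholded_cov_mat_rate_spectral_adaptive}) after elementary bookkeeping of the constants; the identical split with squared errors (and $(a+b)^2\le2a^2+2b^2$) yields (\ref{eqn:thresholded_cov_mat_rate_F_adaptive}). These bounds hold on $\mathcal E$, hence with the asserted probability.

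Finally, for (\ref{eqn:tau_*-bound_subgaussian}): conditionally on $X_1^n$, $\bar T_n^\sharp$ is the maximum of $2p^2$ centered Gaussians of variance at most $4n^{-1}M_n$, where $M_n:=\max_{m,k}(\tilde\Gamma_n)_{(m,k),(m,k)}$, so by the Gaussian maximal inequality and Borell--TIS, $a_{\bar T_n^\sharp}(1-\alpha)\le C(\alpha)\,n^{-1/2}\sqrt{\log p}\,M_n^{1/2}$, whence $\E[a_{\bar T_n^\sharp}(1-\alpha)]\le C(\alpha)\,n^{-1/2}\sqrt{\log p}\,(\E M_n)^{1/2}$. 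I would then bound $\E M_n\le\max_{m,k}\E[(\tilde\Gamma_n)_{(m,k),(m,k)}]+\E\max_{m,k}|(\tilde\Gamma_n)_{(m,k),(m,k)}-\E[(\tilde\Gamma_n)_{(m,k),(m,k)}]|$; the first term is $\le\tfrac14\xi_4^4+o(1)$ since $\tilde\Gamma_n$ estimates $\Gamma$ up to lower order and $\Gamma_{(m,k),(m,k)}\le\tfrac14\xi_4^4$, and the second, being a sup-norm fluctuation of averages of degree-six polynomials of the data, is $o(1)$ under $\xi_6\le C_2\nu_n^{1/3}$, $\xi_8\le C_3\nu_n^{1/2}$ and $\nu_n^4\log^7(np)\le C_4n^{1-K}$ by the maximal-moment inequalities for (nearly degenerate) U-statistics of Section~\ref{sec:conc-ineq-ustat}; since $\xi_4^4\ge4C_1$ by (M.1), this gives $\E M_n\le C\xi_4^4$, hence $\E[a_{\bar T_n^\sharp}(1-\alpha)]\le C'\xi_4^2\sqrt{\log p/n}$ and, after multiplying by $\beta^{-1}$, (\ref{eqn:tau_*-bound_subgaussian}). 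I expect this last fluctuation bound --- showing that the sup-norm error of the degree-six jackknife covariance estimator is absorbed by $\xi_4^4$ under the calibrated $\xi_6,\xi_8,\nu_n$ scaling, which is precisely what makes the bootstrap threshold moment-adaptive rather than driven by the (possibly diverging) variance proxy $\nu_n^2$ --- to be the \emph{main obstacle}, resting on the U-statistic concentration machinery of the Supplement; a secondary technical point is the handling of the shared randomness in the quantile-comparison step, which the detour through the fixed Gaussian law $\bar Y$ resolves.
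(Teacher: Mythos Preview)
Your proposal is correct and follows essentially the same route as the paper. The paper packages your ``detour through $\bar Y$'' quantile comparison as Theorem~\ref{thm:comparison_with_naive_gaussian_wild_bootstrap} (proved via Lemma~\ref{lem:bound-on-rho_ominus}), which it then invokes directly; your reconstruction of this step from Theorem~\ref{thm:mb-bootstrap-rate} and Corollary~\ref{cor:CLT-hyperrec-momconds} is exactly how that lemma is established. For the deterministic thresholding, the paper uses a more elaborate six-term decomposition ($I$--$VI$ plus $\|T_{\tau_*}(\Sigma)-\Sigma\|_2$) that delivers the stated constant $(3+2\beta)\beta^{r-1}+(\beta/(1-\beta))^r$ exactly, whereas your two-case kept/dropped split yields the sharper constant $((1+\beta)/\beta)^{1-r}+(\beta/(1-\beta))^r$, from which (\ref{eqn:thresholded_cov_mat_rate_spectral_adaptive}) follows a fortiori. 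For (\ref{eqn:tau_*-bound_subgaussian}), the paper bounds $a_{\bar T_n^\sharp}(1-\alpha)$ by the union-bound quantile $\bar\psi\,\Phi^{-1}(1-\alpha/(2p^2))/\sqrt n$ with $\bar\psi^2=\max_{m,k}4n^{-1}\sum_i\hat g_{i,mk}^2$ --- the same object as your $4M_n$ --- and then controls $\E[\bar\psi^2]$ by Jensen (reducing to $\max_{m,k}[n(n-1)]^{-1}\sum_{i\neq j}h_{mk}^2(X_i,X_j)$) followed by data splitting and the CCK maximal inequality; this is precisely the ``degree-six fluctuation'' you flag as the main obstacle, and your appeal to Section~\ref{sec:conc-ineq-ustat} lands on the same tool.
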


\begin{rmk}[Comments on the conditions in Theorem \ref{thm:thresholded_cov_mat_rate_adaptive}]
\label{rmk:cov-kernel-comments-on-conditions}
Conditions on the growth rate $\xi_6 \le C_2 \nu_n^{1/3}$ and $\xi_8 \le C_3 \nu_n^{1/2}$ are satisfied by Example \ref{exmp:mixture_two_gaussians} and \ref{exmp:P_2}. The non-degeneracy condition $\Gamma_{(j,k),(j,k)} \ge C_1$ is quite mild. Consider the multivariate cumulants of the joint distribution of the random vector $X = (X_1,\cdots,X_p)^\top$ following a distribution $F$ in $\mathbb{R}^p$. Let $\chi(t) = \E[\exp(\iota t^\top X)]$ be the characteristic function of $X$, where $t = (t_1,\cdots,t_p)^\top \in \mathbb{R}^p$ and $\iota = \sqrt{-1}$. Then, the {\it multivariate cumulants} $\kappa_{r_1 r_2 \cdots r_p}^{1 2 \cdots p}$ of the joint distribution of $X$ are the coefficients in the expansion
$$
\log \chi(t) = \sum_{r_1,r_2,\cdots,r_p = 0}^\infty  \kappa_{r_1 r_2 \cdots r_p}^{1 2 \cdots p} {(\iota t_1)^{r_1} (\iota t_2)^{r_2} \cdots (\iota t_p)^{r_p} \over r_1! r_2! \cdots r_p!}.
$$
For the covariance matrix kernel, we have
\begin{equation}
\label{eqn:write-Gamma_g-as-fourth-culmulant}
\Gamma_{(j,k), (m,l)} = (\kappa_{1111}^{jkml} + \sigma_{jm}\sigma_{kl} + \sigma_{jl}\sigma_{km}) / 4,
\end{equation}
where $\kappa_{1111}^{jkml}$ is the joint fourth-order cumulants of $F$. %By the Cauchy-Schwarz inequality $\sigma_{jk}^2 \le \sigma_{jj} \sigma_{kk}$.
Therefore, if $\kappa_{1111}^{jkjk} \ge 4 C_1$, then $\Gamma_{(j,k),(j,k)} \ge C_1$.

If the data follow a distribution in the elliptic family \cite[Chapter 1]{muirhead1982}, then the condition $\Gamma_{(j,k),(j,k)} \ge C_1$ is equivalent to $\min_{1 \le j \le p} \sigma_{jj} \ge C$ for some constant $C > 0$ depending only on $C_1$. To see this, for $F$ in the elliptic family, it is known that $\kappa_{1111}^{jkml} = \kappa (\sigma_{jk}\sigma_{ml}+\sigma_{jm}\sigma_{kl}+\sigma_{jl}\sigma_{km})$, where $\kappa$ is the kurtosis of $F$. Therefore, $\Gamma_{(j,k),(j,k)} = [(2 \kappa + 1) \sigma_{jk}^2 + (\kappa + 1) \sigma_{jj} \sigma_{kk}] / 4$ and $\Gamma_{(j,k),(j,k)} \ge C_1$ if and only if there exists a constant $C > 0$ such that $\sigma_{jj} \ge C$ for all $j=1,\cdots,p$.
\qed
\end{rmk}

%Therefore, if the data follow the Gaussian distribution, then the bootstrapped thresholded covariance matrix estimator $\hat\Sigma(\tau_*)$ attains (\ref{eqn:thresholded_cov_mat_rate_spectral_adaptive}) and (\ref{eqn:thresholded_cov_mat_rate_F_adaptive}) when $p = O(\exp(n^{(1-K)/7}))$.

%However, we shall emphasize that, for $\Sigma \in {\cal G}(r, C_0, \zeta_p)$, although the diagonal entries in $\Sigma$ are uniformly bounded by a constant $C_0$, we do allow $\nu^2$ to grow with $n$ in the subgaussian distribution, in which case the bootstrap approach can have advantages over the non-adaptive minimax thresholding procedure (see the paragraphs below for more detailed discussions).

There are a number of interesting features of Theorem \ref{thm:thresholded_cov_mat_rate_adaptive}. Consider $r=0$; i.e. $\Sigma$ is truly sparse such that $\max_{1 \le m \le p} \sum_{k=1}^p \vone\{\sigma_{mk} \neq 0\} \le \zeta_p$ for $\Sigma \in {\cal G}(0, \zeta_p)$. Then we can take $\beta=1$ (i.e. $\tau_* = a_{\bar{T}_n^\sharp}(1-\alpha)$) and the convergence rates are 
$$
\|\hat\Sigma(\tau_*)-\Sigma\|_2 \le 6 \zeta_p \tau_* \quad \text{ and } \quad p^{-1} | \hat\Sigma(\tau_*)-\Sigma|_F^2 \le 18 \zeta_p \tau_*^2.
$$
Hence, the tuning parameter can be adaptively selected by bootstrap samples while the rate of convergence is {\it nearly optimal} in the following sense.
Since the distribution of $\tau_*$ mimics that of $\tau_\diamond$, $\hat\Sigma(\tau_*)$ achieves the same convergence rate as the thresholded estimator $\hat\Sigma(\tau_\diamond)$ for the oracle choice of the threshold $\tau_\diamond$ with probability at least $1-\alpha-C n^{-K/6}$. On the other hand, the bootstrap method is not fully equivalent to the oracle procedure in terms of the constants in the estimation error bounds. Suppose that we know the support $\Theta$ of $\Sigma$, i.e. locations of the nonzero entries in $\Sigma$. Then, the {\it oracle estimator} is simply $\breve\Sigma = \{\hat{s}_{mk} \vone\{(m,k) \in \Theta\} \}_{m,k=1}^p$ and we have
\begin{eqnarray*}
\|\breve\Sigma - \Sigma\|_2 &\le& \max_{1 \le m \le p} \sum_{k=1}^p |\hat{s}_{mk}-\sigma_{mk}| \vone\{(m,k) \in \Theta\} \\
&\le& |\hat{S}_n-\Sigma|_\infty \max_{1 \le m \le p} \sum_{k=1}^p \vone\{(m,k) \in \Theta\}  \le \tau_\diamond \zeta_p.
\end{eqnarray*}
Therefore, the constant of the convergence rate for the bootstrap method does not attain the oracle estimator. However, we shall comment that $\beta$ is not a tuning parameter since it does not depend on $F$ and the effect of $\beta$ only appears in the constants in front of the convergence rates (\ref{eqn:thresholded_cov_mat_rate_spectral_adaptive}) and (\ref{eqn:thresholded_cov_mat_rate_F_adaptive}).

Assuming that the observations are subgaussian$(\nu^2)$ and the variance factor $\nu^2$ is a {\it fixed} constant, it is known that the threshold value $\tau_\Delta = C(\nu) \sqrt{\log(p)/n}$ achieves the minimax rate for estimating the sparse covariance matrix \cite{caizhou2011a}. Compared with the minimax optimal tuning parameter $\tau_\Delta$, our bootstrap threshold $\tau_*$ exhibits several advantages for certain subgaussian distributions which we shall highlight (with stronger side conditions).

First, the bootstrap threshold $\tau_*$ does not need the knowledge of $\nu_n^2$ and it allows $\nu_n^2 \to \infty$ as $n \to \infty$. In this case, from (\ref{eqn:tau_*-bound_subgaussian}), the bootstrap threshold $\tau_* = O_\Prob( \xi_4^2 \sqrt{\log(p)/n})$, where the constant of $O_\Prob(\cdot)$ depends only on $\alpha,\beta,C_1,\cdots,C_4$ in Theorem \ref{thm:thresholded_cov_mat_rate_adaptive}, while the universal thresholding rule $\tau_\Delta = C' \nu_n^2 \sqrt{\log(p)/n}$. Therefore, if $\xi_4 = o(\nu_n)$, then $\tau_* = o_\Prob(\tau_\Delta)$ and the bootstrap threshold $\tau_*$ is less conservative than the minimax threshold. For instance, suppose that $X_{im}, i=1,\cdots,n; m=1,\cdots,p$ have the same marginal distribution in Example \ref{exmp:mixture_two_gaussians} (continuous case) or Example \ref{exmp:P_2} (discrete case). Then we have $\E[\tau_*] = O(\sqrt{\log(p)/n})$ by (\ref{eqn:tau_*-bound_subgaussian}) and $\tau_\Delta = C a_n^2 \sqrt{\log(p)/n}$. Thus $\tau_* = o_\Prob(\tau_\Delta)$ for $a_n \to \infty$.

Second, $\tau_\Delta$ is non-adaptive to the observations $X_1^n$ since the minimax lower bound is based on the worst case analysis and the matching upper bound is obtained by the Bonferroni inequality which ignores the dependence structures in $F$. %since the constant $C(\nu) > 0$ depends on the underlying distribution $F$ through $\nu^2$ and  in view of (\ref{eqn:tau_*-bound_subgaussian}). 
On the contrary, $\tau_*$ takes into account the dependence information of $F$ by conditioning on the observations. Therefore, the bootstrap threshold may better adjust to the dependence structure for some designs of $X_i$. 

\begin{exmp}[A block diagonal covariance matrix example with reduced rank]
\label{exmp:thresholded_estimator_reduced_rank}
Let $L,m$ be two positive integers and $p = L m$. Let $Z_{il},i=1,\cdots,n; l=1,\cdots,L,$ be iid mean zero subgaussian$(\nu_n^2)$ random variables with unit variance and $Y_{il} = \vone_m Z_{il}$, where $\vone_m$ is the $m \times 1$ vector containing all ones. Let $X_i = (Y_{i1}^\top,\cdots,Y_{iL}^\top)^\top$. Under the assumptions in Theorem \ref{thm:thresholded_cov_mat_rate_adaptive}, we can show that $\E[\tau_*] \le C' \beta^{-1} \xi_4^2 (\log(L)/n)^{1/2}$. If $\log{L} = o(\log{p})$, then $\tau_* = o_\Prob(\tau_\Delta)$ and $\hat\Sigma(\tau_*)$ can gain much tighter performance bounds in (\ref{eqn:thresholded_cov_mat_rate_spectral_adaptive}) and (\ref{eqn:thresholded_cov_mat_rate_F_adaptive}) than $\hat\Sigma(\tau_\Delta)$. Note that the covariance matrix $\Sigma = \Cov(X_i)$ in this example is block diagonal such that the diagonal blocks of $\Sigma$ are rank-one matrices $\vone_m \vone_m^\top$. Therefore, $\Sigma$ has the simultaneous sparsity (i.e. $\zeta_p = m$) and reduced rank (i.e. $\text{rank}(\Sigma) = L$). 
\end{exmp}

Third, as we shall demonstrate in Theorem \ref{thm:thresholded_cov_mat_rate_adaptive_polymom}, the Gaussian type convergence rate of the bootstrap method in Theorem \ref{thm:thresholded_cov_mat_rate_adaptive} can be achieved even for heavy-tailed data with polynomial moments.

\begin{thm}[Data-driven threshold selection: uniform polynomial moment observations]
\label{thm:thresholded_cov_mat_rate_adaptive_polymom}
Let $X_i$ be iid mean zero random vectors such that $\|\max_{1 \le k \le p} |X_{1k}| \|_8 \le \nu_n$ and $\Sigma \in {\cal G}(r,\zeta_p)$. Suppose that there exist constants $C_i > 0, i=1,\cdots,3,$ such that $\Gamma_{(j,k),(j,k)} \ge C_1$, $\xi_6 \le C_2 \nu_n^{1/3}$ and $\xi_8 \le C_3 \nu_n^{1/2}$ for all $j,k=1,\cdots,p$. Let $\alpha, \beta, K \in (0, 1)$ and $\tau_* = \beta^{-1} a_{\bar{T}_n^\sharp}(1-\alpha)$. If $\nu_n^8 \log^7(np) \le C_4 n^{1-7K/6}$, then (\ref{eqn:thresholded_cov_mat_rate_spectral_adaptive}) and (\ref{eqn:thresholded_cov_mat_rate_F_adaptive}) hold with probability at least $1-\alpha-C n^{-K/6}$ for some constant $C > 0$ depending only on $C_1,\cdots,C_4$. In addition, (\ref{eqn:tau_*-bound_subgaussian}) holds for some constant $C' > 0$ depending only on $\alpha$ and $C_1,\cdots,C_4$.
\end{thm}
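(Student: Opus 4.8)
The plan is to follow the proof of Theorem~\ref{thm:thresholded_cov_mat_rate_adaptive} essentially line by line, the only substantive change being that the sub-exponential branch of the bootstrap validity (Theorem~\ref{thm:mb-bootstrap-rate}(i)) is replaced by its uniform polynomial moment branch (Theorem~\ref{thm:mb-bootstrap-rate}(ii)), and likewise the GAR is used in its polynomial form (Theorem~\ref{thm:CLT-hyperrec-momconds}(ii)). Recall that for the covariance kernel $h(x_1,x_2)=(x_1-x_2)(x_1-x_2)^\top/2$ one has $U_n=\hat S_n$, $T_n=\sqrt n(\hat S_n-\Sigma)/2$, so that $|\hat S_n-\Sigma|_\infty = 2n^{-1/2}\max_{1\le m,k\le p}|T_{n,mk}|$. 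As in that proof, the whole conclusion reduces to two facts: (a) $\Prob\big(|\hat S_n-\Sigma|_\infty \le a_{\bar T_n^\sharp}(1-\alpha)\big)\ge 1-\alpha-Cn^{-K/6}$, and (b) $\E[a_{\bar T_n^\sharp}(1-\alpha)]\le C'\xi_4^2(\log(p)/n)^{1/2}$. Granting (a), on the event $\{|\hat S_n-\Sigma|_\infty\le a_{\bar T_n^\sharp}(1-\alpha)=\beta\tau_*\}$ I would feed the estimation-noise bound $|\hat S_n-\Sigma|_\infty$ and the threshold $\tau_*=\beta^{-1}a_{\bar T_n^\sharp}(1-\alpha)$ into the same deterministic thresholding inequality used for Theorem~\ref{thm:thresholded_cov_mat_rate_adaptive}; the mismatch between these two quantities is exactly what produces the constants $3+2\beta$, $\beta^{1-r}$, $\beta/(1-\beta)$, etc.\ in~(\ref{eqn:thresholded_cov_mat_rate_spectral_adaptive})--(\ref{eqn:thresholded_cov_mat_rate_F_adaptive}). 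Fact (b) then gives~(\ref{eqn:tau_*-bound_subgaussian}) at once, since $\tau_*=\beta^{-1}a_{\bar T_n^\sharp}(1-\alpha)$.

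First I would verify that the covariance kernel meets the hypotheses of Theorem~\ref{thm:mb-bootstrap-rate}(ii) with $d=p^2$. Writing $h_{(m,k)}(x_1,x_2)=(x_{1m}-x_{2m})(x_{1k}-x_{2k})/2$, a direct computation gives $g_{(m,k)}(X)=(X_mX_k-\sigma_{mk})/2$, so $\E[g_{(m,k)}^2(X)]=\Gamma_{(m,k),(m,k)}\ge C_1$ and (M.1) holds with $\ub=C_1$; by Cauchy--Schwarz $\E|h_{(m,k)}(X,X')|^{2+\ell}\le C\,\xi_{2(2+\ell)}^{2(2+\ell)}$ for $\ell=1,2$, which under $\xi_6\le C_2\nu_n^{1/3}$ and $\xi_8\le C_3\nu_n^{1/2}$ is $\lesssim\nu_n^2$ and $\lesssim\nu_n^4$, so (M.2) holds with $B_n\asymp\nu_n^2$; and since $\max_{m,k}|h_{(m,k)}(X,X')|\le(\max_l|X_l|)^2+(\max_l|X'_l|)^2$ we get $\E[\max_{m,k}|h_{(m,k)}(X,X')|^4]\le C\|\max_l|X_l|\|_8^8\le C\nu_n^8$, i.e.\ (E.2) holds with $q=4$ and the same $B_n\asymp\nu_n^2$ (the diagonal von Mises conditions are not needed for the jackknife Gaussian multiplier bootstrap, and $h(x,x)\equiv 0$ in any case). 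Then for (a) I would apply Theorem~\ref{thm:mb-bootstrap-rate}(ii) with $q=4$, $B_n\asymp\nu_n^2$, $d=p^2$ and the choice $\gamma=n^{-K/6}$: with probability at least $1-n^{-K/6}$, $\rho^B(T_n,T_n^\sharp)\le C\{\varpi_{1,n}+\varpi_{2,n}^B(n^{-K/6})\}$, where $\varpi_{1,n}\asymp(\nu_n^4\log^7(np)/n)^{1/6}$ and $\varpi_{2,n}^B(n^{-K/6})\asymp(\nu_n^4\log^3(np)\,n^{K/12-1/2})^{1/3}$; the second term is the binding one, and it is $\le Cn^{-K/6}$ precisely when $\nu_n^8\log^6(np)\lesssim n^{1-7K/6}$, which is implied by the hypothesis $\nu_n^8\log^7(np)\le C_4 n^{1-7K/6}$ (the same hypothesis also forces $\varpi_{1,n}\lesssim n^{-K/6}$, $\log(p^2)\le\bar b n$, and via Theorem~\ref{thm:CLT-hyperrec-momconds}(ii) the GAR $\rho^{re}(T_n,Y)\lesssim n^{-K/6}$). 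Hence $\rho^B(T_n,T_n^\sharp)\le Cn^{-K/6}$ off an event of probability $\le n^{-K/6}$, and since $\{\max_{m,k}|T_{n,mk}|\le t\}$ and $\{\max_{m,k}|T_{n,mk}^\sharp|\le t\}$ are hyperrectangles, combining this with the GAR, the Gaussian anti-concentration inequality, and the quantile-comparison argument of \cite{cck2015a} exactly as in Theorem~\ref{thm:thresholded_cov_mat_rate_adaptive} yields (a).

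For (b), since $T_n^\sharp\mid X_1^n\sim N(0,\tilde\Gamma_n)$ with $\tilde\Gamma_n$ as in~(\ref{eqn:tilde_Gamma_n}), the Borell--TIS and Gaussian maximal inequalities bound the conditional $(1-\alpha)$-quantile of $2n^{-1/2}\max_{m,k}|T_{n,mk}^\sharp|$ by $C_\alpha n^{-1/2}\sqrt{\log p}\,\big(\max_{m,k}(\tilde\Gamma_n)_{(m,k),(m,k)}\big)^{1/2}$; after taking expectations and applying Jensen, it remains to show $\E\big[\max_{m,k}(\tilde\Gamma_n)_{(m,k),(m,k)}\big]\le C\xi_4^4$. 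Writing $(\tilde\Gamma_n)_{(m,k),(m,k)}=\Gamma_{(m,k),(m,k)}+\big((\tilde\Gamma_n)_{(m,k),(m,k)}-\Gamma_{(m,k),(m,k)}\big)$ and using $\Gamma_{(m,k),(m,k)}=\tfrac14\Var(X_{1m}X_{1k})\le\tfrac14\xi_4^4$, this reduces to controlling the maximal deviation $\E|\tilde\Gamma_n-\Gamma|_\infty$ of the jackknife covariance estimator over its $p^2$ entries, which is handled by the maximal moment inequalities for (the degenerate components of) U-statistics developed in the SM together with $\xi_6\le C_2\nu_n^{1/3}$, $\xi_8\le C_3\nu_n^{1/2}$ and the assumed scaling --- the step being identical to the corresponding one in Theorem~\ref{thm:thresholded_cov_mat_rate_adaptive}. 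This proves (b) and completes the argument.

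The main obstacle is the calibration of $\gamma$ in the bootstrap step: for $q=4$ the polynomial term $\varpi_{2,n}^B(\gamma)$ grows like $\gamma^{-1/6}$ as $\gamma\to 0$, while $\gamma$ simultaneously enters the failure probability additively, so one is forced to take $\gamma\asymp n^{-K/6}$, and it is exactly this trade-off that upgrades the naive requirement $\nu_n^4\lesssim n^{1-K}$ to the stated $\nu_n^8\log^7(np)\le C_4 n^{1-7K/6}$; keeping careful track of which of the three parameters $\nu_n$, $\log(np)$, $K$ each summand of the rate depends on is the delicate part. The only other nontrivial point is the uniform-over-$p^2$-entries control of $\E|\tilde\Gamma_n-\Gamma|_\infty$ by $\xi_4^4$ under merely polynomial moments, where the $\xi_6$, $\xi_8$ hypotheses and the SM maximal inequalities for degenerate U-statistics are genuinely needed; the remaining deterministic thresholding bookkeeping carries over verbatim from Theorem~\ref{thm:thresholded_cov_mat_rate_adaptive}.
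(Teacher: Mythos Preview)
Your proposal is correct and follows the same strategy as the paper. The one cosmetic difference is that the paper obtains fact~(a) by invoking Theorem~\ref{thm:comparison_with_naive_gaussian_wild_bootstrap}(ii) directly --- that result already packages the bootstrap approximation together with the quantile-comparison step, and its two hypotheses $B_n^2\log^7(dn)\le n^{1-K}$ and $B_n^4\log^6 d\le n^{2-4(1+K/6)/q-K}$ with $q=4$, $B_n\asymp\nu_n^2$ are both implied by $\nu_n^8\log^7(np)\le C_4 n^{1-7K/6}$ --- whereas you route through Theorem~\ref{thm:mb-bootstrap-rate}(ii) with $\gamma=n^{-K/6}$ and then append the CCK quantile comparison separately, in effect re-deriving Theorem~\ref{thm:comparison_with_naive_gaussian_wild_bootstrap}(ii) along the way; your $\gamma$-calibration reproduces exactly the $7K/6$ exponent in its hypotheses, so the two paths are equivalent and the paper's is simply shorter.
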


We compare Theorem \ref{thm:thresholded_cov_mat_rate_adaptive_polymom} with the threshold obtained by the union bound approach. Assume that $\max_{1 \le k \le p} \E|X_{1k}|^q < \infty$ for $q \ge 8$. By the Nagaev inequality \cite{nagaev1979a} applied to the split sample in Remark \ref{rmk:data-splitting-reduction}, one can show that
$$\tau_\sharp = C(q) \Big\{ {p^{4/q} \over n^{1-2/q}} \xi_q^2 + \Big( {\log{p} \over n}\Big)^{1/2} \xi_4^2 \Big\}$$
is the right threshold that gives a large probability bound for $\tau_\diamond = |\hat{S}_n-\Sigma|_\infty$. Consider $q = 8$, $\xi_8 = O(1)$, and the scaling limit $p = n^A$ for $A > 0$. Then the universal threshold $\tau_\sharp = o(1)$ if $0 < A < 3/2$. In contrast, since  $\|\max_{1 \le k \le p} |X_{1k}| \|_8 \le p^{1/8} \xi_8 = O(p^{1/8})$, it follows from Theorem \ref{thm:thresholded_cov_mat_rate_adaptive_polymom} that the bootstrap threshold $\tau_*$ is asymptotically valid if $0 < A < 1$ and by (\ref{eqn:tau_*-bound_subgaussian}), $\E[\tau_*] = O(\sqrt{(\log{p})/n})$. Therefore, in the least favorable case for the bootstrap, we conclude that: (i) if $A \in (0, 1/2]$, then $\E[\tau_*] \asymp \tau_\sharp$; (ii) if $A \in (1/2, 1)$, then $\E[\tau_*] = o(\tau_\sharp)$ and $\tau_\sharp = o(1)$; (iii) if $A \in [1, 3/2)$, then $\tau_\sharp = o(1)$ while the bootstrap threshold $\tau_*$ is not asymptotically valid; (iv) if $A \in [3/2, \infty)$, then neither $\hat\Sigma(\tau_*)$ or $\hat\Sigma(\tau_\sharp)$ is consistent for estimating $\Sigma$. Hence, the bootstrap method gives better convergence rate than the universal thresholding method under the spectral and Frobenius norms when $A \in  (1/2, 1)$. On the other hand, since $\tau_\sharp = o(1)$ when $A \in (0, 3/2)$, the cost of the bootstrap to achieve the Gaussian-like convergence rate $\tau_* = O_\Prob(\sqrt{(\log{p})/n})$ for the heavy-tailed distribution $F$ is a stronger requirement on the scaling limit for $A \in (0, 1)$. Moreover, to the best of our knowledge, the minimax lower bound is currently not available to justify $\tau_\sharp$ for observations with polynomial moments. Finally, we remark that bootstrap can adapt to the dependency structure in $F$. For Example \ref{exmp:thresholded_estimator_reduced_rank} with a block diagonal covariance matrix, we only need $L \log^7(n L) = o(n)$, where $L$ can be much smaller than $p$ and the dimension $p$ may still be allowed to be larger or even much larger than the sample size $n$.

%From Theorem \ref{thm:thresholded_cov_mat_rate_adaptive_polymom}, the subgaussian assumption on $F$ is not essential: for non-Gaussian data with heavier tails than subgaussian, the thresholded covariance matrix estimator with the threshold selected by the bootstrap approach again attains the Gaussian type convergence rate at the asymptotic confidence level $100(1-\alpha)\%$. In particular, the dimension $p$ may still be allowed to increase subexponentially fast in the sample size $n$.

\subsection{Simultaneous inference for covariance and rank correlation matrices}

Another related important problem of estimating the sparse covariance matrix $\Sigma$ is the consistent recovery of its support, i.e. non-zero off-diagonal entries in $\Sigma$ \cite{lamfan2009a}. Towards this end, a lower bound of the minimum signal strength ($\Sigma$-min condition) is a necessary condition to separate the weak signals and true zeros. Yet, the $\Sigma$-min condition is never verifiable. To avoid this undesirable condition, we can alternatively formulate the recovery problem as a more general hypothesis testing problem
\begin{equation}
\label{eqn:cov_mat_simultaneous_test_formulation}
H_0: \Sigma = \Sigma_0  \quad \text{versus} \quad  H_1: \Sigma \neq \Sigma_0,
\end{equation}
where $\Sigma_0$ is a known $p \times p$ matrix. In particular, if $\Sigma_0=\Id_{p \times p}$, then the support recovery can be re-stated as the following simultaneously testing problem: for all $m, k \in \{ 1,\cdots,p \}$ and $m \neq k$,
\begin{equation}
\label{eqn:cov_mat_simultaneous_test_formulation_2}
H_{0,mk}: \sigma_{mk} = 0 \quad \text{versus} \quad H_{1,mk}: \sigma_{mk} \neq 0.
\end{equation}
The test statistic we construct is $\bar{T}_0 = |\hat{S}_n-\Sigma_0|_{\infty,\text{off}}$, which is an $\ell^\infty$ statistic by taking the maximum magnitudes on the off-diagonal entries. Then $H_0$ is rejected if $\bar{T}_0 \ge a_{\bar{T}_n^\sharp}(1-\alpha)$.

\begin{cor}[Asymptotic size of the simultaneous test: subgaussian observations]
\label{cor:cov_mat_simultaneous_test}
Let $\nu_n \ge 1$ and $X_i$ be iid mean zero random vectors in $\R^p$ such that $X_{ik} \sim \text{subgaussian}(\nu_n^2)$ for all $k=1,\cdots,p$. Suppose that there exist constants $C_i > 0, i=1,\cdots,3,$ such that $\Gamma_{(j,k),(j,k)} \ge C_1$, $\xi_6 \le C_2 \nu_n^{1/3}$ and $\xi_8 \le C_3 \nu_n^{1/2}$ for all $j,k=1,\cdots,p$. Let $\alpha,\beta, K \in (0, 1)$ and $\tau_* = \beta^{-1} a_{\bar{T}_n^\sharp}(1-\alpha)$. If $\nu_n^4 \log^7(np) \le C_4 n^{1-K}$, then the above test based on $\bar{T}_0$ for (\ref{eqn:cov_mat_simultaneous_test_formulation}) has the size $\alpha + O(n^{-K/6})$; i.e. the family-wise error rate of the simultaneous test problem (\ref{eqn:cov_mat_simultaneous_test_formulation_2}) is asymptotically controlled at the level $\alpha$.
\end{cor}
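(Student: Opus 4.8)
The plan is to recognize $\bar{T}_0$ under $H_0$ as a rescaled two-sided maximum of a centered matrix-valued U-statistic, to check that the covariance kernel meets the moment hypotheses of the jackknife Gaussian multiplier bootstrap with $B_n\asymp\nu_n^2$, and then to read off the size from the quantile-consistency statement of Theorem~\ref{thm:comparison_with_naive_gaussian_wild_bootstrap}(i).

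\emph{Reduction.} Under $H_0$, $\hat{S}_n-\Sigma_0=\hat{S}_n-\Sigma=U_n-\theta$, where by Example~\ref{exmp:sample-covmat} $U_n=\hat{S}_n$ is the matrix U-statistic with kernel $h(x_1,x_2)=(x_1-x_2)(x_1-x_2)^\top/2$ and $\theta=\E h=\Sigma$. With $T_n=\sqrt{n}(U_n-\theta)/2$, $\bar{T}_0=|\hat{S}_n-\Sigma|_{\infty,\text{off}}=2n^{-1/2}\max_{m\ne k}|T_{n,mk}|$, so the test rejects on the event $\{\bar{T}_0\ge a_{\bar{T}_n^\sharp}(1-\alpha)\}$ with $a_{\bar{T}_n^\sharp}$ the conditional quantile in (\ref{eqn:conditional_quantitle_jackknife_gaussian_multiplier_bootstrap}). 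To pass from the two-sided maximum to the one-sided maxima handled in Sections~\ref{sec:gaussian-approx} and~\ref{sec:bootstraps}, I would apply all results to the doubled kernel $(h,-h)$: its one-sided maximum is $\max_{m\ne k}|T_{n,mk}|$, the doubled dimension $2p(p-1)$ inflates the logarithmic factors only by an absolute constant, and the increasing rescaling $x\mapsto 2x/\sqrt{n}$ preserves quantile matching, identifying the rescaled $(1-\alpha)$-quantile of the jackknife multiplier statistic with $a_{\bar{T}_n^\sharp}(1-\alpha)$. (Keeping, as in Section~\ref{subsec:gaussian-multiplier-bootstrap-jackknife}, the maximum over all $p^2$ coordinates rather than its off-diagonal restriction only makes the test weakly conservative, which is harmless for an upper bound on the size.)

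\emph{Checking the hypotheses.} Condition (M.1) holds with $\ub=C_1$ by $\Gamma_{(j,k),(j,k)}\ge C_1$. Since each $X_{ik}$ is $\text{subgaussian}(\nu_n^2)$, each coordinate $h_{jk}$ is half a product of two sub-Gaussian variables, hence sub-exponential; the two factors are dependent but Cauchy--Schwarz gives $\E|h_{jk}|^3\le\tfrac18\|X_{1j}-X_{2j}\|_6^3\,\|X_{1k}-X_{2k}\|_6^3\le 8\xi_6^6\le 8C_2^6\nu_n^2$ and likewise $\E|h_{jk}|^4\le 16\xi_8^8\le 16C_3^8\nu_n^4$, uniformly over all (including diagonal) coordinates, so (M.2) holds with $B_n\asymp\nu_n^2$. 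Similarly $\|h_{jk}\|_{\psi_1}\lesssim\|X_{1j}-X_{2j}\|_{\psi_2}\|X_{1k}-X_{2k}\|_{\psi_2}\lesssim\nu_n^2$, giving (E.1) with the same $B_n$. Hence the scaling requirement $B_n^2\log^7(dn)\lesssim n^{1-K}$ of Theorem~\ref{thm:comparison_with_naive_gaussian_wild_bootstrap}(i) is exactly the assumed $\nu_n^4\log^7(np)\le C_4n^{1-K}$, the constant $C_4$ being absorbed into the final constant just as the free constant $\bar b$ is absorbed in Corollary~\ref{cor:CLT-hyperrec-momconds}. I expect this step to be the only genuinely technical point: the side conditions $\xi_6\le C_2\nu_n^{1/3}$ and $\xi_8\le C_3\nu_n^{1/2}$ are precisely what force $B_n=\nu_n^2$ instead of a larger power of $\nu_n$, and any cruder moment bound would degrade both the admissible dimension and the rate.

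\emph{Conclusion.} Applying Theorem~\ref{thm:comparison_with_naive_gaussian_wild_bootstrap}(i) to the doubled kernel and undoing the rescaling yields $\sup_{\beta\in(0,1)}\big|\Prob(\bar{T}_0\le a_{\bar{T}_n^\sharp}(\beta))-\beta\big|\le Cn^{-K/6}$ with $C$ depending only on $C_1,\dots,C_4$. Evaluating at $\beta=1-\alpha$ gives $\Prob_{H_0}(\bar{T}_0\le a_{\bar{T}_n^\sharp}(1-\alpha))\ge 1-\alpha-Cn^{-K/6}$, hence $\Prob_{H_0}(\bar{T}_0> a_{\bar{T}_n^\sharp}(1-\alpha))\le\alpha+Cn^{-K/6}$; the boundary term $\Prob_{H_0}(\bar{T}_0= a_{\bar{T}_n^\sharp}(1-\alpha))$ is $O(n^{-K/6})$ by the Gaussian anti-concentration inequality underlying the cited results, so the closed/open distinction in the rejection rule is immaterial and the size of the test is $\alpha+O(n^{-K/6})$. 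Finally, when $\Sigma_0=\Id_{p\times p}$ any false rejection in (\ref{eqn:cov_mat_simultaneous_test_formulation_2}) under $H_0$ forces some off-diagonal $|\hat{s}_{mk}|=|\hat{s}_{mk}-\sigma_{mk}|\ge a_{\bar{T}_n^\sharp}(1-\alpha)$, i.e. $\bar{T}_0\ge a_{\bar{T}_n^\sharp}(1-\alpha)$, so the same bound controls the family-wise error rate.
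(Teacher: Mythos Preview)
Your proposal is correct and follows essentially the same route as the paper: verify (M.1), (M.2), (E.1) for the covariance kernel with $B_n\asymp\nu_n^2$ (the paper packages the $\psi_1$ bound in Lemma~\ref{lem:moment-bounds-gaussian-obs} rather than via a product-of-$\psi_2$ argument, but the content is identical), then invoke Theorem~\ref{thm:comparison_with_naive_gaussian_wild_bootstrap}(i). The paper does not write out a separate proof of this corollary, treating it as an immediate consequence of the argument for Theorem~\ref{thm:thresholded_cov_mat_rate_adaptive}; your treatment is in fact more careful, making explicit the doubling $(h,-h)$ to pass from one-sided maxima to $|\cdot|_\infty$, the off-diagonal versus full-index conservativeness, and the anti-concentration handling of the boundary event.
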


From Corollary \ref{cor:cov_mat_simultaneous_test}, the test based on $\bar{T}_0$ is asymptotically exact of size $\alpha$ for subgaussian data. A similar result can be established for observations with polynomial moments. Due to the space limit, details are omitted. \cite{changzhouzhouwang2016} proposed a similar test statistic for comparing the two-sample large covariance matrices. Their results (Theorem 1 in \cite{changzhouzhouwang2016}) are analogous to Corollary \ref{cor:cov_mat_simultaneous_test} in this paper in that no structural assumptions in $\Sigma$ are needed in order to obtain the asymptotic validity of both tests. However, we shall note that their assumptions (C.1), (C.2), and (C.3) on the non-degeneracy are stronger than our condition $\Gamma_{(j,k),(j,k)} \ge C_1$. For subgaussian observations $X_{ik} \sim \text{subgaussian}(\nu_n^2)$, (C.3) in \cite{changzhouzhouwang2016} assumed that $\min_{1 \le j \le k \le p} \gamma_{jk} /\nu_n^4 \ge c$ for some constant $c > 0$, where $\gamma_{jk}=\Var(X_{1j}X_{1k})$. If $\nu_n^2 \to \infty$, then \cite[Theorem 1]{changzhouzhouwang2016} requires that $\gamma_{jk}$ for all $j,k=1,\cdots,p$ have to obey a uniform lower bound that diverges to infinity. For the covariance matrix kernel, since $g(x) = (x x^\top - \Sigma) / 2$, we only need that $\min_{j,k} \gamma_{jk} \ge c$ for some fixed lower bound.

Next, we comment that a distinguishing feature of our bootstrap test from the $\ell^2$ test statistic \cite{chenzhangzhong2010} is that no structural assumptions are made on $F$ and we allow for the strong dependence in $\Sigma$. For example, consider again the elliptic distributions \cite[Chapter 1]{muirhead1982} with the positive-definite $V=\varrho \vone_p \vone_p^\top + (1-\varrho) \Id_{p\times p}$ such that the covariance matrix $\Sigma$ is proportion to $V$. Then, we have 
\begin{eqnarray*}
\tr(V^4) &=& p[1+(p-1)\varrho^2]^2 + p(p-1)[2\varrho+(p-2)\varrho^2]^2, \\
\tr(V^2) &=& \varrho^2 p^2 + (1-\varrho^2) p.
\end{eqnarray*}
For any $\varrho\in(0,1)$, $\tr(V^4) / \tr^2(V^2) \to 1$ as $p \to \infty$. Therefore, the limiting distribution of the $\ell^2$ test statistic in \cite{chenzhangzhong2010} is no longer normal and its asymptotic distribution remains unclear.

Finally, the covariance matrix testing problem (\ref{eqn:cov_mat_simultaneous_test_formulation}) can be generalized further to nonparametric forms which can gain more robustness to outliers and the nonlinearity in the dependency structures. Let $U_\diamond = \E[h(X_1,X_2)]$ be the expectation of the random matrix associated with $h$ and $U_0$ be a known $p \times p$ matrix. Consider the testing problem
$$
H_0: U_\diamond = U_0  \quad \text{versus} \quad  H_1: U_\diamond \neq U_0.
$$
Then, the test statistic can be constructed as $\bar{T}_0 = |U_n-U_0|_\infty$ (or $\bar{T}'_0 = |U_n-U_0|_{\infty,\text{off}}$) and $H_0$ is rejected if $\bar{T}_0 \ge a_{\bar{T}_n^\sharp}(1-\alpha)$ (or $\bar{T}'_0 \ge a_{\bar{T}_n^\sharp}(1-\alpha)$), where the bootstrap samples are generated w.r.t. the kernel $h$. The above test covers Kendall's tau rank correlation matrix as a special case where $h$ is the bounded kernel.

\begin{cor}[Asymptotic size of the simultaneous test for Kendall's tau rank correlation matrix]
\label{cor:kendall_tau_mat_simultaneous_test}
Let $X_i$ be iid random vectors with a distribution $F$ in $\R^p$. Suppose that there exists a constant $C_1 > 0$ such that $\Gamma_{(j,k),(j,k)} \ge C_1$ for all $j,k=1,\cdots,p$. Let $\alpha,\beta, K \in (0, 1)$ and $\tau_* = \beta^{-1} a_{\bar{T}_n^\sharp}(1-\alpha)$, where the bootstrap samples are generated with Kendall's tau rank correlation coefficient matrix kernel. If $\log^7(np) \le C_2 n^{1-K}$, then the test based on $\bar{T}'_0$ has the size $\alpha + O(n^{-K/6})$.
\end{cor}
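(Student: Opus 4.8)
The plan is to derive the corollary directly from the quantile-level validity of the jackknife Gaussian multiplier bootstrap, namely Theorem~\ref{thm:comparison_with_naive_gaussian_wild_bootstrap}, after recasting the test into the hyperrectangle framework. Under $H_0$ one has $\E[h(X_1,X_2)]=U_0$, so on the index set $\mathcal I$ that defines $\bar{T}'_0$ (the off-diagonal pairs $(m,k)$, $|\mathcal I|\le p^2$) I would work with the centered kernel $\tilde h_{(m,k)}(x_1,x_2)=h_{mk}(x_1,x_2)-(U_0)_{mk}$ and then with its symmetrization $(\tilde h,-\tilde h)$, a kernel into $\R^{d}$ with $d=2|\mathcal I|\le 2p^2$ and mean zero, so $\theta=0$. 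For $T_n=\sqrt{n}(U_n-U_0)/2$ built from this symmetrized kernel one has $\max_{1\le j\le d}T_{nj}=\tfrac{\sqrt{n}}{2}|U_n-U_0|_{\infty,\text{off}}=\tfrac{\sqrt{n}}{2}\bar{T}'_0$, and likewise $\max_{1\le j\le d}T^\sharp_{nj}$ equals $\tfrac{\sqrt{n}}{2}$ times the (off-diagonal) jackknife multiplier statistic of Section~\ref{sec:stat_apps}; since the statistic in \eqref{eqn:multiplier_bootstrap} subtracts $U_n$, it is invariant under recentering the kernel, so the bootstrap draws and the conditional quantiles $a_{\bar{T}_n^\sharp}(\cdot)$ are unchanged, and the common factor $\tfrac{\sqrt{n}}{2}$ cancels in every quantile comparison.

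Next I would check the hypotheses of Theorem~\ref{thm:comparison_with_naive_gaussian_wild_bootstrap}(i). Kendall's kernel is bounded, $0\le h_{mk}\le 1$, so $|\tilde h_{(m,k)}(X,X')|\le 1$; hence $\E|\tilde h_{(m,k)}(X,X')|^{2+\ell}\le 1$ and $\|\tilde h_{(m,k)}(X,X')\|_{\psi_1}\le c$ for an absolute constant $c$, so (M.2) and (E.1) hold with $B_n$ an absolute constant (the $-\tilde h$ coordinates have the same moments). Condition (M.1) is exactly the assumed $\Gamma_{(j,k),(j,k)}\ge C_1$. Since $\log d\le 3\log p\le C\log(np)$ and $B_n=O(1)$, both $\log d\le \bar b n$ and the scaling requirement $B_n^2\log^7(dn)\le n^{1-K}$ follow from the hypothesis $\log^7(np)\le C_2 n^{1-K}$ once $C_2$ is taken small enough.

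Theorem~\ref{thm:comparison_with_naive_gaussian_wild_bootstrap}(i) then yields $\sup_{\alpha'\in(0,1)}|\Prob(\bar{T}'_0\le a_{\bar{T}_n^\sharp}(\alpha'))-\alpha'|\le Cn^{-K/6}$, so $\Prob(\bar{T}'_0\le a_{\bar{T}_n^\sharp}(1-\alpha))=1-\alpha+O(n^{-K/6})$; the size of the test equals $1-\Prob(\bar{T}'_0<a_{\bar{T}_n^\sharp}(1-\alpha))$, which lies in $[\alpha-O(n^{-K/6}),\ \alpha+O(n^{-K/6})+\Prob(\bar{T}'_0=a_{\bar{T}_n^\sharp}(1-\alpha))]$. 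The remaining step, and the one I expect to be the main obstacle, is to show that the atom $\Prob(\bar{T}'_0=a_{\bar{T}_n^\sharp}(1-\alpha))$ is $O(n^{-K/6})$ uniformly: each $\tau_{mk}$ is supported on a grid of mesh $2/(n(n-1))$, so the coordinates of $T_n$ are supported on grids of mesh $O(n^{-3/2})$, and for every $t$ one obtains $\Prob(\bar{T}'_0=t)\le \Prob(u<\tfrac{\sqrt{n}}{2}\bar{T}'_0\le \tfrac{\sqrt{n}}{2}t)$ for some $u$ within $O(n^{-3/2})$ of $\tfrac{\sqrt{n}}{2}t$; bounding this difference of two hyperrectangle events for $T_n$ via $\rho^{re}(T_n,Y)\le Cn^{-K/6}$ (Theorem~\ref{thm:CLT-hyperrec-momconds}) and the Gaussian anti-concentration inequality (legitimate since $\min_j\Var(Y_j)\ge C_1$) gives $Cn^{-3/2}\sqrt{\log p}+Cn^{-K/6}=O(n^{-K/6})$, uniformly in $t$ and hence at $t=a_{\bar{T}_n^\sharp}(1-\alpha)$. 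Everything else is routine bookkeeping—recentering, symmetrizing, and tracking the $2n^{-1/2}$ rescaling—so that the only substantive input beyond the two cited theorems is the non-degeneracy $\Gamma_{(j,k),(j,k)}\ge C_1$, the moment conditions being automatic for the bounded Kendall kernel.
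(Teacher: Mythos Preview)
Your proposal is correct and follows essentially the same route the paper intends: since Kendall's tau kernel is bounded, conditions (M.1), (M.2), and (E.1) hold with $B_n=O(1)$, so Theorem~\ref{thm:comparison_with_naive_gaussian_wild_bootstrap}(i) applies directly and the scaling condition reduces to $\log^7(np)\le C_2 n^{1-K}$. The paper treats Corollary~\ref{cor:kendall_tau_mat_simultaneous_test} as an immediate consequence of this without a separate proof, and your centering/symmetrization bookkeeping is the standard way to reduce the $|\cdot|_{\infty,\text{off}}$ statistic to the $\max$-hyperrectangle framework already used in Section~\ref{sec:stat_apps}.

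One point worth noting: your treatment of the possible atom $\Prob(\bar{T}'_0=a_{\bar{T}_n^\sharp}(1-\alpha))$ is more careful than anything the paper makes explicit. The paper does not address this issue for the discrete Kendall statistic, implicitly absorbing it into the $O(n^{-K/6})$ term. Your argument via Theorem~\ref{thm:CLT-hyperrec-momconds} and Gaussian anti-concentration (Nazarov's inequality, used elsewhere in the paper) is a clean way to make this rigorous, and the grid spacing $O(n^{-3/2})$ indeed makes the resulting contribution negligible.
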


Therefore, the asymptotic validity of the bootstrap test for large Kendall's tau rank correlation matrix is obtained when $\log{p} = o(n^{1/7})$ without imposing structural and moment assumptions on $F$.

\section{Proof of the main results}
\label{sec:proof}

The rest of the paper is organized as follows. In Section \ref{app:concentration_ineq_canonical-Ustat}, we first present a useful inequality for bounding the expectation of the sup-norm of the {\it canonical} U-statistics and then compare with an alternative simple data splitting bound by reducing to the moment bounding exercise for the sup-norm of sums of iid random vectors. We shall discuss several advantages of using the U-statistics approach by exploring the degeneracy structure. Section \ref{subsec:proof-of-GA} contains the proof of the Gaussian approximation result and Section \ref{subsec:proof-of-bootstraps} proves the convergence rate of the bootstrap validity. Proofs of the statistical applications are given in Section \ref{subsec:proof-of-applications}. Additional proofs and technical lemmas are given in the SM.

\subsection{A maximal inequality for canonical U-statistics}
\label{app:concentration_ineq_canonical-Ustat}

Before proving our main results, we first establish a maximal inequality of the canonical U-statistics of order two. The derived expectation bound is useful in controlling the size of the nonlinear and completely degenerate error term in the Gaussian approximation.

\begin{thm}[A maximal inequality for canonical U-statistics]
\label{thm:expectation-bound}
Let $X_1^n$ be a sample of iid random variables in a separable and measurable space $(S, {\cal S})$. Let $f: S \times S \to \mathbb{R}^d$ be an ${\cal S} \otimes {\cal S}$-measurable, symmetric and canonical kernel such that $\E|f_m(X_1,X_2)|<\infty$ for all $m=1,\cdots,d$. Let $V_n = [n(n-1)]^{-1} \sum_{1\le i \neq j \le n} f(X_i,X_j)$, $M = \max_{1\le i \neq j \le n} \max_{1 \le m \le d} |f_m(X_i,X_j)|$, $D_q = \max_{1 \le m \le d} (\E|f_m(X_1,X_2)|^q)^{1/q}$ for $q > 0$. If $2 \le d \le  \exp(b n)$ for some constant $b>0$, then there exists an absolute constant $K > 0$ such that 
\begin{equation}
\label{eqn:expectation-bound-canonical}
\E [|V_n|_\infty] \le K (1 + b^{1/2}) \Big\{ \Big( {\log{d} \over n} \Big)^{3/2} \|M\|_4 + {\log{d} \over n} D_2 + \Big( {\log{d} \over n} \Big)^{5/4}  D_4 \Big\}.
\end{equation}
\end{thm}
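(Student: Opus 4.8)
The plan is to decouple the canonical U-statistic $V_n$ into a Rademacher chaos of order two and then apply a moment/chaining argument in the style of the Talagrand–Latała–Giné theory, adapted to the $\ell^\infty$ (i.e. maximum over $d$ coordinates) geometry. First I would pass from $V_n$ to its fully decoupled version $V_n^{\mathrm{dec}} = [n(n-1)]^{-1}\sum_{i\neq j} f(X_i, X_j')$, where $(X_i')$ is an independent copy of $(X_i)$; by the standard decoupling inequality for U-statistics (de la Peña–Giné), $\E|V_n|_\infty \le C\,\E|V_n^{\mathrm{dec}}|_\infty$ up to an absolute constant, and decoupled chaos is easier to handle because the two blocks of variables are independent. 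Next, since $f$ is canonical, I would introduce independent Rademacher multipliers $\varepsilon_i$ and symmetrize in the first block, so that it suffices to bound $\E\big| [n(n-1)]^{-1}\sum_{i\neq j}\varepsilon_i f(X_i,X_j')\big|_\infty$; a further symmetrization in the second block brings in $\varepsilon_j'$. This reduces the problem to controlling, coordinatewise and then uniformly over $m=1,\dots,d$, a decoupled Rademacher chaos $\sum_{i\neq j}\varepsilon_i\varepsilon_j' a_{ij}^{(m)}$ with $a_{ij}^{(m)} = f_m(X_i,X_j')$, conditionally on the data.

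The core estimate is the tail/moment bound for such a chaos. Working conditionally on $X_1^n, X_1'^n$, I would apply the Hanson–Wright-type inequality for decoupled Rademacher chaos: for a fixed matrix $A = (a_{ij})$ with zero diagonal, $\big\|\sum_{i\neq j}\varepsilon_i\varepsilon_j' a_{ij}\big\|_q \lesssim \sqrt{q}\,\|A\|_F + q\,\|A\|_{\mathrm{op}}$. Taking $q \asymp \log d$ and a union bound over the $d$ coordinates converts this into a bound on $\E\max_m$ in terms of $\max_m\|A^{(m)}\|_F$ and $\max_m\|A^{(m)}\|_{\mathrm{op}}$, each of which is now a random quantity depending on the data. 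The remaining task is to take expectations of these two matrix norms. For the Frobenius norm, $\E\|A^{(m)}\|_F^2 = \sum_{i\neq j}\E f_m^2(X_i,X_j') = n(n-1) D_2^2$ after normalization, which yields the $\frac{\log d}{n} D_2$ term — the "diffusive'' contribution. The operator-norm term is the delicate one: I would bound $\|A^{(m)}\|_{\mathrm{op}} \le \|A^{(m)}\|_F$ crudely where needed but more efficiently split off the large entries, controlling $\E\|A^{(m)}\|_{\mathrm{op}}$ via a symmetrization-and-truncation argument that produces the $\|M\|_4$ (worst-entry) and $D_4$ (fourth-moment) contributions, weighted by the stated powers $(\log d/n)^{3/2}$ and $(\log d/n)^{5/4}$.

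I expect the main obstacle to be precisely the control of the operator-norm term $\E\max_m \|A^{(m)}\|_{\mathrm{op}}$: a naive bound by the Frobenius norm loses too much, while a sharp bound requires either a matrix-Bernstein/noncommutative-Khintchine step applied to $A^{(m)} = \sum_{j} (\text{column}_j)\otimes e_j$ with random columns, or an $\varepsilon$-net argument over the sphere combined with the chaos tail bound — and in either case one must carefully track how the maximum over the $d$ kernel coordinates interacts with the spectral norm, since these two maxima do not commute. Managing the truncation level (separating entries above/below a threshold of order, say, $\|M\|_4$) so that the tail and bulk contributions recombine into exactly the three displayed terms, with the correct fractional exponents $3/2$ and $5/4$ in $\log d/n$, is the computational heart of the argument. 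Once the conditional bound is in hand, a final application of Jensen/Hölder to pull the data-dependent norms out under $\E$, together with the hypothesis $\log d \le bn$ to absorb lower-order cross terms into the $(1+b^{1/2})$ prefactor, completes the proof.
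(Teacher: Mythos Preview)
Your opening moves --- randomization to Rademacher chaos followed by the Hanson--Wright inequality conditionally on the data --- match the paper exactly. The divergence, and the gap, is in what you do next.

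You write that ``for the Frobenius norm, $\E\|A^{(m)}\|_F^2 = \sum_{i\neq j}\E f_m^2(X_i,X_j') = n(n-1)D_2^2$, which yields the $\frac{\log d}{n}D_2$ term.'' But after the union bound over $m$ you do not need $\E\|A^{(m)}\|_F^2$ for a fixed $m$; you need $\E\big[\max_{1\le m\le d}\|A^{(m)}\|_F\big]$, and the maximum sits \emph{inside} the expectation. That quantity does not reduce to $n D_2$ by any elementary step --- controlling it is in fact the entire technical content of the theorem. The paper handles this by setting $I = \E\max_m \sum_{i\neq j} f_m^2(X_i,X_j)$, performing a \emph{second} Hoeffding decomposition on the kernel $f_m^2$ (which is not canonical), applying randomization and Hanson--Wright once more to the degenerate piece, and using Cauchy--Schwarz together with a decoupling lemma to obtain a \emph{self-referential} inequality of the form
\[
I \;\le\; C\Big\{ n^2 D_2^2 + (\log d)\,\sqrt{I}\,\sqrt{\E M^2} + n(\log d)^{1/2}\big[n D_4^4 + (\log d)\,\E M^4\big]^{1/2}\Big\},
\]
which is then solved as a quadratic in $\sqrt{I}$. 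All three displayed terms --- the $D_2$, $D_4$, and $\|M\|_4$ contributions with their respective powers of $\log d/n$ --- emerge from this single recursive bound on the Frobenius quantity.

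Consequently you have also misidentified the hard step. The operator norm is not where the work lies: the paper disposes of it in one line via $\|A^{(m)}\|_{\mathrm{op}}\le\|A^{(m)}\|_F$ and the constraint $\log d\le b n$, which is also where the $(1+b^{1/2})$ prefactor appears. Your proposed truncation/matrix-Bernstein attack on $\E\max_m\|A^{(m)}\|_{\mathrm{op}}$ is unnecessary, and even if carried out would not repair the Frobenius gap above. The missing idea is the recursive Hoeffding decomposition of $f_m^2$ and the resulting quadratic inequality for $I$.
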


Note that Theorem \ref{thm:expectation-bound} is non-asymptotic. As immediate consequences of Theorem \ref{thm:expectation-bound}, we can derive the rate of convergence of $\E[|V_n|_\infty]$ with kernels under the subexponential and uniform polynomial moment conditions.

\begin{cor}[Kernels with subexponential and uniform polynomial moments]
\label{cor:expectation-subexponential-polynomial-kernel}
Let $B_n, B'_n$ be two sequences of positive reals and $f$ be a symmetric and canonical kernel. Suppose that $2 \le d \le  \exp(b n)$ for some constant $b>0$. (i)
If
\begin{equation}
\label{eqn:subexponential-moment}
\max_{1 \le m \le d}  \E \left[ \exp(|f_m| / B_n) \right] \le 2,
\end{equation}
then there exists a constant $C(b)>0$ such that
\begin{equation}
\label{eqn:expectation-subexponential-kernel}
\E [|V|_\infty] \le C(b) B_n  \{ (n^{-1} \log{d})^{3/2} \log(nd)   + n^{-1} \log{d} \}.
\end{equation}
(ii) Let $q \ge 4$. If \begin{equation}
\label{eqn:uniform-polynomial-moment}
\E (\max_{1 \le m \le d} |f_m| / B_n)^q \vee \max_{1 \le m \le d} \E (|f_m| / B'_n)^4 \le 1,
\end{equation}
then there exists a constant $C(b)>0$ such that
\begin{equation}
\label{eqn:expectation-polynomial-kernel}
\E [|V|_\infty] \le C(b) \{ B_n n^{-3/2+2/q} (\log{d})^{3/2} + B'_n n^{-1} \log{d}  \}.
\end{equation}
\end{cor}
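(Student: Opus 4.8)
The plan is to deduce Corollary~\ref{cor:expectation-subexponential-polynomial-kernel} directly from Theorem~\ref{thm:expectation-bound}, which gives
$$\E[|V_n|_\infty] \le K(1+b^{1/2})\Big\{ (n^{-1}\log d)^{3/2}\|M\|_4 + (n^{-1}\log d)\, D_2 + (n^{-1}\log d)^{5/4} D_4 \Big\},$$
by estimating the three quantities $\|M\|_4$, $D_2$, $D_4$ in each of the two moment regimes and then collecting the logarithmic and polynomial-in-$n$ factors. The only structural issue is that $M=\max_{i\neq j}\max_m|f_m(X_i,X_j)|$ is a maximum over $\Theta(n^2)$ \emph{dependent} coordinates, so its control must go through an inequality that does not use independence.

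\emph{Subexponential case.} The hypothesis $\max_m\E[\exp(|f_m|/B_n)]\le 2$ is exactly $\|f_m\|_{\psi_1}\le B_n$. From $\E|\xi|^q\le q!\,\|\xi\|_{\psi_1}^q\,\E e^{|\xi|/\|\xi\|_{\psi_1}}$ and Stirling one gets $\|\xi\|_q\le C q\|\xi\|_{\psi_1}$, hence $D_2\le CB_n$ and $D_4\le CB_n$ with absolute constants. For $\|M\|_4$ I would invoke the Orlicz maximal inequality $\|\max_{k\le N}\xi_k\|_{\psi_1}\le C\log(1+N)\max_k\|\xi_k\|_{\psi_1}$ (valid with no independence) for the $N=n(n-1)d\le n^2d$ coordinates, giving $\|M\|_{\psi_1}\le C\log(1+n^2d)B_n\le C'\log(nd)B_n$ using $n\ge4$, $d\ge3$, and then $\|M\|_4\le C\|M\|_{\psi_1}\le C'\log(nd)B_n$. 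Substituting, the three contributions are $CB_n\{(n^{-1}\log d)^{3/2}\log(nd),\ (n^{-1}\log d),\ (n^{-1}\log d)^{5/4}\}$; since $(n^{-1}\log d)^{5/4}\le\tfrac12[(n^{-1}\log d)^{3/2}+(n^{-1}\log d)]$ by AM--GM and $\log(nd)\ge1$, the third term is absorbed into the first two, yielding (\ref{eqn:expectation-subexponential-kernel}).

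\emph{Uniform polynomial case.} From $\max_m\E(|f_m|/B_n')^4\le1$ we read off $D_4\le B_n'$, and Lyapunov's inequality gives $D_2\le D_4\le B_n'$. For $\|M\|_4$, bound $M^q\le\sum_{i\neq j}(\max_m|f_m(X_i,X_j)|)^q$ with $q\ge4$, so $\E M^q\le n(n-1)\E(\max_m|f_m(X_1,X_2)|)^q\le n^2B_n^q$, whence $\|M\|_q\le n^{2/q}B_n$ and $\|M\|_4\le\|M\|_q\le n^{2/q}B_n$. Plugging in, the first term becomes $C(n^{-1}\log d)^{3/2}n^{2/q}B_n = CB_n n^{-3/2+2/q}(\log d)^{3/2}$, the second $CB_n'n^{-1}\log d$, and the third $C(n^{-1}\log d)^{5/4}B_n'\le Cb^{1/2}(n^{-1}\log d)B_n'$ because $n^{-1}\log d\le b$ under $d\le e^{bn}$; absorbing it into the second term gives (\ref{eqn:expectation-polynomial-kernel}), with $C(b)$ now carrying the $b$-dependence both from the prefactor $1+b^{1/2}$ and from this last step.

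\emph{Main obstacle.} The delicate point is the estimate of $\|M\|_4$: one must choose a maximal inequality that tolerates the dependence among the $n(n-1)d$ entries of $f$. In the subexponential regime the Orlicz maximal inequality costs only a $\log(nd)$ factor, matching the first term of (\ref{eqn:expectation-subexponential-kernel}); in the polynomial regime the crude ``sum of $q$-th powers'' union bound is what produces the $n^{2/q}$ inflation in the leading term of (\ref{eqn:expectation-polynomial-kernel}), and I would note that this loss is intrinsic to having only a uniform $q$-th moment. Everything else is elementary bookkeeping with the above inequalities and the constraint $\log d\le bn$.
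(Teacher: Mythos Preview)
Your proposal is correct and follows essentially the same route as the paper: in both parts you apply Theorem~\ref{thm:expectation-bound} after bounding $D_2,D_4$ by Lyapunov/moment inequalities and controlling $\|M\|_4$ via the Orlicz maximal inequality (Pisier) in case~(i) and the crude $q$-th moment union bound $\|M\|_q\le n^{2/q}B_n$ in case~(ii), exactly as the paper does. Your treatment is in fact more detailed than the paper's two-line proof, and the absorption of the $(n^{-1}\log d)^{5/4}D_4$ term into the other two is handled correctly (the exponent on $b$ in your last step should be $1/4$ rather than $1/2$, but this is immaterial since it is absorbed into $C(b)$).
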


\begin{rmk}[Comparison of Theorem \ref{thm:expectation-bound} with sums of iid random vectors by data splitting]
\label{rmk:data-splitting-reduction}
We can also bound the expected norm of a $U$-statistic by the expected norm of sums of iid random vectors. Assume that $\E |f_k(X_1,X_2)| < \infty$ for all $k=1,\cdots,d$ and let $m = [n/2]$ be the largest integer no greater than $n/2$. As noted in \cite{hoeffding1963}, we can write
\begin{equation}
\label{eqn:hoeffding_average}
m(V_n - \E V_n) = {1 \over n!} \sum_{\text{all } \pi_n} S(X_{\pi_n(1)}, \cdots, X_{\pi_n(n)}),
\end{equation}
where $S(X_1^n) = \sum_{i=1}^{m} [f(X_{2i-1},X_{2i}) - \E f]$ and the summation $\sum_{\text{all } \pi_n}$ is taken over all possible permutations $\pi_n : \{1,\cdots,n\} \to \{1,\cdots,n\}$. By Jensen's inequality and the iid assumption of $X_i$, we have
\begin{equation}
\label{eqn:reducing-Ustat-to-iid-sum}
\E |V_n - \E V_n |_\infty \le {1 \over m} \E \Big |\sum_{i=1}^{m} [f(X_i,X_{i+m}) - \E f ] \Big|_\infty,
\end{equation}
which can be viewed as a data splitting method into two halves. Assuming (\ref{eqn:subexponential-moment}), it follows from Bernstein's inequality \cite[Proposition 5.16]{vershynin2010a} that
\begin{equation}
\label{eqn:reducing-Ustat-to-iid-sum-bernstein-bound}
\E |V_n - \E V_n |_\infty \le K_1 B_n ( \sqrt{\log(d)/n} + \log(d)/n )
\end{equation}
for some absolute constant $K_1 > 0$. So if $\log{d} \le b n^{1-\varepsilon}$ for some $\varepsilon \in (0,1)$, then $\E |V_n - \E V_n |_\infty \le C(b) B_n (\log(d)/n)^{1/2} \le C(b) B_n n^{-\varepsilon/2}$. For the canonical kernel where $\E V_n = 0$, there are two advantages of using the U-statistics approach in Theorem \ref{thm:expectation-bound} over the data splitting method into iid summands (\ref{eqn:reducing-Ustat-to-iid-sum}) and (\ref{eqn:reducing-Ustat-to-iid-sum-bernstein-bound}). 

First, we can obtain from (\ref{eqn:expectation-subexponential-kernel}) that $\E |V_n|_\infty \le C(b) B_n \{ n^{1-5\varepsilon/2} + n^{-\varepsilon} \}$. Therefore, sharper rate is obtained by (\ref{eqn:expectation-subexponential-kernel}) when $\varepsilon \in (1/2,1)$ which covers the regime of valid Gaussian approximation and bootstrap. Under the scaling limit for the Gaussian approximation validity, i.e. $B_n^a \log^7(np) / n \le C n^{-K_2}$ for some $K_2 \in (0,1)$, where $a=2$ for the subexponential moment kernel and $a=4$ for the uniform polynomial moment kernel, it is easy to see that $\log{d} \le \log(n d) \le C n^{(1-K_2)/7}$ so we can take $\varepsilon = (6+K_2)/7$.

Second and more importantly, the rate of convergence obtained by the Bernstein bound (\ref{eqn:reducing-Ustat-to-iid-sum-bernstein-bound}) does not lead to a convergence rate for the Gaussian and bootstrap approximations. The reason is that, although (\ref{eqn:reducing-Ustat-to-iid-sum-bernstein-bound}) is rate-exact for {\it non-degenerate} U-statistics, where the dependence of the rate in (\ref{eqn:reducing-Ustat-to-iid-sum-bernstein-bound}) on the sample size is $O(B_n n^{-1/2})$, it is not strong enough to control the size of the nonlinear remainder term $\E[|n^{1/2} V_n |_\infty]$ when $d \to \infty$ (recall that $R_n = n^{1/2} V_n / 2$); c.f. Proposition \ref{prop:general-GA-rate}. On the contrary, our bound in Theorem \ref{thm:expectation-bound} exploits the degeneracy structure of $V$ and the dependence of the rate in (\ref{eqn:expectation-bound-canonical}) on the sample size is $O(B_n n^{-1} + \|M\|_4 n^{-3/2})$. Therefore, Theorem \ref{thm:expectation-bound} is mathematically more appealing in the degenerate case.

For non-degenerate U-statistics $U_n = [n (n-1)]^{-1} \sum_{1 \le i \neq j \le n} h(X_i, X_j)$, the reduction to sums of iid random vectors in (\ref{eqn:reducing-Ustat-to-iid-sum}) does not give tight asymptotic distributions. To illustrate this point, we consider the case $d=1$ and let $X_i$ be iid mean zero random variables with variance $\sigma^2$. Let $\zeta_1^2 = \Var(g(X_1))$ and $\zeta_2^2 = \Var(h(X_1, X_2))$. Assume that $\zeta_1^2 > 0$. So $\zeta_1^2$ is the variance of the leading projection term used in the Gaussian approximation and by Jensen's inequality $\zeta_1^2 \le \zeta_2^2$. Note that $\sqrt{n}(U_n-\E U_n) \stackrel{D}{\to} N(0, 4 \zeta_1^2)$ \cite[Theorem A, page 192]{serfling1980} and by the CLT $\sqrt{2/m} \sum_{i=1}^m [f(X_i,X_{i+m}) - \E f] \stackrel{D}{\to} N(0, 2 \zeta_2^2)$. Since in general $\zeta_2^2 \neq 2 \zeta_1^2$, the limiting distribution of the U-statistic is not the same as that in the data splitting method. For example, consider the non-degenerate covariance kernel $h(x_1,x_2)=(x_1-x_2)^2 / 2$. Denote $\mu_4 = \E X_1^4$ and $g(x_1)=(x_1^2-\sigma^2)/2$. Then, $\zeta_2^2 = (\mu_4 + \sigma^4)/2$ and $\zeta_1^2 = (\mu_4-\sigma^4) / 4$ so that $\zeta_2^2 > 2 \zeta_1^2$ when $\sigma^2 > 0$. In particular, if $X_i$ are iid $N(0,\sigma^2)$, then $\mu_4=3\sigma^4$, $4 \zeta_1^2 = 2\sigma^4$, and $2 \zeta_2^2= 4\sigma^4$. Therefore, even though (\ref{eqn:reducing-Ustat-to-iid-sum-bernstein-bound}) gives better rate in the non-degenerate case, the reduction by splitting the data into the iid summands is not optimal for the Gaussian approximation purpose, which is the main motivation of this paper. In fact, $\zeta_2^2$ serves no purpose in the limiting distribution of $\sqrt{n}(U_n - \E U_n)$.
\qed
\end{rmk}

\subsection{Proof of results in Section \ref{sec:gaussian-approx}}
\label{subsec:proof-of-GA}

For $q > 0$ and $\phi \ge 1$, we define 
\begin{eqnarray*}
D_{g,q} &=& \max_{1\le m \le d} \E|g_m(X)|^q, \\
M_{g,q}(\phi) &=& \E \left[ \max_{1 \le m \le d} |g_m(X)|^q \vone\left( \max_{1 \le m \le d} |g_m(X)| > {\sqrt{n} \over 4 \phi \log{d} } \right) \right], \\
M_{Y,q}(\phi) &=& \E \left[ \max_{1 \le m \le d} |Y_m|^q \vone\left( \max_{1 \le m \le d} |Y_m| > {\sqrt{n} \over 4 \phi \log{d} } \right) \right].
\end{eqnarray*}
and $M_q(\phi) = M_{g,q}(\phi) + M_{Y,q}(\phi)$. The Gaussian approximation result (GAR) in Proposition \ref{prop:general-GA-rate} below relies on the control of $D_{g,3}$ and $M_3(\phi)$. Interestingly, the quantity $M_{g,3}(\phi)$ can be viewed as a stronger version of the Lindeberg condition that allows us to derive the explicit convergence rate of the Gaussian approximation when $d \to \infty$. Denote $\chi_{\tau,ij} = \vone(\max_{1 \le m \le d} |h_m(X_i,X_j)| > \tau)$ for $\tau \ge 0$. Let
\begin{eqnarray*}
D_q &=& \max_{1\le m \le d} \E|h_m(X_1,X_2)|^q, \\
M_{h,q}(\tau) &=& \E \left[ \max_{1\le i \neq j \le n} \max_{1 \le m \le d} |h_m(X_i,X_j)|^q \chi_{\tau,ij} \right].
\end{eqnarray*}
For two random vectors $X$ and $Y$ in $\R^d$, we denote 
$$\tilde\rho^{re}(X,Y) = \sup_{y \in \R^d}  \left|\Prob(X \le y) -  \Prob(Y \le y) \right|.$$

\begin{prop}[A general Gaussian approximation result for U-statistics]
\label{prop:general-GA-rate}
Assume that (M.1) holds and $\log d \le \bar{b} n$ for some constant $\bar{b} > 0$. Then there exist constants $C_i := C_i(\ub, \bar{b}) > 0, i=1,2$ such that for any real sequence $\bar{D}_{g,3}$ satisfying $D_{g,3} \le \bar{D}_{g,3}$, we have
\begin{eqnarray}
\nonumber
\tilde\rho^{re}(T_n, Y) &\le& C_1 \Bigg\{ \left({\bar{D}_{g,3}^2 \log^7 d \over n}\right)^{1/6} + {M_3(\phi_n) \over \bar{D}_{g,3}} \\ \label{eqn:general-GA-rate}
&& \qquad + \phi_n \left( {\log^{3/2}{d} \over n} (M_{h,4}(\tau)^{1/4} + \tau) + {\log{d} \over n^{1/2}} D_2^{1/2} + {\log^{5/4}{d} \over n^{3/4}} D_4^{1/4} \right) \Bigg\},
\end{eqnarray}
where
\begin{equation}
\label{eqn:phi_n}
\phi_n = C_2 \left({\bar{D}_{g,3}^2 \log^4 d \over n}\right)^{-1/6}.
\end{equation}
In addition, $\rho^{re}(T_n, Y)$ obeys the same bound in (\ref{eqn:general-GA-rate}).
\end{prop}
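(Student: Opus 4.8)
\textit{Proof sketch.}
The plan is to run a two-step scheme built on the Hoeffding decomposition $T_n = L_n + R_n$, where $L_n = n^{-1/2}\sum_{i=1}^n g(X_i)$ is a sum of iid centered vectors and $R_n = \tfrac{\sqrt{n}}{2}V_n$ with $V_n = [n(n-1)]^{-1}\sum_{i\neq j} f(X_i,X_j)$ the canonical U-statistic attached to the symmetric and $F$-canonical kernel $f(x_1,x_2) = h(x_1,x_2) - g(x_1) - g(x_2)$ (canonicity follows from $\theta=0$ and $g(X)=\E[h(X,X')\mid X]$). First I would approximate ${\cal L}(T_n)$ over ${\cal A}^{re}$ by ${\cal L}(L_n)$, treating $R_n$ as a stochastically small additive perturbation; then I would approximate ${\cal L}(L_n)$ by ${\cal L}(Y)$, $Y\sim N(0,\Gamma)$, via the high-dimensional CLT for sums of independent vectors of \cite{cck2013,cck2015a}. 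A single scalar smoothing parameter $\phi\ge 1$ governs the Gaussian comparison throughout, and the value $\phi_n$ in (\ref{eqn:phi_n}) is precisely the one that balances the error terms produced by the two steps.

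For the linear step I would apply the Gaussian approximation inequality to $n^{-1/2}\sum g(X_i)$ with a truncation of $g(X_i)$ at the level $\sqrt{n}/(4\phi\log d)$ appearing in the definitions of $M_{g,q}(\phi)$ and $M_{Y,q}(\phi)$. Assumption (M.1), $\E g_m^2(X)\ge\ub>0$, supplies the variance lower bound required both for Gaussian anti-concentration over hyperrectangles (Nazarov's inequality) and for the CCK rate, so that $\tilde\rho^{re}(L_n,Y)$ is controlled by the ``truncated/bounded'' rate $(\bar{D}_{g,3}^2\log^7 d/n)^{1/6}$ plus the truncation remainder $M_{g,3}(\phi)/\bar{D}_{g,3} + M_{Y,3}(\phi)/\bar{D}_{g,3} = M_3(\phi)/\bar{D}_{g,3}$. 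Taking $\phi = \phi_n = C_2(\bar{D}_{g,3}^2\log^4 d/n)^{-1/6}$ equates the bounded-case rate with the smoothing scale and yields the first two terms of (\ref{eqn:general-GA-rate}).

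For the perturbation step I would carry $R_n$ through the CCK comparison as an additive shift of the sum: replacing the indicator of a hyperrectangle by a smooth surrogate with derivatives of size $O(\phi_n)$, then controlling the slab near the boundary by Gaussian anti-concentration (again via (M.1)), produces a contribution of order $\phi_n\,\E|R_n|_\infty$. Since $\E|R_n|_\infty = \tfrac{\sqrt{n}}{2}\E|V_n|_\infty$, Theorem \ref{thm:expectation-bound} applied to the canonical kernel $f$ gives $\E|V_n|_\infty \lesssim (\log d/n)^{3/2}\|M_f\|_4 + (\log d/n)\,D_{2,f} + (\log d/n)^{5/4}D_{4,f}$ with $M_f = \max_{i\neq j}\max_m|f_m(X_i,X_j)|$ and $D_{q,f} = \max_m\|f_m\|_q$. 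Conditional Jensen gives $\|g_m\|_q\le\|h_m\|_q$, hence $D_{q,f}\le 3\max_m\|h_m\|_q$, which matches the $D_2^{1/2}$ and $D_4^{1/4}$ in (\ref{eqn:general-GA-rate}); truncating $\max_{i\neq j}\max_m|h_m(X_i,X_j)|$ at level $\tau$ gives $\|M_f\|_4\lesssim \tau + M_{h,4}(\tau)^{1/4}$ up to a contribution from $g$ that is deterministic (after the same truncation at $\sqrt{n}/(4\phi_n\log d)$) and of lower order. Multiplying the resulting bound for $\E|V_n|_\infty$ by $\phi_n\sqrt{n}/2$ cancels the surplus $\sqrt{n}$ and reproduces exactly the bracketed expression multiplying $\phi_n$ in (\ref{eqn:general-GA-rate}). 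Finally $\rho^{re}(T_n,Y)\le\tilde\rho^{re}(T_n,Y)$, together with the symmetric (reflection) version of the whole argument, transfers the bound from $\tilde\rho^{re}$ to $\rho^{re}$.

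The main obstacle is the control of $\E|R_n|_\infty$. The naive reduction of $V_n$ to sums of iid vectors by data splitting (Remark \ref{rmk:data-splitting-reduction}) gives only $\E|V_n|_\infty = O(B_n n^{-1/2})$, which after multiplication by $\sqrt{n}$ does not vanish and cannot drive any Gaussian approximation; exploiting the complete degeneracy of $f$ through the sharper maximal inequality of Theorem \ref{thm:expectation-bound} — which yields $\E|V_n|_\infty$ essentially of order $n^{-1}$ — is what makes the scheme work. The remaining delicate point is the bookkeeping of the two truncation levels, $\sqrt{n}/(4\phi\log d)$ for $g$ (controlling both $M_3(\phi)$ and the deterministic part of $M_f$) and the free $\tau$ for $h$, and threading $R_n$ through the smoothing so that it is paid at the linear rate $\phi_n\,\E|R_n|_\infty$ rather than at a lossy square-root rate obtained from a crude Markov-plus-anti-concentration split.
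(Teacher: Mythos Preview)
Your approach is essentially the paper's: Hoeffding decomposition $T_n=L_n+R_n$, CCK smoothing with parameter $\phi$, Slepian--Stein bound on the linear part producing $(\bar D_{g,3}^2\log^7 d/n)^{1/6}+M_3(\phi_n)/\bar D_{g,3}$, a mean-value step giving the perturbation cost $\phi_n\,\E|R_n|_\infty$, and Theorem~\ref{thm:expectation-bound} for $\E|R_n|_\infty$.

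Two places need tightening. First, your passage from $\|M_f\|_4$ to $M_{h,4}(\tau)^{1/4}+\tau$ ``up to a contribution from $g$ that is deterministic after truncation and of lower order'' is not how the paper proceeds, and as stated it is incomplete: truncating $g$ at $\sqrt{n}/(4\phi_n\log d)$ controls the bounded part, but the tail contributes a fourth moment, not the third-moment quantity $M_{g,3}(\phi_n)$, and a crude $n^{1/4}$ bound on $\|\max_{i,m}|g_m(X_i)|\|_4$ would cost an extra $n^{1/4}$ you cannot afford. The paper instead uses Jensen plus the decoupling inequality (Lemma~\ref{lem:decoupling_max_nonnegative}) to obtain directly $\E[\max_{m}\max_{i\neq j} f_m^4(X_i,X_j)]\le K\,\E[\max_m\max_{i\neq j} h_m^4(X_i,X_j)]$, after which $M_{h,4}(0)\le M_{h,4}(\tau)+\tau^4$ gives the stated form. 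Second, your final sentence has the inequality backwards: $\tilde\rho^{re}\le\rho^{re}$, not the reverse, so the extension to all hyperrectangles is not free; the paper invokes the argument of \cite[Corollary~5.1]{cck2015a} (which your ``reflection'' remark gestures at) for this step.
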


With the help of Proposition \ref{prop:general-GA-rate}, we are now ready to prove Theorem \ref{thm:CLT-hyperrec-momconds}. 

\begin{proof}[Proof of Theorem \ref{thm:CLT-hyperrec-momconds}]
We may assume that $\varpi_{1,n} \le 1$; otherwise the proof is trivial. Let $\ell_n = \log(nd) > 1$. By (M.2) and Jensen's inequality, we have $D_2 \le B_n^{2/3}$, $D_{g,3} \le D_3 \le B_n$, and $D_4 \le B_n^2$. Write $M_{h,q} = M_{h,q}(0)$. By Proposition \ref{prop:general-GA-rate} with $\tau=0$ and $\phi_n$ is given by (\ref{eqn:phi_n}), we have
\begin{eqnarray}
\nonumber
\rho^{re}(T_n, Y) &\le& C_1 \Bigg\{ \left({\bar{D}_{g,3}^2 \log^7 d \over n}\right)^{1/6} + {M_3(\phi_n) \over \bar{D}_{g,3}} \\ \label{eqn:general-GA-rate_tau=0}
&& \qquad + \phi_n \left( {\log^{3/2}{d} \over n} M_{h,4}^{1/4} + {\log{d} \over n^{1/2}} D_2^{1/2} + {\log^{5/4}{d} \over n^{3/4}} D_4^{1/4} \right) \Bigg\},
\end{eqnarray}
where $C_1 > 0$ is a constant only depending on $\ub$ and $\bar{b}$.

{\bf Case (E.1).} By \cite[Lemma 2.2.2]{vandervaartwellner1996}, $M_{h,4}^{1/4} \le K_1 B_n \ell_n$. Choosing $\bar{D}_{g,3} = B_n$, we have
\begin{eqnarray*}
\phi_n {\log^{3/2}{d} \over n} M_{h,4}^{1/4} &\le& C_2 {B_n^{2/3} \ell_n^{11/6} \over n^{5/6}} \le C_2 \varpi_{1,n}, \\
\phi_n {\log{d} \over n^{1/2}} D_2^{1/2} &\le& C_3 {(\log d)^{1/3} \over n^{1/3}} \le C_3 \varpi_{1,n}, \\
\phi_n {\log^{5/4}{d} \over n^{3/4}} D_4^{1/4} &\le& C_4 {B_n^{1/6} (\log d)^{7/12} \over n^{7/12}} \le C_4 \varpi_{1,n}.
\end{eqnarray*}
Following the proof of \cite[Proposition 2.1]{cck2015a}, we can show that
$$
\left({\bar{D}_{g,3}^2 \log^7 d \over n}\right)^{1/6} + {M_3(\phi_n) \over \bar{D}_{g,3}} \le C_5 \varpi_{1,n}.
$$
Then, (\ref{eqn:CLT-hyperrec-subexp}) follows from (\ref{eqn:general-GA-rate_tau=0}). Here, all constants $C_i$ for $i=2,\cdots,5$ depend only on $\ub$ and $\bar{b}$. \\

{\bf Case (E.2).} $D_2$ and $D_4$ obey the same bounds as case (E.1). Assuming (E.2), $M_{h,4}^{1/4} \le n^{1/2} B_n$. Choosing $\bar{D}_{g,3} = B_n + B_n^2 n^{-1/2+2/q} (\log d)^{-1/2}$, we have 
$$
\phi_n {\log^{3/2}{d} \over n} M_{h,4}^{1/4} \le C_6 {B_n^{2/3} \ell_n^{5/6} \over n^{1/3}} \le C_6 \varpi_{1,n}.
$$
Following the proof of \cite[Proposition 2.1]{cck2015a}, we can show that
$$
\left({\bar{D}_{g,3}^2 \log^7 d \over n}\right)^{1/6} + {M_3(\phi_n) \over \bar{D}_{g,3}} \le C_7 \left\{ \varpi_{1,n} + \varpi_{2,n} \right\}.
$$
Here, $C_6, C_7$ are constants depending only on $\ub, \bar{b}$, and $q$. Then, (\ref{eqn:CLT-hyperrec-subexp}) is immediate.
\end{proof}

\subsection{Proof of results in Section \ref{sec:bootstraps}}
\label{subsec:proof-of-bootstraps}

In view of the approximation diagram (\ref{eqn:approx-diagram}), our first task is to control the random quantity
$$
\sup_{A \in {\cal A}^{re}} \left| \Prob(Y \in A) - \Prob(Z^X \in A \mid X_1^n) \right|
$$
on an event occurring with large probability, which is Step (2) in the approximation diagram (\ref{eqn:approx-diagram}). 

\begin{prop}[Gaussian comparison bound for the linear part in U-statistic and its EB version]
\label{prop:gaussian_comp_leading_term-EB}
Let $Z^X | X_1^n \sim N(0, \hat\Gamma_n)$, where $\hat\Gamma_n$ is defined in (\ref{eqn:emprical_cov_mat_eb}). Suppose that (M.1), (M.2) and (M.2') are satisfied. \\
(i) If (E.1) and (E.1') hold, then there exists a constant $C(\ub)>0$ such that with probability at least $1-\gamma$ we have 
\begin{equation}
\label{eqn:prop:gaussian_comp_leading_term-EB_subexp}
\sup_{A \in {\cal A}^{re}} \left| \Prob(Y \in A) - \Prob(Z^X \in A \mid X_1^n) \right| \le C(\ub) \varpi^{B}_{1,n}(\gamma).
\end{equation}
(ii) If (E.2) and (E.2') hold with $q \ge 4$, then there exists a constant $C(\ub,q)>0$ such that with probability at least $1-\gamma$ we have 
\begin{equation}
\label{eqn:prop:gaussian_comp_leading_term-EB_unifpoly}
\sup_{A \in {\cal A}^{re}} \left| \Prob(Y \in A) - \Prob(Z^X \in A \mid X_1^n) \right| \le C(\ub,q) \{ \varpi^{B}_{1,n}(\gamma) + \varpi^{B}_{2,n}(\gamma) \}.
\end{equation}
\end{prop}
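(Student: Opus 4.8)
This is a Gaussian-to-Gaussian comparison over hyperrectangles: $Y\sim N(0,\Gamma)$ and $Z^X\mid X_1^n\sim N(0,\hat\Gamma_n)$, so I would invoke the Gaussian comparison inequality for hyperrectangles (see e.g.\ \cite{cck2015a}): on the event $\{\min_m(\hat\Gamma_n)_{mm}\ge\ub/2\}$,
\[
\sup_{A\in{\cal A}^{re}}\bigl|\Prob(Y\in A)-\Prob(Z^X\in A\mid X_1^n)\bigr|\ \le\ C(\ub)\,\Delta_n^{1/3}\bigl(1\vee\log(d/\Delta_n)\bigr)^{2/3},\qquad \Delta_n:=|\hat\Gamma_n-\Gamma|_\infty .
\]
Since (M.1) gives $\Gamma_{mm}\ge\ub$ for all $m$, and $(\hat\Gamma_n)_{mm}\ge\Gamma_{mm}-\Delta_n$, the whole statement reduces to showing that, with probability at least $1-\gamma$,
\[
\Delta_n\ \le\ C\,B_n\,\log^{1/2}(nd)\,\log(1/\gamma)\,n^{-1/2}\quad\text{[case (i)]},
\]
together with the analogous bound carrying an extra $\gamma^{-2/q}$-type summand in case (ii); plugging this into the displayed comparison bound, absorbing the $\log(d/\Delta_n)$ factor, and using $\Delta_n\le\ub/2$ to control $(\hat\Gamma_n)_{mm}$, yields $\rho^{re}\le C(\ub)\,\varpi^B_{1,n}(\gamma)$ resp.\ $C(\ub,q)\{\varpi^B_{1,n}(\gamma)+\varpi^B_{2,n}(\gamma)\}$. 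As usual we may assume $\varpi^B_{1,n}(\gamma)\le 1$, else there is nothing to prove.

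The content is therefore the high-probability bound on $\Delta_n$. Writing $\hat g_i:=n^{-1}\sum_{j=1}^n h(X_i,X_j)$, so $\hat\Gamma_n=n^{-1}\sum_{i=1}^n\hat g_i\hat g_i^\top-V_nV_n^\top$ with $V_n=n^{-1}\sum_i\hat g_i$, the transparent decomposition is
\[
\hat\Gamma_n-\Gamma=\Bigl(\tfrac1n\sum_{i=1}^n g(X_i)g(X_i)^\top-\Gamma\Bigr)+\tfrac1n\sum_{i=1}^n\bigl[(\hat g_i-g(X_i))g(X_i)^\top+g(X_i)(\hat g_i-g(X_i))^\top\bigr]+\tfrac1n\sum_{i=1}^n(\hat g_i-g(X_i))(\hat g_i-g(X_i))^\top-V_nV_n^\top .
\]
The first term is an average of i.i.d.\ centered matrices whose entries $g_m(X)g_l(X)-\Gamma_{ml}$ have Orlicz (resp.\ polynomial) moments controlled through (M.2) and Jensen --- e.g.\ $\|g_mg_l\|_{\psi_{1/2}}\lesssim B_n^2$ and $\E(g_mg_l)^2\le D_4\lesssim B_n^2$ under (E.1) --- so a maximal Bernstein/Nagaev deviation inequality over the $d^2$ entries at level $\gamma$ puts it below the target above. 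The remaining three terms should all be genuinely higher order: $\hat g_i-g(X_i)=n^{-1}\sum_{j\ne i}(h(X_i,X_j)-g(X_i))+n^{-1}(h(X_i,X_i)-g(X_i))$ is, conditionally on $X_i$, a normalized sum of $n$ centered variables plus a diagonal correction, and here the von Mises conditions (M.2'), (E.1'), (E.2') are exactly what is needed to control the diagonal pieces $h_m(X_i,X_i)$ --- without them the empirical bootstrap can fail \cite{bickelfreedman1981}. The tail-probability inequalities for maxima of $U$-statistics from Section \ref{sec:conc-ineq-ustat} then bound $\max_{i\le n}|\hat g_i-g(X_i)|_\infty$, $\max_{i\le n}|g(X_i)|_\infty$, and $|V_n|_\infty$ on an event of probability $1-\gamma$, and combining these shows the last three terms are dominated by the first.

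I expect the delicate point to be exactly the bilinear middle term $n^{-1}\sum_i(\hat g_i-g(X_i))g(X_i)^\top$: the naive bound $\max_i|\hat g_i-g(X_i)|_\infty\cdot n^{-1}\sum_i|g(X_i)|_\infty$ does \emph{not} keep the right power of $B_n$, because $\max_i|\hat g_i-g(X_i)|_\infty$ picks up the fluctuation of the conditional second moments $\E[h_m(X_i,X_j)^2\mid X_i]$, which under (E.1) alone can be as heavy as $\psi_{1/2}$. The clean route --- which I would carry out in detail --- is to regard $n^{-1}\sum_i\hat g_i\hat g_i^\top=n^{-3}\sum_{i,j,k}h(X_i,X_j)h(X_i,X_k)^\top$ as a degree-three $V$-statistic, split the triple sum into its all-distinct part and the coincident-index parts (the latter carrying diagonal kernels, handled under (M.2')/(E.1')/(E.2')), Hoeffding-decompose the order-three $U$-statistic entrywise --- its linear projection being $n^{-1}\sum_i[g(X_i)g(X_i)^\top-\Gamma]$ plus a strictly smaller leaf-projection term --- and dispatch the completely degenerate second- and third-order remainders using the canonical maximal inequality Theorem \ref{thm:expectation-bound} (and its order-three analogue in the SM) together with Corollary \ref{cor:expectation-subexponential-polynomial-kernel} and the SM tail bounds. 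Case (ii) runs along the same lines, with the sub-exponential maximal and tail inequalities replaced by their uniform-polynomial-moment counterparts, which is precisely where the extra summand $\varpi^B_{2,n}(\gamma)$ and its $\gamma^{-2/q}$ dependence enter.
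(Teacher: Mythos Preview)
Your overall strategy is exactly the paper's: invoke a Gaussian comparison lemma over hyperrectangles (Lemma \ref{lem:gauss-comp-general} in the SM, giving $\rho^{re}\le C(\ub)(\log d)^{2/3}\bar\Delta_n^{1/3}$ on $\{\Delta_n\le\bar\Delta_n\}$), and then establish a high-probability bound on $\Delta_n=|\hat\Gamma_n-\Gamma|_\infty$. Your $V$-statistic viewpoint, splitting $n^{-3}\sum_{i,j,k}h(X_i,X_j)h(X_i,X_k)^\top$ into the all-distinct piece and coincident-index pieces (the latter handled via (M.2'), (E.1'), (E.2')), is also precisely what the paper does --- see (\ref{eqn:decomp_Delta_hat}). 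Your target bound $\Delta_n\lesssim (n^{-1}B_n^2\log(nd)\log^2(1/\gamma))^{1/2}$ matches the paper's $t^*$ in (\ref{eqn:t_star}).

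The one place you diverge is in how you treat the leading order-three $U$-statistic $\hat\Gamma_{n,1,1}$. You propose to Hoeffding-decompose it entrywise and then dispatch the degenerate pieces with Theorem \ref{thm:expectation-bound} ``and its order-three analogue in the SM.'' There is no such order-three analogue in the paper, and you do not actually need one: the paper bypasses Hoeffding decomposition entirely here. It applies the general-order tail inequalities of Section \ref{sec:conc-ineq-ustat} (Lemmas \ref{lem:subexp-concentration-ineq} and \ref{lem:fuknagaev-concentration-ineq}) \emph{directly} to the non-degenerate, non-symmetric kernel $H(x_1,x_2,x_3)=h(x_1,x_2)h(x_1,x_3)^\top$, via the Hoeffding averaging representation (\ref{eqn:ustat-perm-rep}) with $r=3$ blocks. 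This needs only $\E H_m^2\le B_n^2$ (from (M.2) and Cauchy--Schwarz) and $\|\max_m H_m\|_{\psi_{1/2}}\lesssim B_n^2\log^2(nd)$ (from (E.1) and Pisier), and immediately yields the tail bound (\ref{eqn:gamma_n11}). Your Hoeffding-decomposition route would work in principle, but is more laborious and requires tools the paper does not develop; the direct block-splitting concentration is both simpler and what the SM actually provides.
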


From Proposition \ref{prop:gaussian_comp_leading_term-EB}, we are now ready to establish the rate of convergence of the empirical bootstrap for U-statistics. Let $W_{jk} =  |n^{-1} \sum_{i=1}^n h_j(X_k,X_i) - V_{nj}|$ for $j=1,\cdots,d$ and $k=1,\cdots,n$. For $q,\tau>0$, and $\phi \ge 1$, we define
\begin{eqnarray*}
\hat{D}_{g,q} &=& \max_{1 \le j \le d} n^{-1} \sum_{k=1}^n W_{jk}^q, \\
\hat{D}_q &=& \max_{1 \le j \le d} n^{-2} \sum_{k,l=1}^n |h_j(X_k,X_l)|^q, \\
\hat{M}_{h,q}(\tau) &=& n^{-2} \sum_{i,k=1}^n \max_{1 \le j \le d} |h_j(X_i,X_k)|^q \vone(\max_{1 \le j \le d} |h_j(X_i,X_k)| > \tau),\\
\hat{M}_{g,q}(\phi) &=& n^{-1} \sum_{k=1}^n \max_{1 \le j \le d} W_{jk}^q \vone \left(\max_{1 \le j \le d} W_{jk} > { \sqrt{n} \over 4\phi\log d} \right), \\
\hat{M}_{Z,q}(\phi) &=& \E\left[ \max_{1\le j \le d} |Z^X_j|^q \vone\left( \max_{1 \le j \le d} |Z^X_j| > { \sqrt{n} \over 4\phi\log d} \right) \mid X_1^n \right],
\end{eqnarray*}
$\hat{M}_q(\phi) =\hat{M}_{g,q}(\phi) + \hat{M}_{Z,q}(\phi)$, and $Z^X \mid X_1^n \sim N(0,\hat\Gamma_n)$.

\begin{proof}[Proof of Theorem \ref{thm:eb-bootstrap-rate}]
In this proof, the constants $C_1,C_2,\cdots$ depend only on $\ub,\bar{b},K$ in case (i) and $\ub,\bar{b},q,K$ in case (ii). First, we may assume that 
\begin{equation}
\label{eqn:eb_reduction1}
n^{-1} B_n^2 \log^7(nd) \le c_1 \le 1
\end{equation}
for some sufficiently small constant $c_1 > 0$, where $c_1$ depends only on $\ub,\bar{b},K$ in case (i) and on $\ub,\bar{b},q,K$ in case (ii), since otherwise the proof is trivial by setting the constants $C(\ub,\bar{b},K)$ in (i) and $C(\ub,\bar{b},q,K)$ in (ii) large enough. By (\ref{eqn:approx-diagram}) and the triangle inequality,
\begin{eqnarray}
\nonumber
\rho^{B}(T_n, T^*_n) &\le& \rho^{re}(T_n, Y) + \sup_{A \in {\cal A}^{re}} |\Prob(Y \in A) - \Prob(Z^X \in A \mid X_1^n)| \\
\label{eqn:eb-rate-decomp}
&& \qquad + \rho^{re}(Z^X, T_n^* \mid X_1^n),
\end{eqnarray}
where $\rho^{re}(Z^X, T_n^* \mid X_1^n) = \sup_{A \in {\cal A}^{re}} |\Prob(Z^X \in A \mid X_1^n) - \Prob(T_n^* \in A \mid X_1^n)|$. Since $\log(1/\gamma) \le K \log(dn)$, we have $\varpi^{B}_{1,n}(\gamma) \le K^{1/3} \varpi_{1,n}$ and $\varpi_{2,n} \le \varpi^{B}_{2,n}(\gamma)$ for $\gamma \in (0,e^{-1})$. By Theorem \ref{thm:CLT-hyperrec-momconds} and Proposition \ref{prop:gaussian_comp_leading_term-EB}, we have: (i) if (E.1) and (E.1') hold, then with probability at least $1-2\gamma/9$ we have
$$
\rho^{re}(T_n, Y) + \sup_{A \in {\cal A}^{re}} |\Prob(Y \in A) - \Prob(Z^X \in A \mid X_1^n)| \le C(\ub,\bar{b},K) \varpi_{1,n};
$$
(ii) if (E.2) and (E.2') hold, then with probability at least $1-2\gamma/9$ we have
$$
\rho^{re}(T_n, Y) + \sup_{A \in {\cal A}^{re}} |\Prob(Y \in A) - \Prob(Z^X \in A \mid X_1^n)| \le C(\ub,\bar{b},q,K) \{ \varpi_{1,n} + \varpi^{B}_{2,n}(\gamma) \}.
$$
To deal with the third term on the right-hand side of (\ref{eqn:eb-rate-decomp}), we observe that conditionally on $X_1^n$, $U_n^*$ is a U-statistics of $\xi_1,\cdots,\xi_n$ and $Z^X$ has the conditional covariance matrix $\hat\Gamma_n$; c.f. (\ref{eqn:emprical_cov_mat_eb}). So we can apply Proposition \ref{prop:general-GA-rate} conditionally.

{\bf Case (i).} As in the proof of Proposition \ref{prop:gaussian_comp_leading_term-EB}, we have with probability at least $1-\gamma/9$
\begin{equation}
\label{eqn:tail_prob_bound_hatGamma_n}
|\hat\Gamma_n-\Gamma|_\infty \le C_1 [n^{-1} B_n^2 \log(nd) \log^2(1/\gamma)]^{1/2}.
\end{equation}
By (\ref{eqn:eb_reduction1}), (M.1), (M.2) and (M.2'), there exists a constant $C_2 > 0$ such that $\ub/2 \le \hat\Gamma_{n,jj} \le C_2 B_n^{2/3} \le C_2 B_n$ for all $j=1,\cdots,d$ holds with probability at least $1-\gamma/9$. Let $\bar{D}_{g,3} = C_3 B_n$, $\bar{D}_2 = C_4 B_n^{2/3}$, and $\bar{D}_4 = C_5 B_n^2 \log(dn)$. By Lemma \ref{lem:bounds_on_terms_empirical-version-EB}, each of the three events $\{\hat{D}_{g,3} \ge \bar{D}_{g,3}\}$, $\{\hat{D}_2 \ge \bar{D}_2\}$, and $\{\hat{D}_4 \ge \bar{D}_4\}$ occur with probability at most $\gamma/9$. Let $\phi_n = C_6 (n^{-1} \bar{D}_{g,3}^2 \log^4 d)^{-1/6}$ for some $C_6 > 0$ such that $\phi_n \ge 1$. By Jensen's inequality, $\max_{k,j} W_{kj} \le 2 n^{-1} \max_{k,j} \sum_{i=1}^n |h_j(X_k,X_i)|$. Then, by the union bound and the assumptions (E.1) and (E.1'), we have
\begin{eqnarray}
%\label{eqn:M_g3_eb}
\nonumber
\Prob(\hat{M}_{g,3}(\phi_n) > 0) &=& \Prob(\max_{1 \le j \le d,1 \le k \le n} W_{jk} > \sqrt{n} / (4\phi_n \log d))\\
\nonumber
&\le& (dn) \max_{1 \le j \le d,1 \le k \le n} \Prob\left({1 \over n} \sum_{i=1}^n |h_j(X_k,X_i)| > {\sqrt{n} \over 8 \phi_n \log d} \right) \\ 
\label{eqn:M_g3_hat_bound_subexp}
&\le& (2dn) \exp\left( -{\sqrt{n} \over 8 \phi_n (\log d) B_n} \right),
\end{eqnarray}
where the last step (\ref{eqn:M_g3_hat_bound_subexp}) follows from the triangle inequality on the Orlicz space with the $\psi_1$ norm $\|n^{-1} \sum_{i=1}^n | h_j(X_k,X_i) | \|_{\psi_1} \le n^{-1} \sum_{i=1}^n \| h_j(X_k,X_i) \|_{\psi_1} \le B_n$. Substituting the value of $\phi_n$ and using (\ref{eqn:eb_reduction1}), we have
\begin{equation}
\label{eqn:M_g3_hat_lower_bound_exponent_subexp}
{\sqrt{n} \over 8 \phi_n (\log d) B_n} \ge {C_3^{1/3} \log(nd) \over 8 C_6 c_1^{1/3}} \ge {C_3^{1/3} \over 16 C_6 c_1^{1/3}}  \big[\log(nd) +{1 \over K}\log(1/\gamma) \big].
\end{equation}
Therefore $\Prob(\hat{M}_{g,3}(\phi_n) > 0) \le \gamma/9$ by choosing $c_1 > 0$ small enough. Next, we deal with $\hat{M}_{Z,3}(\phi_n)$. Since conditional on $X_1^n$, $Z^X \sim N(0,\hat\Gamma_n)$. On the event $\{\hat\Gamma_{n,jj} \le C_2 B_n, \forall j=1,\cdots,d$\}, we have $\|Z_j^X\|_{\psi_2} \le \sqrt{8 C_2 B_n / 3} B_n^{1/2}$ and $\|Z_j^X\|_{\psi_1} \le C_7 B_n^{1/2}$ for all $j=1,\cdots,d$, where $C_7= \sqrt{8 C_2 /(3 \log2)}$. Integration-by-parts yields
$$
\hat{M}_{Z,3}(\phi_n) = \int_t^\infty \Prob(\max_j |Z_j^X| > u^{1/3} | X_1^n) du + t \Prob(\max_j |Z_j^X| > t^{1/3} | X_1^n),
$$
where $t = (\sqrt{n} / 4 \phi_n \log d)^3$. Since for any $u > 0$
$$
\Prob(\max_j |Z_j^X| > u^{1/3} | X_1^n) \le (2d) \exp(-u^{1/3} / (C_7 B_n^{1/2})),
$$
we have by elementary calculations that
\begin{eqnarray*}
\int_t^\infty \Prob(\max_j |Z_j^X| > u^{1/3} | X_1^n) du &\le& C_8 d t \left[\sum_{\ell=1}^3 (B_n^{1/2} t^{-1/3})^\ell \right] \exp\left(-{t^{1/3} \over C_7 B_n^{1/2}} \right).
\end{eqnarray*}
Since $B_n^{1/2} t^{-1/3} \log(nd) \le 4 C_6 C_3^{-1/3} [n^{-1} B_n^2 \log^4 (nd)]^{1/3} \le 4 C_6 C_3^{-1/3} c_1^{1/3}$, it follows from (\ref{eqn:M_g3_hat_bound_subexp}) and (\ref{eqn:M_g3_hat_lower_bound_exponent_subexp}) that
$$
\hat{M}_{Z,3}(\phi_n) \le C_9 d t \exp\left(-{t^{1/3} \over C_7 B_n^{1/2}} \right) \le C_9 d n^{3/2} \exp\left(-{\log(nd) \over 4 C_7 C_6 C_3^{-1/3} c_1^{1/3} } \right) \le C_9 n^{-1/2}
$$
for $c_1 > 0$ small enough. For the term $\hat{M}_{h,4}(\tau)$, we note that
$$
\Prob(\hat{M}_{h,4}(\tau) > 0) = \Prob(\max_{1 \le i,k \le n} \max_{1 \le j \le d} |h_j(X_i,X_k)| > \tau)
$$
and by (E.1) and (E.1') $\|h_j(X_j,X_k)\|_{\psi_1} \le B_n$. So we have
$$
\Prob(\hat{M}_{h,4}(\tau) > 0) \le  (2 d n^2) \exp(-\tau / B_n).
$$
Choose $\tau = C_{10} n^{1/2} / [\phi_n \log d]$. Then, by (\ref{eqn:M_g3_hat_bound_subexp}) and (\ref{eqn:M_g3_hat_lower_bound_exponent_subexp}), we have $\Prob(\hat{M}_{h,4}(\tau) > 0) \le \gamma / 9$. Now, by Proposition \ref{prop:general-GA-rate} conditional on $X_1^n$ with $M_{h,4}(\tau) \le n^2 \E[\max_{1 \le j \le d} |h_j(X,X')|^4 \vone(\max_{1 \le j \le d} |h_j(X,X')| > \tau)]$, we conclude that
\begin{eqnarray*}
\rho^{re}(Z^X, T_n^* \mid X_1^n) &\le& C_{11} \Big\{ \left({B_n^2 \log^7 d \over n}\right)^{1/6} + {1 \over n^{1/2} B_n} + \phi_n {\log d \over n^{1/2}} B_n^{1/3} \\
&& \qquad + \phi_n {\log^{5/4} d \over n^{3/4}} B_n^{1/2} \log^{1/4} (dn) + {\log^{1/2} d \over n^{1/2}}  \Big\} \\
&\le& C_{12} \varpi_{1,n}
\end{eqnarray*}
holds with probability at least $1-7\gamma/9$. So (\ref{eqn:eb-bootstrap-rate-subexp}) follows. \\

{\bf Case (ii).} In addition to (\ref{eqn:eb_reduction1}), we may assume that
\begin{equation}
\label{eqn:eb_reduction2}
{B_n^2 \log^3(nd) \over \gamma^{2/q} n^{1-2/q}} \le c_2 \le 1
\end{equation}
for some small enough constant $c_2 > 0$. As in Case (i),  there exists a constant $C_1 > 0$ such that $\ub/2 \le \hat\Gamma_{n,jj} \le C_1 B_n$ for all $j=1,\cdots,d$ holds with probability at least $1-\gamma/9$. Let $\bar{D}_{g,3} = C_2 [B_n + n^{-1+3/q} B_n^3 \gamma^{-3/q} (\log d)]$, $\bar{D}_2 = C_3 [B_n^{2/3} + n^{-1+2/q} B_n^2 \gamma^{-2/q} (\log d)] $, and $\bar{D}_4 = C_4  [B_n^2 + n^{-1+4/q} B_n^2 \gamma^{-4/q} (\log d)]$. By Lemma \ref{lem:bounds_on_terms_empirical-version-EB}, each of the three events $\{\hat{D}_{g,3} \ge \bar{D}_{g,3}\}$, $\{\hat{D}_2 \ge \bar{D}_2\}$, and $\{\hat{D}_4 \ge \bar{D}_4\}$ occur with probability at most $\gamma/9$. Note that 
\begin{eqnarray}
\nonumber
\phi_n &:=& C_5 (n^{-1} \bar{D}_{g,3}^2 \log^4 d)^{-1/6} \\
\label{eqn:phi_n_bound_poly}
&\le& C_5 C_2^{-1/3} \min \{ n^{1/6} B_n^{-1/3} \log^{-2/3} d, \; n^{1/2-1/q} B_n^{-1} \gamma^{1/q} \log^{-1}d \}.
\end{eqnarray}
By (\ref{eqn:M_g3_hat_bound_subexp}), the union bound, (\ref{eqn:phi_n_bound_poly}), and choosing $C_2$ large enough we have
\begin{eqnarray*}
\Prob(\hat{M}_{g,3}(\phi_n) > 0) &\le& n \max_{1 \le k \le n} \Prob\left({1 \over n} \sum_{i=1}^n \max_j |h_j(X_k,X_i)| > {\sqrt{n} \over 8 \phi_n \log d} \right) \\
&\le& {(8 B_n \phi_n \log d)^q \over n^{q/2-1}} \le {\gamma \over 9},
\end{eqnarray*}
where the second last step follows from (E.2), (E.2') and the triangle inequality $\|n^{-1} \sum_{i=1}^n \max_j |h_j(X_k,X_i)| \|_q \le n^{-1} \sum_{i=1}^n \|\max_j |h_j(X_k,X_i)| \|_q \le B_n$. Bound on the term $\hat{M}_{Z,3}(\phi_n)$ is the same as in Case (i). Choose $\tau = C_6 n^{1/2+1/q} / [\phi_n \log d]$ for some $C_6 > 0$. Then we have
$$
\Prob(\hat{M}_{h,4}(\tau) > 0) \le C_6^{-q} n^2 {(B_n \phi_n \log d)^q \over n^{q/2+1}} \le {\gamma \over 9}.
$$
Then, we have by elementary calculations that
\begin{eqnarray*}
\rho^{re}(Z^X, T_n^* \mid X_1^n) &\le& C_7 \Big\{ \left({\bar{D}_{g,3}^2 \log^7 d \over n}\right)^{1/6} + {1 \over n^{1/2} B_n} + \phi_n {\log d \over n^{1/2}} \bar{D}_2^{1/2} \\
&& \qquad + \phi_n {\log^{5/4} d \over n^{3/4}} \bar{D}_4^{1/4} + {\log^{1/2} d \over n^{1/2-1/q}}  \Big\} \\
&\le& C_8 \{ \varpi_{1,n} + \varpi_{2,n}^B(\gamma) \}
\end{eqnarray*}
with probability at least $1-7\gamma/9$. The proof is now complete.
\end{proof}

\begin{proof}[Proof of Corollary \ref{thm:eb-bootstrap-asymptotic_validity}]
Let $\gamma_n = [n \log^2(n)]^{-1}$. Then $\sum_{n=4}^\infty \gamma_n \le \int_3^\infty [x \log^2(x)]^{-1} dx = \log^{-1}(3) < \infty$. Applying Theorem \ref{thm:eb-bootstrap-rate} with $\gamma = \gamma_n$ and by the Borel-Cantelli lemma, we have $\Prob(\rho^B(T_n, T^*_n) > C \varpi_{1,n} \text{ i.o.}) = 0$ for part (i) and $\Prob(\rho^B(T_n, T^*_n) > C \{ \varpi_{1,n} + {\varpi'}_{2,n}^B(\gamma) \} \text{ i.o.}) = 0$ for part (ii), from which the Corollary follows.
\end{proof}

The proof of the validity of the randomly reweighted bootstrap with iid Gaussian weights (Section \ref{subsec:iid-weighted-bootstrap}) and Gaussian multiplier bootstrap with the jackknife covariance matrix estimator (Section \ref{subsec:gaussian-multiplier-bootstrap-jackknife}) can be found in Section \ref{app:proof-of-bootstraps} in the SM.

\subsection{Proof of results in Section \ref{sec:stat_apps}}
\label{subsec:proof-of-applications}

\begin{proof}[Proof of Theorem \ref{thm:thresholded_cov_mat_rate_adaptive}]
Let $\tau_\diamond = \beta^{-1} |\hat{S}_n-\Sigma|_\infty$. By the subgaussian assumption and Lemma \ref{lem:moment-bounds-gaussian-obs}, it is easy to verify that there is a large enough constant $C>0$ depending only on $C_2,C_3$ such that
\begin{equation}
\label{eqn:check_GA1}
\max_{\ell=1,2} \E[|h_{mk}|^{2+\ell} / (C \nu_n^{2\ell})] \vee \E[\exp(|h_{mk}| / \nu_n^2)] \le 2,
\end{equation}
where $h$ is the covariance matrix kernel. Since $\Gamma_{(j,k),(j,k)} \ge C_1$ for all $j,k=1,\cdots,p$ and $\nu_n^4 \log^7(np) \le C_4 n^{1-K}$, we have by Theorem \ref{thm:comparison_with_naive_gaussian_wild_bootstrap} that $ |\hat{S}_n-\Sigma|_\infty \le a_{\bar{T}_n^\sharp}(1-\alpha)$ with probability at least $1-\alpha-Cn^{-K/6}$, where $C > 0$ is a constant depending only on $C_i, i=1,\cdots,4$. Therefore, $\Prob(\tau_\diamond \le \tau_*) \ge 1- \alpha -C n^{-K/6}$ and the rest of the proof is restricted to the event $\{\tau_\diamond \le \tau_*\}$. By the decomposition,
\begin{eqnarray*}
\|\hat\Sigma(\tau_*)-\Sigma\|_2 &\le& \|\hat\Sigma(\tau_*)-T_{\tau_*}(\Sigma)\|_2 + \|T_{\tau_*}(\Sigma)-\Sigma\|_2 \\
&\le& I + II + III + \tau_*^{1-r} \zeta_p,
\end{eqnarray*}
where $T_\tau(\Sigma) = \{\sigma_{mk} \vone\{|\sigma_{mk}| > \tau\}\}_{m,k=1}^p$ is the resulting matrix of the thresholding operator on $\Sigma$ at the level $\tau$ and
\begin{eqnarray*}
I &=& \max_m \sum_k |\hat{s}_{mk}| \vone\{|\hat{s}_{mk}| > \tau_*, |\sigma_{mk}| \le \tau_*\}, \\
II &=& \max_m \sum_k |\sigma_{mk}| \vone\{|\hat{s}_{mk}| \le \tau_*, |\sigma_{mk}| > \tau_*\}, \\
III &=& \max_m \sum_k |\hat{s}_{mk}-\sigma_{mk}| \vone\{|\hat{s}_{mk}| > \tau_*, |\sigma_{mk}| > \tau_*\}.
\end{eqnarray*}
Note that on the event $\{\tau_\diamond \le \tau_*\}$, $\max_{m,k} |\hat{s}_{mk}-\sigma_{mk}| \le \beta \tau_*$. Since $\Sigma \in {\cal G}(r, \zeta_p)$, we can bound
$$
III \le (\beta \tau_*) (\tau_*^{-r} \zeta_p) = \beta \tau_*^{1-r} \zeta_p.
$$
By triangle inequality,
\begin{eqnarray*}
II &\le& \max_m \sum_k |\hat{s}_{mk}-\sigma_{mk}| \vone\{|\sigma_{mk}| > \tau_*\} + \max_m \sum_k |\hat{s}_{mk}| \vone\{|\hat{s}_{mk}| \le \tau_*, |\sigma_{mk}| > \tau_*\} \\
&\le& (\beta \tau_*) (\tau_*^{-r} \zeta_p) + \tau_* (\tau_*^{-r} \zeta_p) = (1+\beta) \tau_*^{1-r} \zeta_p.
\end{eqnarray*}
Let $\eta \in (0,1)$. We have $I \le IV + V + VI$, where
\begin{eqnarray*}
IV &=& \max_m \sum_k |\sigma_{mk}| \vone\{|\hat{s}_{mk}| > \tau_*, |\sigma_{mk}| \le \tau_*\}, \\
V &=&  \max_m \sum_k |\hat{s}_{mk}-\sigma_{mk}| \vone\{|\hat{s}_{mk}| > \tau_*,  |\sigma_{mk}| \le \eta \tau_*\},  \\
VI &=& \max_m \sum_k |\hat{s}_{mk}-\sigma_{mk}| \vone\{|\hat{s}_{mk}| > \tau_*,  \eta \tau_* < |\sigma_{mk}| \le \tau_*\} .
\end{eqnarray*}
Clearly, $IV \le \tau_*^{1-r} \zeta_p$. On the indicator event of $V$, we observe that 
$$
\beta \tau_* \ge |\hat{s}_{mk}-\sigma_{mk}| \ge |\hat{s}_{mk}|-|\sigma_{mk}| > (1-\eta) \tau_*.
$$
Therefore, $V=0$ if $\eta + \beta \le 1$. For $VI$, we have
$$
VI \le (\beta\tau_*) (\eta \tau_*)^{-r} \zeta_p.
$$
Collecting all terms, we conclude that
$$
\|\hat\Sigma(\tau_*)-\Sigma\|_2 \le (3+2\beta+\eta^{-r} \beta) \zeta_p \tau_*^{1-r} + V.  % = {3+2\beta+\eta^{-r} \beta \over \beta^{1-r}} \|\hat{S}-\Sigma\|^{1-r} s_p.
$$
Then (\ref{eqn:thresholded_cov_mat_rate_spectral_adaptive}) follows from the choice $\eta=1-\beta$. The Frobenius norm rate (\ref{eqn:thresholded_cov_mat_rate_F_adaptive}) can be established similarly. Details are omitted.

Next, we prove (\ref{eqn:tau_*-bound_subgaussian}). Let $\hat{g}_i = (n-1)^{-1} \sum_{j \neq i} h(X_i,X_j) - U_n$ and denote $\Phi(\cdot)$ as the cdf of the standard Gaussian random variable. By the union bound, we have for all $t > 0$
$$
\Prob_e \Big( {2 \over \sqrt{n}} | \sum_{i=1}^n \hat{g}_i e_i |_\infty \ge t \Big) \le 2 p^2 \Big[ 1-\Phi \Big( {t \over \bar\psi} \Big) \Big],
$$
where $\bar\psi = \max_{1 \le m,k \le p} |\psi_{mk}|$ and $\psi_{mk}^2 = 4 n^{-1} \sum_{i=1}^n \hat{g}_{i,mk}^2$. Let $\tilde\tau = n^{-1/2} \beta^{-1} \bar\psi \Phi^{-1}(1-\alpha/(2p^2))$; then $\tau_* \le \tilde\tau$. Since $\Phi^{-1}(1-\alpha/(2p^2)) \asymp (\log{p})^{1/2}$, we have $\E[\tau_*] \le C' \beta^{-1} \E[\bar\psi] (\log(p)/n)^{1/2}$, where $C' > 0$ is a constant only depending on $\alpha$. Now, we bound $\E[\bar\psi]$. By Jensen's inequality,
$$
\psi_{mk}^2 \le {16 \over n (n-1)} \sum_{1 \le i \neq j \le n} h_{mk}^2(X_i,X_j).
$$
Let $\ell = [n/2]$. By the data splitting argument in (\ref{eqn:reducing-Ustat-to-iid-sum}), \cite[Lemma 9]{cck2014b} and Jensen's inequality, we have
\begin{eqnarray*}
&& \E \Big\{ \max_{m,k} {1 \over n (n-1)} \Big| \sum_{1 \le i \neq j \le n} [h_{mk}^2(X_i,X_j) - \E h_{mk}^2] \Big| \Big\} \le {1 \over \ell} \E \Big\{ \max_{m,k} \Big| \sum_{i=1}^\ell [h_{mk}^2(X_i,X_{i+\ell}) - \E h_{mk}^2] \Big| \Big\} \\
&& \qquad \le {K_1 \over \ell} \Big\{ (\log{p})^{1/2} [\max_{m,k} \sum_{i=1}^\ell  \E h_{mk}^4(X_i,X_{i+\ell})]^{1/2} + (\log{p}) [\E \max_{m,k}\max_{1 \le i \le \ell} h_{mk}^4(X_i, X_{i+\ell})]^{1/2} \Big\}.
\end{eqnarray*}
By Pisier's inequality \cite[Lemma 2.2.2]{vandervaartwellner1996}, we have 
$$
\|\max_{m,k}\max_{i \le \ell} |h_{mk}(X_i, X_{i+\ell})| \|_4 \le K_2 \nu_n^2 \log(np).
$$
Hence it follows from (\ref{eqn:check_GA1}) that
$$
\E[\bar\psi^2] \le C \Big\{ \xi_4^4 +  \xi_8^4 \Big( {\log{p} \over n} \Big)^{1/2} + \nu_n^4 {\log^3(n p) \over n} \Big\}.
$$
Since $\nu_n^4 \log^7(np) \le C_4 n^{1-K}$ and $\xi_8 \le C_3 \nu_n^{1/2}$, we have $\E[\bar\psi] \le C \xi_4^2$, where $C > 0$ is constant depending only on $C_i,i =1,\cdots,4$. Then, we conclude that $\E[\tau_*] \le C(\alpha,C_1,\cdots,C_4) \beta^{-1} \xi_4^2 (\log(p)/n)^{1/2}$.
\end{proof}

%\begin{proof}[Proof of Corollary \ref{cor:thresholded_estimator_reduced_rank}]
%Let $\tilde{X}_i = (Z_{i1},\cdots,Z_{iL})^\top$ be iid random vectors in $\R^L$ and $\tilde{S}_n$ be the sample covariance matrix of $\tilde{X}_i$. Note that $\Cov(\tilde{X}_i) = \Id_L$ and $| \hat{S}_n - \Sigma |_\infty = | \tilde{S}_n - \Id_L|_\infty$. Then the Corollary follows from Theorem \ref{thm:mb-bootstrap-rate} applied to $\tilde{X}_1,\cdots,\tilde{X}_n$.
%\end{proof}

\section*{Acknowledgments}
The author would like to thank two anonymous referees, an Associate Editor, and the Co-Editor Tailen Hsing for their careful readings and many constructive comments that lead to significant improvements of this paper. The author is also grateful to Stephen Portnoy (UIUC), Xiaofeng Shao (UIUC), and Wei Biao Wu (University of Chicago) for their helpful discussions.

% AOS,AOAS: If there are supplements please fill:
\begin{supplement}[id=suppA]
  \sname{Supplement to}
  \stitle{``Gaussian and bootstrap approximations for high-dimensional U-statistics and their applications"}
  \slink[doi]{}
  %\sdatatype{.pdf}
  \sdescription{This supplemental file contains additional proofs, technical lemmas, and simulation results.}
\end{supplement}

\bibliographystyle{imsart-number}
\bibliography{bootstrap_ustat}

\newpage

\begin{center}
{\Large \bf Supplementary Material to ``Gaussian and bootstrap approximations for high-dimensional U-statistics and their applications"} \\

\vspace{0.1in}

Xiaohui Chen \\

\vspace{0.1in}

University of Illinois at Urbana-Champaign
\end{center}

\vspace{0.2in}

\appendix

\section{Proof details in Section 5.1}

\begin{lem}
\label{lem:decoupling_max_nonnegative}
Let $X_1,\cdots,X_n$ be a sample of independent random variables taking values in a measurable space $(S, {\cal S})$ and $h : S \times S \to \R^d$ be a symmetric and non-negative kernel such that $h(x_1,x_2) = h(x_2,x_1)$ and $h(x_1,x_2) \ge 0$ for all $x_1,x_2 \in S$. Let $X'_1,\cdots,X'_n$ be an independent copy of $X_1,\cdots,X_n$. Then, 
\begin{equation}
\label{eqn:decoupling_max_nonnegative}
\E[\max_{1 \le m \le d} \max_{1 \le i \neq j \le n} h_m(X_i, X'_j)] \le 4 \E[\max_{1 \le m \le d} \max_{1 \le i \neq j \le n} h_m(X_i, X_j)].
\end{equation}
\end{lem}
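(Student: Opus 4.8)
The plan is to prove this by a symmetrization-by-random-selection argument that converts the decoupled maximum on the left into the coupled maximum on the right. First I would enlarge the probability space with iid Bernoulli$(1/2)$ variables $\eta_1,\dots,\eta_n$, independent of $(X_1^n,{X'}_1^n)$, and define a new sample by $Y_k = X_k$ if $\eta_k=1$ and $Y_k = X'_k$ if $\eta_k=0$. Since $X_k$ and $X'_k$ are iid and the triples $(X_k,X'_k,\eta_k)$ are independent across $k$, the vector $(Y_1,\dots,Y_n)$ is again an iid sample with the same law as $(X_1,\dots,X_n)$; in particular
$$
\E\big[\max_{1\le m\le d}\max_{1\le i\neq j\le n} h_m(Y_i,Y_j)\big] = \E\big[\max_{1\le m\le d}\max_{1\le i\neq j\le n} h_m(X_i,X_j)\big].
$$

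Next, I would fix the realization of $(X_1^n,{X'}_1^n)$ and choose an index $(m^\ast,i^\ast,j^\ast)$ with $i^\ast\neq j^\ast$ attaining $W:=\max_m\max_{i\neq j} h_m(X_i,X'_j)$; the key point is that this index is measurable with respect to the data alone, hence independent of the selectors $\eta$. On the event $\{\eta_{i^\ast}=1,\eta_{j^\ast}=0\}$ one has $Y_{i^\ast}=X_{i^\ast}$ and $Y_{j^\ast}=X'_{j^\ast}$, so $h_{m^\ast}(Y_{i^\ast},Y_{j^\ast}) = W$. Using that $h\ge 0$ to insert the indicator for free, and that $\eta$ is independent of the data with $\Prob(\eta_{i^\ast}=1,\eta_{j^\ast}=0)=1/4$ because $i^\ast\neq j^\ast$,
$$
\E_\eta\big[\max_m\max_{i\neq j} h_m(Y_i,Y_j)\big] \ge \E_\eta\big[h_{m^\ast}(Y_{i^\ast},Y_{j^\ast})\big] \ge \E_\eta\big[h_{m^\ast}(Y_{i^\ast},Y_{j^\ast})\vone(\eta_{i^\ast}=1,\eta_{j^\ast}=0)\big] = \tfrac14 W .
$$
Taking expectation over the data gives $\E[\max_m\max_{i\neq j} h_m(Y_i,Y_j)]\ge \tfrac14\E[W]$, and combining with the distributional identity from the first step yields (\ref{eqn:decoupling_max_nonnegative}).

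This is mostly routine once the coupling $Y$ is set up; the two points that require care are that $(m^\ast,i^\ast,j^\ast)$ must be taken as a function of the data only so that it is independent of $\eta$, and that non-negativity of $h$ is precisely what permits dropping down to the event $\{\eta_{i^\ast}=1,\eta_{j^\ast}=0\}$ — without it the inequality could fail. The constant $4$ is exactly $1/\Prob(\eta_{i^\ast}=1,\eta_{j^\ast}=0)$, i.e.\ the cost of forcing $X_{i^\ast}$ into the first argument and $X'_{j^\ast}$ into the second; note that symmetry of $h$ is not actually needed for this argument.
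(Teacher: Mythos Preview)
Your proof is correct. Both your argument and the paper's use the same random-selection coupling: introduce iid fair coins, set $Y_k$ (the paper calls it $Z_k$) equal to $X_k$ or $X'_k$ according to the coin, and use that $(Y_1,\dots,Y_n)$ has the same law as $(X_1,\dots,X_n)$. The difference is in how the factor $4$ is extracted. The paper computes $\E[h_m(Z_i,Z_j)\mid \mathcal{X}] = \tfrac14[h_m(X_i,X_j)+h_m(X_i,X'_j)+h_m(X'_i,X_j)+h_m(X'_i,X'_j)]$, bounds $\max_{i\neq j} h_m(X_i,X'_j)$ pointwise by $4\max_{i\neq j}\E[h_m(Z_i,Z_j)\mid\mathcal{X}]$ via non-negativity, and then applies Jensen (convexity of the max) to pull the conditional expectation outside. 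Your argmax route instead fixes the maximizer $(m^\ast,i^\ast,j^\ast)$ as a function of the data, uses non-negativity to drop an indicator, and reads off $1/4$ as the probability of the favorable selector event. Your version is marginally more elementary (no Jensen step) and, as you correctly observe, does not use the symmetry of $h$; the paper invokes symmetry to equate $\max_{i\neq j} h_m(X_i,X'_j)$ with $\max_{i\neq j} h_m(X'_i,X_j)$, though that equality is in fact inessential to its bound as well. Either way the constant $4$ has the same origin: the four equally likely configurations of $(\eta_{i^\ast},\eta_{j^\ast})$.
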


\begin{proof}[Proof of Lemma \ref{lem:decoupling_max_nonnegative}]
The proof is based on \cite[Theorem 3.1.1]{delaPenaGine1999} with necessary modifications to the maximum function. Let $\varepsilon_i, i = 1,\cdots,n$, be a sequence of iid Rademacher random variables such that $\Prob(\varepsilon_i = \pm1) = 1/2$ and $\varepsilon_i, i =1,\cdots,n,$ are also independent of $X_1,\cdots,X_n$ and $X'_1,\cdots,X'_n$. Define
$$
Z_i = \left\{ \begin{array}{cc}
X_i & \text{if } \varepsilon_i = +1 \\
X'_i & \text{if } \varepsilon_i = -1 \\
\end{array} \right. ,
\qquad 
Z'_i = \left\{ \begin{array}{cc}
X'_i & \text{if } \varepsilon_i = +1 \\
X_i & \text{if } \varepsilon_i = -1 \\
\end{array} \right. .
$$
Then, 
\begin{eqnarray*}
{\cal L}(Z_1,\cdots,Z_n,Z'_1,\cdots,Z'_n) &=& {\cal L}(X_1,\cdots,X_n,X'_1,\cdots,X'_n), \\
{\cal L}(Z_1,\cdots,Z_n) &=& {\cal L}(X_1,\cdots,X_n).
\end{eqnarray*}
Let ${\cal X} = \sigma(X_i,X'_i : i=1,\cdots,n)$ be the $\sigma$-filed generated by the $X$ and $X'$ random variables. Then, we have for $1 \le i \neq j \le n$
$$
\E[h(Z_i,Z_j) | {\cal X}] =[h(X_i,X_j) + h(X_i,X'_j) + h(X'_i,X_j) + h(X'_i,X'_j)] / 4.
$$
By the symmetry of $h$, for each $m=1,\cdots,d$ we have
$$
\max_{1 \le i \neq j \le n} h_m(X_i,X'_j) = \max_{1 \le i \neq j \le n} h_m(X'_i,X_j).
$$
Then, it follows from the non-negativity of $h$ that 
\begin{eqnarray*}
\max_{1 \le i \neq j \le n} h_m(X_i,X'_j) &=& \max_{1 \le i \neq j \le n} [ h_m(X_i,X'_j) \vee h_m(X'_i,X_j) ] \\
&\le& \max_{1 \le i \neq j \le n} [ h_m(X_i,X'_j) + h_m(X'_i,X_j) + h(X_i,X_j) + h(X'_i,X'_j)] \\
&=& 4 \max_{1 \le i \neq j \le n} \E[h_m(Z_i,Z_j) | {\cal X}].
\end{eqnarray*}
Therefore, we have
\begin{eqnarray*}
\E[\max_{1 \le m \le d} \max_{1 \le i \neq j \le n} h_m(X_i,X'_j)] &\le& 4 \E \{ \max_{1 \le m \le d} \max_{1 \le i \neq j \le n} \E[h_m(Z_i,Z_j) | {\cal X}] \} \\
&\le& 4 \E[\max_{1 \le m \le d} \max_{1 \le i \neq j \le n} h_m(X_i,X_j)],
\end{eqnarray*}
where the last step follows from Jensen's inequality and the fact that $Z_1,\cdots,Z_n$ have the same joint law as $X_1,\cdots,X_n$.
\end{proof}

\begin{proof}[Proof of Theorem \ref{thm:expectation-bound}]
In the proof, we shall use $K_1,K_2,\cdots,$ to denote absolute constants whose values may differ from place to place, and the indices $i \neq j$ and $m$ implicitly run over $1 \le i \neq j \le n$ and $1 \le m \le d$, respectively. Let $X,X',X'_1,\cdots,X'_n$ be iid random variables with the distribution $F$ and they are independent of $X_1^n$ Let $\varepsilon_i, i = 1,\cdots,n$, be a sequence of iid Rademacher random variables such that $\Prob(\varepsilon_i = \pm1) = 1/2$ and $\varepsilon_i, i =1,\cdots,n,$ are also independent of $X_1^n$. By the randomization inequality \cite[Theorem 3.5.3]{delaPenaGine1999}, we have
$$
\E[ \max_m |\sum_{i \neq j} f_m(X_i,X_j)| ] \le K_1 \E[ \max_m |\sum_{i \neq j} \varepsilon_i \varepsilon_j f_m(X_i,X_j)| ].
$$
Fix an $m=1,\cdots,d$ and let $\Lambda^m$ be the $n \times n$ matrix with diagonal of zeros and $\Lambda^m_{ij} = f_m(X_i,X_j)$ for $i \neq j$. Since $\tr(\Lambda^m)=0$ and $\varepsilon_i$'s are iid sub-Gaussian, by the Hanson-Wright inequality \cite[Theorem 1]{rudelsonvershynin2013a}, conditional on $X_1^n$, we have for all $t>0$
\begin{equation*}
\Prob(|\mbf\varepsilon^\top \Lambda^m \mbf\varepsilon| \ge t \mid X_1^n) \le 2 \exp\left\{ -K_2 \min\left[ {t^2 \over |\Lambda^m|_F^2}, {t \over \|\Lambda^m\|_2} \right] \right\},
\end{equation*}
where $\mbf\varepsilon = (\varepsilon_1,\cdots,\varepsilon_n)^\top$. Denote $V_1 = \max_m |\Lambda^m|_F$ and $V_2 = \max_m \|\Lambda^m\|_2$. Let 
$$t^* = \max \left\{ V_1 \sqrt{\log(d) / K_2}, \quad V_2 {\log(d) / K_2} \right\}.$$
By the union bound, we have
\begin{eqnarray*}
&& \E[\max_m |\mbf\varepsilon^\top \Lambda^m \mbf\varepsilon|  \mid X_1^n] = \int_0^\infty \Prob(\max_m |\mbf\varepsilon^\top \Lambda^m \mbf\varepsilon| \ge t \mid X_1^n) \; dt \\
&& \qquad \le t^* + 2 d \int_{t^*}^\infty \max\left\{ \exp\left( -{K_2 t^2 \over V_1^2} \right), \; \exp\left( -{K_2 t \over V_2} \right) \right\} \; dt.
\end{eqnarray*}
Since $d \ge 2$, we have by changing variables that
$$
\int_{t^*}^\infty \exp\left( -{K_2 t^2 \over V_1^2} \right) \; dt \le {V_1 \over \sqrt{2 K_2} } \int_{\sqrt{2 \log d}}^\infty \exp\left(-{s^2 \over2} \right) \;ds.
$$
By the tail bound $1-\Phi(x) \le \phi(x)/x$ for all $x>0$, where $\Phi(\cdot)$ and $\phi(\cdot)$ are the cdf and pdf of the standard Gaussian random variable, respectively, it follows that
\begin{equation}
\label{eqn:V_1}
2 d \int_{t^*}^\infty \exp\left( -{K_2 t^2 \over V_1^2} \right) \; dt  \le {V_1 \over \sqrt{K_2 \log{d}}} \le K_2 V_1.
\end{equation}
Similarly, we have
$$
2 d \int_{t^*}^\infty \exp\left( -{K_2 t / V_2} \right) \; dt \le 2 V_2 / K_2.
$$
Note that $V_2 \le V_1$. Therefore, we have
\begin{eqnarray}
\nonumber
\E|\sum_{i \neq j} \varepsilon_i \varepsilon_j f(X_i,X_j)|_\infty \le K_3 t^* &\le&  K_3 (\log{d}) \E\max_m |\Lambda^m|_F \\
\label{eqn:Lambda2_term}
&\le& K_3 (\log{d}) (\E\max_m |\Lambda^m|_F^2)^{1/2},
\end{eqnarray}
where the last step (\ref{eqn:Lambda2_term}) follows from Jensen's inequality. 

Next, we bound the term $I := \E[\max_m |\Lambda^m|_F^2] = \E[\max_m \sum_{i \neq j} f_m^2(X_i,X_j)]$. Consider the Hoeffding decomposition of $f_{mk}^2$. Let
$$
\tilde{f}^m_1(x_1) = \E f^2_m(x_1,X') - \E f^2_{mk}
$$
and
$$
\tilde{f}^m(x_1,x_2) = f^2_m(x_1,x_2) - \E f^2_m(x_1,X') - \E f^2_m(X,x_2) + \E f^2_m.
$$
Clearly, $\E [\tilde{f}^m_1(X)] = 0$ and $\E [\tilde{f}^m(X,X')] = \E [\tilde{f}^m(x_1,X')] = \E [\tilde{f}^m(X,x_2)] = 0$ for all $x_1,x_2 \in S$; i.e. $\tilde{f}^m_1$ is centered and $\tilde{f}^m$ is a canonical kernel of U-statistic of order two. Since
$$
f^2_m(x_1,x_2) - \E f^2_m = \tilde{f}^m(x_1,x_2) + \tilde{f}^m_1(x_1) + \tilde{f}^m_1(x_2),
$$
we have by the triangle inequality
\begin{eqnarray}
\nonumber
\E \max_m \sum_{i\neq j} f_m^2(X_i,X_j) &\le& \max_m \sum_{i\neq j} \E f_m^2 + \E \max_m \Big| \sum_{i\neq j} ( f_m^2(X_i,X_j) - \E f_m^2 ) \Big| \\
\label{eqn:f^2_quad}
&\le& n^2 \max_m \E f_m^2 + \E |\sum_{i\neq j} \tilde{f}(X_i,X_j)|_\infty + (n-1) \E |\sum_{i=1}^n \tilde{f}_1(X_i)|_\infty,
\end{eqnarray}
where $\tilde{f}=\{\tilde{f}^m\}_{m=1}^d$ and $\tilde{f}_1=\{\tilde{f}^m_1\}_{m=1}^d$ are $d \times 1$ random vectors. By the Hoeffding inequality, conditional on $X_1^n$, we have for all $t>0$
\begin{equation*}
\Prob(|\sum_{i=1}^n \varepsilon_i \tilde{f}_1^m(X_i)| \ge t \mid X_1^n) \le 2 \exp\left[ - {t^2 \over 2 \sum_{i=1}^n (\tilde{f}_1^m(X_i))^2} \right].
\end{equation*}
By the symmetrization inequality \cite[Lemma 2.3.1]{vandervaartwellner1996} and the argument for bounding (\ref{eqn:V_1}), we get
\begin{equation}
\label{eqn:f_tilde_1}
\E |\sum_{i=1}^n \tilde{f}_1(X_i)|_\infty \le 2 \E |\sum_{i=1}^n \varepsilon_i \tilde{f}_1(X_i)|_\infty \le K_4 (\log{d})^{1/2} \E \Big[ \max_m \sum_{i=1}^n \tilde{f}^m_1(X_i)^2 \Big]^{1/2}.
\end{equation}
By the randomization inequality as in the previous argument before Jensen's inequality (\ref{eqn:Lambda2_term}), we get
\begin{equation}
\label{eqn:f_tilde_2}
\E |\sum_{i \neq j}\tilde{f}(X_i,X_j)|_\infty \le K_1 \E |\sum_{i\neq j} \varepsilon_i \varepsilon_j \tilde{f}(X_i,X_j)|_\infty \le K_3 (\log{d}) \E \Big[ \max_m \sum_{i \neq j} \tilde{f}^m(X_i,X_j)^2 \Big]^{1/2}.
\end{equation}
By the triangle and Jensen's inequalities, we have
\begin{eqnarray*}
&& \E \Big[ \max_m \sum_{i\neq j} \tilde{f}^m(X_i,X_j)^2 \Big]^{1/2}  \le 2 \E \Big[\max_m \sum_{i\neq j} f_m^4(X_i,X_j) \Big]^{1/2} \\
&& \qquad \qquad  + 8^{1/2} \E\Big\{ \max_m \sum_{i\neq j} [\E( f_m^2(X_i, X') | X_1^n)]^2 \Big\}^{1/2} + 2 \Big[ \max_m \sum_{i \neq j} (\E f_m^2)^2 \Big]^{1/2}.
\end{eqnarray*}
By the Cauchy-Schwarz inequality, we have
$$
\E \Big[\max_m \sum_{i\neq j} f_m^4(X_i,X_j) \Big]^{1/2} \le \sqrt{I} \sqrt{\E M^2},
$$
and
\begin{eqnarray*}
&& \E\Big\{ \max_m \sum_{i\neq j} [\E( f_m^2(X_i, X') | X_1^n)]^2 \Big\}^{1/2}\\
&\le& \E \Big\{ \Big[\max_m \max_{i\neq j} \E( f_m^2(X_i, X'_j) | X_1^n) \Big]^{1/2} \Big[\max_m \sum_{i\neq j} \E( f_m^2(X_i, X'_j) | X_1^n \Big]^{1/2} \Big\} \\
&\le& \Big[\E \max_m \max_{i\neq j} \E( f_{mk}^2(X_i, X'_j) | X_1^n) \Big]^{1/2} \Big[ \E \max_m \sum_{i\neq j} \E( f_m^2(X_i, X'_j) | X_1^n) \Big]^{1/2} \\
&\le& 4 \sqrt{\E M^2} \sqrt{I},
\end{eqnarray*}
where in the last step we used Jensen's inequality, Lemma \ref{lem:decoupling_max_nonnegative}, and the decoupling inequality \cite[Theorem 3.1.1]{delaPenaGine1999}. In addition, $\max_m \sum_{i \neq j} (\E f_m^2)^2 \le I \cdot \E M^2$. Therefore, we obtain from (\ref{eqn:f_tilde_2}) that
$$
\E |\sum_{i\neq j}\tilde{f}(X_i,X_j)|_\infty \le K_5 (\log{d}) \sqrt{I} \sqrt{\E M^2}.
$$
By (\ref{eqn:f^2_quad}), (\ref{eqn:f_tilde_1}), and (\ref{eqn:f_tilde_2}), we obtain that
$$
I \le K_6 \Big\{ n^2 \max_m \E f_m^2 + (\log{d}) \sqrt{I} \sqrt{\E M^2} + n (\log{d})^{1/2} \E \Big[ \max_m \sum_i \tilde{f}^m_1(X_i)^2 \Big]^{1/2} \Big\}.
$$
The solution of this quadratic inequality for $I$ is given by
\begin{equation}
\label{eqn:bound_I}
I \le K_7 \Big\{ (\log{d})^2 (\E M^2) + n^2 \max_m \E f_m^2 + n (\log{d})^{1/2} \E \Big[ \max_m \sum_i \tilde{f}^m_1(X_i)^2 \Big]^{1/2} \Big\}.
\end{equation}
By \cite[Lemma 9]{cck2014b}, Jensen's inequality, and Lemma \ref{lem:decoupling_max_nonnegative},
\begin{eqnarray}
\nonumber
\E \Big[ \max_m \sum_i \tilde{f}^m_1(X_i)^2 \Big] &\le& K_8 \Big\{ \max_m \E \sum_i \tilde{f}^m_1(X_i)^2 + (\log{d}) \E  \Big[ \max_m \max_i \tilde{f}^m_1(X_i)^2 \Big] \Big\} \\
\nonumber
&\le& K_8 \Big\{ n \max_m \E f_m^4(X_1,X_2) + (\log{d}) \E  \Big[ \max_m \max_{i \neq j} f_m^4(X_i,X'_j) \Big] \Big\}  \\
\label{eqn:bound_I_simplify}
&\le& K_8 \Big\{ n \max_m \E f_m^4(X_1,X_2) + (\log{d}) \E M^4 \Big\}.
\end{eqnarray}
Now, combining (\ref{eqn:Lambda2_term}), (\ref{eqn:bound_I}), and (\ref{eqn:bound_I_simplify}), we conclude that
\begin{eqnarray*}
&&\qquad  \E | \sum_{i \neq j} f(X_i,X_j) |_\infty \\
&\le& K_9 (\log{d}) \Big\{ n^2 \max_m \E f_m^2 + (\log{d})^2 \E M^2 + n (\log{d})^{1/2} [n \max_m \E f_m^4 + (\log{d}) \E M^4]^{1/2}  \Big\}^{1/2} \\
&\le& K_9 (\log{d}) \Big\{ n D_2 + (\log{d}) \|M\|_2 + n^{3/4} (\log{d})^{1/4} D_4 + (n \log{d})^{1/2} \|M\|_4 \Big\}.
\end{eqnarray*}
Since $\|M\|_2 \le \|M\|_4$ and $d \le \exp(b n)$, we have $(\log{d}) \|M\|_2 \le (b n \log{d})^{1/2} \|M\|_4$, from which (\ref{eqn:expectation-bound-canonical}) follows.
\end{proof}

\begin{proof}[Proof of Corollary \ref{cor:expectation-subexponential-polynomial-kernel}]
Part (i) of the Corollary follows from the bounds $D_2 \le D_4 \le B_n$ and $\|M\|_4 \le K B_n \log(nd)$. Part (ii) of the Corollary follows from the bounds $D_2 \le D_4 \le B'_n$ and $\|M\|_4 \le B_n n^{2/q}$.
\end{proof}

\section{Proof details of Section 5.2}

\begin{proof}[Proof of Proposition \ref{prop:general-GA-rate}]
Without loss of generality, we may assume $Y$ is independent of $X_1^n$. First, we deal with $\tilde\rho^{re}(T_n, Y)$. We may assume that $\phi_n \ge 1$ for otherwise we can choose a large enough $C_1$ to make (\ref{eqn:general-GA-rate}) trivially hold. For any $\phi \ge 1$, let $\beta = \phi \log{d}$ and $F_\beta : \R^d \to \R$ such that
\begin{equation*}
F_\beta(\omega) = {1\over\beta} \log( \sum_{j=1}^d \exp\left( \beta(\omega_j - y_j) \right) ) \qquad \text{for } \omega, y \in \R^d.
\end{equation*}
Then, for all $\omega \in \R^d$
\begin{equation}
\label{eqn:smoothing_property_F_beta}
0 \le F_\beta(\omega) - \max_{1 \le j \le d} (\omega_j-y_j) \le  {\log{d} \over \beta} = \phi^{-1}.
\end{equation}
Let $u_0 : \R \to [0,1]$ be a thrice continuously differentiable function with bounded derivatives, say by an absolute constant $K_1>0$, such that $u_0(t) = 1$ if $t \le 0$ and $u_0(t) = 0$ if $t>1$. Let $u(t) = u_0(\phi t)$; then $\lambda(\omega) = u(F_\beta(\omega))$ is a mapping from $\R^d$ to $[0,1]$, where $u(\cdot)$ satisfies the smoothing property
\begin{equation}
\label{eqn:smoothing_property_u}
\vone(t \le 0) \le u(t) \le \vone(t \le \phi^{-1}) \qquad \text{for } t \in \R.
\end{equation}
Let $Y'$ be an independent copy of $Y$ and for $ v \in [0,1]$
$$
{\cal I}_n(v) = \lambda(\sqrt{v} L_n + \sqrt{1-v} Y) - \lambda(Y').
$$
By \cite[equation (26)]{cck2015a}, we have for all $\phi \ge 1$
$$
|\E [{\cal I}_n(v)]| \le C_1(\ub) {\phi^2 \log^2{d} \over n^{1/2} } \left( \phi D_{g,3} \rho_{n,1} +  D_{g,3} \log^{1/2}{d} + \phi  M_3(\phi) \right),
$$
where
$$
\rho_{n,1} = \sup_{y \in \R^d, \; v \in [0,1]} \left| \Prob(\sqrt{v} L_n + \sqrt{1-v} Y \le y) - \Prob(Y' \le y) \right|.
$$
By the mean value theorem, we have
$$
\lambda(T_n) - \lambda(L_n) = \sum_{m=1}^d \partial_m \lambda(\xi) R_{nm} = \sum_{m=1}^d u'(F_\beta(\xi)) \eta_m(\xi) R_{nm},
$$
where $\eta_m(\omega) = \partial_m F_\beta(\omega)$ is the first-order partial derivative of $F_\beta$ w.r.t. $\omega_m$ and $\xi$ is a $d\times 1$ random vector on the line segment joining $L_n$ and $T_n$. Since $\eta_m(\omega) \ge 0$, $\sum_{m=1}^d \eta_m(\omega) = 1$ for any $\omega \in \R^d$ and $\sup_{t \in \R} |u'(t)| \le K_1 \phi$, we have
$$
| \E [\lambda(T_n) - \lambda(L_n)] | \le K_1 \phi \E |R_n|_\infty.
$$
Therefore, by the smoothing properties (\ref{eqn:smoothing_property_F_beta}) and (\ref{eqn:smoothing_property_u}), we have
\begin{eqnarray*}
&& \Prob(T_n \le y-\phi^{-1}) \le \Prob(F_\beta(T_n) \le 0) \le \E[\lambda(T_n)] \\
&\le& \E[\lambda(L_n)] + K_1 \phi \E|R_n|_\infty \le \E[\lambda(Y')] + |\E[{\cal I}_n(1)]| + K_1 \phi \E|R_n|_\infty \\
&\le& \Prob(F_\beta(Y') \le \phi^{-1}) + |\E[{\cal I}_n(1)]| + K_1 \phi \E|R_n|_\infty \\
&\le& \Prob(Y' \le y + \phi^{-1}) + |\E[{\cal I}_n(1)]| + K_1 \phi \E|R_n|_\infty \\
&\le& \Prob(Y' \le y - \phi^{-1}) + C_2(\ub) \phi^{-1} \log^{1/2}{d} + |\E[{\cal I}_n(1)]| + K_1 \phi \E|R_n|_\infty,
\end{eqnarray*}
where the last inequality follows from Nazarov's inequality \cite{nazarov2003} or \cite[Lemma A.1]{cck2015a}. For the lower bound, we interchange the roles of $T_n$ and $Y'$ to get
\begin{eqnarray*}
\Prob(Y' \le y- \phi^{-1}) \le  \Prob(T_n \le y + \phi^{-1}) + |\E[{\cal I}_n(1)]| + K_1 \phi \E|R_n|_\infty.
\end{eqnarray*}
By a second application of Nazarov's inequality \cite[Lemma A.1]{cck2015a}, $\Prob(Y' \le y- \phi^{-1}) \ge \Prob(Y' \le y + \phi^{-1}) - C_2(\ub) \phi^{-1} \log^{1/2} d$. Since $y \in \R^d$ is arbitrary, we have
\begin{eqnarray*}
&& \tilde\rho^{re}(T_n,Y') \le K_1 \phi E|R_n|_\infty + C_2(\ub) \phi^{-1} \log^{1/2}{d} \\
&& \qquad\qquad + C_1(\ub) {\phi^2 \log^2{d} \over n^{1/2} } \left( \phi D_{g,3} \rho_{n,1} +  D_{g,3} \log^{1/2}{d} + \phi  M_3(\phi) \right) .
\end{eqnarray*}
Choose $\phi=\phi_n \ge 1$.Then, by the argument of \cite[Theorem 2.1 and Lemma 5.1]{cck2015a} in bounding $\rho_{n,1}$, we get
\begin{equation}
\label{eqn:rho_re_Tn_Y}
\tilde\rho^{re}(T_n,Y') \le C_3(\ub) \left\{ \phi_n \E |R_n|_\infty +  \left({\bar{D}_{g,3}^2 \log^7 d \over n}\right)^{1/6} + {M_3(\phi_n) \over \bar{D}_{g,3}}\right\}
\end{equation}
for any real sequence $\bar{D}_{g,3} \ge D_{g,3}$. By Jensen's inequality and Lemma \ref{lem:decoupling_max_nonnegative},
$$\E[\max_{1 \le m \le d} \max_{1 \le i \neq j \le n} f_m^4(X_i, X_j)] \le K_2 \E[\max_{1 \le m \le d} \max_{1 \le i \neq j \le n} h_m^4(X_i, X_j)]$$
for some absolute constant $K_2 > 0$. By Theorem \ref{thm:expectation-bound} and in view of $M_{h,4}(0) \le M_{h,4}(\tau) + \tau^4$ for any $\tau \ge 0$, there exists an absolute constant $K_3 > 0$ such that 
\begin{equation}
\label{eqn:Rn_max_norm_bound}
\E |R_n|_\infty \le K_3 (\bar{b}^{1/2}  + 1) \left\{ {\log^{3/2}{d} \over n} (M_{h,4}(\tau)^{1/4} + \tau) + {\log{d} \over n^{1/2}} D_2^{1/2} + {\log^{5/4}{d} \over n^{3/4}} D_4^{1/4} \right\}.
\end{equation}
Now, (\ref{eqn:general-GA-rate}) follows from (\ref{eqn:rho_re_Tn_Y}) and (\ref{eqn:Rn_max_norm_bound}). The bound for $\rho^{re}(T_n,Y)$ can be similarly dealt with using the argument in \cite[Corollary 5.1]{cck2015a} and we omit the details. 
\end{proof}

\section{Proof details of Section 5.3}
\label{app:proof-of-bootstraps}

\begin{lem}
\label{lem:gauss-comp-general}
Suppose that (M.1) holds. Let $\Gamma_n$ be a symmetric positive-semidefinite matrix depending only on $X_1^n$. Let $Z^X \mid X_1^n \sim N(0,\Gamma_n)$ and $\Delta_n = |\Gamma_n-\Gamma|_\infty$. Then there exists a constant $C(\ub) > 0$ such that for every sequence of real numbers $\bar{\Delta}_n > 0$, we have 
\begin{equation}
\label{eqn:gauss-comp-general}
\sup_{A \in {\cal A}^{re}} |\Prob(Y \in A) - \Prob(Z^X \in A \mid X_1^n) | \le C(\ub) (\log d)^{2/3} \bar\Delta_n^{1/3}
\end{equation}
on the event $\{\Delta_n \le \bar{\Delta}_n\}$.
\end{lem}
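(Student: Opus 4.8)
The plan is to obtain \eqref{eqn:gauss-comp-general} as an immediate consequence of the high-dimensional Gaussian comparison inequality for hyperrectangles, applied conditionally on the sample. First I would note that the event $\{\Delta_n \le \bar\Delta_n\}$ is $\sigma(X_1^n)$-measurable, so it suffices to fix a realization of $X_1^n$ lying in this event and to bound $\sup_{A \in \mathcal{A}^{re}} |\Prob(Y \in A) - \Prob(Z^X \in A \mid X_1^n)|$ for that realization. Both $Y \sim N(0,\Gamma)$ and $Z^X \mid X_1^n \sim N(0,\Gamma_n)$ are centered Gaussian vectors in $\R^d$; by assumption (M.1) the reference covariance satisfies $\Gamma_{mm} = \E[g_m^2(X)] \ge \ub$ for every $m = 1,\dots,d$, while $\Gamma_n$ is symmetric positive-semidefinite. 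The only proximity information available is $|\Gamma_{n,jk} - \Gamma_{jk}| \le \Delta_n \le \bar\Delta_n$ for all $j,k = 1,\dots,d$.

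Second, I would invoke the Gaussian comparison theorem for the class of hyperrectangles in the form with the $1/3$ exponent (cf. \cite{cck2015a,cck2013} and the hyperrectangle counterpart used throughout this paper): for two centered Gaussian vectors in $\R^d$ whose covariance matrices differ by at most $\delta$ in the entrywise sup-norm, and such that the diagonal entries of at least one of them are bounded below by a constant $\ub > 0$, one has $\sup_{A \in \mathcal{A}^{re}} |\Prob(\cdot \in A) - \Prob(\cdot \in A)| \le C(\ub)\, \delta^{1/3} (\log d)^{2/3}$, using $d \ge 3$. Applying this with $\delta = \Delta_n$ yields $\sup_{A \in \mathcal{A}^{re}} |\Prob(Y \in A) - \Prob(Z^X \in A \mid X_1^n)| \le C(\ub)\, \Delta_n^{1/3} (\log d)^{2/3}$; on the event $\{\Delta_n \le \bar\Delta_n\}$ the monotonicity of $x \mapsto x^{1/3}$ lets us replace $\Delta_n$ by $\bar\Delta_n$, which is exactly \eqref{eqn:gauss-comp-general}.

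The part requiring care is verifying the hypotheses of the comparison theorem rather than any new estimate: (i) the Slepian-type interpolation $\Gamma(t) = t\Gamma + (1-t)\Gamma_n$ stays positive-semidefinite for all $t \in [0,1]$, being a convex combination of PSD matrices, so that a legitimate Gaussian interpolant exists; and (ii) the anti-concentration step (Nazarov's inequality, as in the proof of Proposition \ref{prop:general-GA-rate}) needs a uniform lower bound on coordinate variances, which is supplied by $\Gamma_{mm} \ge \ub$ — and, crucially, the comparison can be arranged so that Nazarov's inequality is applied to $Y$, whose coordinates are non-degenerate, rather than to the possibly degenerate $Z^X$. No moment condition on $h$ nor any control of the off-diagonal structure of $\Gamma$ enters, since the statement merely compares two Gaussian laws with nearby covariances, so the constant depends on $F$ only through $\ub$. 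Finally, extending the bound from hyperrectangles with finite sides to the full class $\mathcal{A}^{re}$ (allowing $a_j = -\infty$ or $b_j = +\infty$) is a routine monotone-limit argument, the estimate being uniform in the side lengths.
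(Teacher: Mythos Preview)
Your proposal is correct and rests on the same ingredients as the paper's proof (Slepian interpolation, the smoothing functions $u$ and $F_\beta$, Nazarov's anti-concentration applied to $Y$, and balancing the smoothing parameter). The only difference is one of packaging: you invoke the Gaussian comparison bound for hyperrectangles from \cite{cck2015a,cck2013} as a black box, whereas the paper re-derives it inline --- starting from the Stein-type estimate $|\E[u(F_\beta(Z^X))\mid X_1^n]-\E[u(F_\beta(Y))]|\le(\|u''\|_\infty/2+\beta\|u'\|_\infty)\Delta_n$ (a variant of \cite[Theorem~1]{cck2014b}), running the smoothing/anti-concentration steps of Proposition~\ref{prop:general-GA-rate} to obtain a bound $C(\ub)\{\phi^{-1}(\log d)^{1/2}+\phi^2(\log d)\Delta_n\}$, and then optimizing over $\phi$ to land on $(\log d)^{2/3}\bar\Delta_n^{1/3}$. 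Your observation that Nazarov's inequality must be applied to $Y$ rather than to the possibly degenerate $Z^X$ is exactly the point the paper's derivation handles, and your final monotone-limit remark corresponds to the paper's appeal to \cite[Corollary~5.1]{cck2015a}.
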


\begin{proof}[Proof of Lemma \ref{lem:gauss-comp-general}]
Let $F_\beta(\cdot)$ be defined in Proposition \ref{prop:general-GA-rate} with $\beta = \phi \log{d}$. By a variant of \cite[Theorem 1]{cck2014b}, we have
$$
\left| \E[u(F_\beta(Z^X)) \mid X_1^n] - \E[u(F_\beta(Y))] \right| \le (\|u''\|_\infty / 2 + \beta \|u'\|_\infty) \Delta_n,
$$
where $u$ and $F_\beta$ are the same as in the proof of Proposition \ref{prop:general-GA-rate}. By the smoothing steps in the proof of Proposition \ref{prop:general-GA-rate}, we obtain that 
\begin{eqnarray*}
\left| \Prob(Z^X \le y - \phi^{-1} \mid X_1^n) - \Prob(Y \le y - \phi^{-1}) \right| \le C(\ub) \{ \phi^{-1} (\log d)^{1/2} + (\phi^2 + \beta \phi) \Delta_n \}.\end{eqnarray*}
Since $y \in \mathbb{R}^d$ is arbitrary, we have
$$
\sup_{y \in \R^d} |\Prob(Y \le y) - \Prob(Z^X \le y \mid X_1^n) | \le C(\ub) \{ \phi^{-1} (\log d)^{1/2} + (\phi^2 + \beta \phi) \Delta_n \}.
$$
Using $\beta = \phi \log{d}$ and optimizing the last bound over $\phi$, we have
$$
\sup_{y \in \R^d} |\Prob(Y \le y) - \Prob(Z^X \le y \mid X_1^n) | \le C(\ub) (\log d)^{2/3} \bar\Delta_n^{1/3}
$$
on the event $\{\Delta_n \le \bar{\Delta}_n\}$. Then (\ref{eqn:gauss-comp-general}) follows from the argument in \cite[Corollary 5.1]{cck2015a}.
\end{proof}

\begin{proof}[Proof of Proposition \ref{prop:gaussian_comp_leading_term-EB}]
In this proof, we shall use $K_i > 0, i=1,2,\cdots,$ to denote absolute constants. The proof is to use Lemma \ref{lem:gauss-comp-general}. Specifically, we shall find a real sequence $\bar{\Delta}_n$ such that $\Prob(\hat\Delta_n \ge \bar{\Delta}_n) \le \gamma$ and then bound $(\bar\Delta_n \log^2{d})^{1/3}$. First, we may assume that 
$$
n^{-1} B_n^2 \log^5(nd) \log^2(1/\gamma) \le 1
$$
since otherwise the proof is trivial by setting the constants $C(\ub) \ge 1$ in (i) and $C(\ub,q) \ge 1$ in (ii).
Let $\hat\Gamma_{n,1} = n^{-3} \sum_{i,j,k=1}^n h(X_i,X_j) h(X_i,X_k)^\top$, $\hat\Gamma_{n,2} = V_n V_n^\top$, $\Gamma_1 = \E[h(X_1,X_2) h(X_1,X_3)^\top]$, and $\Gamma_2 = \theta \theta^\top$. Then, $\hat\Gamma_n = \hat\Gamma_{n,1}-\hat\Gamma_{n,2}$ and $\Gamma = \Gamma_1-\Gamma_2$. 

{\bf Case (i).} Assuming (E.1) and (E.1').  We shall first deal with $\hat\Gamma_{n,1}-\Gamma_1$, which can be decomposed as the summation of five U-statistics
\begin{equation}
\label{eqn:decomp_Delta_hat}
\sum_{i,j,k=1}^n = \underbrace{\sum_{1 \le i \neq j \neq k \le n} + \sum_{j=k \neq i}}_{\text{off diagonal of } h} + \underbrace{\left (\sum_{i=j \neq k} + \sum_{i=k \neq j} \right) + \sum_{i=j=k}}_{\text{on and off diagonal of } h}.
\end{equation}
The first two terms involve only off-diagonal entries $h(x_i, x_j)$ for $i \neq j$, while the last three terms involve both diagonal $h(x_i, x_i)$ and off-diagonal entries. We first tackle the case $1 \le i \neq j \neq k \le n$, which will be shown to be the leading term in $|\hat\Gamma_{n,1}-\Gamma_1|_\infty$ and the remaining cases can similarly be dealt with. 

{\it Term $\sum_{1 \le i \neq j \neq k \le n}$.} Let $H(x_1,x_2,x_3) = h(x_1,x_2) h(x_1,x_3)^\top$ for $x_i \in S, i=1,2,3,$ and
$$
\hat\Gamma_{n,1,1} = {(n-3)! \over n!} \sum_{1 \le i \neq j \neq k \le n} H(X_i, X_j, X_k).
$$
Then $\hat\Gamma_{n,1,1}$ is a U-statistics of order three and $\E [\hat\Gamma_{n,1,1}] = \Gamma_1$. Let $r = [n/3]$, $Z = r |\hat\Gamma_{n,1,1}-\Gamma_1|_\infty$, and
$$M = \max_{0 \le i \le r-1} \max_{1 \le m_1, m_2 \le d} |H_m(X_{3i+1}^{3i+3})|$$
for all $m = (m_1,m_2)$ where $m_1,m_2 = 1,\cdots,d$. Let $\tau = 8 \E[M]$. By Lemma \ref{lem:subexp-concentration-ineq} with $\alpha = 1/2$, $\eta=1$, and $\delta=1/2$, we have for all $t > 0$ that
\begin{equation}
\label{eqn:proof-eb-bootstrap-rawbound}
\Prob(Z \ge 2 \E Z_1 + t)  \le \exp\left(-{t^2 \over 3 \bar\zeta_n^2} \right) + 3 \exp\left[ -\left({t \over K_1 \left\| M\right\|_{\psi_{1/2}}} \right)^{1/2} \right],
\end{equation}
where
\begin{equation*}
\bar\zeta_n^2 = \ \max_m\sum_{i=0}^{r-1} \E H_m^2(X_{3i+1}^{3i+3}), \quad Z_1 = \max_m |\sum_{i=0}^{r-1} [\bar{H}_m(X_{3i+1}^{3i+3}) - \E \bar{H}_m]|,
\end{equation*}
and $\bar{H}_m(x_1^3) = H_m(x_1^3) \vone(\max_{1 \le m_1,m_2 \le d} |H_m(x_1^3)| \le \tau)$ for all $m=(m_1,m_2)$ and $m_1, m_2=1,\cdots,d$. By \cite[Lemma 8]{cck2014b} and Jensen's inequality,
\begin{eqnarray*}
\E Z_1 &\le& K_2 \Bigg\{ (\log d)^{1/2} \left[ \max_m \sum_{i=0}^{r-1} \E( \bar{H}_m(X_{3i+1}^{3i+3})-\E \bar{H}_m)^2 \right]^{1/2} \\
&& \qquad  \qquad + (\log d) [\E \max_{i,m} |\bar{H}_m(X_{3i+1}^{3i+3})-\E \bar{H}_m|^2 ]^{1/2} \Bigg\} \\
&\le& K_2 \left\{ (\log d)^{1/2} \bar\zeta_n + (\log d) \left\| M \right\|_{\psi_{1/2}} \right\}.
\end{eqnarray*}
By the Cauchy-Schwarz inequality and (M.2)
$$
\E H_m^2(X_{3i+1}^{3i+3}) \le [\E h_{m_1}^4(X_{3i+1},X_{3i+2})]^{1/2} [\E h_{m_2}^4(X_{3i+1},X_{3i+3})]^{1/2} \le B_n^2
$$
so that $\bar\zeta_n \le r^{1/2} B_n \le n^{1/2} B_n$. Note that for $\|X\|_{\psi_1} < \infty$, we have $\|X^2\|_{\psi_{1/2}} = \|X\|_{\psi_1}^2$. By Pisier's inequality and (E.1)
$$
\left\| M \right\|_{\psi_{1/2}} \le K_3 \log^2(r d) \max_{i,m} \left\| h_m(X_{3i+1}^{3i+2}) \right\|_{\psi_1}^2 \le K_3 B_n^2  \log^2(n d).
$$
Therefore, with the assumption that $B_n^2 \log^5(nd) \log^2(1/\gamma) \le n$, we have
$$
\E Z_1 \le K_4 \{ (n B_n^2 \log{d})^{1/2} + B_n^2 (\log{d})  \log^2(n d) \} \le 2 K_4 (n B_n^2 \log(nd))^{1/2}.
$$
Combining (\ref{eqn:proof-eb-bootstrap-rawbound}) with the last inequality, we have
\begin{eqnarray*}
&& \Prob(|\hat\Gamma_{n,1,1}-\Gamma_1|_\infty \ge K_5 (n^{-1} B_n^2 \log(nd))^{1/2} + t)   \\
&&  \qquad \le \exp\left(-{n t^2 \over K_6 B_n^2} \right) + 3 \exp\left(-{\sqrt{n t} \over K_7 B_n \log(nd) } \right).
\end{eqnarray*}
Choose 
\begin{equation}
\label{eqn:t_star}
t^* = K_8 \sqrt{B_n^2 \log(nd) \log^2(1/\gamma) \over n}
\end{equation}
for some large enough $K_8>0$. Then we have
\begin{eqnarray*}
\Prob(|\hat\Gamma_{n,1,1}-\Gamma_1|_\infty \ge 2 t^*)  &\le&   \exp\left(-(K_8^2 K_6^{-1}) \log(nd) \log^2(1/\gamma) \right) \\
&& + 3 \exp\left( - (K_8^{1/2} K_7^{-1}) n^{1/4} \log^{1/2}(1/\gamma) \log^{-3/4}(nd) B_n^{-1/2} \right).
\end{eqnarray*}
Recall that $d \ge 3$ and $\gamma \in (0, e^{-1})$. So $\log(dn) \ge \log(3) > 1$ and $\log(1/\gamma) > 1$.
Therefore, we obtain that
\begin{equation}
\label{eqn:gamma_n11}
\Prob( |\hat\Gamma_{n,1,1}-\Gamma_1|_\infty \ge 2 t^*) \le \gamma^{K_8^2 \over K_6} + 3 \gamma^{K_8^{1/2} \over K_7} \le \gamma / 17,
\end{equation}
from which it follows that
$$
\Prob( ( |\hat\Gamma_{n,1,1}-\Gamma_1|_\infty \log^2(dn) )^{1/3} \ge K_9 \varpi_{n,1}^B(\gamma) ) \le \gamma / 17.
$$

{\it Term $\sum_{j = k \neq i}$.} This term can be similarly handled as the pervious term and here we only sketch the argument. Let $H(x_1,x_2) = h(x_1,x_2) h(x_1,x_2)^\top$ and
$$
\hat\Gamma_{n,1,2} = {(n-2)! \over n!} \sum_{1 \le i \neq j \le n} H(X_i, X_j).
$$
Let $\Gamma_{1,2} = \E[h(X_1,X_2) h(X_1,X_2)^\top]$ and $r = [n/2]$. By the argument for bounding the term $\sum_{1 \le i \neq j \neq k \le n}$, we can show that
$$
\Prob(|\hat\Gamma_{n,1,2}-\Gamma_{1,2}|_\infty \ge t^*) \le \gamma/17,
$$
where $t^*$ is given by (\ref{eqn:t_star}). By the Cauchy-Schwarz inequality and (M.2),  $|\Gamma_{1,2}|_\infty \le \max_{1 \le m \le d} \E[h_m^2(X_1,X_2)] \le B_n^{2/3},$ from which we get $n^{-1} |\Gamma_{1,2}|_\infty \le t^*/2$. So it follows that
$$
\Prob(n^{-1} |\hat\Gamma_{n,1,2}|_\infty \ge 3 t^* / 2) \le \gamma/17.
$$

{\it Terms $(\sum_{i=j \neq k} + \sum_{i=k \neq j} ) + \sum_{i=j=k}$.} Those three terms can be similarly dealt with by invoking (M.2') and (E.1'). Details are omitted.

To deal with $\hat\Gamma_{n,2}-\Gamma_2$, we can split $\sum_{i,j,k,l=1}^n h(X_i,X_j) h(X_k,X_l)^\top$ into sums, where there are zero, two, three, and four equalities in the index set $(i,j,k,l)$, respectively. So, there are $2^4-4 = 12$ such groups, all of which obey the same bound as before. Therefore we can conclude that
$$
\Prob( ( |\hat\Gamma_n-\Gamma|_\infty \log^2(dn) )^{1/3} \ge K_{10} \varpi_{n,1} ) \le \gamma
$$
and (\ref{eqn:prop:gaussian_comp_leading_term-EB_subexp}) follows from Lemma \ref{lem:gauss-comp-general}. \\

{\bf Case (ii).} Assuming (E.2) and (E.2'). Again, we start from the decomposition (\ref{eqn:decomp_Delta_hat}) and use the same notations as in {\bf Case (i)}.

{\it Term $\sum_{1 \le i \neq j \neq k \le n}$.} Recall $H(x_1^3) = h(x_1,x_2) h(x_1,x_3)^\top$. Let $\tau = 4 \cdot 2^{2/q} \cdot \|M\|_{q/2}$. By Lemma \ref{lem:fuknagaev-concentration-ineq} with $\eta = 1$ and $\delta =1/2$, we have
$$
\Prob(Z \ge 2 \E Z_1 + t) \le \exp\left(-{t^2 \over 3 \bar\zeta_n^2 }\right) + C_1(q) {\E[M^{q/2}] \over t^{q/2}}.
$$
By (M.2), $\bar\zeta_n \le n^{1/2} B_n$ and by \cite[Lemma 8]{cck2014b}, Jensen's inequality, and (E.2) with $q \ge 4$, we have
\begin{eqnarray*}
\E Z_1 &\le& K_1 \{(\log d)^{1/2} \bar\zeta_n + (\log d) [\E \max_{i,m} H_m^2(X_{3i+1}^{3i+3})]^{1/2} \} \\
&\le& K_1 \{ (n B_n^2 \log d)^{1/2} + (\log d) n^{2/q} B_n^2 \}.
\end{eqnarray*}
Then, it follows that
\begin{eqnarray*}
&& \Prob \left( |\hat\Gamma_{n,1,1}-\Gamma_1|_\infty \ge K_2 \left\{ \left( {B_n^2 \log d \over n} \right)^{1/2} + \left( {B_n^2 \log d \over n^{1-2/q}} \right) \right\} + t \right) \\
&\le& \exp\left(-{ n t^2 \over K_3 B_n^2 }\right) + C_1(q) {B_n^q \over n^{q/2-1} t^{q/2}}.
\end{eqnarray*}
Choose
\begin{equation}
\label{eqn:t_star_poly}
t^* = K_4 \left[{B_n^2 \log(nd) \log^2(1/\gamma) \over n}\right]^{1/2} + C_2(q) {B_n^2 \log d \over n^{1-2/q} \gamma^{2/q}}
\end{equation}
for some $K_4, C_2(q) > 0$ large enough. Then, we get
\begin{eqnarray*}
\Prob ( |\hat\Gamma_{n,1,1}-\Gamma_1|_\infty \ge C_3(q) t^*) &\le& \exp(-(K_4^2 K_3^{-1}) \log(nd) \log^2(1/\gamma)) + {C_1(q) \over C_2(q)^{q/2}} \gamma \\
&\le& \gamma /17.
\end{eqnarray*}
So we obtain that
$$
\Prob( ( |\hat\Gamma_{n,1,1}-\Gamma_1|_\infty \log^2(dn) )^{1/3} \ge C_4(q) \{ \varpi_{n,1}^B(\gamma) + \varpi_{n,2}^B(\gamma) \} ) \le \gamma / 17.
$$
The rest terms can be similarly handled and and (\ref{eqn:prop:gaussian_comp_leading_term-EB_unifpoly}) follows from Lemma \ref{lem:gauss-comp-general}.
\end{proof}

Recall that for $q > 0$ 
$$
\hat{D}_{g,q} = \max_{1 \le j \le d} n^{-1} \sum_{k=1}^n W_{jk}^q \quad \text{and} \quad \hat{D}_q = \max_{1 \le j \le d} n^{-2} \sum_{k,l=1}^n |h_j(X_k,X_l)|^q,
$$
where $W_{jk} =  |n^{-1} \sum_{i=1}^n h_j(X_k,X_i) - V_{nj}|$ and $V_{nj} = n^{-2} \sum_{k,i=1}^n h_j(X_k,X_i)$ for $j=1,\cdots,d$ and $k=1,\cdots,n$.

\begin{lem}
\label{lem:bounds_on_terms_empirical-version-EB}
Let $\gamma \in (0,1)$ and $\beta > 0$. Suppose that $n^{-1} B_n^2 \log^7(nd) \le 1$. \\
(i) Assume that (M.1), (M.2), (M.2'), (E.1), (E.1') hold and in addition $\log(1/\gamma) \le K \log(dn)$. Then there exist constants $C_i := C_i(\beta,K) > 0, i=1,2,3$ such that 
\begin{eqnarray*}
\Prob(\hat{D}_{g,3} \ge C_1 B_n) &\le& \gamma/\beta, \\
\Prob(\hat{D}_2 \ge C_2 B_n^{2/3}) &\le& \gamma/\beta, \\
\Prob(\hat{D}_4 \ge C_3 B_n^2 \log(dn)) &\le& \gamma/\beta.
\end{eqnarray*}

(ii) Assume that (M.1), (M.2), (M.2'), (E.2), (E.2') hold with $q \ge 4$ and in addition $B_n^2 \log^3(dn) \gamma^{-2/q} n^{-1+2/q} \le 1$. Then there exist constants $C_i := C_i(\beta,q) > 0, i=1,2,3$ such that 
\begin{eqnarray*}
\Prob(\hat{D}_{g,3} \ge C_1 [B_n + n^{-1+3/q} B_n^3 \gamma^{-3/q} (\log d)] ) &\le& \gamma/\beta, \\
\Prob(\hat{D}_2 \ge C_2 [B_n^{2/3} + n^{-1+2/q} B_n^2 \gamma^{-2/q} (\log d)] ) &\le& \gamma/\beta, \\
\Prob(\hat{D}_4 \ge C_3 [B_n^2 + n^{-1+4/q} B_n^4 \gamma^{-4/q} (\log d)] ) &\le& \gamma/\beta.
\end{eqnarray*}
\end{lem}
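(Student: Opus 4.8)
The plan is to handle the three quantities $\hat D_{g,3}$, $\hat D_2$, $\hat D_4$ one at a time. In each case the strategy is the same: identify the (conditional) expectation of the relevant average, bound it with the moment hypotheses (M.2), (M.2'), and then control the fluctuation around it \emph{uniformly over the $d$ coordinates} by a union bound over $j=1,\dots,d$ together with the U-statistic concentration inequalities already established, namely Lemma~\ref{lem:subexp-concentration-ineq} in the subexponential regime (case (i)) and Lemma~\ref{lem:fuknagaev-concentration-ineq} in the polynomial regime (case (ii)), after a truncation at a level governed by the maximal envelope $M = \max_{j,k,l}|h_j(X_k,X_l)|$.

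First I would treat $\hat D_2$ and $\hat D_4$, the ``diagonal-dominated'' terms. For fixed $j$ and $q\in\{2,4\}$, write $n^{-2}\sum_{k,l=1}^n |h_j(X_k,X_l)|^q$ as $\tfrac{n-1}{n}$ times a non-centered U-statistic of order two with the nonnegative kernel $|h_j(x,y)|^q$ plus $n^{-1}$ times the iid average $n^{-1}\sum_k |h_j(X_k,X_k)|^q$. By Jensen's inequality, (M.2) bounds the U-statistic expectation by $D_q$, which is $\le B_n^{2/3}$ for $q=2$ and $\le B_n^2$ for $q=4$, while (M.2') bounds the diagonal expectation by the same order. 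For the deviations I would reduce the U-statistic to an average of $\lfloor n/2\rfloor$ iid terms via Hoeffding's averaging identity (\ref{eqn:hoeffding_average}) and then apply Lemma~\ref{lem:subexp-concentration-ineq} (resp.\ Lemma~\ref{lem:fuknagaev-concentration-ineq}) to the centered sum, with envelope $M^q$ controlled via Pisier's inequality \cite[Lemma 2.2.2]{vandervaartwellner1996} under (E.1),(E.1') (resp.\ the uniform polynomial moment under (E.2),(E.2')), and the diagonal iid average handled the same way using (M.2'),(E.1')/(E.2'). A union bound over $j$ and the truncation level $\tau\asymp B_n\log(nd)$ turns this into the claimed tail bounds; the extra factor $\log(dn)$ in the bound for $\hat D_4$ is exactly the envelope/self-bounding contribution in this step, and the scaling hypothesis $n^{-1}B_n^2\log^7(nd)\le1$ (resp.\ $B_n^2\log^3(dn)\gamma^{-2/q}n^{-1+2/q}\le1$) is what absorbs the remaining terms into the stated constants.

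Next comes $\hat D_{g,3}=\max_j n^{-1}\sum_k W_{jk}^3$, which has an extra layer because $W_{jk}=|n^{-1}\sum_i h_j(X_k,X_i)-V_{nj}|$ is itself a data-dependent average. I would decompose $n^{-1}\sum_i h_j(X_k,X_i)-V_{nj}$ as (an empirical version of) $g_j(X_k)$ plus an inner fluctuation, split $W_{jk}^3$ by $|a+b|^3\le 4(|a|^3+|b|^3)$, and bound $\max_j n^{-1}\sum_k |g_j(X_k)|^3$ as an iid average with mean $D_{g,3}\le D_3\le B_n$ by the argument of the previous paragraph, while the remaining pieces are controlled, simultaneously over all $j$ and $k$, by a maximal/concentration bound on the centered inner sums $n^{-1}\sum_i h_j(X_k,X_i)-g_j(X_k)-(V_{nj}-\theta_j)$ (with envelope $\max_{j,k,i}|h_j(X_k,X_i)|$ bounded via Pisier or the polynomial moment, and the $n^{-1}h_j(X_k,X_k)$ term absorbed using (M.2'),(E.1')/(E.2')). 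Under the scaling hypothesis all error contributions are $O(B_n)$ (resp.\ $O(n^{-1+3/q}B_n^3\gamma^{-3/q}\log d)$) with probability $\ge1-\gamma/\beta$, and the $\gamma^{-2/q},\gamma^{-3/q},\gamma^{-4/q}$ factors in case (ii) are precisely the polynomial-tail contributions of Lemma~\ref{lem:fuknagaev-concentration-ineq} at confidence level $1-\gamma/\beta$.

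I expect the main obstacle to be the two-level structure of $\hat D_{g,3}$: one is not bounding a single U- or V-statistic but an empirical third moment of a \emph{family} of data-dependent averages, so one needs a deviation bound for the inner sums $n^{-1}\sum_i h_j(X_k,X_i)$ around $g_j(X_k)$ that holds uniformly in $(j,k)$ and is compatible with the outer averaging over $k$, all while keeping the powers of $B_n$ and the exponents of $\log(nd)$ from inflating beyond what $n^{-1}B_n^2\log^7(nd)\le1$ (resp.\ the polynomial analogue) can absorb. The $\hat D_2$ and $\hat D_4$ bounds are comparatively routine once the U-statistic-to-iid reduction and the envelope estimates are in place.
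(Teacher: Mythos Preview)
Your approach is workable but takes a more complicated route than the paper's. The paper dispenses with the two-level structure of $\hat D_{g,3}$ by a direct Jensen argument: since $W_{jk}=|a_{jk}-\bar a_j|$ with $a_{jk}=n^{-1}\sum_i h_j(X_k,X_i)$ and $\bar a_j=n^{-1}\sum_k a_{jk}$, one gets $W_{jk}^3\le 4(|a_{jk}|^3+|\bar a_j|^3)$, and two further applications of Jensen yield $n^{-1}\sum_k W_{jk}^3\le 8\,n^{-2}\sum_{k,i}|h_j(X_k,X_i)|^3$. This reduces $\hat D_{g,3}$ to exactly the same form as $\hat D_2,\hat D_4$ (a V-statistic of the nonnegative kernel $|h_j|^3$), so the ``main obstacle'' you anticipate never arises. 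Your decomposition $W_{jk}\approx g_j(X_k)+\text{fluctuation}$ would also go through, but requires a uniform-in-$(j,k)$ bound on the inner sums that the paper entirely avoids.

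Two further differences worth noting. First, the paper applies the \emph{nonnegative} maximal U-statistic tail inequalities (Lemmas~\ref{lem:subexp-concentration-ineq-nonnegative} and~\ref{lem:fuknagaev-concentration-ineq-nonnegative}) to the kernels $H_j=|h_j|^q$, rather than the centered versions (Lemmas~\ref{lem:subexp-concentration-ineq} and~\ref{lem:fuknagaev-concentration-ineq}) you cite; this is more natural here since $|h_j|^q\ge 0$ and one only needs an upper tail. Second, because those lemmas are stated for $Z=m\max_{1\le j\le d}U_{nj}$, the maximum over $j$ is already built in, so the explicit union bound over coordinates you describe is unnecessary. Your route would still reach the stated bounds, but the paper's reduction via Jensen and the nonnegative lemmas is shorter and explains why all three quantities $\hat D_{g,3},\hat D_2,\hat D_4$ are handled by a single template.
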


\begin{proof}[Proof of Lemma \ref{lem:bounds_on_terms_empirical-version-EB}]
In this proof, we shall use $K_1,K_2,\cdots$ to denote absolute constants and $C_1,C_2>0$ to denote constants that only depends on $\beta,K$ in case (i) and $\beta,q$ in case (ii). 

{\bf Case (i).} Consider the term $\hat{D}_{g,3}$. By Jensen's inequality, $\hat{D}_{g,3} \le 8 n^{-1} (n-1) \hat{D}_{g,3,1} + 8 n^{-1} \hat{D}_{g,3,2}$, where
\begin{equation}
\label{eqn:hat_D_g31_g32}
\hat{D}_{g,3,1} = {1 \over n(n-1)} \max_{1\le j \le d} \sum_{1 \le k \neq i \le n} |h_j(X_k,X_i)|^3, \quad \hat{D}_{g,3,2} = {1 \over n} \max_{1\le j \le d} \sum_{i=1}^n |h_j(X_i,X_i)|^3.
\end{equation}
Let $H(x_1,x_2) = \{H_j(x_1,x_2)\}_{j=1}^d$ and $H_j(x_1,x_2) = |h_j(x_1,x_2)|^3$. Let $M = \max_{1 \le j \le d} \max_{0 \le i \le m-1} H_j(X_{2i+1}^{2i+2})$ and $Z' = \max_{1 \le j \le d} \sum_{i=0}^{m-1} H_j(X_{2i+1}^{2i+2})$, where $m = [n/2]$. Applying Lemma \ref{lem:subexp-concentration-ineq-nonnegative} to the non-negative kernel $H(x_1,x_2)$ with $\alpha=1/3$ and $\eta = 1$, we have that for all $t>0$
$$
\Prob(m \hat{D}_{g,3,1} \ge 2 \E Z' + t) \le 3 \exp \left\{ -\left( t \over K_1 \|M\|_{\psi_{1/3}} \right)^{1/3} \right\}.
$$
Since $\|H_j(X_{2i+1}^{2i+2})\|_{\psi_{1/3}} = \| h_j(X_{2i+1}^{2i+2})\|_{\psi_1}^3$, by (E.1) and Pisier's inequality, $\|M\|_{\psi_{1/3}} \le K_2 B_n^3 \log^3(dn)$. Then by \cite[Lemma 9]{cck2014b} and (M.2), we have
\begin{eqnarray*}
\E Z' &\le& K_3 \{ \max_{1 \le j \le d} \sum_{i=0}^{m-1} \E H_j(X_{2i+1}^{2i+2}) + (\log d) \E[M] \} \\
&\le& K_3 \{n B_n + B_n^3 \log^4(dn) \}.
\end{eqnarray*}
Therefore, it follows that
$$
\Prob(\hat{D}_{g,3,1} \ge K_4 \{B_n + n^{-1} B_n^3 \log^4(dn)\} + t) \le 3 \exp \left \{ - {(nt)^{1/3} \over K_5 B_n \log(dn)} \right \},
$$
which is equivalent to
$$
\Prob \left(\hat{D}_{g,3,1} \ge K_6 \left\{ B_n + n^{-1} B_n^3 \log^4(dn) + n^{-1} B_n^3 \log^3(dn) s^3 \right\} \right) \le 3 e^{-s}
$$
holds for all $s>0$. Now, choose $s=\log(6\beta/\gamma)$. Then $s \le C_1 \log(dn)$ for some constant $C_1 > 0$. Then, it is easy to check that
$$
B_n + n^{-1} B_n^3 \log^4(dn) + n^{-1} B_n^3 \log^3(dn) s^3 \le (2+C_1^3) B_n.
$$
Therefore, we have $\Prob(\hat{D}_{g,3,1}  \ge  C_2 B_n ) \le \gamma/(2\beta)$. Under (M.2') and (E.1'), by the same calculations, we can show that $\Prob(\hat{D}_{g,3,2} \ge C_3 B_n) \le \gamma/(2\beta)$. Thus, $\Prob(\hat{D}_{g,3}  \ge  C_4 B_n ) \le \gamma/\beta$. In addition, routine calculations show that $\hat{D}_2$ and $\hat{D}_4$ can be dealt in similar ways and we can show that $\Prob(\hat{D}_{n,2}  \ge C_5 B_n^{2/3}) \le \gamma/\beta$ and $\Prob(\hat{D}_{n,4} \ge C_6 B_n^2 \log(dn)) \le \gamma/\beta$.

{\bf Case (ii).} Applying Lemma \ref{lem:fuknagaev-concentration-ineq-nonnegative} to the non-negative kernel $H(x_1,x_2) = \{H_j(x_1,x_2)\}_{j=1}^d$ and $H_j(x_1,x_2) = |h_j(x_1,x_2)|^3$ with $\eta = 1$, we have that for all $t > 0$
$$
\Prob(m \hat{D}_{g,3,1} \ge 2 \E Z' + t) \le C_1(q) {\E[M^{q/3}] \over t^{q/3}}.
$$
By (M.2) and (E.2), we have $\E H_j(X_{2i+1}^{2i+2}) \le B_n$ and $E[M^{q/3}] \le n B_n^q$. So $\|M\|_1 \le \|M\|_{q/3} \le n^{3/q} B_n^3$ and
\begin{eqnarray*}
\E Z' &\le& K_1 \{ \max_{1 \le j \le d} \sum_{i=0}^{m-1} \E H_j(X_{2i+1}^{2i+2}) + (\log d) \E[M] \} \\
&\le& K_1 \{n B_n + n^{3/q} B_n^3 \log d \}.
\end{eqnarray*}
Then, we have for all $t > 0$
$$
\Prob(\hat{D}_{g,3,1,} \ge K_2 \{B_n + n^{-1+3/q} B_n^3 \log d\} + t) \le C_1(q) {B_n^q \over n^{q/3-1} t^{q/3}},
$$
from which it follows that $\Prob(\hat{D}_{g,3,1} \ge C_2 \bar{D}_{g,3}) \le \gamma/(2\beta)$, where $\bar{D}_{g,3} = B_n + n^{-1+3/q} B_n^3 \gamma^{-3/q} (\log d)$. By the same argument and using (M.2') and (E.2'), we can show that $\Prob(\hat{D}_{g,3,2} \ge C_3 \bar{D}_{g,3}) \le \gamma/(2\beta)$. Thus, $\Prob(\hat{D}_{g,3}  \ge  C_4 B_n ) \le \gamma/\beta$. Finally, $\hat{D}_2$ and $\hat{D}_4$ can be dealt in similar ways.
\end{proof}

\begin{proof}[Proof of Theorem \ref{thm:iid-weighted-bootstrap}]
Without loss of generality, we may assume (\ref{eqn:eb_reduction1}). Observe that $T_n^\diamond = L_n + R_n$, where
\begin{eqnarray}
\label{eqn:Ln_iid-weighted-bootstrap}
L_n &=& {1 \over \sqrt{n} (n-1)} \sum_{i=1}^n (\sum_{j \neq i} h(X_i,X_j)) (w_i-1), \\
\label{eqn:Rn_iid-weighted-bootstrap}
R_n &=& {1 \over 2 \sqrt{n} (n-1)} \sum_{1 \le i \neq j \le n} (w_i-1)(w_j-1) h(X_i,X_j).
\end{eqnarray}
Since $w_i$ are iid $N(1,1)$ and $L_n | X_1^n \sim N(0, \breve{\Gamma}_n)$, where 
$$\breve{\Gamma}_n = {1 \over n(n-1)^2} \sum_{i=1}^n \sum_{j \neq i} \sum_{k \neq i} h(X_i,X_j) h(X_i,X_k)^\top.$$
Let $Y \sim N(0,\Gamma)$. Since $\theta = 0$, by Theorem \ref{thm:CLT-hyperrec-momconds} and following the proof of Proposition \ref{prop:gaussian_comp_leading_term-EB}, we have: (i) if (E.1) holds, then with probability at least $1-2\gamma/6$ we have
$$
\rho^{re}(T_n, Y) + \sup_{A \in {\cal A}^{re}} |\Prob(Y \in A) - \Prob(L_n \in A \mid X_1^n)| \le C_1(\ub,\bar{b},K) \varpi_{1,n};
$$
(ii) if (E.2) holds, then with probability at least $1-2\gamma/6$ we have
$$
\rho^{re}(T_n, Y) + \sup_{A \in {\cal A}^{re}} |\Prob(Y \in A) - \Prob(L_n \in A \mid X_1^n)| \le C_1(\ub,\bar{b},q,K) \{ \varpi_{1,n} + \varpi^{B}_{2,n}(\gamma) \}.
$$
By (\ref{eqn:approx-diagram}) with $Z^X = L_n$ and $T_n^\natural = T_n^\diamond$, it remains to bound $\rho^{re}(L_n, T_n^\diamond \mid X_1^n)$. As in the proof of Proposition \ref{prop:gaussian_comp_leading_term-EB}, we have $\{\breve{\Gamma}_{n,jj} \ge \ub/2, \forall j=1,\cdots,d\}$ occurs with probability at least $1-\gamma/6$. Let $\E_w$ be the expectation taken w.r.t. $w_1,\cdots,w_n$. By the smoothing properties (\ref{eqn:smoothing_property_F_beta}) and (\ref{eqn:smoothing_property_u}), there exists an absolute constant $K_1 > 0$ and a constant $C_2 > 0$ depending only on $\ub$ such that
$$
\rho^{re}(L_n, T_n^\diamond \mid X_1^n) \le C_2(\ub) \phi^{-1} (\log d)^{1/2} + K_1 \phi \E_w |R_n|_\infty
$$
holds for all $\phi > 0$ on the event $\{\breve{\Gamma}_{n,jj} \ge \ub/2, \forall j=1,\cdots,d\}$. Minimizing over $\phi > 0$, we get $\rho^{re}(L_n, T_n^\diamond \mid X_1^n) \le C_3(\ub) (\log d)^{1/4} (\E_w|R_n|_\infty)^{1/2}$. Conditional on $X_1^n$, $R_n$ is a weighted and completely degenerate quadratic form. Therefore, by Lemma \ref{lem:maximal_ineq_weighted_quad_forms}, there exists an absolute constant $K_2 > 0$ such that
\begin{eqnarray*}
\E_w |R_n|_\infty &\le& K_2 {\log d \over n^{{3/2}}} \{ (\log d) [\E_w(M^2)]^{1/2} + A_1 \\
%\label{eqn:R_n_bound_iid_weights}
&& \qquad + (\log d)^{1/4} A_2 \|w_1\|_4 + (\log d)^{1/2} [\E_w(\tilde{M}^4)]^{1/4} \},
\end{eqnarray*}
where 
\begin{eqnarray*}
A_1 &=& \max_{1 \le m \le d} [\sum_{1 \le i \neq j \le n} h_m^2(X_i,X_j)]^{1/2}, \\
A_2 &=& \max_{1 \le m \le d} \{\sum_{i=1}^n [\sum_{j \neq i} h_m^2(X_i,X_j)]^2 \}^{1/4}, \\
M &=& \max_{1 \le m \le d} \max_{1 \le i \neq j \le n} |h_m(X_i,X_j) (w_i-1)(w_j-1)|, \\
\tilde{M} &=& \max_{1 \le m \le d} \max_{1 \le i \le n} [\sum_{j \neq i} h_m^2(X_i,X_j)]^{1/2} |w_i-1|.
\end{eqnarray*}
For $\tau > 0$, we let $G_\tau = \{|h_m(X_i,X_j)| \le \tau, \forall i,j=1,\cdots,n; i \neq j; m=1,\cdots,d\}$.
%$\chi_{\tau,ij} = \vone(\max_{1 \le m \le d} |h_m(X_i,X_j)| > \tau)$
Then, on the event $G_\tau$, we have $M \le \tau \max_{1 \le i \le n} |w_i-1|^2$ and $\tilde{M} \le n^{1/2} \tau \max_{1 \le i \le n} |w_i-1|$. By Pisier's inequality, we have $\E[\max_{1 \le i \le n} |w_i-1|^a] \le C(a) (\log n)^a$ for any $a \ge 1$. Therefore, we obtain that, on the event $\{ \min_{1 \le j \le d} \breve{\Gamma}_{n,jj} \ge \ub/2 \} \cap G_\tau$, there is a constant $C_3 > 0$ depending only on $\ub$ such that
\begin{eqnarray}
\nonumber
\rho^{re}(L_n, T_n^\diamond \mid X_1^n) &\le& C_3 \left({\log d \over n}\right)^{3/4} \{ (\log (nd))^{3/2} \tau^{1/2} + A_1^{1/2} \\
&& \qquad + (\log d)^{1/8} A_2^{1/2} + n^{1/4} (\log (nd))^{3/4} \tau^{1/2} \}.
\label{eqn:Ln_Tn_bound_iid_weights}
\end{eqnarray}

{\bf Case (i).} Choose $\tau = [n B_n / \log(nd)]^{1/3}$. Then, by (E.1) and the union bound
$$
\Prob(G_\tau^c) \le (2 d n^2) \exp\left(-{\tau \over B_n} \right) = (2 d n^2) \exp\left(-{\log(nd) \over (n^{-1} B_n^2 \log^4(nd))^{1/3}} \right).
$$
By the assumption (\ref{eqn:eb_reduction1}) and $\log(1/\gamma) \le K \log(dn)$, we get $\Prob(G_\tau^c) \le \gamma/6$. Since $A_1 = n \hat{D}_2^{1/2}$, where $\hat{D}_2$ is in Lemma \ref{lem:bounds_on_terms_empirical-version-EB}, we have $\Prob(A_1 \ge C_4 n B_n^{1/3}) \le \gamma/6$. By the Cauchy-Schwarz inequality, $A_2 \le n^{3/4} \hat{D}_4^{1/4}$, where $\hat{D}_4$ is in Lemma \ref{lem:bounds_on_terms_empirical-version-EB}. By a second application of Lemma \ref{lem:bounds_on_terms_empirical-version-EB}, we get $\Prob(A_2 \ge C_5 n^{3/4} B_n^{1/2} \log^{1/4}(dn)) \le \gamma/6$. Therefore, it follows from (\ref{eqn:Ln_Tn_bound_iid_weights}) that there exists a constant $C_6 > 0$ depending only on $\ub,\bar{b},K$ such that
$$
\rho^{re}(L_n, T_n^\diamond \mid X_1^n) \le C_6  \left\{ {B_n^{1/6} \log^{3/4} d \over n^{1/4}} + {B_n^{1/4} \log(dn) \over n^{3/8}} + {\log^{3/2} (dn) \over n^{1/2} } \tau^{1/2} \right\}
$$
holds with probability at least $1-4\gamma/6$. Elementary calculations show that $\rho^{re}(L_n, T_n^\diamond \mid X_1^n) \le C_6 \varpi_{1,n}$. Thus, we have $\rho^B(T_n, T_n^\diamond) \le C_7 \varpi_{1,n}$ with probability at least $1-\gamma$.

{\bf Case (ii).} Choose $\tau = [n B_n / \log(nd)]^{1/3} + (\gamma/6)^{-1/q} n^{2/q} B_n$. Then, by (E.2) and the union bound $\Prob(G_\tau^c) \le n^2 B_n^q \tau^{-q} \le \gamma/6$. By Lemma \ref{lem:bounds_on_terms_empirical-version-EB}, we have $\Prob(A_1 \ge C_4 [ n B_n^{1/3} + n^{1/2+1/q} B_n \gamma^{-1/q} \log^{1/2} d]) \le \gamma/6$ and $\Prob(A_2 \ge C_5 [n^{3/4} B_n^{1/2} + n^{1/2+1/q} B_n \gamma^{-1/q} \log^{1/4} d] ) \le \gamma/6$. Therefore, it follows from (\ref{eqn:Ln_Tn_bound_iid_weights}) that there exists a constant $C_6 > 0$ depending only on $\ub,\bar{b},q,K$ such that
\begin{eqnarray*}
\rho^{re}(L_n, T_n^\diamond \mid X_1^n) &\le& C_6  \Big\{ {B_n^{1/6} \log^{3/4} d \over n^{1/4}} + {B_n^{1/4} \log(dn) \over n^{3/8}} \\
&& \qquad + {B_n^{1/2} \log d \over \gamma^{1/(2q)} n^{1/2-1/(2q)}} + {\log^{3/2} d \over n^{1/2} } \tau^{1/2} \Big\}
\end{eqnarray*}
holds with probability at least $1-4\gamma/6$. Elementary calculations show that $\rho^{re}(L_n, T_n^\diamond \mid X_1^n) \le C_6 \{\varpi_{1,n} + \varpi_{2,n}^B(\gamma)\}$. Thus, we have $\rho^B(T_n, T_n^\diamond) \le C_7 \{\varpi_{1,n} + \varpi_{2,n}^B(\gamma)\}$ with probability at least $1-\gamma$.
\end{proof}

\begin{proof}[Proof of Theorem \ref{thm:iid-weighted-bootstrap-no-centering}]
Observe that we can write $T_n^\flat = L_n + R_n$, where 
$$
L_n = {1 \over \sqrt{n}} \sum_{i=1}^n [{1 \over n-1} \sum_{j \neq i} h(X_i,X_j) - U_n ] (w_i-1)
$$
and $R_n$ is defined in (\ref{eqn:Rn_iid-weighted-bootstrap}). Then, Theorem \ref{thm:iid-weighted-bootstrap-no-centering} follows from Theorem \ref{thm:CLT-hyperrec-momconds} and the argument in Proposition \ref{prop:gaussian_comp_leading_term-EB}.
\end{proof}

\begin{proof}[Proof of Theorem \ref{thm:mb-bootstrap-rate}]
Let $Y \sim N(0,\Gamma)$. By the triangle inequality, $\rho^B(T_n, T_n^\sharp) \le \rho^{re}(T_n,Y) + \sup_{A \in {\cal A}^{re}} |\Prob(Y \in A) - \Prob(T_n^\sharp \in A \mid X_1^n)|$. Then, Theorem \ref{thm:mb-bootstrap-rate} follows from Theorem \ref{thm:CLT-hyperrec-momconds} and the proof of Proposition \ref{prop:gaussian_comp_leading_term-EB} since $|\hat\Gamma_n-\Gamma|_\infty$ and $|\tilde\Gamma_n-\Gamma|_\infty$ obey the same large probability bound.
\end{proof}

Recall that $Y \sim N(0, \Gamma)$ and $\bar{Y} = \max_{1 \le k \le d} Y_k$. Without loss of generality, we assume that $Y$ is independent of $X_1^n$. Let $\rho_\ominus(\alpha) = \Prob(\{\bar{T}_n \le a_{\bar{T}_n^\sharp}(\alpha) \} \ominus \{ \bar{T}_n \le a_{\bar{Y}}(\alpha) \} )$, where $a_{\bar{Y}}(\alpha)$ is the $\alpha$-th quantile of $\bar{Y}$ and  and $A \ominus B = (A \setminus B) \cup (B \setminus A)$ is the symmetric difference of two subsets $A$ and $B$. 

\begin{lem}
\label{lem:bound-on-rho_ominus}
Assume (M.1). Then, there exists a constant $C > 0$ depending only on $\ub$ such that for every $\alpha \in (0,1)$ and $v > 0$, we have
\begin{equation*}
\rho_\ominus(\alpha) \le 2 \left[ \rho(\bar{T}_n, \bar{Y}) + C v^{1/3} (\log{d})^{2/3} + \Prob(\Delta_n > v) \right],
\end{equation*}
where $\Delta_n = |\tilde\Gamma_n - \Gamma|_\infty$ and $\tilde\Gamma_n$ is defined in (\ref{eqn:tilde_Gamma_n}). 
\end{lem}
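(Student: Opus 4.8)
\emph{Proof proposal.} The plan is the standard quantile--transfer argument, carried out on the event where the jackknife covariance estimate is accurate. Fix $\alpha\in(0,1)$ and $v>0$, let $C(\ub)$ be the constant of Lemma~\ref{lem:gauss-comp-general}, and set $\pi:=C(\ub)(\log d)^{2/3}v^{1/3}$. Since $T_n^\sharp\mid X_1^n\sim N(0,\tilde\Gamma_n)$ has the same conditional law as the vector $Z^X$ of Lemma~\ref{lem:gauss-comp-general} with $\Gamma_n=\tilde\Gamma_n$, specializing that lemma to the max-hyperrectangles $\{x\in\R^d:x_j\le t\text{ for all }j\}\in{\cal A}^{re}$ yields, on the event $\{\Delta_n\le v\}$,
\[
\sup_{t\in\R}\big|\Prob(\bar{T}_n^\sharp\le t\mid X_1^n)-\Prob(\bar{Y}\le t)\big|\le \pi .
\]
A routine comparison of quantiles of two cdfs at sup-distance $\le\pi$ then gives, on the same event and with the conventions $a_{\bar{Y}}(\beta)=-\infty$ for $\beta\le0$ and $a_{\bar{Y}}(\beta)=+\infty$ for $\beta\ge1$,
\[
a_{\bar{Y}}(\alpha-\pi)\ \le\ a_{\bar{T}_n^\sharp}(\alpha)\ \le\ a_{\bar{Y}}(\alpha+\pi).
\]

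The next step is to exploit the measurability structure. Both $a_{\bar{T}_n^\sharp}(\alpha)$ and $\{\Delta_n\le v\}$ are $X_1^n$-measurable, so the displayed inequalities hold pointwise on $\{\Delta_n\le v\}$ and no independence is needed. Since $a_{\bar{Y}}(\alpha)$ also lies in $[a_{\bar{Y}}(\alpha-\pi),a_{\bar{Y}}(\alpha+\pi)]$ by monotonicity, on $\{\Delta_n\le v\}$ the two thresholds defining the events in $\rho_\ominus(\alpha)$ both belong to this interval, whence
\[
\{\bar{T}_n\le a_{\bar{T}_n^\sharp}(\alpha)\}\ominus\{\bar{T}_n\le a_{\bar{Y}}(\alpha)\}\ \subseteq\ \{a_{\bar{Y}}(\alpha-\pi)<\bar{T}_n\le a_{\bar{Y}}(\alpha+\pi)\}.
\]
Consequently $\rho_\ominus(\alpha)\le\Prob\big(a_{\bar{Y}}(\alpha-\pi)<\bar{T}_n\le a_{\bar{Y}}(\alpha+\pi)\big)+\Prob(\Delta_n>v)$, with the endpoints now deterministic.

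Finally I would transfer the interval probability from $\bar{T}_n$ to $\bar{Y}$. Writing the interval as a difference of two half-lines and using $\rho(\bar{T}_n,\bar{Y})=\sup_t|\Prob(\bar{T}_n\le t)-\Prob(\bar{Y}\le t)|$,
\[
\Prob\big(a_{\bar{Y}}(\alpha-\pi)<\bar{T}_n\le a_{\bar{Y}}(\alpha+\pi)\big)\le \Prob\big(a_{\bar{Y}}(\alpha-\pi)<\bar{Y}\le a_{\bar{Y}}(\alpha+\pi)\big)+2\rho(\bar{T}_n,\bar{Y}).
\]
Under (M.1) the Gaussian $Y\sim N(0,\Gamma)$ is non-degenerate, so $\bar{Y}=\max_m Y_m$ has a continuous cdf; hence $\Prob(\bar{Y}\le a_{\bar{Y}}(\beta))=\beta$ for $\beta\in(0,1)$, and a one-line case check (allowing $\alpha\pm\pi$ to escape $(0,1)$) shows the first term on the right is at most $(\alpha+\pi)-(\alpha-\pi)=2\pi$ in every case. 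Combining the last three displays,
\[
\rho_\ominus(\alpha)\le 2\pi+2\rho(\bar{T}_n,\bar{Y})+\Prob(\Delta_n>v)\le 2\big[\rho(\bar{T}_n,\bar{Y})+C(\ub)(\log d)^{2/3}v^{1/3}+\Prob(\Delta_n>v)\big],
\]
which is the assertion.

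I expect the only real friction to be bookkeeping: fixing the exact direction of the quantile inequalities and disposing of the handful of boundary cases where $\alpha\pm\pi$ leaves $(0,1)$ (harmless because there the $\bar{Y}$-interval probability is even smaller, or $2\pi\ge1$ makes the bound trivial). The substantive input --- the conditional Gaussian comparison on $\{\Delta_n\le v\}$ --- is already supplied by Lemma~\ref{lem:gauss-comp-general}, and the absence of atoms in the law of $\bar{Y}$ under (M.1) is standard.
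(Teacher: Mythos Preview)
Your proposal is correct and follows essentially the same approach as the paper: apply Lemma~\ref{lem:gauss-comp-general} on the event $\{\Delta_n\le v\}$ to sandwich $a_{\bar{T}_n^\sharp}(\alpha)$ between $a_{\bar{Y}}(\alpha-\pi)$ and $a_{\bar{Y}}(\alpha+\pi)$, contain the symmetric difference in the interval event, transfer to $\bar{Y}$ via $\rho(\bar{T}_n,\bar{Y})$, and use continuity of the law of $\bar{Y}$ to bound the remaining probability by $2\pi$. Your bookkeeping is in fact slightly cleaner than the paper's (you obtain $+\Prob(\Delta_n>v)$ rather than $+2\Prob(\Delta_n>v)$ before absorbing into the overall factor of $2$, since you use the pointwise sandwiching on $\{\Delta_n\le v\}$ rather than two separate probabilistic statements), but this does not change the final bound or the underlying argument.
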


\begin{proof}[Proof of Lemma \ref{lem:bound-on-rho_ominus}]
The proof is a modification (and an improvement) of \cite[Theorem 3.1]{cck2013}. For the sake of completeness, we give the details. Applying Lemma \ref{lem:gauss-comp-general} to the max-hyperrectangles, there exists a constant $C(\ub) > 0$ such that for every $v > 0$, we have 
\begin{equation*}
\sup_{t \in \R} | \Prob(\bar{T}_n^\sharp \le t | X_1^n) - \Prob(\bar{Y} \le t)| \le C(\ub) (\log d)^{2/3} v^{1/3} := \pi(v)
\end{equation*}
on the event $\{ \Delta_n \le v \}$. So on this event
$$
\Prob_e( \bar{T}_n^\sharp \le a_{\bar{Y}}(\alpha + \pi(v)) ) \ge \Prob( \bar{Y} \le a_{\bar{Y}}(\alpha + \pi(v)) ) - \pi(v) \ge  \alpha + \pi(v) - \pi(v) = \alpha,
$$
where $\Prob_e$ is the probability taken w.r.t. to the Gaussian multiplier random variables $e_1,\cdots,e_n$. This implies that 
$$
\Prob( a_{\bar{T}_n^\sharp}(\alpha) \le a_{\bar{Y}}(\alpha + \pi(v)) ) \ge \Prob( \Delta_n \le v ).
$$
Similarly, since $Y$ is independent of $X_1^n$, we also have $\Prob( a_{\bar{Y}}(\alpha) \le a_{\bar{T}_n^\sharp}(\alpha + \pi(v)) ) \ge \Prob( \Delta_n \le v )$. Then, the Lemma follows from 
\begin{eqnarray*}
\rho_\ominus(\alpha) &\le& \Prob(a_{\bar{Y}}(\alpha - \pi(v)) <  \bar{T}_n \le a_{\bar{Y}}(\alpha + \pi(v)) ) + 2 \Prob(\Delta_n > v) \\
&\le& \Prob(a_{\bar{Y}}(\alpha - \pi(v)) <  \bar{Y} \le a_{\bar{Y}}(\alpha + \pi(v)) ) + 2 \Prob(\Delta_n > v) + 2 \rho(\bar{T}_n, \bar{Y}) \\
&\le& 2 \pi(v) + 2 \Prob(\Delta_n > v) + 2 \rho(\bar{T}_n, \bar{Y}),
\end{eqnarray*}
where the last step is due to the fact that $\bar{Y}$ has no point mass.
\end{proof}

\begin{proof}[Proof of Theorem \ref{thm:comparison_with_naive_gaussian_wild_bootstrap}]
Let $\Delta_n = |\tilde\Gamma_n - \Gamma|_\infty$. By Lemma \ref{lem:bound-on-rho_ominus}, there exists a constant $C > 0$ depending only on $\ub$ such that for every $\alpha \in (0,1)$ and $v > 0$, we have
\begin{eqnarray*}
| \Prob( \bar{T}_n \le a_{\bar{T}_n^\sharp}(\alpha) ) - \alpha | &\le&  | \Prob( \bar{T}_n \le a_{\bar{T}_n^\sharp}(\alpha) ) - \Prob( \bar{T}_n \le a_{\bar{Y}}(\alpha) ) | + \rho(\bar{T}_n, \bar{Y}) \\
&\le& \rho_\ominus(\alpha) + \rho(\bar{T}_n, \bar{Y}) \\
&\le& C v^{1/3} (\log{d})^{2/3} + 2 \Prob(\Delta_n > v) + 3 \rho(\bar{T}_n, \bar{Y}).
\end{eqnarray*}
{\bf Case (i).} Since $|\tilde\Gamma_n-\Gamma|_\infty$ obeys the same probability bound as $|\hat\Gamma_n-\Gamma|_\infty$ in (\ref{eqn:tail_prob_bound_hatGamma_n}), there exists an absolute constant $K_1 > 0$ such that $\Prob( \Delta_n > v ) \le n^{-1}$, where $v = K_1 [n^{-1} B_n^2 \log^3(n d)]^{1/2}$. Then (\ref{eqn:comparison_with_naive_gaussian_wild_bootstrap}) follows from Theorem \ref{thm:CLT-hyperrec-momconds} and the assumption $B_n^2 \log^7(n d) \le n^{1-K}$.\\

\noindent {\bf Case (ii).} Again, $|\tilde\Gamma_n-\Gamma|_\infty$ obeys the same probability bound as $|\hat\Gamma_n-\Gamma|_\infty$, which is (\ref{eqn:t_star_poly}) under (E.2). Therefore, there exists a constant $C_1(q) > 0$ such that $\Prob( \Delta_n > v ) \le n^{-K/6}$, where $v = C_1(q) \{ [n^{-1} B_n^2 \log^3(n d)]^{1/2} + n^{-1+2(1+K/6)/q} B_n^2 \log(d) \}$. Then (\ref{eqn:comparison_with_naive_gaussian_wild_bootstrap}) follows from Theorem \ref{thm:CLT-hyperrec-momconds} and the assumptions that $B_n^2 \log^7(n d) \le n^{1-K}$ and $B_n^4 \log^6(d) \le n^{2-4(1+K/6)/q-K}$.
\end{proof}

\section{A maximal inequality for completely degenerate weighted quadratic forms}

\begin{lem}[A maximal inequality for completely degenerate weighted quadratic forms]
\label{lem:maximal_ineq_weighted_quad_forms}
Let $\va_{ij} \in \R^d$ be $d$-dimensional fixed vectors such that $\va_{ij} = \va_{ji}$ for all $i,j=1,\cdots,n$ and $i \neq j$. Let $w_1,\cdots,w_n$ be iid random variables such that $\E w_i = 0, \E w_i^2 = 1$, and $\E w_i^4 < \infty$. Then, there exists an absolute constant $K > 0$ such that
\begin{eqnarray}
\nonumber
\E |\sum_{1 \le i \neq j \le n} \va_{ij} w_i w_j |_\infty &\le& K (\log d) \big\{ (\log d) \|M\|_2 + A_1 \\ \label{eqn:maximal_ineq_weighted_quad_forms}
&& \qquad + (\log d)^{1/4} A_2 \|w_1\|_4 + (\log d)^{1/2} \|\tilde{M}\|_4 \big\},
\end{eqnarray} 
where
\begin{eqnarray*}
A_1 &=& \max_{1 \le m \le d} (\sum_{1 \le i \neq j \le n} a_{ij,m}^2)^{1/2}, \quad A_2 = \max_{1 \le m \le d} [\sum_{i=1}^n (\sum_{j \neq i} a_{ij,m}^2)^2 ]^{1/4}, \\ 
M &=& \max_{1 \le m \le d} \max_{1 \le i \neq j \le n} |a_{ij,m} w_i w_j|, \quad \tilde{M} = \max_{1 \le m \le d} \max_{1 \le i \le n} (\sum_{j\neq i} a_{ij,m}^2)^{1/2} |w_i|.
\end{eqnarray*} 
\end{lem}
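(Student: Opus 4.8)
The plan is to peel the completely degenerate quadratic form apart in two stages --- first replacing $w_iw_j$ by a decoupled product $w_iw_j'$, then conditioning to expose a sum of independent mean-zero vectors --- and then to control the random coefficients that appear by a second, nested application of the same device. Since $\E w_i=0$, the kernel $(x,y)\mapsto\va_{ij}xy$ is canonical ($\E[\va_{ij}w_ix]=0$ for every $x$), so the decoupling inequality of de la Pe\~na and Gin\'e \cite[Theorem 3.1.1]{delaPenaGine1999}, applied to the convex map $|\cdot|_\infty$, gives
$$\E\Big|\sum_{1\le i\neq j\le n}\va_{ij}w_iw_j\Big|_\infty\le K_1\,\E\Big|\sum_{1\le i\neq j\le n}\va_{ij}w_iw_j'\Big|_\infty$$
for an independent copy $w'=(w_1',\dots,w_n')$ of $w$. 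Conditioning on $w'$ turns the right-hand side into $\E_w|\sum_i\vb_iw_i|_\infty$ with $\vb_i=\sum_{j\neq i}\va_{ij}w_j'$, a sum of independent mean-zero random vectors in $\R^d$; a Nemirovski-type maximal inequality (the one used via \cite[Lemma 8]{cck2014b} in the proof of Theorem~\ref{thm:expectation-bound}) then bounds it conditionally by a constant multiple of
$$(\log d)^{1/2}\Big(\max_m\sum_i b_{i,m}^2\Big)^{1/2}+(\log d)\,\E_w^{1/2}\Big[\max_{i,m}b_{i,m}^2w_i^2\Big].$$

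The work then is to take $\E_{w'}$ of these two terms. For the variance proxy, $\sum_i b_{i,m}^2$ is itself a quadratic form in $w'$ whose Hoeffding decomposition splits into a diagonal piece $\sum_j\big(\sum_{i\neq j}a_{ij,m}^2\big)w_j'^2$ --- with mean at most $A_1^2$ and fluctuation bounded by a \cite[Lemma 9]{cck2014b}-type maximal inequality, which after the square root forced by Jensen contributes the $(\log d)^{1/4}A_2\|w_1\|_4$ term --- and a canonical off-diagonal piece, which I would decouple once more and bound by repeating the linear-form argument, feeding into the $A_2$ and $\|\tilde M\|_4$ contributions. For the envelope term I would undo one layer of decoupling to write $\max_{i,m}|b_{i,m}w_i|=\max_m\max_i\big|\sum_{j\neq i}a_{ij,m}w_iw_j'\big|$, bound this by a row part controlled by $\tilde M=\max_m\max_i(\sum_{j\neq i}a_{ij,m}^2)^{1/2}|w_i|$ and a residue controlled by the single-entry envelope $M$, and close the estimate with Cauchy--Schwarz (which is precisely why only the low moments $\|M\|_2$ and $\|\tilde M\|_4$ are needed). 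Assembling the pieces with the standard bounds $\|\max_m(\cdot)\|_p\le d^{1/p}\max_m\|\cdot\|_p$ and choosing $p\asymp\log d$ yields (\ref{eqn:maximal_ineq_weighted_quad_forms}).

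The hard part will not be any individual inequality but the nested bookkeeping: the conditional variance of the linearized form is again a quadratic form carrying a nonzero diagonal, so the decoupling/Hoeffding step has to be iterated, and one must track how the envelopes $M,\tilde M$ and the Frobenius-type norms $A_1,A_2$ accumulate powers of $\log d$ (and the moment $\|w_1\|_4$) across the two layers, getting the final exponents $(\log d)^2,(\log d),(\log d)^{5/4},(\log d)^{3/2}$ to line up. This is structurally the same obstacle as in the proof of Theorem~\ref{thm:expectation-bound}, where controlling $I=\E\max_m\sum_{i\neq j}f_m^2(X_i,X_j)$ forced a sub-decomposition and a quadratic inequality, but here it should be somewhat cleaner since the $\va_{ij}$ are deterministic and only the weights need to be decoupled.
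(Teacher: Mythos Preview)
Your route is genuinely different from the paper's, and the difference matters. The paper does \emph{not} decouple $w_iw_j\mapsto w_iw_j'$ and then linearize; instead it first applies the randomization inequality (Lemma~\ref{lem:randomization}) to insert Rademacher signs $\varepsilon_i\varepsilon_j$, and then --- conditionally on the $w$'s --- applies the Hanson--Wright inequality to the Rademacher quadratic form. The point is that the $\varepsilon_i$ are sub-Gaussian even though the $w_i$ are not, so Hanson--Wright is available and yields directly
\[
\E\Big|\sum_{i\neq j}\va_{ij}w_iw_j\varepsilon_i\varepsilon_j\Big|_\infty \le K(\log d)\sqrt{I},\qquad I=\E\Big[\max_m\sum_{i\neq j}a_{ij,m}^2w_i^2w_j^2\Big],
\]
with \emph{no} envelope term at this stage. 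One then expands $w_i^2w_j^2=1+(w_i^2-1)+(w_j^2-1)+(w_i^2-1)(w_j^2-1)$ inside $I$: the constant gives $A_1^2$, the linear pieces are handled by symmetrization plus \cite[Lemma~9]{cck2014b} and produce the $A_2\|w_1\|_4$ and $\|\tilde M\|_4$ terms, and the canonical piece is randomized again, Hanson--Wright'd, and bounded pointwise by $M^2\cdot\max_m\sum_{i\neq j}a_{ij,m}^2w_i^2w_j^2$ --- so Cauchy--Schwarz gives $\le K(\log d)\|M\|_2\sqrt I$. Solving the resulting quadratic in $\sqrt I$ finishes.

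Your decouple-then-linearize plan pays a price at both levels. First, Lemma~8 of \cite{cck2014b} applied conditionally on $w'$ produces, in addition to the variance proxy, an envelope $\big(\E\max_{i,m}b_{i,m}^2w_i^2\big)^{1/2}$ with $b_{i,m}=\sum_{j\neq i}a_{ij,m}w_j'$; I do not see how this is ``a row part controlled by $\tilde M$ plus a residue controlled by $M$'' under only a fourth-moment assumption on $w$ --- neither $|\sum_{j\neq i}a_{ij,m}w_j'|\le(\sum_{j\neq i}a_{ij,m}^2)^{1/2}(\sum_j w_j'^2)^{1/2}$ nor a single-term bound gives what you need. Second, the off-diagonal part of your variance proxy $J=\E_{w'}\max_m\sum_i b_{i,m}^2$ is a canonical form with coefficients $c_{jk,m}=\sum_{i\neq j,k}a_{ij,m}a_{ik,m}$, which do not factor back into the original $a_{ij,m}$'s the way the paper's $a_{ij,m}^2$ do; the self-referential Cauchy--Schwarz that closes the paper's quadratic inequality has no obvious analogue here. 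Your sketch may be salvageable, but the clean mechanism is Rademacher randomization plus Hanson--Wright, not decoupling plus a linear maximal inequality.
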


\subsection{Proof of Lemma \ref{lem:maximal_ineq_weighted_quad_forms}}

Before proving Lemma \ref{lem:maximal_ineq_weighted_quad_forms}, we need some auxillary lemmas. Let $(X'_1,\cdots,X'_n)$ be an independent copy of $(X_1,\cdots,X_n)$.  Let $\varepsilon_1, \cdots, \varepsilon_n$ be a sequence of independent Rademacher random variables such that $\Prob(\varepsilon_i=\pm1)=1/2$. Let $(\varepsilon'_1,\cdots,\varepsilon'_n)$ be an independent copy of $(\varepsilon_1,\cdots,\varepsilon_n)$. Here, we assume that the underlying probability space is rich enough such that $X_1,\cdots,X_n,X'_1,\cdots,X'_n,\varepsilon_1, \cdots, \varepsilon_n,\varepsilon'_1, \cdots, \varepsilon'_n$ are all independent.

\begin{lem}[Symmetrization for functions depending on the index]
\label{lem:symmetrization}
Let $q \ge 1$ and $X_1,\cdots,X_n$ be independent random variables in $S$. Let $h_{mi}:S\to \R$ be $\cal S$-measurable functions such that $\E|h_{mi}(X_i)|^q < \infty$ and $\E[h_{mi}(X_i)] = 0$ for all $m =1,\cdots,d$ and $i=1,\cdots,n$. Then we have
\begin{eqnarray*}
2^{-q} \E[\max_{1 \le m \le d} |\sum_{i=1}^n \varepsilon_i h_{mi}(X_i)|^q] \le \E[\max_{1 \le m \le d} |\sum_{i=1}^n h_{mi}(X_i)|^q] \le 2^q \E[\max_{1 \le m \le d} |\sum_{i=1}^n \varepsilon_i h_{mi}(X_i)|^q].
\end{eqnarray*} 
\end{lem}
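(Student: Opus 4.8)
The plan is to adapt the classical symmetrization inequality (compare \cite[Lemma 2.3.1]{vandervaartwellner1996}) to this multivariate, index-dependent setting; the point is that the standard proof never uses the identical distribution of the $X_i$ or of the summands $h_{mi}$, only independence across $i$ and a symmetry property of a symmetrized version. Throughout I would write $\|v\| = \max_{1 \le m \le d} |v_m|$, which is a genuine norm on $\R^d$, so that the triangle inequality and the elementary convexity bound $(a+b)^q \le 2^{q-1}(a^q+b^q)$ for $q \ge 1$ apply to $\|\cdot\|^q$. I would introduce an independent copy $(X_1',\dots,X_n')$ of $(X_1,\dots,X_n)$, independent of the Rademacher sequence $(\varepsilon_i)$, and abbreviate $h_{\cdot i}(x) = (h_{1i}(x),\dots,h_{di}(x))^\top$, $S = \sum_{i=1}^n h_{\cdot i}(X_i)$, $S^\varepsilon = \sum_{i=1}^n \varepsilon_i h_{\cdot i}(X_i)$ and $D_i = h_{\cdot i}(X_i) - h_{\cdot i}(X_i')$; all moments appearing below are finite since $\E|h_{mi}(X_i)|^q < \infty$ and $d, n < \infty$.

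For the upper bound I would first use $\E[h_{mi}(X_i')] = 0$: conditioning on $X_1^n$ and applying Jensen's inequality to the convex map $v \mapsto \|v\|^q$ gives $\E\|S\|^q \le \E\|\sum_i D_i\|^q$. The key observation is then that $D_1,\dots,D_n$ are independent (the pairs $(X_i,X_i')$ are independent across $i$) and each $D_i$ is symmetric as a $d$-vector, i.e.\ $D_i \stackrel{d}{=} -D_i$, because $(X_i,X_i') \stackrel{d}{=} (X_i',X_i)$; here the index-dependence of the $h_{mi}$ is harmless. Hence $(\varepsilon_i D_i)_i \stackrel{d}{=} (D_i)_i$, so $\E\|\sum_i D_i\|^q = \E\|\sum_i \varepsilon_i D_i\|^q$, and splitting $\varepsilon_i D_i = \varepsilon_i h_{\cdot i}(X_i) - \varepsilon_i h_{\cdot i}(X_i')$ and applying the triangle inequality followed by the convexity bound (the $X$ and $X'$ parts contributing equal terms) produces the factor $2^q$, i.e.\ the right-hand inequality.

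For the lower bound I would run the same chain in reverse: starting from $\E\|S^\varepsilon\|^q$, insert the mean-zero term $\varepsilon_i\E_{X'}[h_{\cdot i}(X_i')]$ inside the sum and use Jensen to get $\E\|S^\varepsilon\|^q \le \E\|\sum_i \varepsilon_i D_i\|^q$, remove the $\varepsilon_i$ by the same symmetry argument, and bound $\E\|\sum_i D_i\|^q \le 2^q \E\|S\|^q$ by the triangle/convexity step; dividing by $2^q$ gives the claimed left-hand inequality. I do not expect a real obstacle: the only thing to check beyond the classical argument is that the sign-symmetrization survives the loss of the identical-distribution hypothesis, and it does, since that step uses only the independence of $D_1,\dots,D_n$ across $i$ and the symmetry $D_i \stackrel{d}{=} -D_i$, both consequences of the exchangeability of $(X_i,X_i')$. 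The maximum over $m$ enters solely through $\max_m|\cdot|$ being a norm.
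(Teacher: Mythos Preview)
Your proposal is correct and follows essentially the same route as the paper's own proof: introduce an independent copy $X'$, use the mean-zero condition with Jensen's inequality to pass to the symmetrized differences $D_i = h_{\cdot i}(X_i) - h_{\cdot i}(X_i')$, exploit the sign-symmetry of the independent $D_i$ to insert or remove the Rademacher multipliers, and finish with the triangle inequality together with the convexity bound $(a+b)^q \le 2^{q-1}(a^q+b^q)$. The paper spells out the left-hand inequality and notes that the right-hand one ``follows similarly''; your write-up makes both directions explicit but is otherwise the same argument.
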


\begin{proof}[Proof of Lemma \ref{lem:symmetrization}]
Since the random vectors $\vh_i(X_i) - \vh_i(X'_i) = \{h_{1i}(X_i)-h_{1i}(X'_i),\cdots,h_{di}(X_i)-h_{di}(X'_i)\}$ for $i=1,\cdots,n$ are independent and symmetric, $\{\vh_i(X_i) - \vh_i(X'_i)\}_{i=1}^n$ has the same joint distribution as $\{\varepsilon_i [\vh_i(X_i) - \vh_i(X'_i)]\}_{i=1}^n$. Let $\E'$ be the expectation taken only w.r.t. the random variables $X'_1,\cdots,X'_n$. By the centering assumption and Jensen's inequality, we have
\begin{eqnarray*}
\E[\max_{1 \le m \le d} |\sum_{i=1}^n \varepsilon_i h_{mi}(X_i)|^q] &=& \E[\max_{1 \le m \le d} |\sum_{i=1}^n \varepsilon_i h_{mi}(X_i) - \E'[h_{mi}(X'_i)] |^q] \\
&\le& \E[ \max_{1 \le m \le d} |\sum_{i=1}^n \varepsilon_i [ h_{mi}(X_i) - h_{mi}(X'_i)] |^q ] \\
&=& \E[ \max_{1 \le m \le d} |\sum_{i=1}^n  [ h_{mi}(X_i) - h_{mi}(X'_i)] |^q ] \\
&\le& 2^q  \E[ \max_{1 \le m \le d} |\sum_{i=1}^n  h_{mi}(X_i) |^q ].
\end{eqnarray*}
The other half inequality follows similarly.
\end{proof}

\begin{lem}[Randomization for canonical kernels depending on the index]
\label{lem:randomization}
Let $q \ge 1$ and $X_1,\cdots,X_n$ be iid random variables in $S$. Let $h_{ij}:S \times S \to \R^d$ be symmetric ${\cal S} \otimes {\cal S}$-measurable functions such that for all $x_1, x_2 \in S$, $i,j=1,\cdots,n$ and $i \neq j$: (i) $h_{ij}(x_1, x_2) = h_{ji}(x_2, x_1)$ and $\E|h_{m,ij}(X_1,X_2)|^q < \infty$ for all $m =1,\cdots,d$; (ii) $\E[h_{ij}(X_1,x_2)] = \E[h_{ij}(x_1,X_2)] = 0$. Then we have
\begin{eqnarray*}
\E[\max_{1 \le m \le d} |\sum_{i \neq j} h_{m,ij}(X_i,X_j)|^q] &\cong_q& \E[\max_{1 \le m \le d} |\sum_{i \neq j} \varepsilon_i \varepsilon_j h_{m,ij}(X_i,X_j)|^q] \\
&\cong_q& \E[\max_{1 \le m \le d} |\sum_{i \neq j} \varepsilon_i \varepsilon'_j h_{m,ij}(X_i,X_j)|^q] \\
&\cong_q& \E[\max_{1 \le m \le d} |\sum_{i \neq j} \varepsilon_i \varepsilon'_j h_{m,ij}(X_i,X'_j)|^q].
\end{eqnarray*} 
where $a \cong_q b$ means that $c b \le a \le C b$ for some constants $c,C>0$ depending only on $q$.
\end{lem}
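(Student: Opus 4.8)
The plan is to establish the four-term chain one link at a time, each link being an instance of a classical decoupling or randomization inequality for U-statistics of order two (as recorded in \cite[Theorems~3.1.1 and 3.5.3]{delaPenaGine1999}) applied to a suitably conditioned kernel and to the functional $\Phi(v) = \max_{1 \le m \le d} |v_m|^q = |v|_\infty^q$ on $\R^d$, which for $q \ge 1$ is convex and nondecreasing in each $|v_m|$. As a preliminary remark that streamlines the appeal to the references, hypothesis (i) together with relabeling gives $\sum_{i \neq j} h_{m,ij}(X_i,X_j) = 2\sum_{i<j} h_{m,ij}(X_i,X_j)$, so every display in the chain may be read as a symmetric order-two U-statistic whose kernel family $\{h_{ij}\}_{i<j}$ is merely permitted to depend on the pair; the same holds after Rademacher multipliers are inserted or the decoupled variables are introduced.

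For the first equivalence I would invoke the randomization inequality for completely degenerate U-statistics of order two, \cite[Theorem~3.5.3]{delaPenaGine1999}, an appropriate version of which asserts the comparison of the first two displays (up to $q$-dependent constants) for any canonical kernel and any convex nondecreasing $\Phi$. Its proof combines the decoupling inequality with the symmetrization of sums of independent centered summands --- symmetrizing in the index $i$ after conditioning on the remaining variables, where independence and centering of $\sum_{j \ne i} h_{ij}(X_i, X'_j)$ in $i$ hold by (ii), and then symmetrizing in $j$ --- and at every step only Jensen's inequality and conditional-expectation identities carried out pair-by-pair are used. Hence the canonicity hypothesis (ii) is the sole structural input, and the argument is unaffected by the kernel carrying a pair index or by the choice $\Phi(v) = |v|_\infty^q$; this gives $(1) \cong_q (2)$.

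For $(2) \cong_q (3)$ I would condition on $X_1^n$ and set $a_{m,ij} = h_{m,ij}(X_i,X_j)$: the comparison becomes that of the Rademacher chaos $\sum_{i \neq j} \varepsilon_i \varepsilon_j a_{m,ij}$ with its decoupled form $\sum_{i \neq j} \varepsilon_i \varepsilon'_j a_{m,ij}$, and since $(\varepsilon_i,\varepsilon_j) \mapsto a_{m,ij}\varepsilon_i\varepsilon_j$ is a canonical kernel in the i.i.d. signs $\varepsilon_1,\dots,\varepsilon_n$ (because $\E \varepsilon_i = 0$), \cite[Theorem~3.1.1]{delaPenaGine1999} applied conditionally yields both inequalities; integrating over $X_1^n$ and using Fubini closes the link. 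For $(3) \cong_q (4)$ I would instead condition on all the signs $(\varepsilon_i, \varepsilon'_j)$: for fixed signs the kernel $h^{\varepsilon}_{ij}(x_1,x_2) := \varepsilon_i\varepsilon'_j\, h_{ij}(x_1,x_2)$ is $F$-canonical in $X$ by (ii), so \cite[Theorem~3.1.1]{delaPenaGine1999} applied to $X_1,\dots,X_n$ with the independent copy $X'_1,\dots,X'_n$ compares $\sum_{i \neq j}\varepsilon_i\varepsilon'_j h_{m,ij}(X_i,X_j)$ with $\sum_{i \neq j}\varepsilon_i\varepsilon'_j h_{m,ij}(X_i,X'_j)$; integrating over the signs completes the chain.

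The main obstacle is bookkeeping rather than conceptual content: one must check that the classical proofs of \cite[Theorems~3.1.1 and 3.5.3]{delaPenaGine1999}, stated there for a single kernel and scalar convex moment functions, go through when the kernel carries a pair index and the convex functional is $v \mapsto |v|_\infty^q$ on $\R^d$ --- equivalently, a finite supremum of scalar functionals. This is routine: the decoupling and randomization arguments are driven by conditional expectations and Jensen's inequality, operations that commute with coordinatewise maxima and never entangle distinct pair indices, so the emerging constants depend only on the order (two) and on $q$. One should also note that each kernel remains integrable after conditioning and after insertion of the bounded Rademacher factors, which follows from $\E |h_{m,ij}(X_1,X_2)|^q < \infty$; no symmetry of the multiplier kernels is required for the decoupling steps.
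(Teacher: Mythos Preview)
Your proposal is correct and uses essentially the same ingredients as the paper: iterated symmetrization (conditioning on one index block, treating the remaining sum as centered independent summands) combined with the decoupling inequality \cite[Theorem~3.1.1]{delaPenaGine1999}. The paper's proof differs only in organization: it first proves an index-dependent symmetrization lemma (the preceding Lemma~\ref{lem:symmetrization}) and then uses it twice together with Fubini to establish $(1)\cong_q(4)$ directly, finishing with ``the equivalence of the middle terms follows from several applications of the decoupling inequality''; you instead traverse the chain $(1)\!\to\!(2)\!\to\!(3)\!\to\!(4)$ link by link, which is equivalent.
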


\begin{proof}[Proof of Lemma \ref{lem:randomization}]
The proof is a modification of \cite[Theorem 3.5.3]{delaPenaGine1999} for the special case $h_{ij} \equiv h$. For the sake of completeness, we write the proof. Let $E_1$ and $E_2$ be the expectations taken only w.r.t. $\varepsilon_1,\cdots,\varepsilon_n, X_1,\cdots,X_n$ and  $\varepsilon'_1,\cdots,\varepsilon'_n, X'_1,\cdots,X'_n$, respectively. Applying the Fubini theorem and Lemma \ref{lem:symmetrization} twice, we have
\begin{eqnarray*}
\E[\max_{1 \le m \le d} |\sum_{i \neq j} \varepsilon_i \varepsilon'_j h_{m,ij}(X_i,X'_j)|^q] &=& \E_1\{ \E_2 \max_{1 \le m \le d}  |\sum_{j=1}^n \varepsilon'_j (\sum_{i \neq j} \varepsilon_i  h_{m,ij}(X_i,X'_j)) |^q \} \\
&\cong_q& \E_1\{ \E_2 \max_{1 \le m \le d}  |\sum_{j=1}^n (\sum_{i \neq j} \varepsilon_i  h_{m,ij}(X_i,X'_j)) |^q \} \\
&=& \E_2\{ \E_1 \max_{1 \le m \le d}  |\sum_{i=1}^n \varepsilon_i  (\sum_{j \neq i} h_{m,ij}(X_i,X'_j)) |^q \} \\
&\cong_q& \E_2\{ \E_1 \max_{1 \le m \le d}  |\sum_{i=1}^n \sum_{j \neq i} h_{m,ij}(X_i,X'_j) |^q \} \\
&\cong_q& \E[ \max_{1 \le m \le d}  |\sum_{i \neq j} h_{m,ij}(X_i,X_j) |^q ],
\end{eqnarray*} 
where the last step follows from the decoupling inequality \cite[Theorem 3.1.1]{delaPenaGine1999} since $h_{m,ij}$ are symmtric kernels. So the first and last terms in Lemma \ref{lem:randomization} are on the same order. Then, the equivalence of the middle terms follows from several applications of the decoupling inequality.
\end{proof}

\begin{proof}[Proof of Lemma \ref{lem:maximal_ineq_weighted_quad_forms}]
By the symmetry of $\va_{ij}$ and the randomization inequality in Lemma \ref{lem:randomization}, we have
\begin{equation}
\label{eqn:maximal_ineq_quad_first_step_sym}
\E |\sum_{1 \le i \neq j \le n} \va_{ij} w_i w_j |_\infty  \le  K_1 \E |\sum_{1 \le i \neq j \le n} \va_{ij} w_i w_j \varepsilon_i \varepsilon_j |_\infty
\end{equation}
for some absolute constant $K_1 > 0$. Let $I = \E [\max_{1 \le m \le d} \sum_{1 \le i \neq j \le n} a_{ij,m}^2 w_i^2 w_j^2]$. By the argument in Theorem \ref{thm:expectation-bound}, we have
\begin{equation}
\label{eqn:maximal_ineq_quad_second_step}
\E |\sum_{1 \le i \neq j \le n} \va_{ij} w_i w_j \varepsilon_i \varepsilon_j |_\infty \le K_2 (\log d) \sqrt{I}.
\end{equation}
By the triangle inequality,
$$
I \le \max_m \sum_{i\neq j} a_{ij,m}^2 + 2 \E[\max_m |\sum_{i \neq j} a_{ij,m}^2 (w_i^2-1)| ] + \E [\max_m | \sum_{i \neq j} a_{ij,m}^2 (w_i^2-1) (w_j^2-1) |].
$$
Since $\{a_{ij,m}^2 (w_i^2-1) (w_j^2-1)\}_{m=1}^d$ is a completely degenerate kernel, by a second application of the randomization inequality in Lemma \ref{lem:randomization}, Jensen's inequality and the decoupling inequality \cite[Theorem 3.1.1]{delaPenaGine1999}, we deduce that
\begin{eqnarray*}
&& \E [\max_m | \sum_{i \neq j} a_{ij,m}^2 (w_i^2-1) (w_j^2-1) |] \\
&\le& K_3 \E [\max_m | \sum_{i \neq j} a_{ij,m}^2 (w_i^2-1) (w_j^2-1) \varepsilon_i \varepsilon_j |]  \\
&\le& K_3 \E [\max_m | \sum_{i \neq j} a_{ij,m}^2 w_i^2 w_j^2 \varepsilon_i \varepsilon_j |] .
\end{eqnarray*}
Let $\Lambda_m = \{a_{ij,m}^2 w_i^2 w_j^2\}_{1 \le i \neq j \le n}$ be an $n \times n$ matrix with zero diagonal entries. By the Hanson-Wright inequality \cite{rudelsonvershynin2013a}, we have for each $m=1,\cdots,d$ that
$$
\Prob(|\sum_{i \neq j} a_{ij,m}^2 w_i^2 w_j^2 \varepsilon_i \varepsilon_j| \ge t \mid w_1^n) \le 2 \exp\{-K_4 \min(t/T_1, t^2/T_2^2) \},
$$
where $T_1 = \|\Lambda_m\|_2$ is the spectral norm of $\Lambda_m$ and $T_2 = |\Lambda_m|_F$. Integrating this tail probability and by the union bound, we have
$$
\E [\max_m | \sum_{i \neq j} a_{ij,m}^2 w_i^2 w_j^2 \varepsilon_i \varepsilon_j |] \le K_5 (\log d) \E[ \max_m \sum_{i \neq j} a_{ij,m}^4 w_i^4 w_j^4 ]^{1/2}.
$$
Then, it follows from the Cauchy-Schwarz inequality that
$$
\E[ \max_m \sum_{i \neq j} a_{ij,m}^4 w_i^4 w_j^4 ]^{1/2} \le \|M\|_2 \sqrt{I}.
$$
By the Hoeffding inequality \cite[Lemma 2.2.7]{vandervaartwellner1996}, we have for each $m=1,\cdots,d$ that
\begin{eqnarray*}
\Prob( |\sum_{i=1}^n (\sum_{j \neq i} a_{ij,m}^2) (w_i^2-1) \varepsilon_i | \ge t \mid w_1^n) \le 2 \exp\left\{ - {t^2 \over 2 \sum_{i=1}^n (\sum_{j \neq i} a_{ij,m}^2)^2 (w_i^2-1)^2} \right\}
\end{eqnarray*}
holds for all $t > 0$. Then, by the symmetrization inequality in Lemma \ref{lem:symmetrization} and the union bound, we have
$$
\E[\max_m |\sum_{i \neq j} a_{ij,m}^2 (w_i^2-1)| ] \le K_6 (\log d)^{1/2}  \E[ \max_m \sum_{i=1}^n (\sum_{j \neq i} a_{ij,m}^2)^2 (w_i^2-1)^2 ]^{1/2}.
$$
By \cite[Lemma 9]{cck2014b}, we have
\begin{eqnarray*}
&& \E[ \max_m \sum_{i=1}^n (\sum_{j \neq i} a_{ij,m}^2)^2 (w_i^2-1)^2 ] \\
&\le& K_7 \{ \max_m \sum_{i=1}^n (\sum_{j \neq i} a_{ij,m}^2)^2 \|w_1\|_4^4 + (\log d) \E[\max_{m,i}  (\sum_{j \neq i} a_{ij,m}^2)^2 (w_i^2-1)^2 ] \} \\
&\le& K_7 \{A_2^4 \|w_1\|_4^4 + (\log d) \E(\tilde{M}^4) \}.
\end{eqnarray*}
Therefore, we obtain that
$$
I \le K_8 \{A_1^2  + (\log d)^{1/2} [A_2^2 \|w_1\|_4^2 + (\log d)^{1/2} \|\tilde{M}\|_4^2] + (\log d) \|M\|_2 \sqrt{I}  \}.
$$
Solving this quadratic equation for $I$, we get
$$
I \le K_9 \{ (\log d)^2 \|M\|_2^2 + A_1^2 + (\log d)^{1/2} A_2^2 \|w_1\|_4^2 + (\log d) \|\tilde{M}\|_4^2 \}.
$$
Then, (\ref{eqn:maximal_ineq_weighted_quad_forms}) follows from the last inequality combined with (\ref{eqn:maximal_ineq_quad_first_step_sym}) and (\ref{eqn:maximal_ineq_quad_second_step}).
\end{proof}

\section{Tail probability inequalities for the maxima of U-statistics}
\label{sec:conc-ineq-ustat}

In this section, we establish a collection of tail probability inequalities for the maxima of U-statistics of an arbitrary order $r=1,\cdots, n$ under various moment conditions. Let $X_1^n$ be iid random variables taking values in a measurable space $(S, {\cal S})$. Let $I_n^r = \{(i_1,i_2,\cdots,i_r) : 1 \le i_1 \neq i_2 \neq \cdots \neq i_r \le n\}$, $h: S^r \to \R^d$ be a measurable function and
$$
U_n = {(n-r)! \over n!} \sum_{(i_1,i_2,\cdots,i_r) \in I_n^r} h(X_{i_1},X_{i_2},\cdots,X_{i_r}).
$$
be the corresponding $U$-statistics of order $r$. In Appendix \ref{sec:conc-ineq-ustat}, we do not assume that $h$ is symmetric, centered and non-degenerate. Let $m = [n/r]$ be the largest integer no greater than $n/r$ and 
$$
V(X_1^n) = \sum_{i=0}^{m-1} [h(X_{ir+1}^{ir+r}) - \theta],
$$
where $\theta = \E[h(X_{ir+1}^{ir+r})]$ and $X_{ir+1}^{ir+r} = \{ X_{ir+1}, \cdots, X_{ir+r} \}$. Let $\pi_n$ be a permutation of $\{1,\cdots,n\}.$ As noted by \cite{hoeffding1963}, we can write the U-statistics as a dependent average of block sums of iid random variables over all possible permutations
\begin{equation}
\label{eqn:ustat-perm-rep}
m (U_n-\theta) = {1\over n!} \sum_{\all \pi_n} V(X_{\pi_n(1)}, \cdots, X_{\pi_n(n)}).
\end{equation}
The representation (\ref{eqn:ustat-perm-rep}) enables us to reduce the moment bounds on $m(U_n-\theta)$ to those of $V(X_1^n)$, whose size control has been well-understood in literature \cite{ledouxtalagrand1991,adamczak2008,kleinrio2005,massart2000,einmahlli2008}.

\subsection{Centered U-statistics}
\label{subsec:conc-ineq-ustat-centered}

We first deal with centered U-statistics $Z = m |U_n - \theta|_\infty$. Let $\tau >0$ and 
$$\bar{h}(x_1^r) = h(x_1^r) \vone(\max_{1\le j \le d}|h_j(x_1^r)| \le \tau)$$
be the truncated version of $h(x_1^r)$ at the level $\tau$. Define
\begin{eqnarray}
\label{eqn:Z_1_centered}
Z_1 &=& \max_{1 \le j \le d} \left| \sum_{i=0}^{m-1} [\bar{h}_j(X_{ir+1}^{ir+r}) - \E \bar{h}_j] \right|, \\
\label{eqn:M}
M &=&  \max_{1 \le j \le d} \max_{0 \le i \le m-1} |h_j(X_{ir+1}^{ir+r})|, \\
\label{eqn:zeta_bar_centered}
\bar\zeta_n^2 &=& \ \max_{1 \le j \le d} \sum_{i=0}^{m-1} \E h_j^2(X_{ir+1}^{ir+r}).
\end{eqnarray}

\begin{lem}[Sub-exponential inequality for the maxima of centered U-statistics]
\label{lem:subexp-concentration-ineq}
Let $X_1,\cdots,X_n$ be iid random variables taking values in $S$ and $\alpha \in (0,1]$. Suppose that $\|h_j(X_1^r)\|_{\psi_\alpha} < \infty$ for all $j=1,\cdots,d$. Let $\tau = 8 \E[M]$. Then, for any $0 < \eta \le 1$ and $\delta>0$, there exists a constant $C(\alpha,\eta,\delta)>0$ such that we have
\begin{equation}
\label{eqn:subexp-concentration-ineq}
\Prob(Z \ge (1+\eta) \E Z_1 + t)  \le \exp\left(-{t^2 \over 2 (1+\delta) \bar\zeta_n^2} \right) + 3 \exp\left[ -\left({t \over C(\alpha,\eta,\delta) \left\| M \right\|_{\psi_\alpha}} \right)^\alpha \right]
\end{equation}
holds for all $t > 0$.
\end{lem}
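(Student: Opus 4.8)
The plan is to reduce the bound for the centered $U$-statistic to a tail inequality for the sup-norm of a sum of iid random vectors, and then to appeal to an unbounded-empirical-process concentration inequality of Adamczak's type \cite{adamczak2008}. First, using Hoeffding's permutation representation (\ref{eqn:ustat-perm-rep}): if $\sigma$ is a uniform random permutation of $\{1,\dots,n\}$ independent of $X_1^n$, then $m(U_n-\theta)=\E[V(X_{\sigma(1)},\dots,X_{\sigma(n)})\mid X_1^n]$, so for every convex nondecreasing $\phi$ Jensen's inequality gives $\E\phi(Z)\le\E\phi(|V(X_1^n)|_\infty)$ because $V(X_{\sigma(1)},\dots,X_{\sigma(n)})\stackrel{d}{=}V(X_1^n)$. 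Writing $S:=|V(X_1^n)|_\infty=\max_{1\le j\le d}\bigl|\sum_{i=0}^{m-1}(h_j(X_{ir+1}^{ir+r})-\E h_j)\bigr|$, a sum of $m$ iid centered vectors $Y_i:=h(X_{ir+1}^{ir+r})-\theta$ in $\R^d$, this domination of $\E\phi(\cdot)$ over convex nondecreasing $\phi$ lets us transfer to $Z$ any tail bound for $S$ that is itself assembled from exponential-moment control (Chernoff's method) together with a probability bound for a rare ``big-jump'' event --- which is exactly the structure of the right-hand side of (\ref{eqn:subexp-concentration-ineq}); the passage from iid sums to $U$-statistics via this symmetrization is the one used in \cite{adamczak2008}. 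Thus it suffices to prove (\ref{eqn:subexp-concentration-ineq}) with $Z$ replaced by $S$.

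Next I would write $S=\sup_{f\in\mathcal F}\bigl|\sum_{i=0}^{m-1}(f(X_{ir+1}^{ir+r})-\E f)\bigr|$ over the finite class $\mathcal F=\{\pm h_j:1\le j\le d\}$, with envelope $F=\max_{1\le j\le d}|h_j|$, so that $\max_{0\le i\le m-1}F(X_{ir+1}^{ir+r})=M$ and $\sup_{f\in\mathcal F}\sum_i\Var(f)\le\bar\zeta_n^2$. Here $\|M\|_{\psi_\alpha}<\infty$ (Pisier's bound for $\alpha\ge1$, its quasi-norm analogue for $\alpha<1$), so $\tau=8\,\E M$ is finite. Truncate each summand at level $\tau$: on $\{M\le\tau\}$ one has $h_j\equiv\bar h_j$ on every block, hence $S\le Z_1+m\|\E\bar h-\theta\|_\infty$ there, while on $\{M>\tau\}$ one also adds the truncation remainder $\bigl|\sum_i(Y_i-\bar Y_i)\bigr|_\infty$, with $\bar Y_i$ the centred truncation. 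The centering bias $m\|\E\bar h-\theta\|_\infty=m\max_j|\E[h_j\vone(F>\tau)]|$ and the $L^1$-size of the truncation remainder are both small relative to $\E Z_1$ thanks to the calibration $\tau=8\,\E M$ together with a Hoffmann--Jorgensen/L\'evy-type inequality \cite{ledouxtalagrand1991} and Cauchy--Schwarz against $\bar\zeta_n^2$; this is what makes $(1+\eta)\E Z_1$ on the left absorb the bias, and also shows $\E Z_1$ and the untruncated expected supremum agree up to such lower-order terms.

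For the bounded centred truncated supremum $Z_1$ I would invoke Bousquet's form of Talagrand's concentration inequality for suprema of empirical processes \cite{massart2000,kleinrio2005}, namely $\Prob(Z_1\ge\E Z_1+s)\le\exp\bigl(-s^2/(2(v+\tau s/3))\bigr)$ with $v\le\bar\zeta_n^2+2\tau\,\E Z_1$, and split it over two regimes: for $s$ small relative to $\bar\zeta_n^2/\tau$ the denominator is $(1+\delta)$-close to $2\bar\zeta_n^2$, giving the Gaussian term $\exp(-s^2/(2(1+\delta)\bar\zeta_n^2))$ (the $2\tau\,\E Z_1$ inside $v$ being rearranged into the $\eta$-slack); for $s$ large the $\tau s$ term dominates, giving $\exp(-c\,s/\tau)$, which, since $\tau=8\,\E M\le C_\alpha\|M\|_{\psi_\alpha}$ and $e^{-cx}\le e^{-c'x^\alpha}$ once $x\gtrsim 1$, folds into a Weibull term. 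The big-jump contribution --- the event $\{M>\tau\}$ and the truncation remainder --- is handled by a Fuk--Nagaev inequality \cite{einmahlli2008} together with $\Prob(M>u)\le 2\exp(-(u/\|M\|_{\psi_\alpha})^\alpha)$, producing a term of the stated shape $3\exp[-(t/(C\|M\|_{\psi_\alpha}))^\alpha]$. Collecting the Gaussian term and the two Weibull-type terms and renaming constants gives (\ref{eqn:subexp-concentration-ineq}); equivalently, one may simply quote the unbounded-empirical-process tail inequality of \cite{adamczak2008}, specialised to the finite class $\mathcal F=\{\pm h_j\}$, whose conclusion is already in this form.

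The routine parts are the Hoeffding reduction and the citation of Talagrand's inequality; the delicate part is the arithmetic that funnels three regimes into the two summands of (\ref{eqn:subexp-concentration-ineq}) with the \emph{exact} constants $(1+\eta)$ and $(1+\delta)$. Concretely, one must (i) absorb both Bousquet's variance correction $2\tau\,\E Z_1$ and the truncation bias $m\|\E\bar h-\theta\|_\infty$ into the $\eta$-slack, which is precisely what forces $\tau=8\,\E M$; and (ii) merge the light tail of the bounded part (scale $\tau\asymp\E M$) with the genuinely heavy tail of $M$ (scale $\|M\|_{\psi_\alpha}$) into a single $\psi_\alpha$-type bound, using $\E M\asymp_\alpha\|M\|_{\psi_\alpha}$, the elementary inequality $e^{-cx}\le e^{-c'x^\alpha}$ for $x$ bounded below, and an enlargement of $C(\alpha,\eta,\delta)$ to cover the residual small-$x$ range. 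Tracking how these constants depend on $\alpha$ --- in particular that for $\alpha<1$ only a quasi-norm triangle inequality is available \cite{adamczak2008} --- is where the real care is needed.
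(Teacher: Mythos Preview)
Your approach is the same as the paper's --- Hoeffding reduction, truncation at $\tau=8\E M$, Hoffmann--J\o rgensen for the remainder, Talagrand-type concentration for the bounded part, $\psi_\alpha$ control for the unbounded part --- but two steps are stated imprecisely and would not work as written.

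First, the sentence ``it suffices to prove (\ref{eqn:subexp-concentration-ineq}) with $Z$ replaced by $S$'' is not correct: the domination $\E\phi(Z)\le\E\phi(S)$ for convex nondecreasing $\phi$ does not transfer tail probabilities, and in particular a ``probability bound for a rare big-jump event'' does not pass from $S$ to $Z$ via Jensen. The paper avoids this by decomposing $Z$ itself as $Z\le T_1+T_2$, where $T_\ell=\bigl|\frac{1}{n!}\sum_\pi V_\ell(X_\pi)\bigr|_\infty$ and $V_1,V_2$ are the \emph{centered} truncated and remainder block sums; Jensen is then applied separately to the MGF of $T_1$ (giving $\log\E e^{\lambda T_1/(2\tau)}\le\log\E e^{\lambda Z_1/(2\tau)}$, after which Klein--Rio's Lemma~3.4 and Adamczak's Lemma~1 produce the Gaussian plus exponential terms) and to the convexified $\psi_\alpha$ functional of $T_2$. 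Both functionals are convex, so both transfer --- but the transfer happens at the level of these functionals, not of the final tail bound. Centering both pieces also eliminates the bias term $m\|\E\bar h-\theta\|_\infty$ your decomposition carries.

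Second, the unbounded remainder is not handled by Fuk--Nagaev; that is the tool for the polynomial-moment companion Lemma~\ref{lem:fuknagaev-concentration-ineq}. Here the paper bounds $\|T_2\|_{\psi_\alpha}\le C(\alpha)\bigl\|\max_j\bigl|\sum_i(\underline h_j-\E\underline h_j)\bigr|\bigr\|_{\psi_\alpha}\le C'(\alpha)\|M\|_{\psi_\alpha}$ directly via Ledoux--Talagrand Theorem~6.21 (equivalently Adamczak's Theorem~5), and then Markov on $\psi_\alpha$ gives the Weibull term. With these two corrections your argument coincides with the paper's.
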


\begin{lem}[Fuk-Nagaev inequality for the maxima of centered U-statistics]
\label{lem:fuknagaev-concentration-ineq}
Let $X_1,\cdots,X_n$ be iid random variables taking values in $S$ and $q \ge 1$. Suppose that $\E[|h_j(X_1^r)|^q] < \infty$ for all $j=1,\cdots,d$. Choose $\tau = 4 \cdot 2^{1/q} \cdot \|M\|_q$. Then, for any $0 < \eta \le 1$ and $\delta>0$, there exists a constant $C(q,\eta,\delta)>0$ such that we have 
\begin{equation}
\label{eqn:fuknagaev-concentration-ineq}
\Prob(Z \ge (1+\eta) \E Z_1 + t)  \le \exp\left(-{t^2 \over 2 (1+\delta) \bar\zeta_n^2} \right) + C(q,\eta,\delta) {\E M^q \over t^q}
\end{equation}
holds for all $t > 0$.
\end{lem}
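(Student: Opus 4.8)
The plan is to follow the route of the sub-exponential inequality in Lemma~\ref{lem:subexp-concentration-ineq}, replacing the Orlicz control of the truncation maximum by a polynomial (Fuk--Nagaev) control. First I would invoke Hoeffding's permutation representation~\eqref{eqn:ustat-perm-rep}, writing $m(U_n-\theta)=\frac{1}{n!}\sum_{\pi_n}V(X_{\pi_n(1)},\dots,X_{\pi_n(n)})$ as an average over all $n!$ permutations of block sums of iid random vectors. Truncating the kernel at $\tau=4\cdot2^{1/q}\,\|M\|_q$, set $\bar h=h\,\vone(\max_{j}|h_j|\le\tau)$ and $\tilde h=h-\bar h$, and let $\bar U_n$, $\tilde U_n$ be the U-statistics with these kernels; then $Z\le\bar Z+\tilde Z$ with $\bar Z=m|\bar U_n-\E\bar U_n|_\infty$ and $\tilde Z=m|\tilde U_n-\E\tilde U_n|_\infty$.

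For the bounded part, convexity of $x\mapsto e^{\lambda x}$ and of $|\cdot|_\infty$, together with the permutation representation applied to $\bar h$ and the exchangeability of $X_1^n$, gives the transfer $\E e^{\lambda\bar Z}\le\E e^{\lambda|\bar V(X_1^n)|_\infty}$ for every $\lambda\ge0$, where $\bar V(X_1^n)=\sum_{i=0}^{m-1}[\bar h(X_{ir+1}^{ir+r})-\E\bar h]$ is a sum of $m$ iid centered vectors with coordinates bounded by $2\tau$ and worst-coordinate variance proxy at most $\bar\zeta_n^2$; moreover $\E|\bar V(X_1^n)|_\infty=\E Z_1$ and $\E\bar Z\le\E Z_1$. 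A Bousquet/Bennett-type concentration bound for the supremum over the finite index set $\{\pm e_j\}_{j=1}^d$ controls $\log\E e^{\lambda(|\bar V|_\infty-\E|\bar V|_\infty)}$ by $\tfrac12 v\lambda^2/(1-b\lambda/3)$ with $v\lesssim\bar\zeta_n^2+\tau\,\E Z_1$ and $b\lesssim\tau$. Combining with the transfer and optimizing the Chernoff bound over $\lambda$, $\Prob(\bar Z\ge(1+\eta)\E Z_1+s)$ is at most a Bernstein tail with excess $s'=\eta\E Z_1+s$; I would split this into the regime $\tau s'\lesssim_{\eta,\delta}\bar\zeta_n^2$, where the denominator collapses to $(1+\delta)\bar\zeta_n^2$ and yields the Gaussian term, and the complementary regime, where the bound decays like $\exp(-c s'/\tau)$ and, since $\tau\asymp_q\|M\|_q$, is $\le C(q)(\tau/s')^q=C'(q)\|M\|_q^q/s'^q=C'(q)\E M^q/s'^q$, which is absorbed into the Fuk--Nagaev term.

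For the unbounded part, on the event $\{M\le\tau\}$, whose complement has probability at most $\E M^q/\tau^q=(4\cdot2^{1/q})^{-q}$ by Markov's inequality, every canonical block contributes zero to $\tilde h$, so after averaging over permutations $\tilde Z$ reduces there to $\tilde Z\le m\,U_n^{(g)}+m|\E\tilde h|_\infty$ with scalar kernel $g=\max_j|\tilde h_j|$; I would bound the scalar U-statistic $U_n^{(g)}$ by Markov's inequality after one more use of the permutation representation, and the deterministic centering by $m\tau^{1-q}\E[\max_j|h_j(X_1^r)|^q]$, and then observe that both pieces are $O_q(\|M\|_q)$, hence $\le t/4$ as soon as $t\ge c_q\|M\|_q$; for $t<c_q\|M\|_q$ the term $C(q,\eta,\delta)\E M^q/t^q$ already exceeds one and \eqref{eqn:fuknagaev-concentration-ineq} holds trivially. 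Finally, $\{Z\ge(1+\eta)\E Z_1+t\}\subseteq\{\bar Z\ge(1+\eta)\E Z_1+t/2\}\cup\{\tilde Z\ge t/2\}$ and a union bound give the claim after relabelling $\delta$ and the constant.

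The step I expect to be the main obstacle is the unbounded part, specifically the control of the deterministic centering $m|\E\tilde h|_\infty$: a crude estimate carries a spurious factor of $m$, and one must exploit both the fixed truncation scale $\tau\asymp\|M\|_q$ and the fact that the asserted inequality is vacuous once $t$ drops below a constant multiple of $\|M\|_q$. A secondary technical point is apportioning $\eta$ and $\delta$ between the two parts so that the Bernstein denominator coming from the bounded part reduces to exactly $(1+\delta)\bar\zeta_n^2$ in the Gaussian regime.
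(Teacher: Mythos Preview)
Your treatment of the bounded part $\bar Z$ (the paper's $T_1$) is essentially the same as the paper's: transfer the log-mgf to the block sum via Jensen and the permutation representation, apply a Klein--Rio/Bousquet bound, then split the Bernstein denominator into a Gaussian regime and a sub-exponential regime (the latter converted to $t^{-q}$ via $e^{-x}\le C_q x^{-q}$).

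The gap is exactly where you flag it. Your crude estimate $m|\E\tilde h|_\infty\le m\,\tau^{1-q}\,\E[\max_j|h_j(X_1^r)|^q]$ is of order $m\|M\|_q$, not $\|M\|_q$, and your claim that ``both pieces are $O_q(\|M\|_q)$'' is therefore false as stated. Invoking the vacuity of the bound for small $t$ does not help, since you would need $t\gtrsim m\|M\|_q$. The same problem hits your random piece $mU_n^{(g)}$: Markov after one Jensen/permutation step gives $\E[(mU_n^{(g)})^q]\le\E[(\sum_i g(X_{ir+1}^{ir+r}))^q]$, and a triangle-inequality bound on the latter again carries a factor $m^q$. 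The observation that ``on $\{M\le\tau\}$ every canonical block contributes zero to $\tilde h$'' is true but does not help, because the U-statistic (equivalently, the permutation average) involves \emph{all} $r$-tuples, not just the canonical blocks.

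The missing ingredient is the Hoffmann--J{\o}rgensen inequality. With $\tau=4\cdot 2^{1/q}\|M\|_q$, Markov gives $\Prob(\max_{k}\max_j|\sum_{i\le k}\underline h_j|>0)\le\Prob(M>\tau)\le 1/(2\cdot 4^q)<1/8$, and then Hoffmann--J{\o}rgensen (e.g.\ \cite[Proposition~6.8]{ledouxtalagrand1991}) yields
\[
\E\Big|\sum_{i=0}^{m-1}\underline h(X_{ir+1}^{ir+r})\Big|_\infty^q\;\le\;2\cdot 4^q\,\E M^q.
\]
This single bound kills both of your problematic pieces at once: by Jensen, $m|\E\underline h|_\infty=|\E\sum_i\underline h|_\infty\le\E|\sum_i\underline h|_\infty\le(2\cdot 4^q\E M^q)^{1/q}=\tau$, so the centering is $O_q(\|M\|_q)$ with \emph{no} factor of $m$; and combining with the centering via $|a-b|^q\le 2^{q-1}(|a|^q+|b|^q)$ gives $\E|\sum_i(\underline h-\E\underline h)|_\infty^q\le 2\cdot 8^q\E M^q$. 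Now Jensen along the permutation average and Markov give $\Prob(T_2\ge\varepsilon t)\le 2\cdot 8^q\E M^q/(\varepsilon t)^q$, which is the Fuk--Nagaev term. This is precisely how the paper handles the unbounded part.
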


\subsection{Non-negative U-statistics}
\label{subsec:conc-ineq-ustat-nonnegative}

Next, we deal with non-negative U-statistics $Z = m \max_{1 \le j \le d} U_{nj}$, where the kernel $h \ge 0$. Let 
\begin{eqnarray}
\label{eqnZ_prime-nonnegative}
Z' = \max_{1 \le j \le d} \sum_{i=0}^{m-1} h_j(X_{ir+1}^{ir+r})
\end{eqnarray}
and $M$ be defined in (\ref{eqn:M}).

\begin{lem}[Sub-exponential inequality for the maxima of non-negative U-statistics]
\label{lem:subexp-concentration-ineq-nonnegative}
Let $X_1,\cdots,X_n$ be iid random variables taking values in $S$ and $\alpha \in (0,1]$. Suppose that $h \ge 0$ and $\|h_j(X_1^r)\|_{\psi_\alpha} < \infty$ for all $j=1,\cdots,d$. Then, for any $\eta>0$, there exists a constant $C(\alpha,\eta)>0$ such that we have
\begin{equation}
\label{eqn:subexp-concentration-ineq-nonnegative}
\Prob(Z \ge (1+\eta) \E Z' + t)  \le 3 \exp\left[ -\left({t \over C(\alpha,\eta) \left\| M \right\|_{\psi_\alpha}} \right)^\alpha \right]
\end{equation}
holds for all $t > 0$.
\end{lem}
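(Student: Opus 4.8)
\emph{Proof plan.} The strategy is to mirror the proof of Lemma~\ref{lem:subexp-concentration-ineq}, but to exploit nonnegativity of $h$ so that the variance proxy becomes self-bounding and the sub-Gaussian term can be traded away against the factor $1+\eta$.

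\emph{Step 1: reduction to a nonnegative empirical process.} Using Hoeffding's averaging identity~(\ref{eqn:ustat-perm-rep}) I would write, for the i.i.d.\ blocks $\xi_i=(X_{ir+1},\dots,X_{ir+r})$, $0\le i\le m-1$, $m\,U_{nj}=\tfrac1{n!}\sum_{\all\pi_n}\sum_{i=0}^{m-1}h_j\big(X_{\pi_n(ir+1)}^{\pi_n(ir+r)}\big)$, every summand being $\ge0$. Since the coordinatewise maximum is convex, $Z=\max_j\tfrac1{n!}\sum_{\all\pi_n}(\cdot)\le\tfrac1{n!}\sum_{\all\pi_n}\max_j\sum_{i=0}^{m-1}h_j(\cdot)$, and each permuted term has the law of $Z'=\max_{1\le j\le d}\sum_{i=0}^{m-1}h_j(\xi_i)$. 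Equivalently $Z\le\E[\max_j\sum_i h_j(\xi^{\Pi}_i)\mid X_1^n]$ for a uniform random permutation $\Pi$ independent of the data, so $\E[\varphi(Z)]\le\E[\varphi(Z')]$ for every increasing convex $\varphi$; applying this with $\varphi(x)=(x-(1+\eta)\E Z')_+^{\,p}$ shows that a tail bound of the asserted form for $Z'$ transfers to $Z$ after optimizing over $p$ (the resulting polynomial factor is absorbed into $C(\alpha,\eta)$, and the statement is vacuous for small $t$ since $3>1$). Hence it suffices to bound $\Prob(Z'\ge(1+\eta)\E Z'+t)$.

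\emph{Step 2: truncation and the two regimes.} With $M_i=\max_j h_j(\xi_i)$, $M=\max_i M_i$, and the truncation level $\tau=8\,\E M$ (as in Lemma~\ref{lem:subexp-concentration-ineq}), set $\bar h_j(\xi)=h_j(\xi)\,\vone(\max_{j'}h_{j'}(\xi)\le\tau)$, so $Z'\le \bar Z'+R$ with $\bar Z'=\sup_j\sum_i\bar h_j(\xi_i)$ and $R=\sum_{i=0}^{m-1}M_i\,\vone(M_i>\tau)$. For the bounded part $\bar Z'$ — a supremum of a nonnegative empirical process of independent summands bounded by $\tau$ — I would apply a Talagrand/Bousquet-type concentration inequality. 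The crucial gain from $h\ge0$ is the self-bounding estimate $\sigma^2:=\sup_j\sum_i\E\bar h_j^2(\xi_i)\le\tau\,\E[\sup_j\sum_i\bar h_j(\xi_i)]\le\tau\,\E Z'$, so that the slack $\eta\,\E Z'$ in the normalization dominates the quadratic exponent $t^2/\sigma^2$ and leaves only a sub-exponential (hence sub-$\psi_\alpha$) contribution of the form $\exp[-t/(C(\eta)\tau)]$; since $\tau=8\E M\le C\|M\|_{\psi_\alpha}$, this is of the claimed type. For the overflow $R$ I would run the standard unbounded-empirical-process argument of Adamczak \cite{adamczak2008}: $\Prob(R\ge s)\le 2\exp[-(s/(C\|M\|_{\psi_\alpha}))^{\alpha}]$, where $\|M\|_{\psi_\alpha}$ is controlled by Pisier's inequality \cite[Lemma~2.2.2]{vandervaartwellner1996} in terms of the single-block $\psi_\alpha$-norms. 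Adding the two estimates and re-optimizing the free parameters yields the bound for $Z'$, whence the lemma via Step~1.

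\emph{Main obstacle.} The delicate point is achieving the sharp normalization $(1+\eta)\E Z'$ for \emph{every} $\eta>0$ while eliminating the variance term altogether: this is exactly where nonnegativity enters, and it requires carefully balancing $\sigma^2\le\tau\,\E Z'$ against the Bousquet exponent so that the $\eta\,\E Z'$ portion of the deviation absorbs the Gaussian regime. A lesser but genuine technical point is the $Z'\to Z$ transfer, since the tail of a conditional expectation is \emph{not} dominated by the tail of the underlying variable; one must pass through moments via the convex-order inequality $\E\varphi(Z)\le\E\varphi(Z')$ rather than by a direct stochastic comparison. If one prefers, Step~2 collapses to a single invocation of an off-the-shelf deviation inequality for suprema of unbounded nonnegative empirical processes, after which only the Hoeffding reduction and the moment transfer of Step~1 remain to be carried out.
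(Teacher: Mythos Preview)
Your proposal is correct and follows essentially the same architecture as the paper: truncate at $\tau=8\,\E M$, handle the bounded piece via a self-bounding concentration inequality that trades the Gaussian regime against the slack $\eta\,\E Z'$, and handle the overflow via Hoffmann--J{\o}rgensen together with the $\psi_\alpha$ moment bound of Adamczak.

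The only genuine differences are in packaging. In Step~1 you reduce from $Z$ to $Z'$ via the convex-order inequality $\E\varphi(Z)\le\E\varphi(Z')$ and then recover the tail by optimizing the moment bound over~$p$; the paper instead keeps the Hoeffding average and pushes the reduction through at the mgf level, using $\E e^{\lambda T_1/\tau}\le\E e^{\lambda Z_1/\tau}$ by Jensen, which avoids the extra moment-to-tail step. In Step~2 you invoke Bousquet/Talagrand together with the variance self-bound $\sigma^2\le\tau\,\E Z'$; the paper calls Massart's Theorem~10 directly, which already encodes the self-bounding mgf estimate $L(\lambda)\le\varphi(\lambda)\,\E(Z_1/\tau)$ and then reads off the sub-exponential tail via the Legendre transform (as in \cite[Lemma~E.5]{cck2015a}). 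Both routes arrive at the same bound; the paper's is marginally more direct because it never leaves the mgf scale, whereas your moment-to-tail transfer costs an extra constant and a short Stirling-type computation. Your identification of the main obstacle (the self-bounding mechanism for the sharp $(1+\eta)$ constant, and the need for convex order rather than stochastic domination in the $Z\to Z'$ step) is exactly right.
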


\begin{lem}[Markov inequality for the maxima of non-negative U-statistics]
\label{lem:fuknagaev-concentration-ineq-nonnegative}
Let $X_1,\cdots,X_n$ be iid random variables taking values in $S$ and $q \ge 1$. Suppose that $h \ge 0$ and $E[h_j^q(X_1^r)] < \infty$ for all $j=1,\cdots,d$. Then, for any $\eta>0$, there exists a constant $C(q,\eta)>0$ such that we have
\begin{equation}
\label{eqn:fuknagaev-concentration-ineq-nonnegative}
\Prob(Z \ge (1+\eta) \E Z' + t)  \le C(q,\eta) {\E M^q \over t^q}
\end{equation}
holds for all $t > 0$.
\end{lem}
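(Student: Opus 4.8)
The plan is to combine Hoeffding's averaging representation (\ref{eqn:ustat-perm-rep}) with a truncation argument, reducing the claim to a one-sided Fuk--Nagaev bound for the maximum of sums of $m$ iid non-negative random vectors. Rearranging (\ref{eqn:ustat-perm-rep}) coordinatewise gives, for each $j$, $m U_{nj}=(n!)^{-1}\sum_{\text{all }\pi_n}\sum_{i=0}^{m-1} h_j(X_{\pi_n(ir+1)}^{\pi_n(ir+r)})$, so each inner block sum has the law of the vector whose coordinatewise maximum is $Z'$. Since $\omega\mapsto\max_{1\le j\le d}\omega_j$ is convex and non-decreasing, $Z=m\max_j U_{nj}$ is bounded pointwise by an average over $\pi_n$ of copies of $Z'$; hence $\E Z\le\E Z'$ and, by Jensen's inequality, $\E[\Phi(Z)]\le\E[\Phi(Z')]$ for every convex non-decreasing $\Phi$. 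In what follows I would therefore use $\E Z'$ as the reference level and transfer moment and moment-generating-function bounds from the iid block sums to $Z$ via this domination, so that no union bound over the $d$ coordinates is ever taken (which would otherwise force an extra $\log d$ factor).

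First I would truncate the kernel at the level $\tau=c(q)\,(\E M^q)^{1/q}$, for a suitably large $c(q)$ as in Lemma~\ref{lem:fuknagaev-concentration-ineq}, writing $h_j=\bar h_j+\tilde h_j$ with $\bar h_j=h_j\vone(\max_{1\le k\le d}|h_k|\le\tau)\in[0,\tau]$ and $\tilde h_j\ge0$. This gives the pointwise bound $Z\le B+\tilde B$, where $B=m\max_j\bar U_{nj}$ is a non-negative $U$-statistic with kernel bounded by $\tau$ and $\tilde B=m\max_j\tilde U_{nj}$. Writing $\bar Z'_{\mathrm{bd}}=\max_j\sum_{i=0}^{m-1}\bar h_j(X_{ir+1}^{ir+r})$ and splitting the slack as $A_1=\E\bar Z'_{\mathrm{bd}}+\tfrac12(\eta\E Z'+t)$ and $A_2=(1+\eta)\E Z'+t-A_1\ge\tfrac12(\eta\E Z'+t)$ (using $\E\bar Z'_{\mathrm{bd}}\le\E Z'$), the event $\{Z\ge(1+\eta)\E Z'+t\}$ is contained in $\{B\ge A_1\}\cup\{\tilde B\ge A_2\}$.

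For the bounded part, Jensen's inequality transfers the control of the moment generating function of $\bar Z'_{\mathrm{bd}}-\E\bar Z'_{\mathrm{bd}}$ to $B$, and $\bar Z'_{\mathrm{bd}}$ is the supremum of a bounded, non-negative empirical process over a finite class, so Talagrand's concentration inequality (in Bousquet's or Klein--Rio's form) applies with variance proxy $\sigma^2=\sup_j m\Var(\bar h_j(X_1^r))\le\tau\,\E\bar Z'_{\mathrm{bd}}\le\tau\,\E Z'$. This yields $\Prob(B\ge A_1)\le\exp\bigl(-c_1 s^2/(\tau\,\E Z'+\tau s)\bigr)$ with $s=\tfrac12(\eta\E Z'+t)\ge\tfrac\eta2\E Z'$; since then $\E Z'\le(2/\eta)s$, the denominator is at most a constant (depending on $\eta$) times $\tau s$, the exponent is linear in $s/\tau$, and the elementary bound $e^{-x}\le(q/e)^q x^{-q}$ turns this into $C(q,\eta)\,\tau^q/s^q=C(q,\eta)\,\E M^q/t^q$. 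For the heavy part, the averaging representation again shows that $\tilde B$ is dominated pointwise by an average of copies of $H=\sum_{i=0}^{m-1}M_i\vone(M_i>\tau)$, where $M_i$ is the envelope of the $i$-th block; hence $\|\tilde B\|_q\le\|H\|_q$ by Jensen, and a Hoffmann--J{\o}rgensen-type moment estimate gives $\|H\|_q\le C(q)(\E M^q)^{1/q}$, so Markov's inequality yields $\Prob(\tilde B\ge A_2)\le\|H\|_q^q/A_2^q\le C(q)\,\E M^q/t^q$. Adding the two bounds proves (\ref{eqn:fuknagaev-concentration-ineq-nonnegative}).

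The step I expect to be the main obstacle is the Hoffmann--J{\o}rgensen estimate $\|H\|_q\le C(q)(\E M^q)^{1/q}$ for the sum of exceedances $M_i\vone(M_i>\tau)$: a direct Rosenthal bound would lose a factor $m^{q/2}$, and closing the argument requires exploiting that $\tau$ is a sufficiently large multiple of $\|M\|_q=\|\max_i M_i\|_q$, which forces the exceedances onto very few blocks with overwhelming probability. This is the same mechanism already used in the proof of Lemma~\ref{lem:fuknagaev-concentration-ineq}, so I expect to be able to invoke it essentially verbatim, together with the non-negative, un-centered variant of Talagrand's inequality; the sub-Gaussian term present in Lemma~\ref{lem:fuknagaev-concentration-ineq} disappears here because, the kernel being non-negative, the Gaussian-scale fluctuation of $\bar Z'_{\mathrm{bd}}$ around its mean is entirely absorbed into the slack afforded by the factor $(1+\eta)$ in front of $\E Z'$.
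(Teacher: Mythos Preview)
Your proposal is correct and follows essentially the same route as the paper: truncate at a level $\tau$ proportional to $\|M\|_q$, handle the heavy-tail piece via Hoffmann--J{\o}rgensen plus Markov, handle the bounded piece via a concentration inequality yielding an $e^{-C(\eta)t/\tau}$ bound, and then convert that exponential into the polynomial $C(q,\eta)\E M^q/t^q$. The only difference is in the tool used for the bounded part: the paper invokes Massart's Theorem~10, which directly gives the mgf bound $L(\lambda)\le\varphi(\lambda)\,\E(Z_1/\tau)$ with $\varphi(\lambda)=e^\lambda-\lambda-1$ and hence $\Prob(T_1\ge(1+\eta)\E Z_1+t)\le e^{-C(\eta)t/\tau}$ without any separate variance term, whereas you go through Talagrand/Klein--Rio and then absorb the variance via $\sigma^2\le\tau\,\E Z'$ and $\E Z'\le(2/\eta)s$. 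Both reach the same exponential bound; the paper's route is just slightly more direct because Massart's bound for non-negative bounded summands already packages the variance into $\tau\,\E Z_1$.
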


\subsection{Proof of results in Section \ref{sec:conc-ineq-ustat}}

\begin{proof}[Proof of Lemma \ref{lem:subexp-concentration-ineq}]
In this proof, the index $i$ implicitly runs from 0 to $m-1$ and the index $j$ runs from 1 to $d$. First, we may assume that 
$$
t \ge C_1(\alpha,\eta,\delta) \|M\|_{\psi_\alpha}
$$
because otherwise we can choose the constant $C(\alpha,\eta,\delta)$ in (\ref{eqn:subexp-concentration-ineq}) large enough, say $C(\alpha,\eta,\delta) \ge C_1(\alpha,\eta,\delta)$, to make (\ref{eqn:subexp-concentration-ineq}) trivially holds. Let $\underline{h}(x_1^r) = h(x_1^r) - \bar{h}(x_1^r)$, $V_1(X_1^n) = \sum_{i=0}^{m-1} [\bar{h}(X_{ir+1}^{ir+r}) - \E \bar{h}]$, and $V_2(X_1^n) = \sum_{i=0}^{m-1} [\underline{h}(X_{ir+1}^{ir+r}) - \E \underline{h}]$. Define 
\begin{equation*}
T_\ell = \left| {1\over n!} \sum_{\all \pi_n} V_\ell(X_{\pi_n(1)}, \cdots, X_{\pi_n(n)}) \right|_\infty, \qquad \ell = 1,2.
\end{equation*}
Denote $A_1 = \E Z_1$ and $A_2 = \E |\sum_{i=0}^{m-1} \underline{h}(X_{ir+1}^{ir+r})|_\infty$. Since $(n!)^{-1} \sum_{\all \pi_n}$ is a probability measure, by (\ref{eqn:ustat-perm-rep}), Jensen's inequality, and the iid assumption on $X_1,\cdots,X_n$, we have 
\begin{eqnarray*}
Z &\le& T_1+T_2, \\
\E Z &\ge& \E T_1 - 2 A_2, \\
\E T_1 &\le& A_1.
\end{eqnarray*}
Consider $1 \ge \eta > 0$ and $\varepsilon := \varepsilon(\delta) \in (0,1)$, whose value is to be determined. For all $t >0$, we have
\begin{eqnarray}
\label{eqn:ustat-cont-subexp-T1+T2-terms}
&& \Prob(Z \ge (1+\eta) A_1 + t) \le \Prob(T_1 \ge (1+\eta) A_1 + (1-\varepsilon) t) + \Prob(T_2 \ge \varepsilon t).
\end{eqnarray}
We first deal with $\Prob(T_2 \ge \varepsilon t)$. Since $\|h_j(X_1^r)\|_{\psi_\alpha} < \infty$, $\tau = 8 \E[M] \le C_2(\alpha) \left\| M \right\|_{\psi_\alpha}$.
By Markov's inequality, we have
$$
\Prob(\max_{0 \le k \le m-1} \max_{1 \le j \le d} |\sum_{i=0}^k \underline{h}_j(X_{ir+1}^{ir+r})| > 0) \le \Prob(M > \tau) \le 1/8.
$$
By the Hoffmann-J{\o}rgensen inequality \cite[Proposition 6.8, Equation (6.8)]{ledouxtalagrand1991}, we have $A_2 \le 8 \E[M]$, which in together with Jensen's inequality, we get
$$
\E|V_2(X_1^n)|_\infty = \E \left |\sum_{i=0}^{m-1} [\underline{h}(X_{ir+1}^{ir+r}) - \E \underline{h}] \right |_\infty \le 16 \E[M] \le 2 C_2(\alpha) \left\| M \right\|_{\psi_\alpha}.
$$
Since $\psi_\alpha$ is a quasi-norm for $0<\alpha\le1$, we have  by Jensen's inequality that 
$$
\left\| \max_{i,j} \left| [\underline{h}_j(X_{ir+1}^{ir+r}) -\E \underline{h}_j] \right| \right\|_{\psi_\alpha} \le C_3(\alpha) \left\| \max_{i,j} |\underline{h}_j(X_{ir+1}^{ir+r})| \right\|_{\psi_\alpha} \le C_3(\alpha) \left\| M  \right\|_{\psi_\alpha}.
$$
Now, by \cite[Theorem 6.21]{ledouxtalagrand1991} or \cite[Theorem 5]{adamczak2008}, we have
$$
\left\| \max_j \left|\sum_{i=0}^{m-1} [\underline{h}_j(X_{ir+1}^{ir+r}) - \E \underline{h}_j] \right| \right\|_{\psi_\alpha} \le C_4(\alpha) \left\| M \right\|_{\psi_\alpha}.
$$
By the Markov inequality,
$$
\Prob(T_2 \ge \varepsilon t) \le 2 \exp\left[ -\left( {\varepsilon t \over \|T_2\|_{\psi_\alpha}} \right)^\alpha \right],
$$
where
\begin{eqnarray*}
\|T_2\|_{\psi_\alpha} &=& \left\| \left| {1\over n!} \sum_{\all \pi_n} V_2(X_{\pi_n(1)}, \cdots, X_{\pi_n(n)}) \right|_\infty \right\|_{\psi_\alpha} \\
&\le& C_5(\alpha) \left\| \max_j \left|\sum_{i=0}^{m-1} [\underline{h}_j(X_{ir+1}^{ir+r}) - \E \underline{h}_j] \right| \right\|_{\psi_\alpha} \le C_6(\alpha) \left\| M \right\|_{\psi_\alpha},
\end{eqnarray*}
where the second last inequality follows from Jensen's inequality and the iid assumption, and $C_6(\alpha)=C_4(\alpha) C_5(\alpha)$. [In the second last inequality, we need to modify $\psi_\alpha$ to be linear near the origin $(0, (\alpha^{-1}-1)^{1/\alpha})$ so that the modified function $\psi_\alpha$ becomes nondecreasing and convex on $[0, \infty)$.] Therefore, we have
\begin{equation}
\label{eqn:ustat-cont-subexp-T2-term}
\Prob \left(T_2 \ge \varepsilon t \right) \le 2 \exp\left[ -\left( {\varepsilon t \over C_6(\alpha)  \| M \|_{\psi_\alpha}} \right)^\alpha \right].
\end{equation}

Next, we deal with the $T_1$ term in (\ref{eqn:ustat-cont-subexp-T1+T2-terms}). Let $L(\lambda) = \log \E[\exp(\lambda T_1/(2\tau))]$ be the logarithmic moment generating function of $T_1/(2\tau)$. By the convexity of the max norm and exponential function and using Jensen's inequality twice, we see that
\begin{eqnarray*}
L(\lambda) &\le& \log \E \left\{ {1\over n!} \sum_{\all \pi_n} \exp \left[ {\lambda \over 2\tau} \left| V_1(X_{\pi_n(1)}, \cdots, X_{\pi_n(n)}) \right|_\infty \right] \right\} \\
&=& \log \E \exp(\lambda Z_1/(2\tau)).
\end{eqnarray*}
Then it follows from \cite[Lemma 3.4]{kleinrio2005} that for $0 < \lambda < 2/3$, 
$$
L(\lambda) \le \lambda \E \left({Z_1\over 2\tau} \right) + \left[ 2 \E \left({Z_1\over 2\tau} \right) + \left({\zeta_n^2 \over 4 \tau^2} \right) \right] \cdot  {\lambda^2 \over 2 - 3 \lambda},
$$
where
$$
\zeta_n^2 = \max_{1 \le j \le d} \sum_{i=0}^{m-1} \E [\bar{h}_j(X_{ir+1}^{ir+r}) - \E \bar{h}_j]^2 \le \bar\zeta_n^2.
$$
By the Markov inequality and the Legendre transform of the function $\lambda \mapsto \lambda^2 / (2 - 3 \lambda)$ for $0 < \lambda < 2/3$, we get
$$
\Prob(T_1 \ge \E Z_1 + t) \le \exp \left[ -{t^2 \over 2 [{\bar\zeta}_n^2 + 2(2\tau)\E Z_1] + 3 (2\tau) t } \right].
$$
By \cite[Lemma 1]{adamczak2008}, we have for all $0 < \eta \le 1$ and $\delta>0$, there exists a constant $C_7(\eta,\delta)= 2 (1+\delta^{-1})(3+2\eta^{-1})$ such that for all $t>0$,
\begin{eqnarray*}
\Prob(T_1 \ge (1+\eta) \E Z_1+t) \le \exp\left[-{ t^2 \over 2(1+\delta) \bar\zeta_n^2} \right] + \exp\left[-{t \over C_7(\eta,\delta) \tau} \right].
\end{eqnarray*}
Then, we have for all $t>0$
\begin{equation}
\label{eqn:ustat-cont-subexp-T1-term}
\Prob(T_1 \ge (1+\eta) A_1 + (1- \varepsilon) t) \le \exp\left[-{ (1-\varepsilon)^2 t^2 \over 2(1+\delta/2) \bar\zeta_n^2} \right] + \exp\left[-{ (1-\varepsilon) t \over C_7(\eta,\delta/2) \tau} \right].
\end{equation}
Choose $\varepsilon := \varepsilon(\delta) < 1/2$ small enough such that $(1-\varepsilon)^{-2} (1+\delta/2) \le 1+ \delta$.  By (\ref{eqn:ustat-cont-subexp-T1+T2-terms}), (\ref{eqn:ustat-cont-subexp-T2-term}), and (\ref{eqn:ustat-cont-subexp-T1-term}), we obtain that
\begin{eqnarray*}
&& \Prob(Z \ge (1+\eta) A_1+ t)  \\
&\le& 2 \exp\left[ -\left( {\varepsilon t \over C_6(\alpha)  \left\|M\right\|_{\psi_\alpha}} \right)^\alpha \right] + \exp\left[-{ (1-\varepsilon)^2 t^2 \over 2(1+\delta/2) \bar\zeta_n^2} \right] + \exp\left[-{ (1-\varepsilon) t \over C_7(\eta,\delta/2) \tau} \right] \\
&\le& 3 \exp\left[ -\left( {\varepsilon t \over C_8(\alpha,\eta,\delta)  \left\|M\right\|_{\psi_\alpha}} \right)^\alpha \right] + \exp\left[-{ t^2 \over 2(1+\delta) \bar\zeta_n^2} \right],
\end{eqnarray*}
where the last step follows from $\tau \le C_2(\alpha) \|M\|_{\psi_\alpha}$ and the assumption that $t \ge C_1(\alpha,\eta,\delta) \|M\|_{\psi_\alpha}$.
\end{proof}

\begin{proof}[Proof of Lemma \ref{lem:fuknagaev-concentration-ineq}]
We shall use the same notations as in the proof of Lemma \ref{lem:subexp-concentration-ineq}. First, we may assume that 
$$
t \ge C_1(q,\eta,\delta) \|M\|_q
$$
because otherwise we can choose the constant $C(q,\eta,\delta)$ in (\ref{eqn:fuknagaev-concentration-ineq}) large enough to make this Lemma trivially holds. Arguing similarly as in the proof of Lemma \ref{lem:subexp-concentration-ineq}, we also have 
$$\Prob(Z \ge (1+\eta) A_1 + t) \le \Prob(T_1 \ge (1+\eta) A_1 + (1-\varepsilon) t) + \Prob(T_2 \ge \varepsilon t)$$
for some properly chosen $\varepsilon := \varepsilon(\delta) \in (0,1)$. By Markov's inequality,
$$
\Prob(\max_{0 \le k \le m-1} \max_{1 \le j \le d} |\sum_{i=0}^k \underline{h}_j(X_{ir+1}^{ir+r})| > 0) \le \Prob(M > \tau) \le {\E M^q \over \tau^q} = {1 \over 2 \cdot 4^q}.
$$
Then, it follows from the Hoffmann-J{\o}rgensen inequality \cite[Proposition 6.8, Equation (6.8)]{ledouxtalagrand1991} that 
$$
\E |\sum_{i=0}^{m-1} \underline{h}(X_{ir+1}^{ir+r})|_\infty^q \le 2 \cdot 4^q \E[M^q].
$$
By Markov's and Jensen's inequalities, we have
\begin{eqnarray*}
\Prob(T_2 \ge \varepsilon t) \le {\E T_2^q \over \varepsilon^q t^q} \le {\E |\sum_{i=0}^{m-1} \underline{h}(X_{ir+1}^{ir+r}) - \E \underline{h}|_\infty^q \over \varepsilon^q t^q} \le {2 \cdot 8^q \E[M^q] \over \varepsilon^q t^q}.
\end{eqnarray*}
By the argument in Lemma \ref{lem:subexp-concentration-ineq} leading to (\ref{eqn:ustat-cont-subexp-T1-term}), we get
$$
\Prob(T_1 \ge (1+\eta) A_1 + (1- \varepsilon) t) \le \exp\left[-{ (1-\varepsilon)^2 t^2 \over 2(1+\delta/2) \bar\zeta_n^2} \right] + \exp\left[-{ (1-\varepsilon) t \over C_2(q, \eta,\delta) \|M\|_q} \right].
$$
Choosing $\varepsilon := \varepsilon(\delta) < 1/2$ small enough such that $(1-\varepsilon)^{-2} (1+\delta/2) \le 1+ \delta$ and using $e^{-x} \le C_3(q) x^{-q}$, we conclude (\ref{eqn:fuknagaev-concentration-ineq}).
\end{proof}

\begin{proof}[Proof of Lemma \ref{lem:subexp-concentration-ineq-nonnegative}]
The proof is a modification of Lemma \ref{lem:subexp-concentration-ineq}. We may assume that $t/4 \ge (1+\eta) 8 \E[M]$ because otherwise since $8\E[M] \le C_1(\alpha)\|M\|_{\psi_\alpha}$, we can choose $C(\alpha,\eta)$ in (\ref{eqn:subexp-concentration-ineq-nonnegative}) large enough to make this Lemma trivially hold. Let $M$ be defined in (\ref{eqn:M}), $\tau = 8 \E[M]$, and 
$$\bar{h}(x_1^r) = h(x_1^r) \vone(\max_{1\le j \le d} h_j(x_1^r) \le \tau)$$
be the truncated version of the non-negative kernel $h(x_1^r)$ at the level $\tau$. Let $\underline{h}(x_1^r) = h(x_1^r) - \bar{h}(x_1^r) \ge 0$, $V_1(X_1^n) = \sum_{i=0}^{m-1} \bar{h}(X_{ir+1}^{ir+r})$, and $V_2(X_1^n) = \sum_{i=0}^{m-1} \underline{h}(X_{ir+1}^{ir+r})$. For $\ell = 1,2$, define  
\begin{eqnarray}
\label{eqnZ_1-nonnegative}
Z_\ell = \max_{1 \le j \le d} \sum_{i=0}^{m-1} V_{\ell j}(X_1^n)
\end{eqnarray}
and 
$$
T_\ell = \max_{1 \le j \le d} {1 \over n!} \sum_{\all \pi_n} V_{\ell j}(X_{\pi_n(1)}, \cdots, X_{\pi_n(n)}).
$$
So $Z \le T_1 + T_2$. Since by the triangle inequality $Z_1-Z_2 \le Z' \le Z_1+Z_2$, we have
\begin{eqnarray*}
\Prob(Z \ge (1+\eta) \E Z' + t) \le \Prob(T_1 \ge (1+\eta) \E Z_1 - (1+\eta) \E Z_2 + 3t/4) + \Prob(T_2 \ge t/4).
\end{eqnarray*}
Arguing as in the proof of Lemma \ref{lem:subexp-concentration-ineq}, we have
$$
\Prob(T_2 \ge t/4) \le 2 \exp\left[ -\left( { t \over 4 \|T_2\|_{\psi_\alpha}} \right)^\alpha \right]
$$
and
\begin{eqnarray*}
\|T_2\|_{\psi_\alpha} &\le& C_2(\alpha) \left\| Z_2 \right\|_{\psi_\alpha} \le C_3(\alpha) \{ \E Z_2 + \left\| M \right\|_{\psi_\alpha} \},
\end{eqnarray*}
where the first inequality follows from Jensen's inequality, iid assumption, and the fact that $\psi_\alpha$ is a quasi-norm, and the second inequality from the non-negative version of \cite[Theorem 6.21]{ledouxtalagrand1991}. Since
$$
\Prob(\max_{0 \le k \le m-1} \max_{1 \le j \le d} |\sum_{i=0}^k \underline{h}_j(X_{ir+1}^{ir+r})| > 0) \le \Prob(M > \tau) \le 1/8.
$$
By the Hoffmann-J{\o}rgensen inequality \cite[Proposition 6.8, Equation (6.8)]{ledouxtalagrand1991}, we have
$$
\E Z_2 \le 8 \E[M] = \tau \le \min \left\{ {t \over 4(1+\eta)}, \; C_1(\alpha) \|M\|_{\psi_\alpha} \right\}.
$$
So we obtain that
$$
\Prob(T_2 \ge t/4) \le 2 \exp\left[ -\left( {t \over C_4(\alpha) \|M\|_{\psi_\alpha}} \right)^\alpha \right]
$$
and
$$
\Prob(T_1 \ge (1+\eta) \E Z_1 - (1+\eta) \E Z_2 + 3t/4) \le \Prob(T_1 \ge (1+\eta) \E Z_1 + t/2). 
$$
Next, we bound $\Prob(T_1 \ge (1+\eta)\E Z_1 + t)$. Let $L(\lambda)=\log \E \exp[\lambda (T_1-\E Z_1)/\tau]$. By Jensen's inequality and \cite[Theorem 10]{massart2000}, we have
$$
L(\lambda) \le \varphi(\lambda) \E (Z_1 / \tau), \qquad \text{where } \varphi(\lambda) = e^\lambda-\lambda-1.
$$
Then, it follows from the argument in \cite[Lemma E.5]{cck2015a} that for all $t > 0$
\begin{equation}
\label{eqn:T_1-bound-nonnegative}
\Prob(T_1 \ge (1+\eta) \E Z_1 + t) \le e^{-C_5(\eta) t/\tau},
\end{equation}
where $C_5(\eta) = (\eta^{-1}+2/3)^{-1}$. Then (\ref{eqn:subexp-concentration-ineq-nonnegative}) follows from $\tau \le C_1(\alpha) \|M\|_{\psi_\alpha}$ and $t \ge 4(1+\eta) \tau$.
\end{proof}

\begin{proof}[Proof of Lemma \ref{lem:fuknagaev-concentration-ineq-nonnegative}]
The proof is a modification of Lemma \ref{lem:fuknagaev-concentration-ineq} and \ref{lem:subexp-concentration-ineq-nonnegative}. We shall use the same notations as in the proof of Lemma \ref{lem:subexp-concentration-ineq-nonnegative}. Let $\tau = 4 \cdot 2^{1/q} \cdot \|M\|_q$. First, we may assume that 
$$
t/4 \ge (1+\eta) 4 \cdot 2^{1/q} \cdot \|M\|_q = (1+\eta) \tau.
$$
By Jensen's and Markov's inequalities, we have $\E T_2^q \le \E Z_2^q$ and
$$
\Prob(\max_{0 \le k \le m-1} \max_{1 \le j \le d} |\sum_{i=0}^k \underline{h}_j(X_{ir+1}^{ir+r})| > 0) \le \Prob(M > \tau) \le {\E [M^q] \over \tau^q} = {1 \over 2 \cdot 4^q}.
$$
By the Hoffmann-J{\o}rgensen inequality \cite[Proposition 6.8, Equation (6.8)]{ledouxtalagrand1991}, we have
$$
\E Z_2^q \le 2 \cdot 4^q \E[M^q] \le \left[{t \over 4(1+\eta)}\right]^q
$$
so that $\E Z_2 \le t/[4(1+\eta)]$. Therefore, we obtain that
$$
\Prob(T_2 \ge t/4) \le {\E T_2^q \over (t/4)^q} \le 2 \cdot 16^q {\E[M^q] \over t^q} 
$$
and
$$
\Prob(T_1 \ge (1+\eta) \E Z_1 - (1+\eta) \E Z_2 + 3t/4) \le \Prob(T_1 \ge (1+\eta) \E Z_1 + t/2).
$$
Using (\ref{eqn:T_1-bound-nonnegative}), we conclude that
$$
\Prob(Z \ge (1+\eta) \E Z' + t) \le e^{-C_1(\eta) t/\tau} + C_2(q) {\|M\|_q^q \over t^q},
$$
from which (\ref{eqn:fuknagaev-concentration-ineq-nonnegative}) follows.
\end{proof}

\section{Additional proofs in Section 5.4}
\label{appendix:additional_proofs_sec5.4}

\begin{lem}
\label{lem:moment-bounds-gaussian-obs}
Suppose that $X$ and $X'$ are iid mean zero random vectors in $\R^p$ such that $X_m \sim \text{subgaussian}(\nu^2)$ for all $m=1,\cdots,p$. If $h$ is the covariance matrix kernel, then we have for all $m,k=1,\cdots,p,$
\begin{eqnarray*}
%\|M\|_{\psi_1} &\le& C \nu^2 \log(np), \\
%u(\gamma) &\le& C \nu^2 \log^2(np).
\E[\exp(|h_{mk}(X, X')| / \nu^2)] \le 2,
%\| h_{mk} \|_{\psi_1} \le C_0 \nu^2.
\end{eqnarray*}
i.e. $\|h_{mk}(X, X')\|_{\psi_1} \le \nu^2$.
\end{lem}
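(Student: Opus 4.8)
The plan is to unwind the covariance matrix kernel explicitly and reduce the exponential moment of the product $(X_m - X'_m)(X_k - X'_k)$ to exponential moments of squared subgaussian coordinates. Recall that for the covariance matrix kernel $h(x_1,x_2) = (x_1 - x_2)(x_1 - x_2)^\top/2$ one has $h_{mk}(X,X') = (X_m - X'_m)(X_k - X'_k)/2$. The first step is the elementary bound $|ab| \le (a^2 + b^2)/2$, which gives
\[
\frac{|h_{mk}(X,X')|}{\nu^2} \;\le\; \frac{(X_m - X'_m)^2 + (X_k - X'_k)^2}{4\nu^2} \;=:\; A + B ,
\]
with $A = (X_m-X'_m)^2/(4\nu^2)$ and $B = (X_k-X'_k)^2/(4\nu^2)$.

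Next, by monotonicity of $\exp$ and the Cauchy--Schwarz inequality,
\[
\E\big[\exp(|h_{mk}(X,X')|/\nu^2)\big] \;\le\; \E\big[e^{A+B}\big] \;\le\; \big(\E[e^{2A}]\big)^{1/2}\big(\E[e^{2B}]\big)^{1/2},
\]
so it suffices to show $\E\big[\exp\big((X_m - X'_m)^2/(2\nu^2)\big)\big] \le 2$ for each coordinate. For this the key inequality is $(a-b)^2 \le 2a^2 + 2b^2$, which yields
\[
\exp\!\Big(\frac{(X_m - X'_m)^2}{2\nu^2}\Big) \;\le\; \exp\!\Big(\frac{X_m^2}{\nu^2}\Big)\exp\!\Big(\frac{(X'_m)^2}{\nu^2}\Big);
\]
taking expectations, using that $X$ and $X'$ are independent, and invoking the subgaussian hypothesis $\E[\exp(X_m^2/\nu^2)] \le \sqrt2$ gives $\E\big[\exp((X_m - X'_m)^2/(2\nu^2))\big] \le \sqrt2\cdot\sqrt2 = 2$, and likewise for the $k$-th coordinate. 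Substituting back, $\E[\exp(|h_{mk}(X,X')|/\nu^2)] \le \sqrt2\cdot\sqrt2 = 2$, which is exactly the claim; the final assertion $\|h_{mk}(X,X')\|_{\psi_1} \le \nu^2$ is then immediate from the definition of the Orlicz (quasi-)norm. The diagonal case $m=k$ needs no separate treatment: there $h_{mm}(X,X') = (X_m - X'_m)^2/2$ and the chain above applies verbatim.

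The argument is essentially a bookkeeping exercise, so I do not expect a genuine obstacle. The only point that requires care is tracking the numerical constants so that the final bound is exactly $2$ rather than something larger: one must use the specific normalization $\sqrt2$ in Definition \ref{defn:subgaussian_rv}, keep the factor $1/2$ in the kernel in the right place, and note that the single application of Cauchy--Schwarz together with the two independence splittings each cost precisely a power of $\sqrt2$, which multiply out to $2$.
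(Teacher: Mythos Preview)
Your proof is correct and follows essentially the same approach as the paper: both use $|ab|\le (a^2+b^2)/2$, then Cauchy--Schwarz, then $(a-b)^2\le 2(a^2+b^2)$ together with independence and the subgaussian bound $\E[\exp(X_m^2/\nu^2)]\le\sqrt2$, tracking the constants to land exactly at $2$. The only cosmetic difference is that the paper bounds the Cauchy--Schwarz factors by a single $\max_m$ before proceeding, whereas you treat the $m$- and $k$-coordinates separately; the arithmetic is identical.
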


\begin{proof}[Proof of Lemma \ref{lem:moment-bounds-gaussian-obs}]
The lemma follows from direct calculations
\begin{eqnarray*}
\E \left[\exp \left({|h_{mk}(X,X')| \over \nu^2} \right) \right] &=& \E\left[\exp \left({1\over2} {|X_m-X'_m| \over  \nu} {|X_k-X'_k | \over \nu} \right) \right] \\
&\le& \E \left[ \exp \left(  {(X_m-X'_m)^2 \over 4 \nu^2 } +  {(X_k-X'_k)^2 \over 4 \nu^2 }  \right) \right] \\
&\le& \max_{1\le m \le p} \E \left[ \exp \left( {(X_m-X'_m)^2 \over 2 \nu^2 } \right)\right] \\
&\le& \max_{1\le m \le p} \E \left[ \exp \left( {X_m^2 + {X'_m}^2 \over \nu^2 } \right)\right] \\
&\le& \max_{1\le m \le p} \left\{ \E \left[ \exp \left( {X_m^2 \over \nu^2 } \right)\right] \right\}^2 \le 2,
\end{eqnarray*}
where we used the elementary inequality $|ab| \le (a^2+b^2)/2$ in the second step, the Cauchy-Schwarz inequality in the third step, $(a-b)^2 \le 2(a^2+b^2)$ in the fourth step, the iid assumption in the fifth step, and the assumption that $X_m \sim \text{subgaussian}(\nu^2)$ in the last step. 
\end{proof}

\begin{lem}
\label{lem:moment-bounds-unifpoly-obs}
Let $q\ge8$. Suppose that $X$ and $X'$ are iid mean zero random vectors such that $\|\max_{1 \le m \le p} |X_m| \|_q \le \nu$. If $h$ is the covariance matrix kernel, then we have
\begin{eqnarray*}
%\|M\|_{\psi_1} &\le& C \nu^2 \log(np), \\
%u(\gamma) &\le& C \nu^2 \log^2(np).
\E [\max_{1 \le m,k \le p} |h_{mk}(X,X')| / (2 \nu^2)]^4 \le 1.
%\| h_{mk} \|_{\psi_1} \le C_0 \nu^2.
\end{eqnarray*}
\end{lem}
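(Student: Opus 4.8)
The plan is to exploit the explicit product structure of the covariance kernel so that the maximum over pairs $(m,k)$ decouples into the square of a single maximum over coordinates, after which the estimate reduces to the triangle inequality in $L^8$ together with the hypothesis $\|\max_{1\le m\le p}|X_m|\|_q\le\nu$.

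First I would record that for the covariance matrix kernel $h(x_1,x_2)=(x_1-x_2)(x_1-x_2)^\top/2$ one has $h_{mk}(X,X')=(X_m-X'_m)(X_k-X'_k)/2$, so that
\[
\max_{1\le m,k\le p}|h_{mk}(X,X')|=\frac12\Big(\max_{1\le m\le p}|X_m-X'_m|\Big)^2 .
\]
Raising this to the fourth power and taking expectations gives $\E\big[\max_{m,k}|h_{mk}(X,X')|\big]^4=\tfrac1{16}\E\big[(\max_m|X_m-X'_m|)^8\big]=\tfrac1{16}\big\|\max_m|X_m-X'_m|\big\|_8^8$.

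Next I would bound $\big\|\max_m|X_m-X'_m|\big\|_8$. By the triangle inequality for $\|\cdot\|_8$ and the fact that $X$ and $X'$ have the same law, $\big\|\max_m|X_m-X'_m|\big\|_8\le\big\|\max_m|X_m|\big\|_8+\big\|\max_m|X'_m|\big\|_8=2\big\|\max_m|X_m|\big\|_8$. Since $q\ge8$, monotonicity of $L^r$-norms yields $\big\|\max_m|X_m|\big\|_8\le\big\|\max_m|X_m|\big\|_q\le\nu$, hence $\big\|\max_m|X_m-X'_m|\big\|_8\le2\nu$. Combining, $\E\big[\max_{m,k}|h_{mk}(X,X')|\big]^4\le\tfrac1{16}(2\nu)^8=16\nu^8=(2\nu^2)^4$, which is exactly the claim after dividing through by $(2\nu^2)^4$.

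There is no genuine obstacle here; the computation is routine. The only point requiring a little care is the bookkeeping of constants, namely that the factor $1/2$ in the definition of $h$ and the factor $2$ coming from the triangle inequality combine so that $2\nu^2$ is precisely the right normalizing constant, together with the harmless reduction $q\ge8\Rightarrow\|\cdot\|_8\le\|\cdot\|_q$ so that only an eighth moment of $\max_m|X_m|$ is used.
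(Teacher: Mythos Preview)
Your proof is correct and follows essentially the same route as the paper's. The only cosmetic difference is the order of operations: the paper first passes from the fourth moment to the $(q/2)$-th moment via Jensen, applies the triangle inequality in $L^q$, and then takes the $8/q$ power, whereas you reduce to $L^8$ via $\|\cdot\|_8\le\|\cdot\|_q$ first and then apply the triangle inequality there; the resulting constants are identical.
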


\begin{proof}[Proof of Lemma \ref{lem:moment-bounds-unifpoly-obs}]
The lemma follows from direct calculations
\begin{eqnarray*}
&& \E ( \max_{1 \le m,k \le p} |h_{mk}(X,X')| / \nu^2 )^4 \\
&\le& \left[ \E(\max_{1 \le m,k \le p} |h_{mk}(X,X')| / \nu^2)^{q/2} \right]^{8/q} \\
&=& 2^{-4} \left[ \E \left(\max_{1 \le m \le p} |X_m-X'_m|^q / \nu^q \right) \right]^{8/q} \\
&\le& 2^{-4} \left[ 2^q \E (\max_{1 \le m \le p} |X_m|^q / \nu^q) \right]^{8/q} \le 2^4.
\end{eqnarray*}
\end{proof}

\begin{proof}[Proof of Theorem \ref{thm:thresholded_cov_mat_rate_adaptive_polymom}]
The proof is similar to that of Theorem \ref{thm:thresholded_cov_mat_rate_adaptive} and we only sketch the differences. By the assumptions and Lemma \ref{lem:moment-bounds-unifpoly-obs}, we have
$$
\max_{\ell=1,2} \E[|h_{mk}|^{2+\ell} / (C \nu_n^{2\ell})] \vee \E [\|h\| / (2\nu_n^2)]^4 \le 1.
$$
By Theorem \ref{thm:comparison_with_naive_gaussian_wild_bootstrap}, we have $ |\hat{S}_n-\Sigma|_\infty \le a_{\bar{T}_n^\sharp}(1-\alpha)$ with probability at least $1-\alpha-Cn^{-7K/6}$, where $C > 0$ is a constant depending only on $C_i, i=1,\cdots,4$. So (\ref{eqn:thresholded_cov_mat_rate_spectral_adaptive}) and (\ref{eqn:thresholded_cov_mat_rate_F_adaptive}) follow. Note that
$$
\|\max_{m,k}\max_{i \le \ell} |h_{mk}(X_i, X_{i+\ell})| \|_4 \le K_1 n^{1/4} \nu_n^2.
$$
Then, under the assumption that $\nu_n^8 \log^7(np) \le C_4 n^{1-7K/6}$, it follows that
$$
\E[\bar\psi^2] \le C \Big\{ \xi_4^4 +  \xi_8^4 \Big( {\log{p} \over n} \Big)^{1/2} + \nu_n^4 {\log(p) \over n^{3/4}} \Big\}.
$$
Therefore, we get $\E[\tau_*] \le C(\alpha, C_1,\cdots,C_5) \beta^{-1} \xi_4^2 (\log(p)/n)^{1/2}$.
\end{proof}

\section{Numerical comparisons}
\label{sec:numerical_comparison}

We present some numerical comparisons of the Gaussian approximation of the U-statistic with covariance matrix kernel. We consider two mean-zero data distributions from the elliptical family \cite{muirhead1982}:
\begin{enumerate}
\item[(M1)] (sub-exponential moment) The $\varepsilon$-contaminated $p$-variate elliptical normal distribution with density function
\begin{eqnarray}
\nonumber
f(x; \varepsilon, \nu, V) &=& {1-\varepsilon \over (2\pi)^{p/2} \det(V)^{1/2}} \exp\left(-{x^\top V^{-1} x \over2} \right) \\ \label{eqn:eps_contaminated_normal_distn}
&& \qquad + {\varepsilon \over (2 \pi \nu^2)^{p/2} \det(V)^{1/2}} \exp\left(-{x^\top V^{-1} x \over2 \nu^2} \right);
\end{eqnarray}
\item[(M2)] (polynomial moment) The $p$-variate elliptical $t$-distribution with degree of freedom $\nu$ and density function
\begin{equation}
\label{eqn:elliptic_t_distn}
f(x; \nu, V) = {\Gamma(\nu+p)/2 \over \Gamma(\nu/2) (\nu \pi)^{p/2} \det(V)^{1/2}} \left( 1 + {x^\top V^{-1} x \over \nu} \right)^{-(\nu+p)/2}.
\end{equation}
\end{enumerate}

For the $\varepsilon$-contaminated normal distribution, the kurtosis parameter $\kappa = [1+\varepsilon(\nu^4-1)] / [1+\varepsilon(\nu^2-1)]^2-1$; for the elliptic $t$-distribution, the kurtosis parameter $\kappa=2 / (\nu-4)$; c.f. \cite[Chapter 1]{muirhead1982}.

For the positive-definite matrix $V$, we consider three dependency models:
\begin{enumerate}
\item[(D1)] strong dependence model with $V = 0.9\times\vone_p\vone_p^\top + 0.1\times \Id_p$, where $\vone_p$ is the $p \times 1$ vector of all ones;
\item[(D2)] moderate dependence AR(1) model with $V = \{v_{mk}\}_{m,k=1}^p$ and $v_{mk} = 0.7^{|m-k|}$;
\item[(D3)] weak dependence AR(1) model with $V = \{v_{mk}\}_{m,k=1}^p$ and $v_{mk} = 0.3^{|m-k|}$.
\end{enumerate}
 
We use $\varepsilon=0.2$ and $\nu=1.5$ in (M1) and $\nu=10$ in (M2). For the chosen parameters, the two distributions have the same variance scaling for each $V$, while the kurtosis of the sub-exponential case is $0.16$ and the polynomial case is $1/3$. We compare the finite sample performance on $n=500$ and $p=40$ so that there are 820 covariance parameters. In each setup, we compare the approximation quality of $\bar{T}_n$ using $\bar{Y}$ (i.e. the max-hyperrectangles). All results are reported over 5000 simulation runs. First, the $\varepsilon$-contaminated normal distribution shows a better approximation than elliptical $t$-distribution. This is predicted by our theory in Section \ref{sec:gaussian-approx} and \ref{sec:bootstraps}. Second, the approximation becomes more accurate as the dependence gets stronger.

\begin{figure}[t!] %  figure placement: here, top, bottom, or page
   \centering
      \subfigure{\label{subfig:approx_wild_bootstrap_eps_contaminated_n=200_p=40} \includegraphics[scale=0.22]{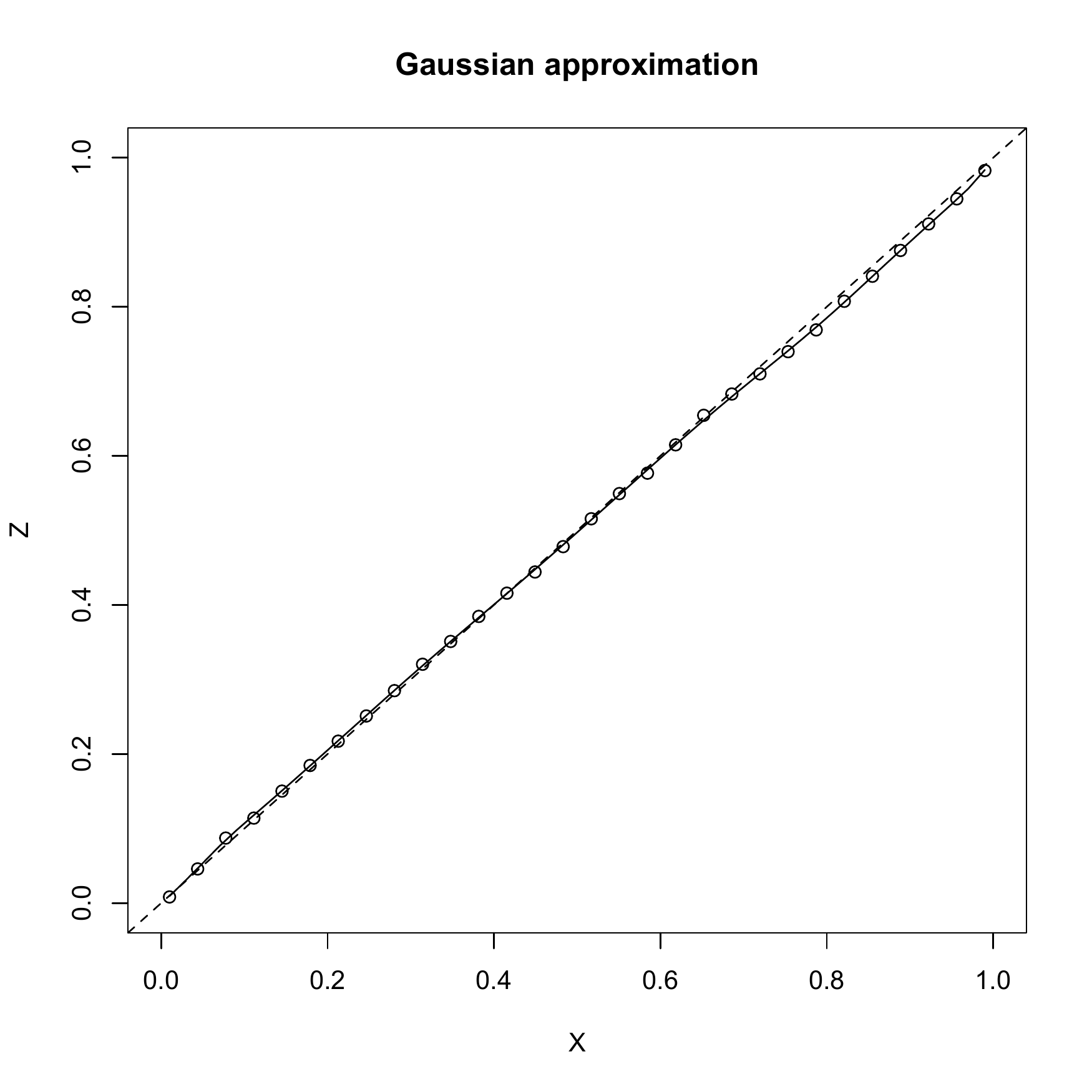}}
      \subfigure{\label{subfig:approx_wild_bootstrap_eps_contaminated_n=200_p=40} \includegraphics[scale=0.22]{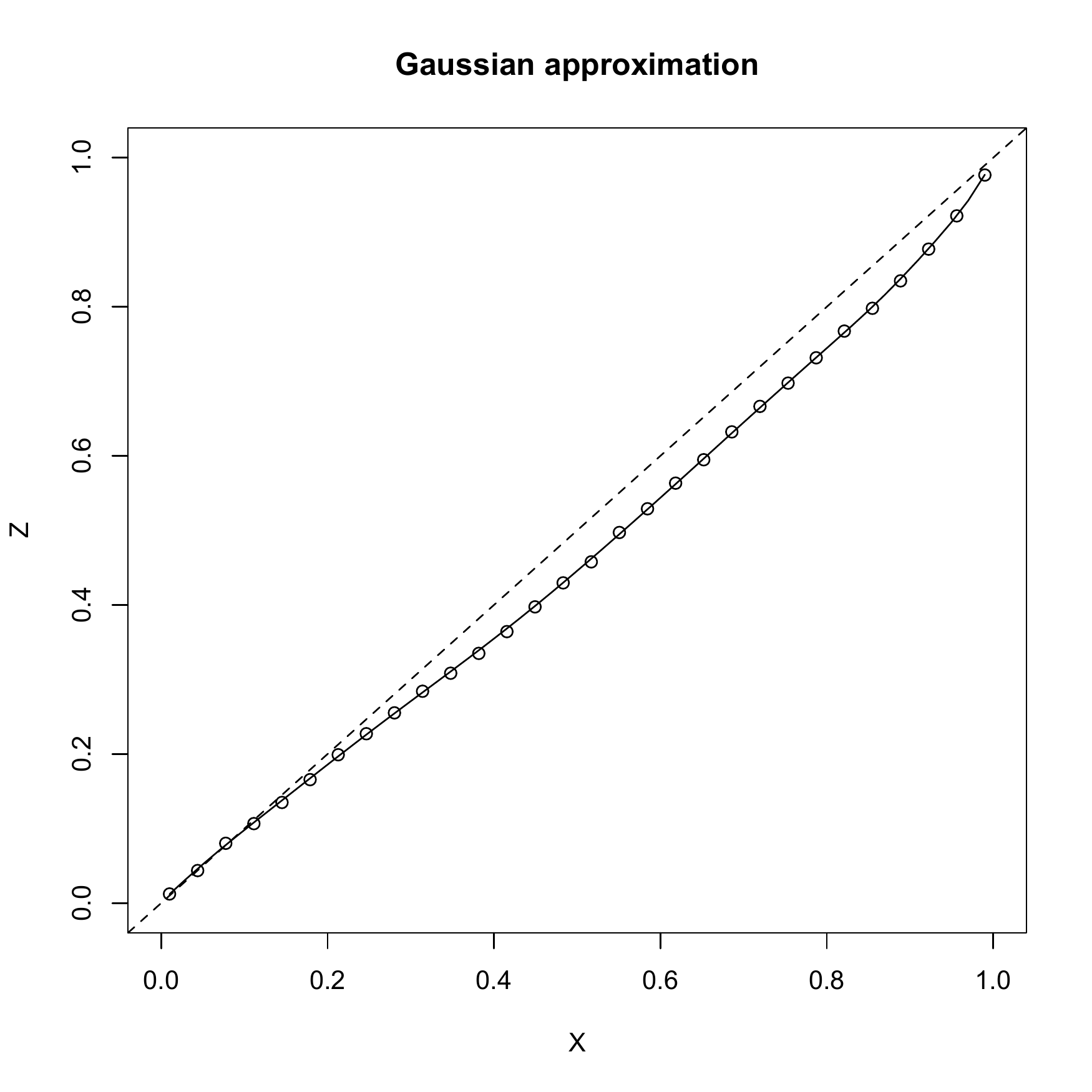}}
      \subfigure{\label{subfig:approx_wild_bootstrap_eps_contaminated_n=200_p=40} \includegraphics[scale=0.22]{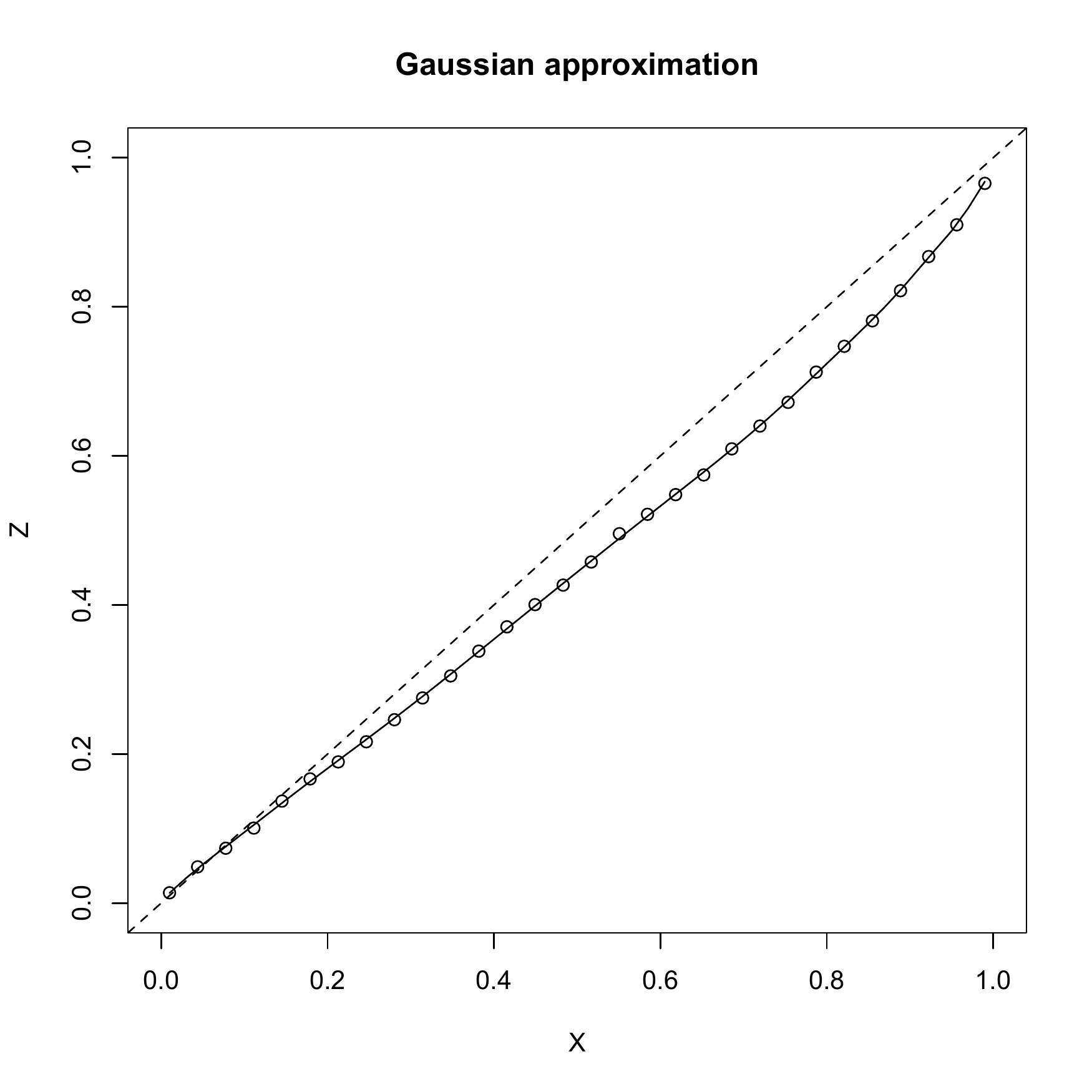}}  \\
      
	\subfigure{\label{subfig:approx_wild_bootstrap_eps_contaminated_n=200_p=40} \includegraphics[scale=0.22]{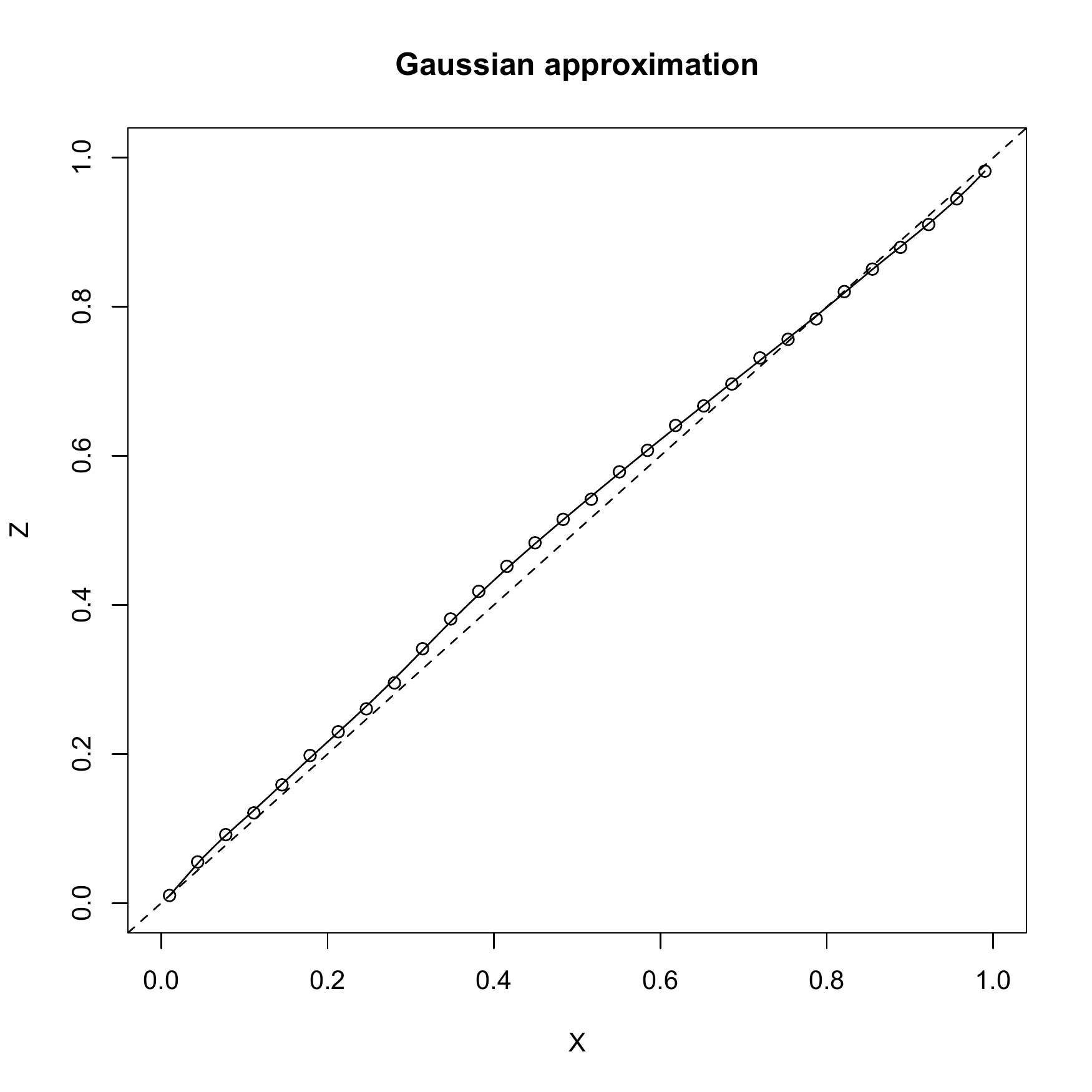}}
	\subfigure{\label{subfig:approx_wild_bootstrap_eps_contaminated_n=200_p=40} \includegraphics[scale=0.22]{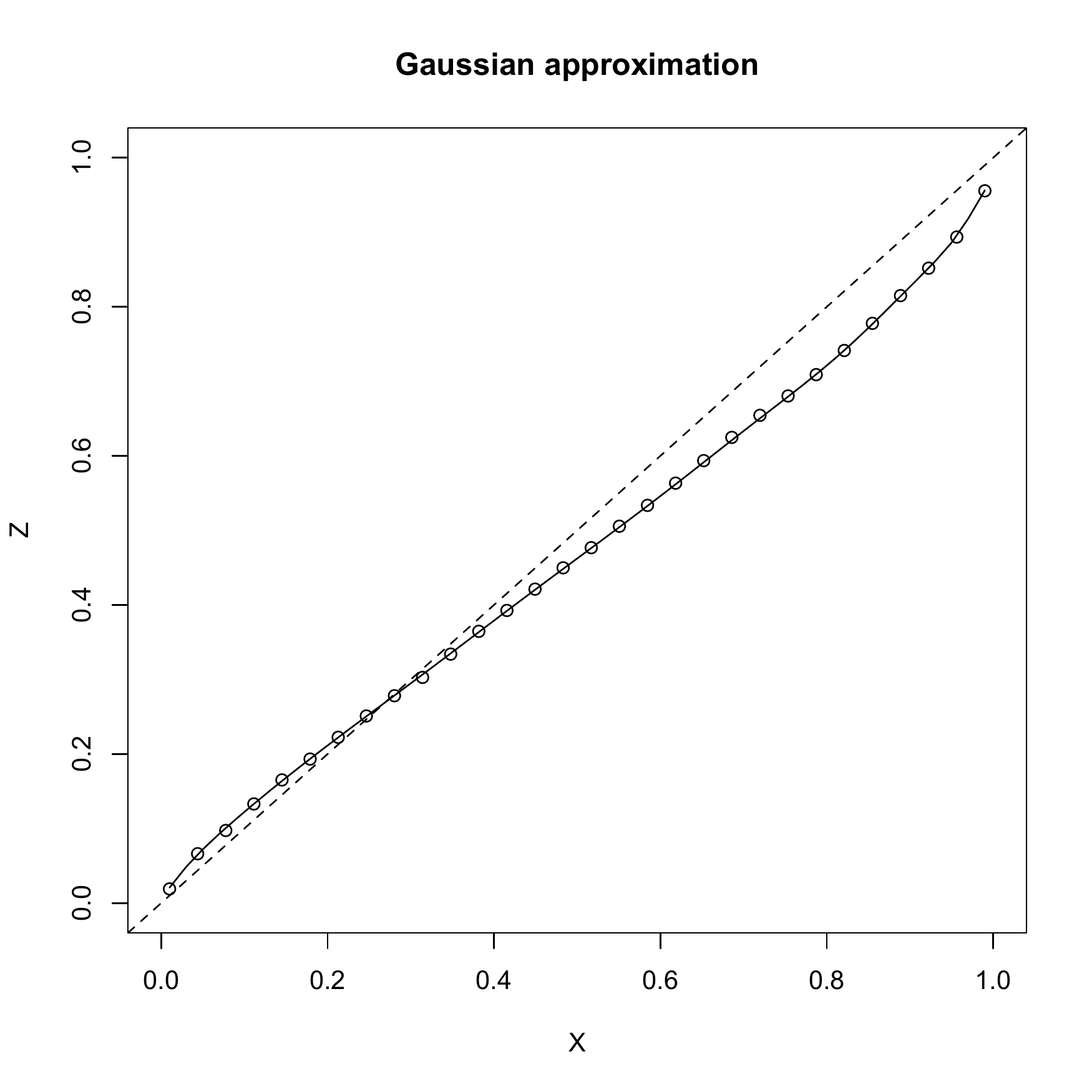}}
	\subfigure{\label{subfig:approx_wild_bootstrap_eps_contaminated_n=200_p=40} \includegraphics[scale=0.22]{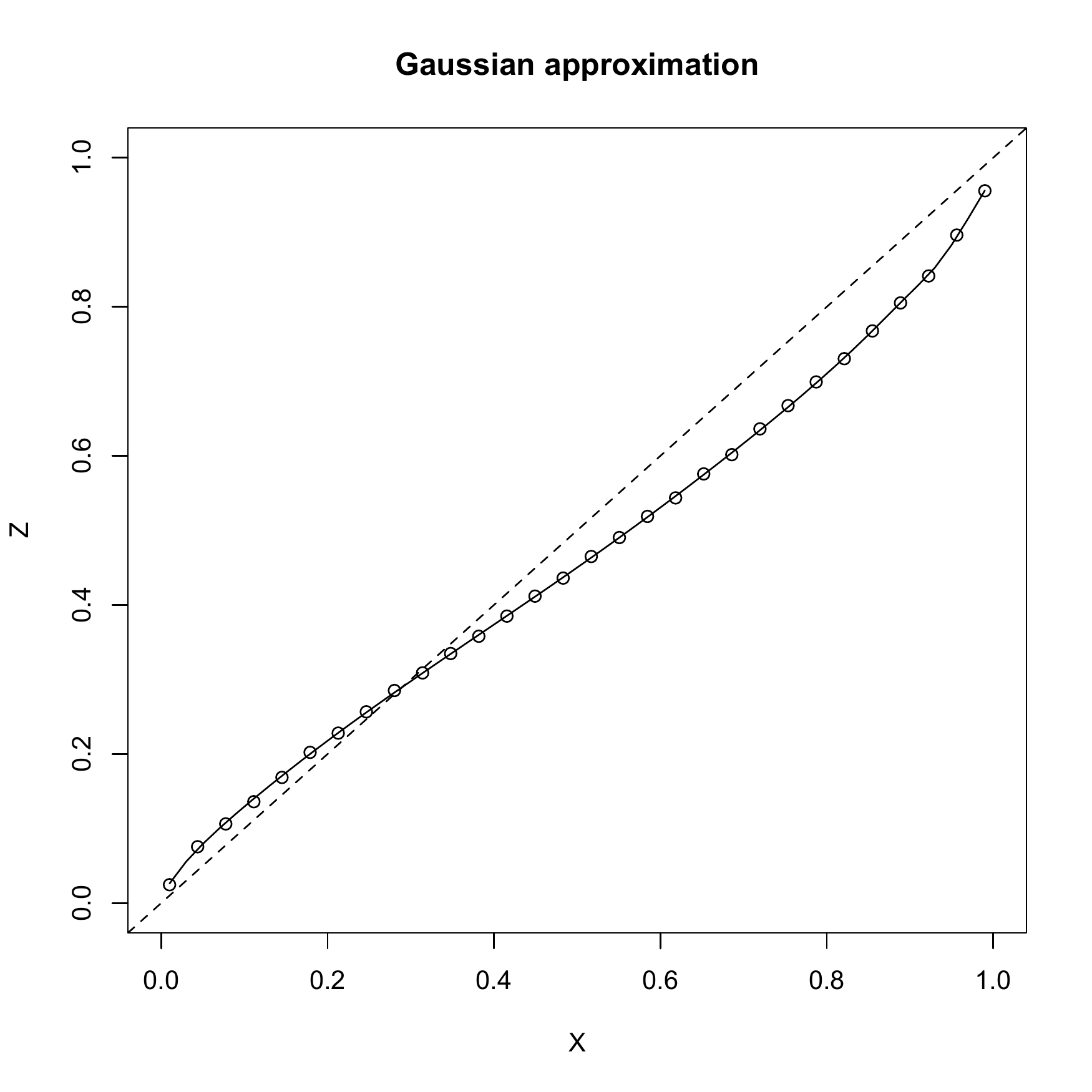}}  \\
   \caption{Gaussian approximations of $\bar{T}_n$ by $\bar{Y}$. Top row for the $\varepsilon$-contaminated normal distribution model: left (M1)+(D1), middle (M1)+(D2), and right (M1)+(D3). bottom row for the elliptic $t$-distribution model: left (M2)+(D1), middle (M2)+(D2), and right (M2)+(D3). Sample size $n=500$ and dimension $p=40$.}
   \label{fig:p=40_n=500}
\end{figure}

%Third, the wild bootstrap has high-quality approximation for the upper tail probabilities, which is particularly relevant for statistical applications; see Section \ref{sec:stat_apps} below. This occurs even when the Gaussian approximation has deteriorated performance such as in the weakly dependent AR(1) models (D2) and (D3). One possible explanation for this phenomenon can be the numeric instability for simulating the $p'\times1$ normal random vectors $Z_i$ in the approximation, where $p'=p(p+1)/2$. On the contrary, the wild bootstrap only requires the simulation of $n$ univariate normal random variables.

\section{Two additional application examples}
\label{sec:more_examples}

In this section, we provide two more examples for applying the bootstrap method. We only state results for subgaussian observations. For the uniform polynomial moment case, we can obtain similar results as in Section \ref{sec:stat_apps}. For a matrix $\Theta = \{\theta_{mk}\}_{m,k=1}^p$, we write $|\Theta|_{L^1} = \max_{1 \le k \le p}\sum_{m=1}^p |\theta_{mk}|$ is the matrix $L^1$-norm of $\Theta$. For a vector $\theta$, we write $|\theta|_w = (\sum_{j=1}^p |\theta_j|^w)^{1/w}, w\ge1$, is the $\ell^w$-norm of $\theta$, where $|\theta|_\infty = \max_{1 \le j \le p} |\theta_j| $ is the max-norm.

\subsection{Estimation of the sparse precision matrix}
\label{subsec:sparse_prec_mat}

Precision matrix, i.e. the inverse of the covariance matrix $\Omega = \Sigma^{-1}$, is an important object in high-dimensional statistics because it closely ties to the Gaussian graphical models and partial correlation graphs \cite{meinshausenbuhlmann2006,yuanlin2007,rothmanbickellevinazhu2008a,pengwangzhouzhu2009a,yuan2010a,cailiuluo2011a}. For multivariate Gaussian observations $X_i$, zero entries in the precision matrix correspond to missing edges in the graphical models; i.e. $\omega_{mk} = 0$ means that $X_m$ and $X_k$ are conditionally independent given the values of all other variables \cite{dempster1972,lauritzen1996a}. To avoid overfitting for graphical models with a large number of nodes, {\it sparsity} is a widely considered structural assumption. Here, we consider the estimation of $\Omega$ by using the CLIME method \cite{cailiuluo2011a}
\begin{equation}
\label{eqn:clime}
\hat\Omega(\lambda) = \text{argmin}_{\Theta \in \mathbb{R}^{p \times p}} |\Theta|_1 \quad \text{subject to} \quad |\hat{S}_n \Theta - \Id_{p \times p}|_\infty \le \lambda,
\end{equation}
where $\lambda \ge 0$ is a tuning parameter to control the sparsity in $\hat\Omega(\lambda)$ and $|\Theta|_1 = \sum_{m,k=1}^p |\theta_{mk}|$. As in the thresholded covariance matrix estimation case in Section \ref{subsec:tuning_selection_thresholded_cov_mat}, the performance of CLIME depends on the selection of tuning parameter $\lambda$. A popular approach is to use the cross-validation (CV), whose theoretical properties again are unclear in the high-dimensional setup. Here, we shall apply the bootstrap method to determine $\lambda$. Let $r \in [0,1)$ and 
$$
\tilde{\cal G}(r, M, \zeta_p) = \Big \{ \Theta \in \mathbb{S}_+^{p \times p} : |\Theta|_{L^1} \le M, \; \max_{m \le p} \sum_{k=1}^p |\theta_{mk}|^r \le \zeta_p \Big \},
$$
where $\mathbb{S}_+^{p \times p}$ is the collection of positive-definite $p \times p$ symmetric matrices. Let $\xi_q = \max_{1 \le k \le p} \|X_{1k}\|_q$ for $q > 0$.

\begin{thm}[Data-driven tuning parameter selection for CLIME: subgaussian observations]
\label{thm:clime_precision_mat_rate_adaptive}
Let $\nu_n \ge 1$ and $X_i$ be iid mean zero random vectors in $\R^p$ such that $X_{ik} \sim \text{subgaussian}(\nu_n^2)$ for all $k=1,\cdots,p$. Suppose that there exist constants $C_i > 0, i=1,\cdots,3,$ such that $\Gamma_{(j,k),(j,k)} \ge C_1$, $\xi_6 \le C_2 \nu_n^{1/3}$ and $\xi_8 \le C_3 \nu_n^{1/2}$ for all $j,k=1,\cdots,p$. Assume that $\Omega \in \tilde{\cal G}(r, M, \zeta_p)$ and and $\nu_n^4 \log^7(np) \le C_4 n^{1-K}$ for some $K \in (0,1)$. Let $\alpha \in (0,1)$ and choose $\lambda_* = M a_{\bar{T}_n^\sharp}(1-\alpha)$, where the bootstrap samples are generated with the covariance matrix kernel. Then, we have with probability at least $1-\alpha-C n^{-K/6}$ for some constant $C > 0$ depending only on $C_1,\cdots,C_4$ such that
\begin{eqnarray}
\label{eqn:spectral_rate_clime_precision_mat_rate_adaptive_wild_bootstrap}
\|\hat\Omega(\lambda_*) - \Omega\|_2 &\le& C_r \zeta_p M^{2-2r} a_{\bar{T}_n^\sharp}^{1-r}(1-\alpha), \\
\label{eqn:Frobenius_rate_clime_precision_mat_rate_adaptive_wild_bootstrap}
p^{-1} |\hat\Omega(\lambda_*) - \Omega|_F^2 &\le& 4 C_r \zeta_p M^{4-2r} a_{\bar{T}_n^\sharp}^{2-r}(1-\alpha),
\end{eqnarray}
where $C_r = 2^{3-2r} (1+2^{1-r}+3^{1-r})$. In addition, we have $\E[a_{\bar{T}_n^\sharp}(1-\alpha)] \le C' \xi_4^2 (\log(p)/n)^{1/2}$ for some constant $C' > 0$ depending only on $\alpha$ and $C_1,\cdots,C_4$. In particular, $\E[\lambda_*] \le C' M \xi_4^2 (\log(p)/n)^{1/2}$.
\end{thm}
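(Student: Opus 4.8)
The plan is to repeat the two-step strategy used for Theorem \ref{thm:thresholded_cov_mat_rate_adaptive}: first show that the bootstrap quantile $a_{\bar T_n^\sharp}(1-\alpha)$ dominates $|\hat S_n-\Sigma|_\infty$ on a high-probability event, and then feed this into the deterministic analysis of the CLIME program (\ref{eqn:clime}) from \cite{cailiuluo2011a}.

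First I would reduce to the covariance matrix kernel $h(x_1,x_2)=(x_1-x_2)(x_1-x_2)^\top/2$, for which $\R^d=\R^{p\times p}$ and $\log d$ is of order $\log p$. Exactly as in the verification (\ref{eqn:check_GA1}), Lemma \ref{lem:moment-bounds-gaussian-obs} together with the hypotheses $X_{ik}\sim\text{subgaussian}(\nu_n^2)$, $\xi_6\le C_2\nu_n^{1/3}$ and $\xi_8\le C_3\nu_n^{1/2}$ shows that conditions (M.2) and (E.1) hold with $B_n$ of order $\nu_n^2$, while (M.1) is the assumed $\Gamma_{(j,k),(j,k)}\ge C_1$. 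Since $\nu_n^4\log^7(np)\le C_4n^{1-K}$ is, up to constants, the scaling condition $B_n^2\log^7(dn)\le\bar b n^{1-K}$, Theorem \ref{thm:comparison_with_naive_gaussian_wild_bootstrap}(i) applies; arguing exactly as in the first paragraph of the proof of Theorem \ref{thm:thresholded_cov_mat_rate_adaptive} (with $\bar T_n$ the rescaled statistic $|\hat S_n-\Sigma|_\infty$), one obtains an event $\mathcal E$ with $\Prob(\mathcal E)\ge 1-\alpha-Cn^{-K/6}$ on which $|\hat S_n-\Sigma|_\infty\le a_{\bar T_n^\sharp}(1-\alpha)$.

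On $\mathcal E$, since $\Omega\in\tilde{\cal G}(r,M,\zeta_p)$ has $|\Omega|_{L^1}\le M$, feasibility of $\Omega$ for (\ref{eqn:clime}) at level $\lambda_*$ follows from
\begin{equation*}
|\hat S_n\Omega-\Id_{p\times p}|_\infty=|(\hat S_n-\Sigma)\Omega|_\infty\le|\hat S_n-\Sigma|_\infty\,|\Omega|_{L^1}\le M\,a_{\bar T_n^\sharp}(1-\alpha)=\lambda_*.
\end{equation*}
Feasibility together with $\lambda_*\ge|\Omega|_{L^1}\,|\hat S_n-\Sigma|_\infty$ is precisely the input of the deterministic CLIME bound, which yields $|\hat\Omega(\lambda_*)-\Omega|_\infty\le 4M\lambda_*=4M^2 a_{\bar T_n^\sharp}(1-\alpha)$. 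The weak $\ell^r$-ball bound $\max_m\sum_k|\omega_{mk}|^r\le\zeta_p$ then converts this entrywise estimate, via the truncation argument of \cite{cailiuluo2011a}, into $\|\hat\Omega(\lambda_*)-\Omega\|_2\le\||\hat\Omega(\lambda_*)-\Omega|\|_{L^1}\le C_r\zeta_p(M\lambda_*)^{1-r}$ and $p^{-1}|\hat\Omega(\lambda_*)-\Omega|_F^2\le 4C_r\zeta_p(M\lambda_*)^{2-r}$, which is (\ref{eqn:spectral_rate_clime_precision_mat_rate_adaptive_wild_bootstrap})--(\ref{eqn:Frobenius_rate_clime_precision_mat_rate_adaptive_wild_bootstrap}) after substituting $M\lambda_*=M^2 a_{\bar T_n^\sharp}(1-\alpha)$, the constant $C_r=2^{3-2r}(1+2^{1-r}+3^{1-r})$ emerging exactly as in \cite{cailiuluo2011a}.

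Finally, the bound $\E[a_{\bar T_n^\sharp}(1-\alpha)]\le C'\xi_4^2(\log(p)/n)^{1/2}$ is the same computation carried out at the end of the proof of Theorem \ref{thm:thresholded_cov_mat_rate_adaptive}: a union bound over the $p^2$ entries reduces to the Gaussian quantile $\Phi^{-1}(1-\alpha/(2p^2))\asymp(\log p)^{1/2}$ times the jackknife scale, which is controlled by the data-splitting inequality (\ref{eqn:reducing-Ustat-to-iid-sum}), \cite[Lemma 9]{cck2014b}, Pisier's inequality, and the hypotheses $\xi_8\le C_3\nu_n^{1/2}$ and $\nu_n^4\log^7(np)\le C_4n^{1-K}$; multiplying by $M$ gives $\E[\lambda_*]\le C'M\xi_4^2(\log(p)/n)^{1/2}$. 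The only genuinely probabilistic step is the domination $|\hat S_n-\Sigma|_\infty\le a_{\bar T_n^\sharp}(1-\alpha)$ on $\mathcal E$, and that is already delivered by Theorem \ref{thm:comparison_with_naive_gaussian_wild_bootstrap}; the main obstacle is therefore not conceptual but the bookkeeping of the deterministic CLIME step, in particular tracking the constant $C_r$ through the weak $\ell^r$-ball truncation of \cite{cailiuluo2011a}, which I expect to be routine but somewhat tedious.
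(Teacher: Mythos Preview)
Your proposal is correct and follows essentially the same route as the paper. The paper's proof simply packages the deterministic CLIME step into Lemma \ref{lem:precmat_general_bound} (which is \cite[Theorem 6]{cailiuluo2011a}) and then applies it on the event $\{\lambda_\diamond\le\lambda_*\}$, where $\lambda_\diamond=|\Omega|_{L^1}|\hat S_n-\Sigma|_\infty$; your explicit feasibility computation and entrywise bound are precisely the content of that lemma, and the rest (invoking Theorem \ref{thm:comparison_with_naive_gaussian_wild_bootstrap} and reusing the $\E[a_{\bar T_n^\sharp}(1-\alpha)]$ bound from Theorem \ref{thm:thresholded_cov_mat_rate_adaptive}) is identical.
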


Now, we compare Theorem \ref{thm:clime_precision_mat_rate_adaptive} with \cite[Theorem 1(a) and 4(a)]{cailiuluo2011a}. Let $\eta \in (0,1/4)$ and $K,\tau \in (0,\infty)$ be bounded constants. Assuming that $\log(p)/n \le \eta$, $\E[\exp(t X_{ij}^2)] \le K$ for all $|t| \le \eta$ and $i=1,\cdots,n; j=1,\cdots,p$, and $\Omega \in \tilde{\cal G}(r, M, \zeta_p)$, \cite{cailiuluo2011a} showed that with probability at least $1-4p^{-\tau}$
\begin{eqnarray*}
\|\hat\Omega(\lambda_\Delta) - \Omega\|_2 &\le& C'_\Delta \zeta_p M^{2-2r} (\log(p)/n)^{(1-r)/2}, \\
p^{-1} |\hat\Omega(\lambda_\Delta) - \Omega|_F^2 &\le& 4 C'_\Delta \zeta_p M^{4-2r} (\log(p)/n)^{1-r/2},
\end{eqnarray*}
where $\lambda_\Delta = C_\Delta M (\log(p)/n)^{1/2}$, $C_\Delta = 2 \eta^{-2} (2+\tau+\eta^{-1}e^2K^2)^2$, and $C'_\Delta = C_r C_\Delta^{1-r}$. If $X_{ij} \sim \text{subgaussian}(\nu_n^2)$, then $\eta \le \nu_n^{-2}$ for large enough $K$. Therefore, $C_\Delta \gtrsim \nu_n^8$ and $C'_\Delta \gtrsim \nu_n^{8(1-r)}$, both diverging to infinity as $\nu_n^2 \to \infty$ (i.e. $\eta \to 0$). Therefore, $\lambda_* = o_\Prob(\lambda_\Delta)$ and the convergence rates in (\ref{eqn:spectral_rate_clime_precision_mat_rate_adaptive_wild_bootstrap}) and (\ref{eqn:Frobenius_rate_clime_precision_mat_rate_adaptive_wild_bootstrap}) are much sharper than those obtained in \cite[Theorem 1(a) and 4(a)]{cailiuluo2011a}. In addition, for the block diagonal covariance matrix in Example \ref{exmp:thresholded_estimator_reduced_rank}, the bootstrap tuning parameter $\lambda_*$ can gain much tighter performance bounds than $\lambda_\Delta$.

On the other hand, the turning parameter $\lambda_*$ requires the knowledge of $M$ and thus the estimator $\hat\Omega(\lambda_*)$ is not fully data-dependent. In contrast with the thresholded covariance matrix estimation problem in Section \ref{subsec:tuning_selection_thresholded_cov_mat}, the fundamental difficulty here for estimating the precision matrix is that there is no sample analog of $\Omega$ when $p > n$ and $M$ can be viewed as a stability parameter in the sparse inversion of the matrix $\hat{S}_n$. That is, the larger $M$, the more difficult to estimate $\Omega = \Sigma^{-1}$; in particular for CLIME, the rates (\ref{eqn:spectral_rate_clime_precision_mat_rate_adaptive_wild_bootstrap}) and (\ref{eqn:Frobenius_rate_clime_precision_mat_rate_adaptive_wild_bootstrap}) become slower. In addition, $|\Omega|_{L^1}$ plays a similar role in the graphical Lasso model for estimating the sparse precision matrix \cite{ravikumarwainwrightraskuttiyu2008a}. The same comments apply to the problem of estimating the sparse linear functionals in Section \ref{subsec:sparse_linear_functionals}.

\subsection{Estimation of the sparse linear functionals of precision matrix}
\label{subsec:sparse_linear_functionals}

Consider estimation of the linear functional $\theta = \Sigma^{-1} b$, where $b$ is a fixed known $p \times 1$ vector and $\Sigma = \Var(X_i)$. Functionals of such form are related to the solution of the linear equality constrained quadratic program
\begin{equation}
\label{eqn:lcqp}
\text{minimize}_{w \in \mathbb{R}^{p\times p}} w^\top \Sigma w \quad \text{subject to} \quad b^\top w = 1,
\end{equation}
which arises naturally in Markowitz portfolio selection, linear discriminant analysis, array signal processing, best linear unbiased estimator (BLUE), and optimal linear prediction for univariate time series \cite{markowitz1952,maizouyuan2012a,guerci1999a,mcmurrypolitis2015}. For example, in the Markowitz portfolio selection, the portfolio risk $\Var(X_i^\top w)$ is minimized subject to the constraint that the expected mean return $\E(X_i^\top w)$ is fixed at certain level. The solution of (\ref{eqn:lcqp}) $w^*= (b^\top \Sigma^{-1} b)^{-1} \Sigma^{-1} b$ is proportional to $\theta$ and the optimal value of (\ref{eqn:lcqp}) is $(b^\top \Sigma^{-1} b)^{-1}$. A naive approach to estimate $\theta$ has two steps: first construct an invertible estimator $\hat\Sigma$ of $\Sigma$ and second estimate $\theta$ by $\hat\Sigma^{-1} b$. This two-step estimator may not be consistent for $\theta$ in high-dimensions even though $\hat\Sigma$ is a spectral norm consistent (and typically regularized) estimator of $\Sigma$ because in the worst case $|\hat{\theta}-\theta|_2 = |\hat\Sigma^{-1} b - \Sigma^{-1} b|_2 \le \|\hat\Sigma^{-1} - \Sigma^{-1}\|_2 \cdot |b|_2$ does not converge if $|b|_2 \to \infty$ at faster rate than $\|\hat\Sigma^{-1} - \Sigma^{-1}\|_2 \to 0$. However, if $\theta$ has some structural assumptions such as sparsity, then we can directly estimate $\theta$ without the intermediate step for estimating $\hat\Sigma$ or $\hat\Sigma^{-1}$. Sparsity in $\theta$ is often a plausible assumption in real applications. For instance, sparse portfolios have been considered in \cite{brodieetal2009} to obtain the stable portfolio optimization and to facilitate the transaction cost for a large number of assets. When $\theta$ is sparse, the following Dantzig-selector type problem has been proposed in \cite{chenxuwu2016+} to estimate $\theta$
\begin{equation}
\label{eqn:dantzig_linear_functional}
\hat{\theta}(\lambda) = \text{argmin}_{w \in \mathbb{R}^{p\times p}} |w|_1 \quad \text{subject to} \quad |\hat{S}_n w - b|_\infty \le \lambda,
\end{equation}
where $\lambda \ge 0$ is a tuning parameter to control the sparsity in $\hat{\theta}(\lambda)$. The optimization problem (\ref{eqn:dantzig_linear_functional}) can be solved by linear programming and thus there are computationally efficient algorithms for obtaining $\hat{\theta}(\lambda)$. The intuition of (\ref{eqn:dantzig_linear_functional}) is that since $\Sigma \theta = b$, we should expect that $\hat{S}_n \hat{\theta} \approx b$ for a reasonably good estimator $\hat{\theta}$. Under the sparsity assumption on $\theta$ and suitable moment conditions on $X_i$, \cite{chenxuwu2016+} obtained the rate of convergence for $\hat{\theta}(\lambda)$. However, a remaining issue for using (\ref{eqn:dantzig_linear_functional}) on real data is to properly select the tuning parameter $\lambda$. Different from the thresholded covariance matrix estimation where the sparsity is assumed in $\Sigma$, here we do not require this structure in the linear functional estimation. Instead, we impose the sparsity assumption directly on $\theta$. Let $r \in [0,1)$ and
$$
{\cal G}'(r, \zeta_p) = \Big \{w \in \mathbb{R}^p : \sum_{j=1}^p |w_j|^r \le \zeta_p \Big \}.
$$
Here, $\zeta_p$ controls the sparsity level of the elements in ${\cal G}'(r, \zeta_p)$. Without assuming any structure on $\Sigma$, we can allow stronger dependence in $\Sigma$ and therefore the bootstrap approximation may perform better in this case.

\begin{thm}[Data-driven tuning parameter selection in estimation of the sparse linear functional of precision matrix: subgaussian observations]
\label{thm:linear_functional_estimation_adaptive_wild_bootstrap}
Let $\nu_n \ge 1$ and $X_i$ be iid mean zero random vectors in $\R^p$ such that $X_{ik} \sim \text{subgaussian}(\nu_n^2)$ for all $k=1,\cdots,p$. Suppose that there exist constants $C_i > 0, i=1,\cdots,3,$ such that $\Gamma_{(j,k),(j,k)} \ge C_1$, $\xi_6 \le C_2 \nu_n^{1/3}$ and $\xi_8 \le C_3 \nu_n^{1/2}$ for all $j,k=1,\cdots,p$.  Assume that $\theta \in {\cal G}'(r, \zeta_p)$ and $\nu_n^4 \log^7(np) \le C_4 n^{1-K}$ for some $K \in (0,1)$. Let $|\theta|_1 \le M$, $\alpha \in (0,1)$, and choose $\lambda_* = M a_{\bar{T}_n^\sharp}(1-\alpha)$, where the bootstrap samples are generated with the covariance matrix kernel. Then we have for all $w \in [1,\infty]$
\begin{equation}
\label{eqn:rate_linear_functional_estimation_adaptive_wild_bootstrap}
|\hat{\theta}(\lambda_*) - \mbf\theta|_w \le (2 \cdot 6^{1 \over w} \cdot 5^{1-r\over w}) \zeta_p^{1\over w} (M |\Sigma^{-1}|_{L^1})^{1-{r \over w}} a_{\bar{T}_n^\sharp}^{1-{r \over w}}(1-\alpha)
\end{equation}
with probability at least $1-\alpha-C n^{-K/6}$ for some constant $C > 0$ depending only on $C_1,\cdots,C_4$. In addition, we have
\begin{equation}
\label{eqn:linfun_lambda_*-bound_subgaussian}
\E[\lambda_*] \le C' M \xi_4^4 (\log(p)/n)^{1/2},
\end{equation}
where $C' > 0$ is a constant depending only on $\alpha$ and $C_1,\cdots,C_4$.
\end{thm}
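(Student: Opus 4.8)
The plan is to follow the template already used for Theorem~\ref{thm:thresholded_cov_mat_rate_adaptive} and Theorem~\ref{thm:clime_precision_mat_rate_adaptive}: reduce the statement to (a) the bootstrap validity of $a_{\bar T_n^\sharp}(1-\alpha)$ for the covariance matrix kernel on a high-probability event, and (b) a purely deterministic Dantzig-selector error analysis for $\hat\theta(\lambda_*)$. For step (a), note that for the covariance matrix kernel $U_n=\hat S_n$, $\theta=\Sigma$, so $\hat S_n-\Sigma = 2n^{-1/2}T_n$ and $|\hat S_n-\Sigma|_\infty = 2n^{-1/2}|T_n|_\infty$. I would first verify the moment hypotheses of Theorem~\ref{thm:comparison_with_naive_gaussian_wild_bootstrap}(i): by Lemma~\ref{lem:moment-bounds-gaussian-obs}, the subgaussian assumption, and the growth conditions $\xi_6\le C_2\nu_n^{1/3}$, $\xi_8\le C_3\nu_n^{1/2}$, one gets the bound (\ref{eqn:check_GA1}), which gives (M.2) and (E.1) with $B_n\asymp\nu_n^2$, while $\Gamma_{(j,k),(j,k)}\ge C_1$ gives (M.1). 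The hypothesis $\nu_n^4\log^7(np)\le C_4 n^{1-K}$ is then exactly $B_n^2\log^7(dn)\lesssim n^{1-K}$ with $d\asymp p^2$ (working with $\pm h$ to capture the two-sided maximum $|\hat S_n-\Sigma|_\infty$). Theorem~\ref{thm:comparison_with_naive_gaussian_wild_bootstrap}(i) then yields $\sup_{\alpha\in(0,1)}|\Prob(\bar T_n\le a_{\bar T_n^\sharp}(\alpha))-\alpha|\le Cn^{-K/6}$, so on an event $\mathcal E$ with $\Prob(\mathcal E)\ge 1-\alpha-Cn^{-K/6}$ we have $|\hat S_n-\Sigma|_\infty\le a_{\bar T_n^\sharp}(1-\alpha)$.

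Working on $\mathcal E$, the deterministic analysis proceeds as follows. Since $\Sigma\theta=b$, $|\hat S_n\theta-b|_\infty = |(\hat S_n-\Sigma)\theta|_\infty\le |\hat S_n-\Sigma|_\infty|\theta|_1\le M a_{\bar T_n^\sharp}(1-\alpha)=\lambda_*$, so $\theta$ is feasible for (\ref{eqn:dantzig_linear_functional}) at $\lambda=\lambda_*$ and hence $|\hat\theta(\lambda_*)|_1\le|\theta|_1\le M$. Writing $\hat\delta=\hat\theta(\lambda_*)-\theta$ and using the decomposition $\hat S_n\hat\theta-\Sigma\theta = \Sigma\hat\delta + (\hat S_n-\Sigma)\hat\theta$ together with $|\hat S_n\hat\theta-b|_\infty\le\lambda_*$, one obtains $|\Sigma\hat\delta|_\infty\le\lambda_*+|\hat S_n-\Sigma|_\infty|\hat\theta|_1\le 2\lambda_*$; since $\Sigma^{-1}$ is symmetric its maximal absolute row sum equals $|\Sigma^{-1}|_{L^1}$, so $|\hat\delta|_\infty\le|\Sigma^{-1}|_{L^1}|\Sigma\hat\delta|_\infty\le 2M|\Sigma^{-1}|_{L^1}a_{\bar T_n^\sharp}(1-\alpha)$, which is the $w=\infty$ case. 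For general $w$ I would exploit $\theta\in{\cal G}'(r,\zeta_p)$: for a threshold $t>0$ and $S=\{j:|\theta_j|>t\}$ one has $|S|\le\zeta_p t^{-r}$ and $|\theta_{S^c}|_1\le\zeta_p t^{1-r}$, and the feasibility inequality $|\hat\theta|_1\le|\theta|_1$ gives the cone bound $|\hat\delta_{S^c}|_1\le|\hat\delta_S|_1+2|\theta_{S^c}|_1$, whence $|\hat\delta|_1\le 2|S|\,|\hat\delta|_\infty+2\zeta_p t^{1-r}$. Taking $t\asymp|\hat\delta|_\infty$ yields $|\hat\delta|_1\lesssim\zeta_p|\hat\delta|_\infty^{1-r}$, and H\"older's inequality $|\hat\delta|_w^w\le|\hat\delta|_\infty^{w-1}|\hat\delta|_1$ produces $|\hat\delta|_w\lesssim\zeta_p^{1/w}|\hat\delta|_\infty^{1-r/w}$; inserting the $\ell^\infty$ bound and carefully tracking the ceiling on $|S|$ and the cone constants gives the stated prefactor $2\cdot 6^{1/w}\cdot 5^{(1-r)/w}$ and the exponent $1-r/w$.

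The remaining claim $\E[\lambda_*]\le C'M\xi_4^4(\log(p)/n)^{1/2}$ reduces, via $\lambda_*=M a_{\bar T_n^\sharp}(1-\alpha)$, to a bound on $\E[a_{\bar T_n^\sharp}(1-\alpha)]$, which I would obtain exactly as in the proof of Theorem~\ref{thm:thresholded_cov_mat_rate_adaptive}: a union bound over the $p^2$ coordinates bounds $a_{\bar T_n^\sharp}(1-\alpha)$ by $n^{-1/2}\bar\psi\,\Phi^{-1}(1-\alpha/(2p^2))$ with $\bar\psi$ the maximal jackknife standard error, $\Phi^{-1}(1-\alpha/(2p^2))\asymp(\log p)^{1/2}$, and $\E[\bar\psi]$ is controlled through the data-splitting moment reduction (\ref{eqn:reducing-Ustat-to-iid-sum}), a maximal inequality for sums of independent vectors, Pisier's inequality, and the growth conditions on $\xi_6,\xi_8$ under $\nu_n^4\log^7(np)\le C_4 n^{1-K}$.

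The only place that needs genuine care is step (a): one must ensure the covariance matrix kernel, although quadratic and unbounded, really meets the sub-exponential moment hypotheses of Theorem~\ref{thm:comparison_with_naive_gaussian_wild_bootstrap} with $B_n$ of the correct order $\nu_n^2$, so that the scaling hypothesis on $\nu_n$ translates into the required scaling on $B_n$ — this is precisely what Lemma~\ref{lem:moment-bounds-gaussian-obs} and (\ref{eqn:check_GA1}) supply. Steps (b) are routine adaptations of arguments already present in the paper; I would only remark that, in contrast with the CLIME setting, no sparsity is imposed on $\Sigma^{-1}$ here, and $|\Sigma^{-1}|_{L^1}$ enters the bound purely as a stability parameter for the sparse inversion.
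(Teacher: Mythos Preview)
Your proposal is correct and follows the same two-step template as the paper: (a) use Lemma~\ref{lem:moment-bounds-gaussian-obs} and the growth conditions on $\xi_6,\xi_8$ to verify (M.1), (M.2), (E.1) for the covariance kernel with $B_n\asymp\nu_n^2$, then invoke Theorem~\ref{thm:comparison_with_naive_gaussian_wild_bootstrap}(i) to get $|\hat S_n-\Sigma|_\infty\le a_{\bar T_n^\sharp}(1-\alpha)$ with probability at least $1-\alpha-Cn^{-K/6}$; (b) run the deterministic Dantzig-selector analysis on that event; and handle $\E[\lambda_*]$ exactly as in Theorem~\ref{thm:thresholded_cov_mat_rate_adaptive}. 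The only cosmetic difference is that for step~(b) the paper does not redo the cone/threshold argument you sketch but simply cites Lemma~\ref{lem:linfun_general_bound} (from \cite{chenxuwu2016+}), which packages the Dantzig error as $|\hat\theta(\lambda)-\theta|_w\le[6D(5\lambda|\Sigma^{-1}|_{L^1})]^{1/w}(2\lambda|\Sigma^{-1}|_{L^1})^{1-1/w}$ in terms of the smallness measure $D(u)=\sum_j(|\theta_j|\wedge u)$, and then plugs in $D(u)\le 2u^{1-r}\zeta_p$ for $\theta\in{\cal G}'(r,\zeta_p)$ to read off the constant $2\cdot 6^{1/w}\cdot 5^{(1-r)/w}$ directly; your reconstruction of that lemma is equivalent.
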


The tuning parameter $\lambda_\Delta = C_\Delta M \sqrt{\log(p)/n}$ is selected in \cite[Theorem II.1]{chenxuwu2016+}, which is non-adaptive and the constant $C_\Delta > 0$ depends on the underlying data distribution $F$ through $\nu_n^2$. In particular, $C_\Delta \to \infty$ as $\nu_n^2 \to \infty$. Theorem \ref{thm:linear_functional_estimation_adaptive_wild_bootstrap} shows that the bootstrap tuning parameter selection strategy is less conservative in view of (\ref{eqn:linfun_lambda_*-bound_subgaussian}) and the rate (\ref{eqn:rate_linear_functional_estimation_adaptive_wild_bootstrap}) can be much tighter than $\hat{\theta}(\lambda_\Delta)$ when $\nu_n^2 \to \infty$. Again, for the block diagonal covariance matrix in Example \ref{exmp:thresholded_estimator_reduced_rank}, the bootstrap tuning parameter $\lambda_*$ can gain much tighter performance bounds than $\lambda_\Delta$. However, as in Section \ref{subsec:sparse_prec_mat}, the turning parameter $\lambda_*$ here requires the knowledge of $M$ and thus the estimator $\hat{\theta}(\lambda_*)$ is not fully data-dependent. But this is due to the fundamental difficulty for the lack of the sample analog of $\theta$ in this problem.

\subsection{Proof of Theorem \ref{thm:clime_precision_mat_rate_adaptive} and \ref{thm:linear_functional_estimation_adaptive_wild_bootstrap}}

\begin{lem}
\label{lem:precmat_general_bound}
Suppose that $\Omega \in \tilde{\cal G}(r, M, \zeta_p)$ and let $\lambda \ge |\Omega|_{L^1} |\hat{S}_n-\Sigma|_\infty$. Then, we have
\begin{eqnarray*}
|\hat\Omega(\lambda) - \Omega|_\infty &\le& 4 |\Omega|_{L^1} \lambda, \\
\|\hat\Omega(\lambda) - \Omega\|_2 &\le& C_1 \zeta_p \lambda^{1-r}, \\
p^{-1} |\hat\Omega(\lambda) - \Omega|_F^2 &\le& C_2 \zeta_p \lambda^{2-r},
\end{eqnarray*}
where $C_1 \le 2 (1+2^{1-r}+3^{1-r}) (4 |\Omega|_{L^1})^{1-r}$ and $C_2 \le 4 |\Omega|_{L^1} C_1$.
\end{lem}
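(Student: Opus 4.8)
The plan is to treat the statement as a purely deterministic consequence of the hypothesis $\lambda \ge |\Omega|_{L^1}\,|\hat{S}_n-\Sigma|_\infty$ and to establish the three bounds in sequence, each one feeding into the next. I will work with the (symmetrized) CLIME estimator throughout; the raw first-step minimizer of (\ref{eqn:clime}) is handled by an extra symmetrization that only affects constants.

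First I would prove feasibility and an auxiliary $L^1$-bound for $\hat\Omega(\lambda)$. Writing $\omega^{(j)}$, $\hat\omega^{(j)}$ for the $j$-th columns of $\Omega$, $\hat\Omega(\lambda)$, the $j$-th column of $\hat{S}_n\Omega-\Id$ is $(\hat{S}_n-\Sigma)\omega^{(j)}$ (since $\Sigma\Omega=\Id$), whose sup-norm is at most $|\hat{S}_n-\Sigma|_\infty|\omega^{(j)}|_1\le|\hat{S}_n-\Sigma|_\infty|\Omega|_{L^1}\le\lambda$; hence each column of $\Omega$ is feasible for the column-wise form of (\ref{eqn:clime}), so $\ell^1$-optimality gives $|\hat\omega^{(j)}|_1\le|\omega^{(j)}|_1\le|\Omega|_{L^1}$ for every $j$, i.e. $|\hat\Omega(\lambda)|_{L^1}\le|\Omega|_{L^1}$. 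Next, the entrywise bound: using $\Omega\Sigma=\Id$ I would write $\hat\Omega(\lambda)-\Omega=\Omega(\Sigma\hat\Omega(\lambda)-\Id)$, bound $|\Sigma\hat\Omega(\lambda)-\Id|_\infty\le|\hat{S}_n\hat\Omega(\lambda)-\Id|_\infty+|(\Sigma-\hat{S}_n)\hat\Omega(\lambda)|_\infty\le\lambda+|\hat{S}_n-\Sigma|_\infty|\hat\Omega(\lambda)|_{L^1}\le2\lambda$ (with at most another factor of two from symmetrization), and apply the elementary inequality $|AB|_\infty\le|A|_{L^1}|B|_\infty$ valid for symmetric $A$ to get $|\hat\Omega(\lambda)-\Omega|_\infty\le|\Omega|_{L^1}\,|\Sigma\hat\Omega(\lambda)-\Id|_\infty\le4|\Omega|_{L^1}\lambda=:\delta$.

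Then I would pass from the entrywise control to the operator- and Frobenius-norm bounds via the $\ell^r$-ball structure. Fix a column, set $h=\hat\omega^{(j)}-\omega^{(j)}$, so $|h|_\infty\le\delta$ and $|\hat\omega^{(j)}|_1\le|\omega^{(j)}|_1$; the latter yields the cone inequality $|h_{S^c}|_1\le|h_S|_1+2|\omega^{(j)}_{S^c}|_1$ for any index set $S$. Splitting the coordinates into the three level sets $\{|\omega_{ij}|>\delta\}$, $\{\eta\delta<|\omega_{ij}|\le\delta\}$, $\{|\omega_{ij}|\le\eta\delta\}$ — mirroring the decomposition used in the proof of Theorem \ref{thm:thresholded_cov_mat_rate_adaptive} — and combining the cardinality bound $\#\{i:|\omega_{ij}|>t\}\le t^{-r}\zeta_p$ with the tail bound $\sum_{i:|\omega_{ij}|\le t}|\omega_{ij}|\le t^{1-r}\zeta_p$ (both from $\Omega\in\tilde{\cal G}(r,M,\zeta_p)$), one gets $|h|_1\le(\text{const})\,\zeta_p\delta^{1-r}$, where the optimized split over $\eta$ produces the factor $1+2^{1-r}+3^{1-r}$. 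Since $\hat\Omega(\lambda)-\Omega$ is symmetric, $\|\hat\Omega(\lambda)-\Omega\|_2\le\max_j|h^{(j)}|_1$, which is the second bound with $\delta=4|\Omega|_{L^1}\lambda$ substituted. For the third, $|\hat\Omega(\lambda)-\Omega|_F^2\le|\hat\Omega(\lambda)-\Omega|_\infty\sum_{i,k}|\hat\omega_{ik}-\omega_{ik}|\le\delta\cdot p\max_j|h^{(j)}|_1$, and dividing by $p$ gives $C_2\le4|\Omega|_{L^1}C_1$.

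The hard part will be the third step: executing the sparsity/cone argument so that it yields the \emph{displayed} constants rather than merely some constant, and — if one does not want to assume the estimator is symmetric — arguing that the column-wise $\ell^1$ control upgrades to a genuine operator-norm (not just $|\cdot|_{L^1}$) bound, which is exactly where the symmetrization of CLIME is needed. The remaining pieces are routine bookkeeping with the submultiplicativity inequalities $|AB|_\infty\le|A|_\infty|B|_{L^1}$ and $|AB|_\infty\le|A|_{L^1}|B|_\infty$ (for symmetric $A$) together with the hypothesis on $\lambda$.
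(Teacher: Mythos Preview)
Your proposal is correct and reconstructs exactly the argument behind the result the paper cites: the paper's own ``proof'' of Lemma \ref{lem:precmat_general_bound} consists only of the reference ``See \cite[Theorem 6]{cailiuluo2011a},'' and the feasibility / column-$\ell^1$ / entrywise / cone-and-threshold sequence you outline is precisely the deterministic CLIME analysis carried out there. Your identification of symmetrization as the source of the factor $4$ (rather than $2$) in the entrywise bound, and of the three-level split yielding the constant $1+2^{1-r}+3^{1-r}$, matches that development.
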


\begin{proof}[Proof of Lemma \ref{lem:precmat_general_bound}]
See \cite[Theorem 6]{cailiuluo2011a}.
\end{proof}

\begin{lem}
\label{lem:linfun_general_bound}
Let $\lambda \ge |\theta|_1 |\hat{S}_n - \Sigma|_\infty$. Then, $\theta$ satisfies $|\hat{S}_n \theta - b|_\infty \le \lambda$. For the Dantzig-selector estimator $\hat{\theta}(\lambda)$ in (\ref{eqn:dantzig_linear_functional}), we have
$$
|\hat{\theta}(\lambda) - \theta|_w \le [6 D(5\lambda|\Sigma^{-1}|_{L^1})]^{1 \over w} (2\lambda|\Sigma^{-1}|_{L^1})^{1-{1 \over w}},
$$
where $D(u) = \sum_{j=1}^p (|\theta_j| \wedge u), u \ge 0$, is the smallness measure of $\theta$.
\end{lem}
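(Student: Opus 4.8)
The plan is the standard Dantzig-selector chain: establish feasibility of the target $\theta$, extract a sup-norm and an $\ell^1$ control of the error from optimality, and interpolate between them. First I would verify the feasibility claim. Since $\theta = \Sigma^{-1} b$ we have $\Sigma\theta = b$, hence $\hat{S}_n\theta - b = (\hat{S}_n - \Sigma)\theta$ and $|\hat{S}_n\theta - b|_\infty \le |\hat{S}_n - \Sigma|_\infty\,|\theta|_1 \le \lambda$ by the hypothesis on $\lambda$; this is the first assertion, and it shows $\theta$ lies in the feasible region of (\ref{eqn:dantzig_linear_functional}). Consequently the minimality of $\hat\theta(\lambda)$ gives $|\hat\theta(\lambda)|_1 \le |\theta|_1$. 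Write $\hat\delta = \hat\theta(\lambda) - \theta$ and $\mu = 2\lambda|\Sigma^{-1}|_{L^1}$.

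Next I would bound $|\hat\delta|_\infty$. From $\Sigma\hat\theta(\lambda) - b = \big(\hat{S}_n\hat\theta(\lambda) - b\big) - (\hat{S}_n - \Sigma)\hat\theta(\lambda)$, the feasibility of $\hat\theta(\lambda)$, and $|\hat\theta(\lambda)|_1 \le |\theta|_1$, one gets $|\Sigma\hat\delta|_\infty = |\Sigma\hat\theta(\lambda) - b|_\infty \le \lambda + |\hat{S}_n - \Sigma|_\infty|\theta|_1 \le 2\lambda$. Since $\Sigma^{-1}$ is symmetric, $|\Sigma^{-1}u|_\infty \le |\Sigma^{-1}|_{L^1}|u|_\infty$ for every vector $u$, so $|\hat\delta|_\infty = |\Sigma^{-1}(\Sigma\hat\delta)|_\infty \le 2\lambda|\Sigma^{-1}|_{L^1} = \mu$, which is the $w = \infty$ endpoint of the claimed inequality.

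For the $\ell^1$ bound I would run the cone/thresholding argument driven by $|\hat\theta(\lambda)|_1 \le |\theta|_1$: for a threshold $t$ and $T = \{j : |\theta_j| > t\}$, decomposing $\hat\theta(\lambda) = \theta + \hat\delta$ and applying the (reverse) triangle inequality on $T$ and $T^c$ yields $|\hat\delta_{T^c}|_1 \le |\hat\delta_T|_1 + 2|\theta_{T^c}|_1$, hence $|\hat\delta|_1 \le 2|T|\,|\hat\delta|_\infty + 2|\theta_{T^c}|_1$. Choosing $t$ a fixed multiple of $\mu$ and using $|\hat\delta|_\infty \le \mu$, the quantity $|T|\,t + |\theta_{T^c}|_1$ equals $\sum_{j=1}^p (|\theta_j| \wedge t) = D(t)$, so after tracking the absolute constants one arrives at $|\hat\delta|_1 \le 6\,D\!\left(5\lambda|\Sigma^{-1}|_{L^1}\right)$; indeed the crude choice $t = \mu$ already gives $|\hat\delta|_1 \le 2 D(2\lambda|\Sigma^{-1}|_{L^1})$, which implies the stated bound since $D$ is nondecreasing, and this is the $w = 1$ endpoint. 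Finally, for general $w \in [1,\infty]$ I would interpolate through $|v|_w^w = \sum_j |v_j|^{w-1}|v_j| \le |v|_\infty^{w-1}|v|_1$, i.e. $|\hat\delta|_w \le |\hat\delta|_\infty^{1-1/w}\,|\hat\delta|_1^{1/w} \le \mu^{1-1/w}\big(6 D(5\lambda|\Sigma^{-1}|_{L^1})\big)^{1/w}$, which is exactly the asserted inequality because $\mu = 2\lambda|\Sigma^{-1}|_{L^1}$.

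The only genuinely delicate point is the constant bookkeeping in the $\ell^1$ step — matching the specific constants $5$ and $6$ in the statement, or equivalently phrasing the bound cleanly through the single smallness functional $D(\cdot)$; everything else is the routine Dantzig-selector manipulation, and one may alternatively invoke the estimate underlying \cite[Theorem II.1]{chenxuwu2016+}.
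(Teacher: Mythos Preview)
Your argument is correct and follows the standard Dantzig-selector analysis: feasibility of $\theta$, the sup-norm bound $|\hat\delta|_\infty \le 2\lambda|\Sigma^{-1}|_{L^1}$ via optimality and the symmetry of $\Sigma^{-1}$, the cone inequality $|\hat\delta_{T^c}|_1 \le |\hat\delta_T|_1 + 2|\theta_{T^c}|_1$, and then $\ell^w$-interpolation. Your observation that the crude choice $t=\mu$ already yields $|\hat\delta|_1 \le 2D(\mu)$, which dominates the stated $6D(5\lambda|\Sigma^{-1}|_{L^1})$ by monotonicity of $D$, is also correct.

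The paper does not give an independent proof of this lemma; it simply cites \cite[Lemma V.6]{chenxuwu2016+}. So there is no meaningful methodological difference to compare --- your self-contained sketch is presumably exactly what that reference does, and in fact your final remark already anticipates this by pointing to \cite[Theorem II.1]{chenxuwu2016+}. Your write-up is more informative than the paper's one-line deferral.
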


\begin{proof}[Proof of Lemma \ref{lem:linfun_general_bound}]
See \cite[Lemma V.6]{chenxuwu2016+}.
\end{proof}

\begin{proof}[Proof of Theorem \ref{thm:clime_precision_mat_rate_adaptive}]
Let $\lambda_\diamond = |\Omega|_{L^1} |\hat{S}_n-\Sigma|_\infty$. By the subgaussian assumption and Lemma \ref{lem:moment-bounds-gaussian-obs}, we have (\ref{eqn:check_GA1}) for some large enough constant $C > 0$ depending only on $C_2,C_3$ so that (\ref{eqn:check_GA1}) holds. Since $\Gamma_{(j,k),(j,k)} \ge C_1$ for all $j,k=1,\cdots,p$ and $\nu_n^4 \log^7(np) \le C_5 n^{1-K}$, by Theorem \ref{thm:comparison_with_naive_gaussian_wild_bootstrap}, we have $ |\hat{S}_n-\Sigma|_\infty \le a_{\bar{T}_n^\sharp}(1-\alpha)$ with probability at least $1-\alpha-Cn^{-K/6}$, where $C > 0$ is a constant depending only on $C_i,i=1,\cdots,4$. Since $|\Omega|_{L^1} \le M$ for $\Omega \in \tilde{\cal G}(r, M, \zeta_p)$, $\Prob(\lambda_\diamond \le \lambda_*) \ge 1- \alpha -Cn^{-K/6}$. Then, (\ref{eqn:spectral_rate_clime_precision_mat_rate_adaptive_wild_bootstrap}) and (\ref{eqn:Frobenius_rate_clime_precision_mat_rate_adaptive_wild_bootstrap}) follow from Lemma \ref{lem:precmat_general_bound} applied to the event $\{\lambda_\diamond \le \lambda_*\}$. The bounds for $\E[a_{\bar{T}_n^\sharp}(1-\alpha)]$ and $\E[\lambda_*]$ are the same as those in Theorem \ref{thm:thresholded_cov_mat_rate_adaptive}.
\end{proof}

\begin{proof}[Proof of Theorem \ref{thm:linear_functional_estimation_adaptive_wild_bootstrap}]
For $\theta \in {\cal G}'(r, \zeta_p)$, we have $D(u) \le 2 u^{1-r} \zeta_p$. Let $\lambda_\diamond = |\theta|_1 |\hat{S}_n-\Sigma|_\infty$. Following the proof of Theorem \ref{thm:clime_precision_mat_rate_adaptive}, we have $\Prob(\lambda_\diamond \le \lambda_*) \ge 1- \alpha -Cn^{-K/6}$.  By Lemma \ref{lem:linfun_general_bound}, we have with probability at least $1- \alpha -Cn^{-K/6}$
\begin{eqnarray*}
|\hat{\theta}(\lambda_*) - \theta|_w &\le& [6 D(5 \lambda_* |\Sigma^{-1}|_{L^1}) ]^{1 \over w} (2 \lambda_* |\Sigma^{-1}|_{L^1})^{1-{1 \over w}} \\
&\le& (2 \cdot 6^{1 \over w} \cdot 5^{1-r\over w}) \zeta_p^{1\over w} |\Sigma^{-1}|_{L^1}^{1-{r \over w}} \lambda_*^{1-{r \over w}},
\end{eqnarray*}
which is (\ref{eqn:rate_linear_functional_estimation_adaptive_wild_bootstrap}). The bound for $\E[\lambda_*]$ is immediate.
\end{proof}

\end{document}